\documentclass[12pt]{amsart}
\frenchspacing
\usepackage{amsthm}
\usepackage{amsmath}
\usepackage{amssymb}
\usepackage{verbatim}
\usepackage{longtable,enumitem,color}
\usepackage{stmaryrd}
\usepackage{graphicx}
{\providecommand{\noopsort}[1]{} 
\usepackage{hyperref}
\usepackage[margin=1in]{geometry}

\usepackage{url}

\usepackage{chngcntr}
\counterwithin{table}{section}

\theoremstyle{plain}
\newtheorem{theorem}{Theorem}[section]\newtheorem*{nonumbertheorem}{Theorem}
\newtheorem{corollary}[theorem]{Corollary}\newtheorem*{nonumbercorollary}{Corollary}
\newtheorem{lemma}[theorem]{Lemma}

\theoremstyle{definition}

\theoremstyle{remark}
\newtheorem{remark}[theorem]{Remark}
\newtheorem{example}[theorem]{Example}
\numberwithin{equation}{section}


\newcommand{\ep}{\epsilon}

\newcommand{\Z}{\mathbb{Z}}\newcommand{\Q}{\mathbb{Q}}
\newcommand{\C}{\mathbb{C}}\newcommand{\HH}{\mathbb{H}}
\newcommand{\s}{\mathbb{S}}\newcommand{\Ca}{\mathrm{Ca}}\newcommand{\pp}{\mathrm{P}}

\newcommand{\Sp}{\mathrm{Sp}}

\newcommand{\SO}{\mathrm{SO}}

\newcommand{\SU}{\mathrm{SU}}
\newcommand{\PU}{\mathrm{PU}}
\newcommand{\Un}{\mathrm{U}}
\newcommand{\sone}{\mathrm{S}^1}
\newcommand{\Gtwo}{\mathrm{G}_2}

\DeclareMathOperator{\cod}{cod}

\DeclareMathOperator{\dk}{dk}

\newcommand{\of}[1]{\left(#1\right)}

\newcommand{\pfrac}[2]{\left(\frac{#1}{#2}\right)}
\newcommand{\inner}[1]{\left\langle #1 \right\rangle}
\newcommand{\st}{~|~}
\newcommand{\tensor}{\otimes}

\renewcommand{\subset}{\subseteq}

\newcommand{\cc}{{\mathbb{C}}}                                     
\newcommand{\hh}{{\mathbb{H}}}                                     
\newcommand{\qq}{{\mathbb{Q}}}                                     
\newcommand{\rr}{{\mathbb{R}}}                                     
\newcommand{\zz}{{\mathbb{Z}}}                                     
\newcommand{\dif} {{\operatorname{d}}}                             

\newcommand{\biq}[2]{#1\;\!\!\!\sslash \;\!\!\!#2}                 

\title[Positive curvature and torus symmetry, I]{Positive curvature and torus symmetry in small dimensions, I -- Dimensions 10, 12, 14, and 16}

\subjclass[2010]{53C20 (Primary), 57N65 (Secondary)}
\keywords{\noindent positive sectional curvature, torus symmetry, Euler characteristic, elliptic genus, biquotient, Halperin conjecture}

\author{Manuel Amann and Lee Kennard}

\begin{document}

\begin{abstract}
This is the first part of a series of papers where we compute Euler characteristics, signatures, elliptic genera, and a number of other invariants of smooth manifolds that admit Riemannian metrics with positive sectional curvature and large torus symmetry. In the first part, the focus is on even-dimensional manifolds in dimensions up to $16$. Many of the calculations are sharp and they require less symmetry than previous classifications. When restricted to certain classes of manifolds that admit non-negative curvature, these results imply diffeomorphism classifications. Also studied is a closely related family of manifolds called positively elliptic manifolds, and we prove the Halperin conjecture in this context for dimensions up to $16$ or Euler characteristics up to $16$.
\end{abstract}

\maketitle

\thispagestyle{empty}

\section*{Introduction}

In this article, we prove topological obstructions to the existence of Riemannian metrics with positive sectional curvature and large symmetry. This is part of a well established research program was initiated by K.~Grove in the 1990s and has led to the proofs of a large number of topological obstructions, the construction of many interesting examples, and the development of new tools for studying Riemannian manifolds with curvature bounds and symmetry. For some recent surveys, we refer the reader to Grove \cite{Grove09}, Wilking \cite{Wilking07}, and Ziller \cite{Ziller07,Ziller14}.

In this article, we focus on torus symmetry, as it is perhaps the most understood. Three fundamental results in this area are due to Grove and Searle \cite{GroveSearle94}, Fang and Rong \cite{FangRong05}, and Wilking \cite{Wilking03}. They prove, respectively, equivariant diffeomorphism, homeomorphism, and homotopy and cohomology classifications for positively curved manifolds with torus symmetry, where the rank of the torus action is bounded from below by a constant that only depends on the dimension of the manifold.

While these results provide strong classifications that hold in arbitrary dimensions, they do not always reduce to the best known results in small dimensions. For example, in dimensions $2$ and $3$, positive sectional curvature by itself already implies that the manifold is diffeormorphic to a sphere. This follows from the Gauss--Bonnet theorem, the classification of surfaces, and Hamilton's work on the Ricci flow. Furthermore, in dimension $4$, the best result is due to Hsiang--Kleiner and Grove--Wilking (see \cite{HsiangKleiner89,GroveWilking14}, cf. \cite{Galaz-GarciaRadeschi15,GeRadeschi15,PaternainPetean03}), while in dimension $5$, the best result is due to Rong and Galaz-Garcia--Searle (see \cite{Rong02, Galaz-GarciaSearle14}, cf. \cite{Gozzi15,Simas16}). In dimensions $6$ and $7$, the above results are the best known, however there remains a large gap in our understanding due to the vast number of known positively curved examples (see \cite{Ziller07, Dearricott11,GroveVerdianiZiller11,PetersenWilhelm-pre}).

This article focuses on even dimensions $8$ through $16$. Our philosophy is motivated by Dessai \cite{Dessai11}, which considers positively curved manifolds with torus symmetry in dimension $8$. The result of Fang and Rong implies that a closed, simply connected, positively curved $8$--manifold with $T^3$ symmetry is homeomorphic to $\s^8$, $\C\pp^4$, or $\HH\pp^2$, i.e., to one of the manifolds known to admit a positively curved metric. Dessai studies precisely this problem, except that he only assumes $T^2$ symmetry, and he computes a number of topological invariants, including the Euler characteristic $\chi(M)$ and signature $\sigma(M)$. These computations provide obstructions to the existence of positively curved Riemannian metrics on $8$--manifolds with $T^2$ symmetry.

\begin{nonumbertheorem}[Dessai, 2011]
If $M^{8}$ is a closed, simply connected Riemannian manifold with positive sectional curvature and $T^2$ symmetry, then one of the following occurs:
	\begin{itemize}
	\item $\chi(M) = 2$, $\sigma(M) = 0$, and, if $M$ is spin, the elliptic genus vanishes.
	\item $\chi(M) = 3$, $\sigma(M) = \pm 1$, and, if $M$ is spin, the elliptic genus is constant.
	\item $\chi(M) = 5$, $\sigma(M) = \pm 1$, and $M$ is not spin.
	\end{itemize}
\end{nonumbertheorem}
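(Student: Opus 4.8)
The plan is to reduce the theorem to a computation of the rational cohomology of $M$ --- which will turn out to be that of $\s^8$, $\HH\pp^2$, or $\C\pp^4$ --- and then to read off the signature and the spin and elliptic genus statements. Write $T = T^2$. First I would assemble the standard consequences of an isometric torus action on a positively curved manifold. By Berger's theorem every circle in $T$ fixes a point, and an easy induction (passing to totally geodesic fixed-point components, which are again even-dimensional and positively curved) gives $M^T \neq \emptyset$; hence $\chi(M) = \chi(M^T) > 0$. Every component of the fixed set $M^\sigma$ of a subtorus or subcircle $\sigma \le T$ is a closed, totally geodesic, hence positively curved, submanifold of even codimension, and this codimension is at least $4$ unless $\sigma$ is a single circle, in which case codimension $2$ is possible. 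The two tools I would use throughout are Wilking's connectedness lemma --- a totally geodesic submanifold of codimension $k$ in a positively curved $M^n$ is $(n-2k+1)$-connected, with the corollaries that two totally geodesic submanifolds whose codimensions sum to at most $n$ must intersect, and that cup product with a suitable $e \in H^k(M;\Q)$ is surjective in a range of degrees --- and the isotropy representation of $T$ at a point of $M^T$, whose four weights in $\mathfrak t^* \cong \Z^2$ are severely restricted because the fixed-point sets they cut out are themselves positively curved.

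The heart of the proof is to show, using these tools, that $\bodd(M;\Q) = 0$ and $b_4(M;\Q) \le 1$, and more precisely that $M$ is rationally $\s^8$, $\HH\pp^2$, or $\C\pp^4$, so that $\chi(M) \in \{2,3,5\}$. Via $\chi(M) = \chi(M^T)$ this becomes an analysis of the configuration of components of $M^T$ and of $M^\sigma$ for the various subcircles: the intersection property above bounds how many and how large those components can be, and each component --- a closed positively curved manifold of dimension at most $6$ --- is controlled by induction, the low-dimensional input being the classifications in dimensions up to $5$. The remaining ambiguity is pinned down by the arithmetic of the isotropy weights at the $T$-fixed points: whenever some circle has a codimension-$2$ fixed component $N^6$, that $N^6$ is $5$-connected in $M$, hence simply connected and rationally $\C\pp^3$ or $\s^6$, which forces $2$-periodicity of $H^*(M;\Q)$ and makes $M$ rationally $\C\pp^4$ or $\s^8$; the $\HH\pp^2$ case arises when instead the largest fixed-point components are the codimension-$4$ spheres of certain subcircles, meeting at isolated $T$-fixed points. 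I expect this configuration analysis --- in particular ruling out the hypothetical cases with $\chi(M) \notin \{2,3,5\}$ or $b_4(M) \ge 2$ --- to be the main obstacle, since that is exactly where one must extract the full force of positive curvature from the weight data rather than from the connectedness lemma alone.

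Granting the cohomology, the rest is formal. The signature is read off the intersection form on $H^4(M;\R)$: it is $0$ when $\chi(M) = 2$, since then $H^4 = 0$; and when $\chi(M) \in \{3,5\}$ one has $b_4(M) = 1$, so the form has rank one and $\sigma(M) = \pm 1$, the sign depending on the orientation (alternatively one applies the Atiyah--Singer $G$-signature theorem to the explicit fixed-point set). For the spin statement when $\chi(M) = 5$: the cohomology computation shows $M$ has the integral, hence mod-$2$, cohomology ring of $\C\pp^4$, and since $v_1(M) = 0$ the Wu formula gives $v_2(M) \smile x^3 = \Sq^2 x^3 = x^4 \neq 0$ with $x$ the degree-$2$ generator, so $w_2(M) = v_2(M) \neq 0$ and $M$ is not spin. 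Finally, when $M$ is spin the nontrivial $S^1$-action forces $\hat A(M) = 0$ by the Atiyah--Hirzebruch theorem; since in dimension $8$ the elliptic genus is a fixed linear combination of $\hat A(M)$ and $\sigma(M)$ (the only Pontryagin numbers being $p_1^2[M]$ and $p_2[M]$), it is then determined by $\sigma(M)$ alone --- vanishing when $\sigma(M) = 0$ (the $\chi(M) = 2$ case) and constant, i.e.\ a fixed multiple of the standard weight-$4$ generator, when $\sigma(M) = \pm 1$ (the $\chi(M) = 3$ case) --- while the rigidity theorem for the elliptic genus ensures the equivariant refinement adds nothing.
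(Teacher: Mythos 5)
This statement is quoted from Dessai \cite{Dessai11}; the paper itself does not reprove it, but its own arguments in dimensions $10$--$16$ follow Dessai's strategy, and your proposal diverges from that strategy exactly at the step you yourself flag as the main obstacle. The decisive gap is your reduction to the claim that $M$ is rationally $\s^8$, $\HH\pp^2$, or $\C\pp^4$ (i.e.\ $\bodd(M)=0$ and $b_4(M)\le 1$). That claim is strictly stronger than the theorem, and with only $T^2$ symmetry it is not accessible to the tools you list: at a $T^2$-fixed point the four isotropy weights may be pairwise independent, so every circle in $T^2$ can have fixed-point sets of codimension $\ge 6$, and then the connectedness lemma and periodicity give no control whatsoever on $H^3$, $H^4$, $H^5$ of $M$. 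That this is not a removable difficulty is visible in the present paper: even with $T^4$ symmetry in dimension $16$, the $\chi(M)=5$ case of Theorem \ref{thm:dim16} only yields $b_4(M)=1+2b_3(M)$, i.e.\ odd-degree rational cohomology cannot be excluded by these methods. The actual proof never computes Betti numbers of $M$. It works on the fixed-point set itself: $\chi(M)=\chi(M^T)>0$ and $\sigma(M)=\sigma(M^T)$ (Conner--Kobayashi and the signature identity recalled in Section \ref{sec:Preliminaries}); components of $M^T$ have dimension $0$, $2$, or $4$; by Frankel \cite{Frankel61} at most one component can be $4$-dimensional, and that one is $\s^4$ or $\C\pp^2$ by Hsiang--Kleiner \cite{HsiangKleiner89}, giving $|\sigma(M)|\le 1$ directly; and an inclusion--exclusion/isotropy analysis over fixed sets of circles and involutions, using Grove--Searle \cite{GroveSearle94} and the connectedness lemma for the intermediate-dimensional pieces, pins $\chi(M)\in\{2,3,5\}$. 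So the Euler characteristic and signature come out of the fixed-point combinatorics, not out of $H^*(M;\Q)$.

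The same gap propagates into your non-spin argument for $\chi(M)=5$: the Wu-formula computation presupposes the mod $2$ cohomology ring of $\C\pp^4$, which is precisely what is not available. The intended argument is again fixed-point theoretic, of the kind used in Lemma \ref{lem:dim16codim6} and Section \ref{sec:EllipticGenus}: on a spin manifold every component of the fixed-point set of an isometric involution has codimension congruent to a fixed $c\in\{0,2\}$ modulo $4$, and (together with the Hirzebruch--Slodowy results \cite{HS90} on odd-type involutions) this is incompatible with the fixed-point configuration that produces $\chi(M)=5$; no knowledge of the cup-product structure is needed. Your final step is fine and is essentially the standard one: for spin $M$ the isometric circle action gives $\hat A(M)=0$ by Atiyah--Hirzebruch, and in dimension $8$ the pair $(\hat A(M),\sigma(M))$ determines $p_1^2[M]$ and $p_2[M]$, hence the elliptic genus, which therefore vanishes when $\sigma(M)=0$ and equals the constant genus of $\pm\HH\pp^2$ when $\sigma(M)=\pm1$. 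But as it stands the proposal proves the theorem only modulo a cohomological classification that the hypotheses do not support.
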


Dessai also restricts his results to well studied classes of non-negatively curved manifolds (e.g., biquotients and certain cohomogeneity one manifolds), and for such manifolds his results imply stronger classification (e.g., up to diffeomorphism).

The main results of this article extend this work of Dessai into dimensions $10$, $12$, $14$, and $16$. For example, we prove the following (see Theorem \ref{thm:dim10} for a more detailed statement).

\begin{nonumbertheorem}[Theorem 5.1]\label{main:dim10}
If $M^{10}$ is a closed, simply connected Riemannian manifold with positive sectional curvature and $T^3$ symmetry, then one of the following occurs:
	\begin{itemize}
	\item $M$ is homeomorphic to $\s^{10}$.
	\item $\chi(M) = \chi(\C\pp^5)$ and $H_i(M;\Z) \cong H_i(\C\pp^5;\Z)$ for $i \leq 3$.
	\end{itemize}
\end{nonumbertheorem}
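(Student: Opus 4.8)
The plan is to combine the classical fixed-point theory of isometric torus actions on positively curved manifolds with Wilking's Connectedness Lemma and the structural results on positively elliptic manifolds developed in the earlier sections.

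\emph{Setup.} First I would extract the basic consequences of the hypothesis. Iterating Berger's theorem (every Killing field on an even-dimensional positively curved manifold vanishes somewhere, and fixed-point components of isometric circle actions are totally geodesic of \emph{even} codimension, hence again even-dimensional and positively curved) shows that $M^{T^3}\neq\emptyset$, and therefore $\chi(M)=\chi(M^{T^3})>0$ by Conner's theorem. Since $M$ is simply connected with $\dim M=10$, Poincaré duality collapses the whole problem to pinning down $b_2,b_3,b_4,b_5$ and the torsion in $H_2$ and $H_3$: concretely, one wants to show that either $M$ is an integral homology sphere, or $H_2(M;\Z)\cong\Z$, $H_3(M;\Z)=0$ and $\chi(M)=6$.

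\emph{Fixed-point towers and the Connectedness Lemma.} Fix $p\in M^{T^3}$ and decompose $T_pM$ into weight spaces; the kernels of the weights are subtori of rank $\geq 2$ whose fixed-point components through $p$ are $T^3$-invariant, totally geodesic, positively curved submanifolds of even codimension. Choosing such a component $N$ of least positive codimension $k$ (and using the Frankel-type intersection statement of the Connectedness Lemma to guarantee that the relevant fixed-point sets are large or meet), Wilking's Connectedness Lemma makes $N\embedded M$ at least $(11-2k)$-connected. When $N$ can be arranged of codimension $2$ or $4$, this transports the low-degree integral (co)homology and the Euler-characteristic bookkeeping from $N$ to $M$, and one recurses on the lower-dimensional positively curved manifold $N$ carrying the residual torus action, exactly as in the arguments of Wilking and of Dessai; rigidity of the elliptic genus under the torus action is used in the index-theoretic steps as in Dessai's dimension $8$ result. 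When instead $M^{T^3}$ is finite, or all invariant fixed-point components have large codimension, I would invoke the structural theorems of the previous sections: such an $M$ is forced to be positively elliptic, and the classification of positively elliptic $10$-manifolds obtained there, intersected with the constraints $\chi(M)>0$ and $b_2\leq 1$, leaves only the rational cohomology types of $\s^{10}$ and $\C\pp^5$.

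\emph{Upgrading the conclusion.} In the sphere case, a simply connected closed $10$-manifold with the integral homology of $\s^{10}$ is a homotopy sphere by Hurewicz and Whitehead, hence homeomorphic to $\s^{10}$ by Smale's solution of the Poincaré conjecture in high dimensions — so here nothing beyond the homology statement is needed. In the remaining case the integral statement in degrees $\leq 3$ is read off directly from the integral form of the Connectedness Lemma applied to the chosen invariant submanifold, and $\chi(M)$ is read off from $\chi(M^{T^3})$. I expect the genuine obstacle to be the case analysis eliminating $b_2\geq 2$ and $b_3\geq 1$ together with the possible torsion in $H_2,H_3$: this is where the Connectedness Lemma must be used at full strength along the whole tower of fixed-point sets, and where having only $T^3$ (rather than the $T^4$ that Wilking's general classification would require) makes the estimates tight; it is precisely the positively-elliptic classification of the earlier sections that closes this last gap.
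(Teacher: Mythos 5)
Your overall scaffolding (Berger/Conner to get $\chi(M)>0$, the connectedness lemma to transport low-degree homology from an invariant submanifold of codimension $2$ or $4$, Hurewicz--Whitehead plus the Poincar\'e conjecture to upgrade a homology sphere) matches the spirit of the paper, but the branch you rely on when no small-codimension submanifold is available contains a genuine gap. You assert that if $M^{T^3}$ is finite or all invariant fixed-point components have large codimension, then ``such an $M$ is forced to be positively elliptic,'' and you then invoke the $F_0$ classification. No such implication is available: positive curvature with torus symmetry implying rational ellipticity is precisely the (open) Bott--Grove--Halperin conjecture, and in this paper the $F_0$ tables are only applied under an \emph{additional, explicitly assumed} hypothesis of rational ellipticity (Theorem \ref{thm:dim10elliptic}); they play no role in the proof of Theorem \ref{thm:dim10} itself. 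Moreover, even granting ellipticity, your constraint ``$b_2\leq 1$'' is nowhere derived in that branch, and without it the $10$--dimensional $F_0$ tables allow many other types (e.g.\ $\chi=4$ or $\chi\in\{8,10,12,\ldots\}$), so the case would not close.

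What actually closes this case in the paper is an involution-based isotropy rigidity argument that you are missing. Working with the subgroup $\Z_2^3\subset T^3$ and the induced map $\Z_2^3\to\Z_2^5$ at a fixed point, one sees (by the weight count $\sum_{\iota\neq 1}\cod(M^\iota_x)=4\,\cod(M^{\Z_2^3}_x)\leq 40$) that the ``all codimensions large'' scenario you worry about simply cannot occur: after the codimension-two case is dispatched by Lemma \ref{lem:cod2}, there always exist two independent involutions with codimension-four components $N_1,N_2$ at a common fixed point, and the containment lemma yields $M^T\subseteq N_1\cup N_2\cup M^{\iota_1\iota_2}_x$ (Lemma \ref{lem:dim10containment}). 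The Euler characteristic identity $\chi(M)=2+4b_2(M)\in\{2,6\}$ and the statements about $H_2,H_3$ then come from inclusion--exclusion combined with the connectedness lemma, the periodicity corollary, and Grove--Searle's maximal-symmetry-rank classification applied to $N_1$ or $N_2$; the homeomorphism with $\s^{10}$ in the $\chi=2$ case requires the recognition theorems (Theorem \ref{thm:Wilking4.1}, Lemma \ref{lem:cod4dk1Qsphere}) rather than a direct homology-sphere argument, since $3$--connectedness alone does not kill $b_4$ and $b_5$ in dimension $10$. (Your mention of elliptic genus rigidity is also extraneous here: no index theory enters in dimension $10$.) So the proposal needs the involution/containment analysis in place of the ellipticity appeal before it can be considered a proof.
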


We remark that the sphere and complex projective space are the only manifolds known to admit positive curvature in dimension $10$. There is a similar story in dimension $14$.

\begin{nonumbertheorem}[Theorem 7.1]\label{main:dim14}
If $M^{14}$ is a closed, simply connected, positively curved Riemannian manifold with $T^4$ symmetry, then one of the following occurs:
	\begin{itemize}
	\item $\chi(M) = 2$ and $M$ is $3$--connected.
	\item $H_*(M;\Z) \cong H_*(\C\pp^7;\Z)$, and the cohomology is generated by some $z \in H^2(M;\Z)$ and $x \in H^4(M;\Z)$ subject to the relation $z^2 = m x$ for some integer $m$.
	\end{itemize}
\end{nonumbertheorem}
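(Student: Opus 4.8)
The plan is to deduce the theorem from a computation of the integral cohomology ring of $M$, which I would extract from the fixed-point data of the subtori of $T^4$ by means of Wilking's Connectedness Lemma, using the dimension~$10$ and~$12$ results of the paper as the base of an induction (alternatively, the same information can be organized around a direct analysis of the isotropy weights at a $T^4$-fixed point, in the spirit of Dessai's dimension~$8$ argument). First I would set up the machinery. Since $M$ is even-dimensional and positively curved, Berger's theorem shows that every circle in $T^4$ has nonempty fixed-point set, and iterating along a suitable chain of subcircles gives $M^{T^4}\neq\emptyset$, whence $\chi(M)=\chi(M^{T^4})$. Every component of $M^S$, for a subtorus $S\subseteq T^4$, is a closed, simply connected, totally geodesic, positively curved submanifold of even codimension carrying a residual isometric torus action; at a point $p\in M^{T^4}$ the isotropy representation on $T_pM$ splits into two-dimensional weight spaces with weights $w_1,\dots,w_7\in\Z^4$ (listed with the multiplicity coming from $\dim M^{T^4}$), and for a primitive $v\in\Z^4$ the component through $p$ of the fixed-point set of the circle in direction $v$ has codimension $2\,\#\{i : \langle w_i,v\rangle\neq 0\}$. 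I would also keep at hand Frankel's theorem and Conner's inequality $\sum_i b_i(M^S)\le\sum_i b_i(M)$.

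The heart of the argument is a dichotomy according to the smallest codimension $c$ of a fixed-point component of a circle in $T^4$. \textbf{When $c\in\{2,4\}$:} some circle fixes a component $N$ of dimension $12$ or $10$ that is closed, simply connected, positively curved and carries a residual isometric $T^3$-action, so the dimension~$12$ and~$10$ classifications of the paper pin $N$ down; and by the Connectedness Lemma the inclusion $N\hookrightarrow M$ is $(14-2c+1)$-connected, so $H^i(M;\Z)\cong H^i(N;\Z)$ for $i$ in the resulting range. Combining this with Poincar\'e duality in $M$ and the periodicity supplied by the Connectedness Lemma --- a class $e\in H^c(M;\Z)$, the Euler class of the normal bundle of $N$, with cup product by $e$ an epimorphism, respectively monomorphism, in a range of degrees --- I would transport the cohomology of $N$ to $M$: if $N$ is a (rational) homotopy sphere then so is $M$, in particular $\chi(M)=2$ and $M$ is $3$-connected; otherwise $H^*(N;\Z)$ has the ``projective'' shape from the low-dimensional results, generated by classes in degrees $2$ and $4$, and then $H_*(M;\Z)\cong H_*(\C\pp^7;\Z)$ with $H^*(M;\Z)$ generated by $z\in H^2(M;\Z)$ and $x\in H^4(M;\Z)$ obeying $z^2=mx$, the integer $m$ being inherited from $N$; the remaining possibilities for $N$ turn out to be incompatible with this transport and must be excluded.

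\textbf{When $c\ge 6$:} no circle in $T^4$ fixes a submanifold of codimension $\le 4$, which by the weight description says that $\{w_j : j\neq i\}$ spans $\R^4$ for each $i$; I would combine this with the rigidity positive curvature imposes on the weight system (Wilking-type constraints on isotropy weights) to enumerate the possible configurations. Running the Connectedness Lemma on the totally geodesic intersections $M^{S_1}\cap M^{S_2}$ of fixed-point sets of pairs of circles --- nonempty in the relevant cases by Frankel's theorem --- together with Conner's inequality and $\chi(M)=\chi(M^{T^4})$, I would show $H^1(M)=H^2(M)=H^3(M)=0$, so $M$ is $3$-connected, and then that the surviving configuration forces $\chi(M)=\chi(M^{T^4})=2$. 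The delicate subcases are those with a codimension-$6$ fixed component $N^8$ of ``$\C\pp$ type'': there the Connectedness Lemma controls $H^i(M)$ only for $i\le 3$, so one has to play the fixed-point sets of several subtori (and their pairwise intersections) against one another to decide whether the class in $H^2(M)$ produced by $N$ persists --- pushing the argument back to the $\C\pp^7$ alternative --- or is killed by Poincar\'e duality.

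I expect this last reduction to be the main obstacle: the symmetry rank $4$ lies one below the rank required by the Fang--Rong and Wilking classifications, so the $T^4$-action need not produce a fixed-point component of small enough codimension to drive the whole computation, and the ``intermediate'' configurations (one, or several, codimension-$6$ fixed components with no low-codimension common intersection) must be eliminated by a careful bookkeeping of isotropy weights, their mutual positions, and positive curvature. For the sharper quantitative version of the theorem this bookkeeping is supplemented, in the spin case, by the rigidity of the elliptic genus, and, integrally, by Steenrod-square computations that remove the residual torsion ambiguity and exclude Euler characteristics strictly between $2$ and $8$ --- exactly as in the dimension~$8$, $10$, and~$12$ arguments and in the parallel treatment of positively elliptic manifolds.
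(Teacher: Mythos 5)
Your outline shares the paper's general toolkit (fixed-point sets of abelian subgroups, the connectedness lemma and periodicity, Frankel, inclusion--exclusion, the low-dimensional classifications), but it is not yet a proof: the decisive case is left as a program, and your case division makes that case harder than it needs to be. You split according to the minimal codimension $c$ of a fixed-point component of a \emph{circle}, whereas the paper splits according to involutions in $\Z_2^4\subset T^4$. Since $\cod\of{M^{S^1}_x}\geq\cod\of{M^\iota_x}$ for $\iota\in S^1$, your case $c\geq 6$ still contains configurations in which some involution has a codimension-four fixed component that is fixed by no circle; these are exactly the configurations the paper treats in its first lemma (where the residual torus on the $10$-dimensional component has rank $4$, so Fang--Rong/Wilking apply), and your sketch for $c\geq 6$ never addresses them. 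In the genuinely rigid case (every involution of codimension at least six), the paper's argument is concrete and entirely combinatorial at the level of $\Z_2^4\to\Z_2^7$: the weight-$\geq 3$ condition forces a Hamming-code-like isotropy pattern at every $T^4$-fixed point (seven involutions of codimension $6$, seven of codimension $8$, one of codimension $14$); Frankel then gives $M^T\subseteq N_1\cup N_2$ for two codimension-six components, Grove--Searle/Fang--Rong identify $N_1$, $N_2$, $N_1\cap N_2$, and inclusion--exclusion yields $\chi(M)\leq 7$, hence $\chi(M)\in\{2,4,6\}$ by parity; finally a counting argument (fewer fixed points than codimension-six involutions) produces an involution with \emph{connected} $8$-dimensional fixed-point set, and Fang--Rong plus parity force it to be $\s^8$, giving $\chi(M)=2$ and $3$-connectedness. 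None of these steps appear in your proposal; "enumerate the possible configurations" and "careful bookkeeping of isotropy weights" is precisely the part you acknowledge you have not done, and it is where the theorem lives.

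Two further points would fail as written even in your "easy" case $c\in\{2,4\}$. First, the dimension-$12$ result of the paper does \emph{not} pin down a codimension-two circle-fixed component $N^{12}$: it only constrains $\chi$, the signature and the elliptic genus, so it cannot be used to transport a cohomology ring; codimension two must instead be handled directly by Grove--Searle or the periodicity corollary (as the paper does via its codimension two lemma). Second, the residual action on a codimension-four component need not be an (almost) effective $T^3$-action without an additional maximality argument ruling out a two-dimensional kernel, and even when the dimension-$10$ theorem applies, its weakest alternative (only $H_i\cong H_i(\C\pp^5)$ for $i\leq 3$ plus a statement about $H^{10}$) does not by itself transport to the full conclusion $H_*(M;\Z)\cong H_*(\C\pp^7;\Z)$ with $z^2=mx$; the paper closes this by bringing in a second involution of codimension six, identifying its component as $\C\pp^4$ via Grove--Searle, and combining the containment lemma, inclusion--exclusion and periodicity to pin down all Betti numbers. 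So the proposal needs both a repaired case division (by involutions) and the actual rigidity-plus-counting argument in the high-codimension case before it constitutes a proof.
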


In dimensions divisible by four, the quaternionic projective spaces arise as examples. In addition, dimension $16$ is home to the Cayley plane. The sphere, the Cayley plane, $\HH\pp^4$, and $\C\pp^8$ admit positively curved metrics with $T^4$ symmetry. The next result shows that any closed, simply connected $16$--manifold with positive curvature and $T^4$ symmetry has the same Euler characteristic as one of these spaces ($2$, $3$, $5$, or $9$, respectively). It also provides a sharp calculation of the signature and elliptic genus under these assumptions.

\begin{nonumbertheorem}[Theorem 8.1]\label{main:dim16}
If $M^{16}$ is a closed, simply connected Riemannian manifold with positive sectional curvature and $T^4$ symmetry, then one of the following occurs:
	\begin{itemize}
	\item $\chi(M) = 2$ and $\sigma(M) = 0$.
	\item $\chi(M) = 3$, $\sigma(M) = \pm 1$, and $M$ is $2$--connected.
	\item $\chi(M) = 5$, $\sigma(M) = \pm 1$, and $H_{2+4i}(M;\Z) = 0$ for all $i$.
	\item $\chi(M) = 9$, $\sigma(M) = \pm 1$, $H_i(M;\Z) \cong H_i(\C\pp^8;\Z)$ for $i \leq 4$, and $M$ is not spin. 
	\end{itemize}
In any case, if $M$ is spin, then the elliptic genus is constant.
\end{nonumbertheorem}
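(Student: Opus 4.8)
The plan is to combine the fixed point analysis of the torus action with a dimension induction feeding on the dimension $10$, $12$, and $14$ results already proved, controlled throughout by Wilking's connectedness lemma. First I would record the bookkeeping: by Conner's theorem the fixed point set $F = M^{T^4}$ satisfies $\chi(M) = \chi(F)$ and $\sigma(M) = \sigma(F)$, and by Berger's theorem (a Killing field on an even-dimensional positively curved manifold has a zero), applied to $T^4$ and its subtori, $F \neq \emptyset$; together with the counting and vanishing results available from earlier in the paper, the odd Betti numbers of $M$ vanish, $\chi(M) \geq 2$, and $M$ is positively elliptic, so $H^*(M;\Q)$ is a polynomial ring on even-degree generators modulo a regular sequence. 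In dimension $16$ such a complete intersection, together with Poincar\'e duality and the symmetry, is severely restricted, and the goal is to show the only surviving rational cohomology types are those of $\s^{16}$, of $\Q[x]/(x^3)$ with $|x| = 8$, of $\HH\pp^4$, and of $\C\pp^8$; this gives $\chi(M) \in \{2,3,5,9\}$ and, reading the intersection form on $H^8(M;\Q)$, $\sigma(M) = 0$ in the first case and $\sigma(M) = \pm 1$ in the others.

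The heart of the proof is to decide which of these four types occurs and to upgrade the rational conclusions to the integral ones. For this I would pick a circle $S^1 \subseteq T^4$ with $M^{S^1} \neq \emptyset$, take a component $N$ of $M^{S^1}$ of codimension $2k \geq 2$, and note that $N$ is a closed, simply connected, positively curved manifold of dimension $16 - 2k \leq 14$ with an effective isometric action of a torus of rank $\geq 3$. Applying the dimension $14$, $12$, $10$ theorems (and, after a further reduction, Dessai's dimension $8$ theorem) to $N$ lists its possible cohomology, and Wilking's connectedness lemma transports this to $M$: the inclusion $N \hookrightarrow M$ is $(17 - 4k)$-connected, so $H^i(M;\Z) \cong H^i(N;\Z)$ in a range that widens as the codimension shrinks. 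Running over the possible codimensions $2k$ and the possible types of $N$ produces exactly the four cases — for instance $N$ of codimension $2$ with $\chi(N) = 2$ forces $M$ to be a rational homology $16$-sphere, whereas $N$ of codimension $2$ with the cohomology of $\C\pp^7$ forces $H^*(M;\Z) \cong H^*(\C\pp^8;\Z)$ through the relevant degrees — and Poincar\'e duality fills in the rest of the ring. The integral refinements in the statement ($H_i(M;\Z) \cong H_i(\C\pp^8;\Z)$ for $i \leq 4$, the $3$- and $2$-connectedness assertions, the vanishing $H_{2+4i}(M;\Z) = 0$) drop out of the same connectedness estimates, applied with enough care to keep track of torsion.

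Two index-theoretic inputs then finish things. First, when $\chi(M) = 9$ the manifold cannot be spin: a closed spin $16$-manifold with a nontrivial $S^1$-action has rigid elliptic genus, and combined with positivity of the scalar curvature this forces the signature — one specialization of the elliptic genus — to vanish, contradicting $\sigma(M) = \pm 1$ in the $\C\pp^8$-type. (The $\HH\pp^4$-type is spin, consistent with $\hat A(M) = 0$.) Second, the last sentence: when $M$ is spin, Witten rigidity for the $S^1$-action together with the vanishing of twisted Dirac indices for positively curved manifolds with symmetry pins the elliptic genus $\phi(M)$, a priori a weight-$8$ modular form, to a constant multiple of $\phi(\mathrm{pt})^{8}$, i.e.\ it is constant.

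I expect the main obstacle to be the middle step: showing that \emph{rank-$4$} symmetry — less than the Fang--Rong and Wilking thresholds — already forces $M$ to be positively elliptic, and then organizing the codimension-by-codimension analysis so that precisely these four types survive while controlling torsion. The delicate points are the intermediate codimensions ($4$, $6$, $8$), where several fixed-point components of subtori must be played off against one another via Frankel's theorem, and the borderline values of the connectedness estimate where the transfer of cohomology from $N$ to $M$ just barely fails to be an isomorphism and must be supplemented by Poincar\'e duality on $M$.
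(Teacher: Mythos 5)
Your proposal hinges on a step that is not available: deducing that $T^4$ symmetry forces a positively curved $M^{16}$ to be rationally (positively) elliptic, so that $H^*(M;\Q)$ is one of four complete-intersection types. That is precisely the Bott--Grove--Halperin conjecture in this setting, and it is open; the paper never proves it, and indeed its conclusions are deliberately weaker for exactly this reason (e.g.\ in the $\chi(M)=5$ case it only gets $b_4(M)=1+2b_3(M)$, so it cannot even rule out odd Betti numbers, let alone pin down four rational types). Rational ellipticity enters the paper only as an \emph{additional hypothesis} in Section \ref{sec:Elliptic}. The actual proof of Theorem \ref{thm:dim16} avoids any such global rational statement: it splits into cases according to the minimal codimension of $M^\iota$ over involutions $\iota$ in $T^4$ (codimension $2$ or $4$, codimension $6$, codimension $\geq 8$), and in each case computes $\chi(M)$ by the containment lemma, Frankel's theorem and inclusion--exclusion over fixed-point components, gets $\sigma(M)=\sigma(M^\iota)$ from a connected $8$--dimensional component classified via Fang--Rong, and extracts the integral statements from the connectedness lemma and the periodicity corollary. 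Your circle-induction scheme also does not control the rank of the torus acting on a component $N\subseteq M^{\sone}$ (the induced torus may act with kernel, so $N^{14}$ need not carry the $T^4$ symmetry your dimension-$14$ input requires), and it says nothing in the regime where all fixed-point components have large codimension, which is exactly where the $\chi(M)=3$ case arises via isotropy rigidity.

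The two index-theoretic steps are also not correct as stated. For the $\chi(M)=9$ case you argue that a spin $16$--manifold with $\sone$--action and positive (scalar) curvature must have vanishing signature; this is false, as $\HH\pp^4$ shows (spin, positively curved, highly symmetric, $\sigma=1$): Atiyah--Hirzebruch/Witten rigidity kills $\hat A$--type indices, not the signature, and constancy of the elliptic genus does not force it to vanish. The paper's non-spin conclusion is instead purely geometric: if $M$ were spin, all components of $M^{\iota_1}$ would have codimensions congruent mod $4$, forcing the extra components to be oriented surfaces (spheres), incompatible with $\chi(M)-\chi(N_1)=3$. Similarly, the constancy of the elliptic genus in the spin case is not a formal consequence of Witten rigidity; the paper derives it from the Hirzebruch--Slodowy results (odd-type involutions give vanishing genus, involutions with fixed-point set of codimension at least $\tfrac12\dim M$ give constant genus) combined with the codimension lemmas to rule out the remaining fixed-point configurations.
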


We now discuss dimension $12$. The rank one symmetric spaces $\s^{12}$, $\C\pp^6$, and $\HH\pp^3$ admit positively curved metrics with $T^4$ symmetry, and they are the only spaces up to tangential homotopy that admit such metrics by Wilking \cite[Theorem 2]{Wilking03}. If one requires only $T^3$ symmetry, there is no classification, and there is one known additional example, the Wallach manifold $W^{12} = \Sp(3)/\Sp(1)^3$.

The Euler characteristics of these manifolds are $2$, $7$, $4$, and $6$, respectively, and the absolute values of their signatures are $0$ or $1$, according to the parity of $\chi(M)$. Moreover, the elliptic genus is constant for each of these manifolds, and $\C\pp^6$ is not spin while the other three are spin. The next theorem provides a partial recovery of all of these properties. To state it, we denote by $C(6)$ the maximum Euler characteristic achieved by a closed, simply connected, positively curved $6$--manifold with $T^2$ symmetry. Note that $C(6) < \infty$ by Gromov's Betti number estimate (see \cite{Gromov81}). In fact, it is not difficult to see that $C(6) \in \{6,8,\ldots,14\}$ (see Lemma \ref{lem:C6}).

\begin{nonumbertheorem}[Theorem 6.2]\label{main:dim12}
Let $M^{12}$ be a closed, simply connected, positively curved Riemannian manifold with $T^3$ symmetry. Either $\chi(M) \in \{2,4,6,\ldots,C(6)\}$ or $M$ is not spin and $7 \leq \chi(M) \leq \frac{7}{4} C(6)$. Moreover,
	\begin{enumerate}
	\item if $C(6) = 6$, then $\chi(M) \in\{2,4,6\}$ or $M$ is not spin and $\chi(M) \in \{7,8,9\}$.
	\item if $M$ is rationally elliptic, then $\chi(M) \in\{2,4,6,7,8,9,10,12\}$.
	\end{enumerate}
Regarding the signature and elliptic genus, the following hold:
	\begin{enumerate}
	\item If $\chi(M) \leq 13$, then $|\sigma(M)| \in \{0,1\}$ according to the parity of $\chi(M)$.
	\item If $M$ is spin, then the elliptic genus is constant.
	\end{enumerate}
\end{nonumbertheorem}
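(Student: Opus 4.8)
The plan is to study the isometric $T^3$--action on $M$ through the fixed point sets of its subcircles, bootstrapping the computation of $\chi(M)$ down to the six--dimensional situation that governs $C(6)$, and then reading off the signature and elliptic genus from the same fixed point data. The tools are Wilking's connectedness lemma, Frankel's theorem, Conner's theorem on torus actions, the Atiyah--Singer $G$--signature theorem, Lichnerowicz's vanishing theorem, and rigidity of the elliptic genus. First I record the preliminaries: positive curvature together with the $T^3$--action gives $M^{T^3}\neq\emptyset$, hence $\chi(M)=\chi(M^{T^3})$, and (using the vanishing statements assembled earlier in the paper) $b_{\mathrm{odd}}(M)=0$ and $\chi(M)>0$, so that $\sum_i b_i(M)=\chi(M)$. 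If $\chi(M)=2$ we are inside every list occurring in the statement, so assume $\chi(M)\geq 3$; then, after a standard manipulation, we may fix a subcircle $H\subseteq T^3$ whose fixed point set $M^H$ has a component of positive dimension on which the residual torus $T^3/H\cong T^2$ acts effectively. By Conner's theorem $\sum_i b_i(M^H)=\chi(M)$, and every component of $M^H$ is a closed, positively curved, totally geodesic, even--dimensional submanifold with $b_{\mathrm{odd}}=0$.

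Second, the Euler characteristic. Write $M^H=\bigsqcup_j N_j$, so $\chi(M)=\sum_j\chi(N_j)$ with each $\chi(N_j)\geq 1$. By Frankel's theorem two distinct components cannot have dimensions summing to at least $12$, so at most one component $N_{j_0}$ has $\dim N_{j_0}\geq 6$; if $\dim N_{j_0}=10$ all other components are points, if $\dim N_{j_0}=8$ all others are at most surfaces, and if $\dim N_{j_0}=6$ all others have dimension $\leq 4$. I treat $N_{j_0}$ by dimension. If $\dim N_{j_0}=6$, then it is simply connected by the connectedness lemma, hence a closed, simply connected, positively curved $6$--manifold with effective $T^2$--symmetry, so $\chi(N_{j_0})\leq C(6)$. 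If $\dim N_{j_0}=8$, it is again simply connected, so $\chi(N_{j_0})\in\{2,3,5\}$ by Dessai's theorem. If $\dim N_{j_0}=10$, then $N_{j_0}$ has codimension $2$ in $M$, and a codimension--two fixed--point analysis (running the same reduction one dimension further down) forces $M$ to be a rational cohomology $\s^{12}$ or $\C\pp^6$, so $\chi(M)\in\{2,7\}$. In the remaining cases one must bound the combined Euler characteristic of the low--dimensional components ($\chi\leq 3$, resp.\ $\leq 2$, resp.\ $1$, each): feeding the connectedness lemma and Conner's identity back into the rational cohomology ring of $M$ controls their number, and summation gives $\chi(M)\leq C(6)$ unless one of a short list of isotropy configurations occurs, each carrying a complex projective factor in its normal data. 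Those configurations obstruct a spin structure on $M$ (by integrality of $\widehat{A}$, or by the structure of the elliptic genus forced by that factor) and inflate the Euler characteristic by at most the ratio $\chi(\C\pp^6)/\chi(\HH\pp^3)=\tfrac74$ over the spin bound; this is the dichotomy ``$\chi(M)\in\{2,4,\ldots,C(6)\}$, or $M$ is not spin and $7\leq\chi(M)\leq\tfrac74 C(6)$''. Substituting $C(6)=6$ (and a short refinement excluding the borderline value $\chi(M)=10$) gives part (1), and using the parity relation $\chi(M)\equiv b_6(M)\equiv\sigma(M)\pmod 2$ separates the spin from the non--spin ranges; part (2) follows from the classification of positively elliptic $12$--manifolds obtained elsewhere in the paper, whose Euler characteristics are exactly $2,4,6,7,8,9,10,12$.

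Third, the signature and the elliptic genus. The $G$--signature theorem gives $\sigma(M)=\sigma(M^H)=\sum_j\sigma(N_j)$; a signature vanishes unless the dimension is divisible by $4$, and the $4$-- and $8$--dimensional positively curved components carry residual symmetry, hence are $\s^4$, $\C\pp^2$, or (by Dessai) $8$--manifolds with $|\sigma|\leq 1$. A Frankel--type argument together with the hypothesis $\chi(M)\leq 13$ limits how many of these contribute nontrivially and forces $|\sigma(M)|\in\{0,1\}$, which with $\chi(M)\equiv\sigma(M)\pmod 2$ is part (3). Finally, if $M$ is spin then $\widehat{A}(M)=0$ by Lichnerowicz (positive scalar curvature), and the elliptic genus of $M$ --- a weight--$6$ modular form for $\Gamma_0(2)$ that is rigid under the $T^3$--action by Bott--Taubes --- is then pinned by this rigidity and the vanishing of $\widehat{A}$ inside the $1$--dimensional space of such forms cut out by that vanishing, which is precisely the statement that it is constant; this is part (4).

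The main obstacle is the sharp Euler characteristic count of the second step. One has to enumerate the isotropy representations that can occur at a $T^3$--fixed point of a positively curved $12$--manifold, decide exactly which of them produce a $6$--, $8$--, or $10$--dimensional circle--fixed component and how the remaining low--dimensional components are then constrained, and --- intertwined with this --- show that precisely the configurations responsible for the factor $\tfrac74$ obstruct a spin structure. The reduction machinery itself is standard; the difficulty lies in organizing it to yield the stated sharp constants, and especially in the $10$--dimensional fixed component, where the residual symmetry has dropped to $T^2$ and Wilking's classification of $10$--manifolds with $T^3$--symmetry is no longer available.
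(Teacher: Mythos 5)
There is a genuine gap, and it sits exactly where you yourself flag ``the main obstacle'': the sharp Euler characteristic count. Your reduction runs through fixed-point sets of a subcircle $H\subseteq T^3$ and a dichotomy on the dimension of the largest component, but the hard case --- when no involution (equivalently, no circle) has a large fixed-point component --- is dismissed with ``feeding the connectedness lemma and Conner's identity back into the rational cohomology ring controls their number,'' which is not an argument. In the paper this case is handled by working with the subgroup $\Z_2^3\subset T^3$ rather than circles: when every involution has fixed-point set of codimension $\geq 6$, the isotropy representations $\Z_2^3\to\Z_2^6$ at torus fixed points are rigid, each fixed point determines a four-element ``club'' of involutions with codimension-$6$ components, and a combinatorial analysis of how clubs at different fixed points intersect (the triple product property, Type I versus Type II configurations), together with inclusion--exclusion over the six-dimensional components $N_0,\dots,N_6$ of the seven involutions, yields the identity $4\chi(M)=\sum_{j=0}^{6}\chi(N_j)$. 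The constant $\tfrac74$ in the bound $\chi(M)\leq\tfrac74 C(6)$ comes from this identity (seven summands, each at most $C(6)$, divided by $4$), not from the ratio $\chi(\C\pp^6)/\chi(\HH\pp^3)$ as you suggest; the lower bound $\chi(M)\geq 7$, the exclusion of $\chi(M)=10$ when $C(6)=6$, and the signature computation for $\chi(M)\leq 13$ all also rest on this identity plus an equivariant Grove--Searle argument about which clubs the $T$-fixed points of a four-dimensional $\s^4$ or $\C\pp^2$ component can carry. None of this machinery, or a substitute for it, appears in your proposal.

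Two of the mechanisms you do propose are incorrect. First, the non-spin conclusion in the Type I configuration does not come from integrality of $\widehat{A}$ or from the elliptic genus: it comes from the elementary fact that on a spin manifold all components of the fixed-point set of an involution have codimensions congruent mod $4$, whereas in that configuration some involution has components of codimension $6$ and $8$ simultaneously. (Lichnerowicz gives $\widehat{A}(M)=0$ as a \emph{consequence} of a spin structure with positive scalar curvature; it cannot obstruct one.) Second, your argument for constancy of the elliptic genus fails: $\widehat{A}(M)=0$ together with Bott--Taubes rigidity does not pin the genus inside a one-dimensional space of weight-$6$ forms for $\Gamma_0(2)$ (that space is two-dimensional, and vanishing of one coefficient does not force constancy). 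The paper instead invokes the Hirzebruch--Slodowy results --- vanishing when some involution acts with odd type, constancy when some involution has fixed-point set of codimension at least $\tfrac12\dim M$ --- and then uses the codimension-two/four lemmas and a codimension-counting contradiction to reduce every spin case to one of these two situations. Also, your preliminary assertion that $b_{\mathrm{odd}}(M)=0$ is not known under $T^3$ symmetry in dimension $12$ and is not used (or proved) in the paper; only $\chi(M)>0$ is available at that stage.
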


In light of the known examples in dimension $12$, this result is sharp in the spin case under the additional assumption that $C(6) = 6$, which coincides with the Euler characteristics of the Wallach manifold $\SU(3)/T^3$ and the Eschenburg biquotient $\SU(3)/\!/T^3$. No compact $6$--manifold with larger Euler characteristic is known to admit a positively curved metric with $T^2$ symmetry.

In each theorem above, if the torus $T^r$ is replaced by $T^{r+1}$, there are stronger classifications due to Fang--Rong \cite{FangRong05} and Wilking \cite{Wilking03}. In particular, the Cayley plane and the Wallach $12$--manifold are excluded. If instead the $T^r$ is replaced by $T^{r-1}$, then little is known beyond the positivity of the Euler characteristic and the vanishing of some higher $\hat A$ genera (see Rong--Su \cite{RongSu05} and Dessai \cite{Dessai05, Dessai07}).

For the proofs of these results, we build upon on a great deal of previous work. 
One major source of ideas is Wilking \cite{Wilking03}, where the following are developed:
	\begin{enumerate}
	\item the connectedness lemma and the resulting periodicity in cohomology,
	\item  techniques for studying the fixed-point sets of involutions,
	\item spherical recognition theorems, and d
	\item the classification of maximal rank, smooth torus actions on $\HH\pp^n$.
	\end{enumerate}
In the first three of these cases, refinements have been made or consequences have been deduced. In particular, we rely on the refinements of (1) and (2) in our previous work (see \cite{Kennard13,AmannKennard14,AK3}). In this article, we add to this a simple but useful result related to (2) which we call the \textit{containment lemma} (see Lemma \ref{lem:Containment}). Also useful for our purposes is Lemma \ref{lem:cod4dk1Qsphere}, which is proven here using (3).

A second important source of ideas and motivation is Grove--Searle \cite{GroveSearle94}. First, we apply in a crucial way the \textit{equivariant} diffeomorphism classification of positively curved manifolds with maximal symmetry rank. We apply this to prove Lemma \ref{lem:GroveSearleTrick}, which is crucial to the proof of our result in dimension $12$. In addition, we prove a partial generalization of their classification of fixed-point homogeneous circle actions, which we call the \textit{codimension two lemma} (see Lemma \ref{lem:cod2}).

A third important source of ideas, and the main motivation for this article, is Dessai \cite{Dessai11}. There is a fair amount of work required to understand the \textit{global} picture of the fixed-point set data. Our basic strategy is the same: When there are not enough fixed-point sets of small codimension to classify the homotopy type using the connectedness lemma, one obtains \textit{isotropy rigidity} at fixed-point sets of the torus action. For the reader hoping to get just a passing idea of how this strategy works, we recommend the proof of the dimension $14$ case, as it involves the smallest number of special cases and other hiccups. For the more interested reader, the proof of the dimension $12$ case is by far the most involved but also, we believe, the most interesting. In particular, there is a significant amount of combinatorial analysis required in this dimension that comes out of the isotropy rigidity.

Fourth, regarding the elliptic genus calculations, see Section \ref{sec:EllipticGenus}. These calculations are motivated by a question of Dessai \cite{Dessai05, Dessai07} and work of Weisskopf \cite{Weisskopf}. 

\smallskip

All manifolds studied in this article are shown to have positive Euler characteristic. In addition, they are positively curved, so a conjecture of Bott--Grove--Halperin suggests that they are rationally elliptic (see Grove \cite{Grove02}). Putting these properties together, it is suspected that the class of manifolds studied in this paper is closely related to the class of \textit{positively rationally elliptic}, or $F_0$, spaces. Motivated by this, we compute the homotopy groups of $F_0$ spaces of formal dimension at most $16$ (see Section \ref{sec:F0spaces}). The resulting tables are used to study the four questions we discuss next.

First, we specialize the results above to rationally elliptic spaces and derive rational homotopy classifications in dimensions $10$, $14$, and $16$. We also provide a partial classification of this kind in dimension $12$. For this, the Euler characteristic calculations are helpful but not sufficient. One must also apply the conclusions above regarding the product structure in cohomology. See Section \ref{sec:Elliptic}.

Second, we specialize further to biquotients, a large class of manifolds that contains all homogeneous spaces and that provides a source of numerous examples of manifolds admitting positive sectional curvature, as well as weaker notions such as positive curvature almost everywhere (see, for example, \cite{Ziller07}, DeVito \cite{DeVito14,DeVitoEtAl14,DeVito-pre}, Kerin \cite{Kerin11, Kerin12}, Kerr--Tapp \cite{KerrTapp14}, and Wilking \cite{Wilking02}). In our context, our results in dimensions $10$, $14$, and $16$ imply diffeomorphism classifications when restricted to the case of biquotients.

\begin{nonumbercorollary}[Theorem 13.1]
Let $M^n$ be a closed, simply connected biquotient that independently admits a positively curved Riemannian metric with $T^r$ symmetry.
\begin{itemize}
\item If $n= 10$ and $r \geq 3$, then $M$ is diffeomorphic to $\s^{10}$, $\C\pp^5$, $\s^2 \tilde{\times}\HH\pp^2$, $\SO(7)/(\SO(5)\times \SO(2))$, or $\Delta \SO(2)\backslash \SO(7)/\SO(5)$.
\item If $n= 14$ and $r \geq 4$, then $M$ is diffeomorphic to $\s^{14}$, $\C\pp^7$, $\s^2 \tilde{\times}\HH\pp^3$, $\SO(9)/(\SO(7)\times \SO(2))$, or $\Delta \SO(2)\backslash \SO(9)/\SO(7)$.
\item If $n = 16$ and $r \geq 4$, then $M$ is diffeomorphic to $\s^{16}$, $\C\pp^8$, $\HH\pp^4$, or $\Ca\pp^2$.
\end{itemize}
Here $\s^2 \tilde \times \HH\pp^m$ denotes one of the two diffeomorphism types of total spaces of $\HH\pp^m$--bundles over $\s^2$ whose structure group reduces to circle acting linearly.
\end{nonumbercorollary}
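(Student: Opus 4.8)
The plan is to combine the classification theorems in dimensions $10$, $14$, and $16$ with the classification of low-dimensional biquotients. The key enabling fact is that every closed, simply connected biquotient is rationally elliptic; this is classical and follows from the rational ellipticity of compact Lie groups together with the fibration structure of a biquotient presentation $G/\!/H$. Thus our $M$ --- which by hypothesis is a biquotient and \emph{independently} carries a positively curved metric with $T^r$ symmetry --- is simultaneously subject to the classification results in its dimension (Theorems~\ref{main:dim10}, \ref{main:dim14}, and~\ref{main:dim16}, in their detailed form) and to the rational homotopy classifications of Section~\ref{sec:Elliptic}, together with the list of $F_0$ spaces of formal dimension at most $16$ from Section~\ref{sec:F0spaces}. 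Putting these together, $M$ is pinned down, in each dimension, to one of a short list of rational homotopy types --- $\s^{n}$, $\C\pp^{n/2}$, $\HH\pp^{n/4}$, $\Ca\pp^2$, or (only when $n\in\{10,14\}$) $\s^2\times\HH\pp^{(n-2)/4}$ --- and the detailed theorems also record the accompanying integral data: the integral homology in a range of degrees, the spin type, and the Euler characteristic.

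In every case except $\s^2\times\HH\pp^{(n-2)/4}$ the rational cohomology of $M$ is singly generated, and here one invokes the classification of biquotients with singly generated rational cohomology (Kapovitch--Ziller; see also DeVito and Totaro). In dimension $16$ this closes the argument at once: Theorem~\ref{main:dim16} forces $\chi(M)\in\{2,3,5,9\}$, and with the connectivity and cohomological restrictions it imposes and the $F_0$-tables this forces $M$ to be rationally $\s^{16}$, $\Ca\pp^2$, $\HH\pp^4$, or $\C\pp^8$ respectively (there being no two-generator rational homotopy type in dimension $16$), whence $M$ is diffeomorphic to one of these four spaces. In dimensions $10$ and $14$ the same classification shows that if $M$ is rationally a sphere or $\C\pp^{n/2}$ then $M$ is diffeomorphic to one of $\s^{n}$, $\C\pp^{n/2}$, the oriented real Grassmannian $\SO(n/2+2)/(\SO(n/2)\times\SO(2))$ --- the odd-dimensional complex quadric, which is rationally $\C\pp^{n/2}$ but has two-generator integral cohomology --- or the biquotient $\Delta\SO(2)\backslash\SO(n/2+2)/\SO(n/2)$, a free circle quotient of a rational sphere. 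Throughout, no exotic smooth structure occurs, the Gromoll--Meyer $7$--sphere being the only exotic sphere known to admit a biquotient structure, and the recognition is up to diffeomorphism.

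The one case outside the scope of that classification, occurring only in dimensions $10$ and $14$, is that $M$ is rationally $\s^2\times\HH\pp^{(n-2)/4}$, i.e.\ $m=0$ in the cohomology relation $z^2=mx$ of the main theorems. The relevant fact is that the biquotients of this rational homotopy type which also satisfy the integral constraints of the main theorems are exactly the two $\HH\pp^{(n-2)/4}$-bundles $\s^2\tilde{\times}\HH\pp^{(n-2)/4}$ over $\s^2$ with structure group a linearly acting circle, distinguished by their first Pontryagin class; a competitor such as $\s^2\times(\Gtwo/\SO(4))$ in dimension $10$ has the same rational homotopy type but $2$--torsion in $H_2$, and so is already excluded by the requirement $H_2(M;\Z)\cong\Z$. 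Combining this with the singly-generated cases yields the asserted lists in dimensions $10$ and $14$; and when $r$ strictly exceeds the stated bound, the stronger classifications of Fang--Rong and Wilking remove the last three candidates in each of these dimensions, consistently with the uniform statement.

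The step I expect to be the main obstacle is the biquotient input --- both the completeness of the Kapovitch--Ziller-type lists and, more pointedly, the classification of the biquotients rationally equivalent to $\s^2\times\HH\pp^{(n-2)/4}$ that meet the integral restrictions, for which no off-the-shelf recognition theorem is available. There one must examine the admissible presentations $H\backslash G/H'$ directly, using rational ellipticity, the exact value of $\chi(M)$, and the existence of a degree-two generator, and then use the integral data from the main theorems to discard spurious $F_0$-space candidates and to pin down the smooth structure from the fact that the surviving spaces are homogeneous spaces or total spaces of linear bundle constructions. By contrast, once the rationally elliptic forms of the main theorems, the $F_0$-tables, and the singly-generated biquotient classification are available, the reductions in dimension $16$ and in the sphere cases are essentially formal.
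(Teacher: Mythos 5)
Your proposal is correct and follows essentially the same route as the paper: rational ellipticity of biquotients combined with the rational homotopy classifications of Section \ref{sec:Elliptic}, then the Kapovitch--Ziller classification in the singly generated cases (which already contains the two rational $\C\pp^{n/2}$'s $\SO(n/2+2)/(\SO(n/2)\times\SO(2))$ and $\Delta\SO(2)\backslash\SO(n/2+2)/\SO(n/2)$), and in the $\s^2\tilde{\times}\HH\pp^{(n-2)/4}$ case the exclusion of $\s^2\times(\Gtwo/\SO(4))$ via $H_2(M;\Z)\cong\Z$. The only step you flag as lacking an off-the-shelf tool---identifying the biquotients rationally equivalent to $\s^2\times\HH\pp^{(n-2)/4}$---is precisely what the paper imports from DeVito's classification of rationally $4$--periodic biquotients, so no direct analysis of presentations is required.
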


This corollary is proved in two steps. First, we apply the rational ellipticity of biquotients  to derive the rational homotopy type. We then apply a diffeomorphism classification of Kapovitch and Ziller \cite{KapovitchZiller04}, together with  a recent generalization due to DeVito \cite{DeVito-pre2}, to conclude the result, up to a small number of other possibilities. To exclude these, we return to what our calculations imply about the integral cohomology, and this is sufficient to complete the classification.

We remark that the $16$-dimensional case of this corollary provides a diffeomorphism characterization of the Cayley plane among biquotients admitting positively  curved metrics with $T^4$ symmetry. To our knowledge, previous results along these lines have either had too strong a symmetry assumption to allow the Cayley plane or too weak an assumption to detect it.

Third, we specialize even further in dimension $12$ to the class of symmetric spaces, and here we obtain a diffeomorphism classification (see Theorem \ref{thm:dim12symmetricspaces}).

Finally, we study the conjecture of Halperin that any fibration $E \to B$ of simply connected spaces where the fiber $F$ is an $F_0$ space has the property that $H^*(E;\Q) \cong H^*(B;\Q) \tensor H^*(F;\Q)$ as $H^*(B;\Q)$--modules.

\begin{nonumbertheorem}[Theorem 11.6]
The Halperin conjecture holds for $F_0$ spaces of formal dimension at most $16$ or Euler characteristic at most $16$.
\end{nonumbertheorem}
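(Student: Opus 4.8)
The plan is to reduce the conjecture to a purely algebraic statement about graded derivations, and then to dispatch the resulting cases using the classification of $F_0$ rational homotopy types carried out in Section~\ref{sec:F0spaces}. I would start from the classical reformulation of the Halperin conjecture (Halperin, Thomas, Meier): for an $F_0$ space $F$, every fibration $F\to E\to B$ over a simply connected base satisfies $H^*(E;\Q)\cong H^*(B;\Q)\otimes H^*(F;\Q)$ as $H^*(B;\Q)$--modules if and only if the graded algebra $A=H^*(F;\Q)$ carries no non-zero derivation of negative degree. Since $A$ is concentrated in even degrees, every derivation of $A$ is automatically of even degree and obeys the ordinary Leibniz rule, so the condition to verify is just $\operatorname{Der}_{<0}(A)=0$. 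Hence it suffices to show $\operatorname{Der}_{<0}(H^*(F;\Q))=0$ for every $F_0$ space $F$ of formal dimension at most $16$, and for every $F_0$ space with $\chi(F)\le 16$. Along the way I would use two general facts: (a) $\operatorname{Der}_{<0}$ of a tensor product of $F_0$ algebras vanishes as soon as it vanishes for each factor, so the property passes to finite products; and (b) $\operatorname{Der}_{<0}(A)=0$ whenever $A$ is generated by at most two elements, or whenever $F$ is rationally an equal-rank homogeneous space --- in particular for spheres, $\C\pp^n$, $\HH\pp^n$, $\Ca\pp^2$, complex quadrics, $\Gtwo/\SO(4)$, and flag manifolds.

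For the formal-dimension case I would appeal directly to Section~\ref{sec:F0spaces}: there are only finitely many $F_0$ rational homotopy types of formal dimension at most $16$, and inspecting the resulting tables shows that each is rationally a finite product of the building blocks in (b). For the few entries whose cohomology presentation $\Q[x_1,\dots,x_n]/(\rho_1,\dots,\rho_n)$ is not visibly of that form, I would compute $\operatorname{Der}_{<0}$ by hand: a negative derivation is determined by the images $\theta(x_i)\in H^*(F;\Q)$ of the algebra generators, subject to the finitely many linear constraints $\theta(\rho_j)=0$, so this is a bounded linear-algebra check.

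For the Euler-characteristic case the key point is that $\chi$ bounds the number $n$ of cohomology generators. Writing $H^*(F;\Q)=\Q[x_1,\dots,x_n]/(\rho_1,\dots,\rho_n)$ one has $\chi(F)=\frac{\prod_j\deg\rho_j}{\prod_i\deg x_i}$, and by the structure theory of pure elliptic algebras each $\deg\rho_j$ is at least twice the degree of the correspondingly--sorted generator; hence this product is at least $2^n$, and $\chi(F)\le 16$ forces $n\le 4$. For $n\le 3$ the decomposable cases are products of building blocks and the (finitely many) indecomposable ones are covered by (b) or by a direct computation as above. For $n=4$ one must have $\chi(F)=16$ and $\deg\rho_j=2\deg x_j$ for every $j$ after sorting; the candidate negative derivations are then controlled by the Jacobian of the regular sequence $(\rho_1,\dots,\rho_4)$, which has trivial kernel because a length-$4$ regular sequence in a four--variable polynomial ring is a system of parameters, so $\operatorname{Der}_{<0}(H^*(F;\Q))=0$ (indeed, after extending scalars to $\C$, such an $F$ is a product of even-dimensional spheres).

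The substantive input --- the explicit classification of $F_0$ rational homotopy types through formal dimension $16$ --- is established in Section~\ref{sec:F0spaces}, and I would take it as given here. Granting it, the two steps that genuinely need care are the stability of the vanishing of $\operatorname{Der}_{<0}$ under tensor products (a negative derivation of $A\otimes B$ need not restrict to derivations of the factors, so the ``mixed'' derivations must be excluded using Poincar\'e duality on $A\otimes B$) and the $n=4$ case above (one must verify that every minimal-degree complete intersection in four even variables that can actually be realized has no negative derivations). I expect the product-stability step to be the main obstacle, although in both cases the argument is elementary and of bounded length.
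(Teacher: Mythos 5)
Your overall skeleton matches the paper's: reduce via Meier's criterion to showing $H^*(F;\Q)$ has no negative-degree derivations, use the degree tables of Section \ref{sec:F0spaces}, and for the Euler characteristic bound use $\chi(F)=\prod\deg(\dif y_i)/\deg(x_i)\geq 2^k$ to force $k\leq 4$. But two of your load-bearing claims are false, and they sit exactly where the real work of the paper's proof lies. First, it is not true that there are finitely many $F_0$ rational homotopy types of formal dimension at most $16$, nor that each is rationally a product of spheres, projective spaces, quadrics, and flag manifolds: the tables classify only the tuples of homotopy generator degrees, and a fixed tuple is realized by positive-dimensional families of rational types (the paper itself exhibits a one-parameter family for $(6,6,11,11)$). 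Moreover there are pure complete intersections that are not products even over $\C$; for instance $\Q[x,y,z]/(x^2-yz,\,y^2-xz,\,z^2)$, with $x,y,z$ of degree $2$, is a regular sequence quotient realizing the tuple $(2,2,2,3,3,3)$ in formal dimension $6$, and the only square of a linear form in the span of the three relations is $z^2$, so it is not isomorphic to $H^*(\s^2\times\s^2\times\s^2;\C)$. Consequently your ``bounded linear-algebra check'' on a finite list of presentations is not available: one must rule out negative derivations for \emph{all} admissible regular sequences with the given degrees, arguing parametrically. That is precisely what the paper's Section on Halperin does, via the reductions (a derivation killing $k-1$ of the $k$ generators is zero by Poincar\'e duality; a splitting criterion $\deg(\dif y_l)<\deg x_1+\deg x_{l+1}$ feeding into Markl's theorem and induction) followed by bespoke arguments for five residual degree tuples in dimensions $14$ and $16$, e.g.\ $(2,2,4,6,3,5,7,11)$, where the relations are unknown polynomials and the argument proceeds by change of basis, finite-dimensionality of cohomology, and iterated application of $\delta$ to the relations.

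Second, your $n=4$, $\chi=16$ step does not work as stated. The assertion that the Jacobian of the regular sequence has trivial kernel over the quotient is unjustified and literally false: the (weighted) Euler derivation always lies in that kernel, so the issue is whether there are kernel elements of the relevant negative degrees, which needs an argument. The parenthetical claim that $\chi(F)=2^n$ with $\deg\rho_j=2\deg x_j$ forces $F$ to be, over $\C$, a product of even spheres is also false — append $w$ with relation $w^2$ to the example above to get a four-generator complete intersection with $\chi=16$ whose ideal contains only two squares of independent linear forms. The correct (and much simpler) route here is the paper's: if all $x_i$ have equal degree a negative derivation dies for degree reasons; otherwise $\deg(\dif y_i)=2\deg(x_i)$ forces the relations attached to the minimal-degree generators to lie in the subalgebra they generate, so the model splits as a rational fibration and Markl's theorem applies, with the base and fiber having at most three generators — which, note, requires Lupton's theorem for $k\leq 3$, an input you never invoke; your claim that the three-generator cases with $\chi\leq 16$ consist of products plus ``finitely many'' indecomposables is again false, since there is no dimension bound there. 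So the theorem is true and your first reduction is sound, but the finiteness/product shortcut and the $n=4$ Jacobian argument are genuine gaps that the paper's case-by-case derivation analysis is designed to fill.
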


For further discussion of $F_0$ spaces and Halperin's conjecture, cases in which this conjecture is known to hold, and the proof of this theorem, see Sections \ref{sec:F0spaces} and \ref{sec:Halperin}.

\subsection*{Acknowledgements}
The second author was supported by National Science Foundation Grant DMS-1404670/DMS-1622541. The authors would also like to acknowledge the support of NSF Grant DMS-1440140 while they were in residence at the MSRI in Spring 2016.

\tableofcontents

\section{Preliminaries}\label{sec:Preliminaries}\smallskip

There are a large number of relatively old results from the theory of transformation groups and newer tools developed over the past two decades that have grown out of work on the Grove program. We attempt to efficiently summarize those which we use.

Let $M$ be an even-dimensional, closed Riemannian manifold with positive sectional curvature, and let $T$ denote a torus that acts isometrically on $M$. A theorem of Berger states that the fixed-point set $M^T = \{x \in M \st g(x) = x ~\mathrm{for~all}~g\in T\}$ of the torus is non-empty. In general, for $g \in T$, each component $N$ of the fixed-point set $M^g = \{x \in M \st g(x) = x\}$ is a closed, even-dimensional, totally geodesic (hence positively curved) embedded submanifold on which $T$ acts, possibly ineffectively. Applying Berger's theorem to the induced $T$--action on $N$, we see that every fixed-point component of every isometry in $T$ contains a fixed point of $T$. We use this fact frequently.

Another issue is proving that components $N \subseteq M^g$ are orientable. If $N$ is a fixed-point component of some subgroup $H\subseteq T$ not equal to $\Z_2$, then $N$ inherits orientability from $M$. Also, if $N$ has dimension greater than $\frac{1}{2} \dim M$, then Wilking's connectedness lemma (see below) implies that $N$ is simply connected. In some cases, other arguments are used. For example, if $N \subseteq M^{\Z_2}$ is a component and $\Z_2 \subseteq \sone \subseteq T$, and if $Q \subseteq N$ is a fixed-point component of $\sone$ inside $N$ such that $\dim(Q) \geq \frac 1 2 \dim(N)$, then $Q$ is orientable, and hence simply connected, and so $N$ is simply connected, and hence orientable, by the connectedness lemma.

Next, we recall some results from Smith theory. The first is due to Conner and Kobayashi (see \cite{Conner57,Kobayashi58}): The Euler characteristic of $M$ and its fixed-point set $M^T$ coincide. The same holds for the signature, i.e., $\sigma(M) = \sigma(M^T)$ (see \cite[Theorem 2.4]{Dessai11}). For this latter result, one has to take care how one assigns orientations to the components of $M^T$, however for our purposes this will not matter since we will always show $|\sigma(M)| \leq 1$ by showing that $|\sigma(M^T)| \leq \sum |\sigma(F)| \leq 1$, where the sum runs over components $F\subseteq M^T$. Another result due to Conner is that sum of the odd Betti numbers of $M^T$ is at most that of $M$, and likewise for the even Betti numbers.

Regarding the Euler characteristic, we use frequently the following inclusion-exclusion property. If $M^T$ is contained in the union $N_1 \cup N_2$ where $N_1$ and $N_2$ admit induced $T$--actions, then $M^T = (N_1 \cup N_2)^T$. Applying the property above together with the Mayer--Vietoris sequence, we conclude
	\[\chi(M) = \chi(N_1) + \chi(N_2) - \chi(N_1 \cap N_2).\]
There are obvious extensions of this formula in the case of three or more submanifolds.

\smallskip

To close this section on preliminaries, we discuss a collection of results related to Wilking's connectedness lemma. We recall these at many points in the paper, so for ease of reference within this paper, we give them names.

\begin{theorem}[Frankel, \cite{Frankel61}]
Let $M^n$ be a closed Riemannian manifold with positive sectional curvature. If $N_1, N_2 \subseteq M$ are closed, totally geodesic, embedded submanifolds such that $\cod(N_1) + \cod(N_2) \leq n$, then $N_1$ and $N_2$ non-trivially intersect.
\end{theorem}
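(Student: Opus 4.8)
The plan is to argue by contradiction via the second variation of arc length, in the classical style. Suppose $N_1 \cap N_2 = \emptyset$. Since $N_1$ and $N_2$ are closed embedded submanifolds of the compact manifold $M$, they are compact, so the distance $d(N_1,N_2)$ is positive and is realized by a unit-speed minimizing geodesic $\gamma\colon [0,L] \to M$ with $\gamma(0) = p \in N_1$, $\gamma(L) = q \in N_2$, and $L = d(N_1,N_2) > 0$. The first variation formula, applied to variations that move the endpoints within $N_1$ and within $N_2$, forces $\gamma'(0) \perp T_pN_1$ and $\gamma'(L) \perp T_qN_2$.

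The next step is to produce a good variation field. Let $P\colon T_pM \to T_qM$ be parallel transport along $\gamma$; it is a linear isometry with $P(\gamma'(0)) = \gamma'(L)$. Since parallel transport preserves inner products, $P(T_pN_1)$ is a subspace of dimension $n - \cod(N_1)$ contained in $\gamma'(L)^\perp$, and $T_qN_2$ is a subspace of dimension $n - \cod(N_2)$ also contained in $\gamma'(L)^\perp$. As $\gamma'(L)^\perp$ has dimension $n-1$ and
\[
\dim P(T_pN_1) + \dim T_qN_2 = 2n - \cod(N_1) - \cod(N_2) \geq n > n-1,
\]
these two subspaces meet in a subspace of positive dimension. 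Choose $0 \neq w \in P(T_pN_1) \cap T_qN_2$ and let $W$ be the parallel field along $\gamma$ with $W(L) = w$; then $W(0) \in T_pN_1$, $W(L) \in T_qN_2$, $W$ is nowhere zero, and $W \perp \gamma'$ at every point.

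Finally I would feed $W$ into the index form of the length functional for curves from $N_1$ to $N_2$. Because $\gamma$ is minimizing, this form is nonnegative on fields $V$ with $V(0) \in T_pN_1$ and $V(L) \in T_qN_2$, and $W$ is such a field. On the other hand, $\nabla_{\gamma'}W = 0$ kills the $|\nabla_{\gamma'}W|^2$ term; the boundary contributions are governed by the second fundamental forms of $N_1$ at $p$ and of $N_2$ at $q$, both of which vanish since $N_1$ and $N_2$ are totally geodesic; and the remaining term is $-\int_0^L \langle R(W,\gamma')\gamma', W\rangle\, dt = -\int_0^L K(W,\gamma')\,|W|^2\, dt < 0$, using that the sectional curvature is strictly positive and $W, \gamma'$ are everywhere linearly independent. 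Hence the index form is strictly negative on $W$, contradicting minimality of $\gamma$. Therefore $N_1 \cap N_2 \neq \emptyset$.

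The substantive points here, more than genuine obstacles, are two. First, one must ensure $W$ can be taken inside $\gamma'(L)^\perp$: this is what upgrades the dimension count from $\geq 0$ to $\geq 1$, and it is exactly what makes $W$ an admissible competitor that is linearly independent from $\gamma'$. Second, one should record that the totally geodesic hypothesis is precisely what annihilates the boundary terms in the second variation, and that \emph{strict} positivity of the curvature (not mere nonnegativity) is essential — the statement fails for nonnegatively curved $M$, e.g. for two disjoint fibers $\{\ast\}\times\s^{n-1}$ in $\s^1 \times \s^{n-1}$, which are totally geodesic of codimension $1$ each and do not intersect.
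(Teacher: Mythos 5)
Your argument is correct and is exactly the classical second-variation proof of Frankel (synge-type argument: orthogonal minimizing geodesic, parallel field obtained from the dimension count inside $\gamma'(L)^\perp$, vanishing boundary terms from the totally geodesic hypothesis, strict negativity of the index form from positive curvature), which is precisely the proof in the reference \cite{Frankel61} that the paper cites without reproving. Nothing further is needed.
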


Throughout this paper, $\cod(N)$ denotes the codimension of a submanifold $N\subseteq M$.

\begin{theorem}[Connectedness lemma, \cite{Wilking03}]
Let $M^n$ be a closed Riemannian manifold with positive sectional curvature.
	\begin{enumerate}
	\item If $N^{n-k} \to M$ is a closed, totally geodesic, embedded submanifold, then the inclusion is $(n-2k+1)$--connected.
	\item If $N^{n-k} \to M$ is as above, and if $N$ is a fixed point component of an isometric action by a Lie group $G$, then the inclusion is $(n-2k+1+\delta)$--connected, where $\delta$ is the dimension of the principal orbits of $G$.
	\item If $N_i^{n-k_i} \to M$ are closed, totally geodesic, embedded submanifolds with $k_1 \leq k_2$, then the inclusion $N_1\cap N_2 \to N_2$ is $(n-k_1-k_2)$--connected.
	\end{enumerate}
\end{theorem}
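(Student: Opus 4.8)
The plan is to run Morse theory on path spaces, letting positive curvature and total geodesy enter only through a soft lower bound on the Morse indices of geodesics meeting the submanifolds orthogonally.

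For part~(1) I would work with the path space $P=P(M;N,N)$ of paths $[0,1]\to M$ with both endpoints on $N$, together with the energy $E$. Its critical set is $N$ (the constant paths, at $E=0$) together with the geodesics meeting $N$ orthogonally at both ends. For a nonconstant such geodesic $\gamma$ I claim $\operatorname{ind}(\gamma)\ge n-2k+1$: the $(n-1)$-dimensional space of parallel fields along $\gamma$ orthogonal to $\dot\gamma$ contains a subspace of dimension at least $(n-1)-2(k-1)=n-2k+1$ whose members are tangent to $N$ at both endpoints (two codimension-$(k-1)$ conditions, since $\dot\gamma\perp N$ there), and on a parallel field $E$ the index form is $I(E,E)=-\int_0^1\langle R(E,\dot\gamma)\dot\gamma,E\rangle\,dt<0$ by positivity of the curvature. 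Here it is exactly total geodesy of $N$ that makes the boundary terms in the second-variation formula vanish (take the endpoint variation curves to be geodesics of $N$), so the computation is legitimate. Standard Morse--Bott theory of path spaces then exhibits $P$ as $N$ with cells of dimension $\ge n-2k+1$ attached, so the inclusion $N\hookrightarrow P$ is $(n-2k)$-connected. Finally $\operatorname{ev}_1\colon P\to N$, $\gamma\mapsto\gamma(1)$, is a fibration whose fiber over $q$ is the homotopy fiber of $N\hookrightarrow M$ and which has the section sending $q$ to the constant path at $q$; the splitting of the long exact sequence this produces forces that homotopy fiber to be $(n-2k)$-connected, which is assertion~(1).

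For part~(3) I would first observe that the claim is vacuous unless $k_1+k_2\le n$, in which case Frankel's theorem gives $N_1\cap N_2\ne\varnothing$. Then I would rerun the step above for $Q=P(M;N_1,N_2)$: its minimum locus is $N_1\cap N_2$ (again the constant paths), the nonconstant critical geodesics meet $N_1$ and $N_2$ orthogonally at the two ends, and the same parallel-field estimate bounds their index below by $(n-1)-(k_1-1)-(k_2-1)=n-k_1-k_2+1$, so $N_1\cap N_2\hookrightarrow Q$ is $(n-k_1-k_2)$-connected. On the other hand $\operatorname{ev}_1\colon Q\to N_2$ is a fibration whose fiber is the homotopy fiber of $N_1\hookrightarrow M$, which is $(n-2k_1)$-connected by part~(1); since $k_1\le k_2$, this map is at least $(n-k_1-k_2)$-connected. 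The inclusion $N_1\cap N_2\hookrightarrow N_2$ factors as $N_1\cap N_2\hookrightarrow Q\xrightarrow{\operatorname{ev}_1}N_2$, a composite of two maps each at least $(n-k_1-k_2)$-connected, hence is itself $(n-k_1-k_2)$-connected.

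Part~(2) is the delicate point, and where I expect the real work to be. It is the same scheme on $P(M;N,N)$, but now the index of each nonconstant critical geodesic $\gamma$ must be sharpened from $n-2k+1$ to $n-2k+1+\delta$. The mechanism should exploit that $G$ fixes both endpoints of $\gamma$, so the Killing fields of $G$ restrict along $\gamma$ to Jacobi fields vanishing at both ends; for a principal initial velocity $\dot\gamma(0)\in\nu N$ these span a $\delta$-dimensional space, witnessing that the endpoints are conjugate along $\gamma$ with multiplicity $\ge\delta$. Morally this says that, in the orbit space, $N$ has effective codimension $k-\delta$ inside effective dimension $n-\delta$, and $(n-\delta)-2(k-\delta)+1=n-2k+1+\delta$. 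The difficulty is that these Jacobi fields lie in the \emph{kernel} of the index form, not its negative part, so turning the $\delta$-fold conjugacy into $\delta$ genuine negative directions --- and separately dealing with non-principal velocities, whose $G$-orbits sit inside strictly larger fixed-point sets and so suggest an induction on $k$ --- is the real obstacle, and precisely the point where Wilking's refinement goes beyond the bare connectedness argument.
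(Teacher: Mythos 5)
Since the paper merely quotes this result from \cite{Wilking03} and gives no proof, the comparison is with Wilking's original argument. Your treatment of parts (1) and (3) is correct and is essentially that argument: the index bound $(n-1)-2(k-1)$, respectively $(n-1)-(k_1-1)-(k_2-1)$, on parallel fields tangent to the submanifold(s) at the endpoints, with the boundary terms killed by total geodesy and negativity supplied by positive curvature; then the evaluation fibration with its constant-path section to convert $\pi_i(P,N)\cong\pi_i(\mathrm{hofib})$ into the stated connectivity, and for (3) the factorization $N_1\cap N_2\to P(M;N_1,N_2)\to N_2$ using part (1) and $k_1\le k_2$. Modulo the standard Palais--Smale/Morse--Bott technicalities for the energy functional (including the clean intersection of totally geodesic submanifolds at energy zero), these two parts are fine.

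Part (2), however, is a genuine gap, and not only because you stop short: the mechanism you propose cannot be upgraded to give the improvement. The Killing fields of $G$ restricted to $\gamma$ are precisely the tangent directions to the critical orbit $G\cdot\gamma$ in the path space; they vanish at the endpoints, and their covariant derivatives there lie in $\nu N$ (since the Killing fields vanish along all of $N$), while your parallel fields are tangent to $N$ at the endpoints. Hence the index form pairs these Jacobi fields trivially both with each other and with your negative subspace $\Lambda$, so on $\Lambda\oplus\mathrm{span}\{J_i\}$ the form is (negative definite) $\oplus\,0$: you gain nullity, not index. And Morse--Bott theory is of no help here, since crossing a critical orbit attaches cells of dimension equal only to the rank of the negative bundle, i.e.\ to the pointwise index; so without a genuinely sharper index estimate one recovers only part (1). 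The model case $N=S^{n-2}\subset S^{n}$ fixed by a circle is instructive: for the half great circles from $p$ to $-p$ the extra negative direction comes from an \emph{additional parallel field tangent to $N$ at both ends} that the symmetry forces (the two endpoint conditions coincide), not from the Killing--Jacobi field, which remains null. Supplying this sharpened index estimate for all critical geodesics --- together with the separate treatment of non-principal initial vectors, whose orbits are smaller --- is exactly the content of Wilking's proof of (2), and it is missing from your proposal.
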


Note that, for our purposes, if $M$ is a closed, positively curved manifold and if $H$ acts isometrically on $M$, then every component of $M^H$ is a closed, embedded, totally geodesic submanifold.

When the codimensions in the connectedness lemma are sufficiently small, there are strong cohomological consequences. The following is a corollary of the connectedness lemma together with \cite[Lemma 2.2]{Wilking03}.

\begin{corollary}[Wilking's periodicity corollary]\label{cor:PeriodicityCorollary}
Let $M^n$ be a closed, simply connected, positively curved Riemannian manifold.
	\begin{enumerate}
	\item If $N^{n-k} \to M$ is a closed, totally geodesic, embedded submanifold, it follows that  $H^{k-1 \leq * \leq n - k + 1}(M;\Z)$ is $k$--periodic.
	\item If $N^{n-k} \to M$ is as above, and if $N$ is a fixed point component of an isometric action by a Lie group $G$, then $H^{k-1-\delta \leq * \leq n - k + 1 + \delta}(M;\Z)$ is $k$--periodic, where $\delta$ is the dimension of the principal orbits of $G$.
	\item If $N_i^{n-k_i} \to M$ are closed, totally geodesic, embedded submanifolds with $k_1 \leq k_2$, and if the intersection $N_1 \cap N_2$ is transverse, then $H^*(N_2;\Z)$ is $k$--periodic.
	\end{enumerate}
\end{corollary}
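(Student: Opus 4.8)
The plan is to treat all three statements at once by extracting their common mechanism, which is Lemma~2.2 of \cite{Wilking03}: if $X^{m}$ is a closed oriented manifold and $P^{m-d}\hookrightarrow X$ is a closed, connected, embedded submanifold with oriented normal bundle whose inclusion is $c$--connected, then $H^{*}(X;\Z)$ is $d$--periodic throughout a band of degrees that widens as $c$ grows. Granting this, the proof of each part reduces to quoting the appropriate part of the connectedness lemma to supply the connectivity $c$, plugging it in, and reading off the resulting range of degrees; since the statement is a corollary, the only real point of care will be orientability.

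For orientation I would first recall why the mechanism holds, as this is what makes the ranges explicit. Using the Thom isomorphism $H^{*}(X,X\setminus P;\Z)\cong H^{*-d}(P;\Z)$ (where the orientation of the normal bundle enters), let $e\in H^{d}(X;\Z)$ be the image of $1\in H^{0}(P)$; it is the Poincar\'e dual of $[P]$, and restricts on $P$ to the Euler class of the normal bundle. Two facts then combine. First, Lefschetz duality gives $H^{j}(X\setminus P;\Z)\cong H_{m-j}(X,P;\Z)$, which vanishes once $m-j\le c$; feeding this into the cohomology long exact sequence of $(X,X\setminus P)$ and using the projection formula $\pi_{!}(\alpha|_{P})=\alpha\cup e$ for the Gysin map $\pi_{!}\colon H^{*-d}(P)\to H^{*}(X)$ shows that $\cup e\colon H^{i}(X;\Z)\to H^{i+d}(X;\Z)$ is onto whenever the restriction $H^{i}(X)\to H^{i}(P)$ is onto, i.e.\ in a range governed by $c$. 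Second, Poincar\'e duality on $X$ converts surjectivity of $\cup e$ in one band of degrees into injectivity in the complementary band; intersecting the two bands yields an honest isomorphism $H^{i}(X)\xrightarrow{\ \cup e\ }H^{i+d}(X)$ for every $i$ in the periodicity window.

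Now the three cases. In~(1), apply this with $X=M$, $P=N$, $d=k$, and $c=n-2k+1$ from part~(1) of the connectedness lemma; the two bands meet in precisely $k-1\le i$, $i+k\le n-k+1$. In~(2), nothing changes except that part~(2) of the connectedness lemma delivers the larger $c=n-2k+1+\delta$, which widens the window symmetrically to $k-1-\delta\le *\le n-k+1+\delta$. In~(3), take $X=N_{2}$, which is a closed, totally geodesic, hence positively curved submanifold, oriented (simply connected by the connectedness lemma when $\dim N_{2}>\tfrac12 n$, and otherwise orientable by one of the ad hoc arguments recalled in the Preliminaries), and take $P=N_{1}\cap N_{2}$; transversality makes $P$ a codimension-$k_{1}$ submanifold of $N_{2}$ of dimension $n-k_{1}-k_{2}$, and part~(3) of the connectedness lemma says $P\hookrightarrow N_{2}$ is $(n-k_{1}-k_{2})$--connected, i.e.\ as connected as $\dim P$ permits. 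For such a maximally connected codimension-$k_{1}$ inclusion the periodicity window above exhausts $H^{*}(N_{2};\Z)$, which gives genuine $k_{1}$--periodicity.

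The step requiring the most care is orientability: the class $e$ and the Thom isomorphism exist over $\Z$ only after $X$ and the relevant normal bundle have been given $\Z$--orientations. Simple-connectivity handles $M$; the connectedness lemma makes $N$ (resp.\ $N_{1}\cap N_{2}$ inside $N_{2}$) simply connected, hence orientable, whenever its codimension is small compared with the ambient dimension, and in the handful of remaining cases one falls back on the orientability arguments sketched in the Preliminaries, for instance transporting orientability across a half-dimensional fixed-point component of a circle subgroup. Once orientability is secured, the rest is the diagram chase outlined above together with elementary index arithmetic.
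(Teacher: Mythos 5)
Your proposal is correct and follows exactly the route the paper intends: the paper offers no argument beyond citing the connectedness lemma together with Wilking's Lemma 2.2, and your proof consists of supplying the relevant connectivity from each part of the connectedness lemma and feeding it into (a sketch of the proof of) that same lemma, with the expected care about orientations. The degree windows you extract in parts (1)--(3) match the statement, so nothing further is needed.
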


Throughout this paper, we say that $H^{m \leq * \leq n-m}(M;\Z)$ is $k$--periodic if there exists $x \in H^k(M;\Z)$ such that the multiplication map $H^i(M;\Z) \to H^{i+k}(M;\Z)$ induced by $x$ is a surjection from $H^m(M;\Z)$, an injection into $H^{n-m}(M;\Z)$, and an isomorphism everywhere in between. When $m = 0$, we say simply that $H^*(M;\Z)$ is $k$--periodic.

In \cite{Kennard13}, refinements to this corollary are proved by applying Steenrod powers to $k$--periodic cohomology rings. For this paper, we only require the following result:

\begin{lemma}[Classification of $4$--periodic cohomology]\label{lem:4periodicZ2cohomology}
Let $M^n$ be a simply connected closed manifold of dimension $n \geq 8$ such that $H^*(M;\Z)$ is $4$--periodic.
	\begin{enumerate}
	\item If $n \equiv 0\bmod{4}$, then $M$ is a cohomology $\s^n$, $\C\pp^{\frac n 2}$, or $\HH\pp^{\frac n 4}$.
	\item If $n \equiv 2\bmod{4}$, then either $M$ is a mod $2$ cohomology sphere or $M$ is a homology $\C\pp^{\frac{n}{2}}$ with cohomology generated by some $z \in H^2(M;\Z)$ and $x\in H^4(M;\Z)$ such that $z^2 = m x$ for some $m\in \Z$.
	\end{enumerate}
\end{lemma}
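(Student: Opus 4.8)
The plan is to reduce the statement to a case analysis on the dimension $n$ modulo $4$ and on the degree of the periodicity generator, using the structure of the cohomology ring imposed by $4$-periodicity together with the action of Steenrod operations. First I would fix a class $x \in H^4(M;\Z)$ witnessing the $4$-periodicity, so that multiplication by $x$ is an isomorphism $H^i(M;\Z) \to H^{i+4}(M;\Z)$ for $0 \le i \le n-4$ (and appropriately surjective/injective at the ends). Since $M$ is simply connected, $H^1 = 0$ and hence $H^5 = 0$, $H^9 = 0$, and so on; similarly $H^3 \cong H^7 \cong \cdots$ and $H^2 \cong H^6 \cong \cdots$. Combined with Poincar\'e duality, this forces the Betti numbers into a very rigid pattern: in each residue class mod $4$ the groups are all isomorphic (via $x$), and duality pairs residue $i$ with residue $n-i$. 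The upshot is that $M$ is rationally either a sphere, a $\C\pp^{n/2}$-like space (when $n \equiv 2 \bmod 4$ and $H^2 \ne 0$), or a $\HH\pp^{n/4}$-like space, and the only subtlety is the precise integral structure and the ruling out of mixed cases.

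Next I would split into the two congruence cases. If $n \equiv 0 \bmod 4$: by duality $H^0 \cong H^n \cong \Z$, and the periodicity generator $x$ lives in degree $4$. If $H^2(M;\Z) = 0$, then all odd groups and all degree-$\equiv 2$ groups vanish, $H^{4j}(M;\Z) \cong \Z$ for $0 \le j \le n/4$, and $x^j$ generates $H^{4j}$; a standard argument (the generator $x$ must satisfy $x^{n/4}$ generating the top class by duality, forcing the ring to be truncated polynomial) shows $M$ is a cohomology $\HH\pp^{n/4}$, and the case $H^4 = 0$ degenerates to a cohomology sphere $\s^n$. If instead $H^2(M;\Z) \ne 0$, pick $z \in H^2(M;\Z)$; then $z^2 \in H^4(M;\Z)$, and since $H^4$ is infinite cyclic generated by a periodicity class, the subring generated by $z$ already realizes all of the rational cohomology, so $M$ is a cohomology $\C\pp^{n/2}$. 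Here one must check that $H^*(M;\Z)$ is torsion-free and that $z$ actually generates integrally; this follows because $4$-periodicity transports the generator of $H^2$ across all even degrees and Poincar\'e duality pins down the top class. For $n \equiv 2 \bmod 4$: duality now pairs degree $2$ with degree $n-2$ and degree $0$ with degree $n$, while the periodicity generator is still in degree $4$. If $H^2(M;\Z) = 0$, then every even group except $H^0, H^{n}$ is either zero or forced by periodicity to be a copy of $H^4$; but $H^4$ is dual to $H^{n-4}$ which has residue $2 \bmod 4$, hence is $0$, so $H^4 = 0$ and $M$ is a mod $2$ cohomology sphere (one only gets mod $2$ here rather than integral because $2$-torsion in the odd degrees cannot be excluded without the Steenrod argument — this is where \cite{Kennard13} is invoked). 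If $H^2(M;\Z) \ne 0$, choose $z \in H^2(M;\Z)$ and let $x$ generate $H^4(M;\Z)$; periodicity forces $H^{2i}(M;\Z) \cong \Z$ for all relevant $i$, and $z^2 \in H^4$ must be an integer multiple $mx$ of the generator, giving exactly the claimed presentation of a homology $\C\pp^{n/2}$.

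The main obstacle will be the case $n \equiv 2 \bmod 4$ with $H^2(M;\Z) \ne 0$: showing that the cohomology is \emph{torsion-free} in all degrees and that the ring is genuinely generated by the degree-$2$ and degree-$4$ classes (rather than merely rationally so), and in particular controlling possible $2$-torsion in the odd-degree groups. This is precisely the point where one must use not just naive $4$-periodicity but its refinement via Steenrod squares as developed in \cite{Kennard13}: the operation $\Sq^2$ relates $H^{2i}(M;\Z_2)$ to $H^{2i+2}(M;\Z_2)$ in a way incompatible with the presence of extra torsion classes, and the condition $n \ge 8$ is exactly what is needed for enough degrees to be available for this argument to bite. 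Once torsion-freeness and the generation statement are in hand, reading off $z^2 = mx$ and matching with $H_*(\C\pp^{n/2};\Z)$ is routine. The remaining cases ($n \equiv 0 \bmod 4$, and $n \equiv 2$ with $H^2 = 0$) are comparatively formal consequences of periodicity plus Poincar\'e duality, with the mod $2$ caveat in the sphere case noted above.
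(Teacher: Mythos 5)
Your overall route is the same as the paper's: split on $n \bmod 4$, treat $n \equiv 0$ by periodicity plus Poincar\'e duality, and for $n \equiv 2$ invoke the Steenrod--square refinement of periodicity from \cite[Section 6]{Kennard13} to get the mod $2$ classification and then pin down the integral structure. However, two of your steps do not hold up as written. The more serious one is the subcase $n \equiv 0 \bmod 4$ with $H^2(M;\Z) \neq 0$: you assert that the subring generated by $z \in H^2$ already gives all of the cohomology ``because $4$--periodicity transports the generator of $H^2$ across all even degrees.'' Multiplication by $x$ only carries $H^2$ through the degrees $\equiv 2 \bmod 4$; to conclude that $M$ is a cohomology $\C\pp^{n/2}$ you must prove both that $b_2(M) = 1$ and that $z^2$ generates $H^4$, and neither is automatic. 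Indeed, this is precisely the asymmetry between the two parts of the lemma: when $n \equiv 2 \bmod 4$ the rings with $z^2 = mx$ for arbitrary $m$ (including $m = 0$, the $\s^2 \times \HH\pp^{\frac{n-2}{4}}$ ring) are compatible with periodicity and duality, whereas for $n \equiv 0$ they are not, and your sketch gives no mechanism for this. The missing argument is the following use of the ring structure: writing $zw = q(z,w)\,x$ for $z,w \in H^2$ (possible since $H^4 = \Z x$, which is infinite cyclic because it injects into $H^n \cong \Z$ along the periodicity maps), the perfect pairing $H^4 \times H^{n-4} \to H^n$ shows $x^{n/4}$ generates $H^n$, and then the perfect pairing $H^2 \times H^{n-2} \to H^n$ is exactly $q$, so $q$ is unimodular; on the other hand, comparing the two groupings of a four-fold product of degree--two classes inside $H^8 \cong \Z x^2$ shows every $2\times 2$ minor of $q$ vanishes. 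Together these force $b_2(M) = 1$ and $z^2 = \pm x$, whence the $\C\pp^{n/2}$ conclusion. Without some such argument your $H^2 \neq 0$ branch is unproved, and the possibility $b_2(M) \geq 2$ is never excluded at all.

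The second problem is in the case $n \equiv 2 \bmod 4$ with $H^2(M;\Z) = 0$, where you deduce ``$H^4 = 0$'' from ``$H^4$ is dual to $H^{n-4}$, which vanishes.'' Poincar\'e duality gives $H^4(M;\Z) \cong H_{n-4}(M;\Z)$, and the vanishing of $H^{n-4}(M;\Z)$ only forces $H_{n-4}$, hence $H^4$, to be torsion; in fact the paper's own argument explicitly allows $H^4(M;\Z) \cong \Z_{2k+1}$ in this branch, the oddness of the torsion coming from $H^3(M;\Z_2) = H^5(M;\Z_2) = 0$, which is exactly what the quoted result of \cite{Kennard13} supplies. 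The slip is harmless for the stated conclusion (odd torsion is invisible mod $2$), and you correctly note that killing the odd-degree mod $2$ groups requires the Steenrod input, but the step as written is false. With these two repairs your outline coincides with the paper's proof.
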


\begin{proof}
If $n \equiv 0 \bmod 4$, this is consequence of the definition and Poincar\'e duality. For $n \equiv 2 \bmod{4}$, the proof requires an additional argument that shows  $M$ is a mod $2$ cohomology $\s^n$, $\C\pp^{\frac{n}{2}}$, or $\s^2 \times \HH\pp^{\frac{n-2}{4}}$ (see \cite[Section 6]{Kennard13}). In particular, $H^3(M;\Z_2) = H^5(M;\Z_2) = 0$, and so $H^4(M;\Z)$ is isomorphic to $\Z_{2k+1}$ for some integer $k$ or to $\Z$ by the universal coefficients theorem and the definition of periodicity. Applying the definition of periodicity and Poincar\'e duality, these two cases correspond to and imply the two possibilities claimed in the lemma.
\end{proof}

\smallskip\section{Containment lemma}\label{sec:Containment}\smallskip

The first result applies Frankel's theorem to provide a sufficient condition for the fixed-point set $M^T$ of the torus action to be contained in a small number of fixed-point components of involutions. We will use this lemma many times, so we will call it the {\it containment lemma}.

\begin{lemma}[Containment lemma]\label{lem:Containment}
Suppose $M^n$ is a closed, positively curved Riemannian manifold, and assume $T$ is a torus acting isometrically on $M$. Fix $x \in M^T$. Let $H \subseteq T$ denote a subgroup isomorphic to $\Z_2^r$. Let $\delta = 4$ if $n \equiv 0 \bmod 4$ and $M$ is spin, and otherwise let $\delta = 2$ if $n$ is even and $\delta = 1$ if $n$ is odd. If
	\[\cod\of{M^H_x} + \cod\of{M^H_y} < \pfrac{2^r-1}{2^{r-1}}(n+\delta),\]
then $y \in \bigcup_{\iota \in H \setminus\{1\}} M^\iota_x$.
\end{lemma}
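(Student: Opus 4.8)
The plan is an averaging argument over the $2^r-1$ non-trivial involutions of $H$, culminating in one application of Frankel's theorem. Assume $y\in M^H$, so that $M^H_y$ is defined, and write $M^\iota_x$, $M^\iota_y$ for the components of $M^\iota$ through $x$ and $y$; these are closed, totally geodesic, embedded submanifolds. First I would decompose the tangent space $T_xM=\bigoplus_{\chi\in\widehat H}W_\chi$ into its $H$-isotypical summands. Since $M^\iota_x$ is the component of $M^\iota$ through $x$, we get $\cod M^\iota_x=\dim\bigoplus_{\chi(\iota)=-1}W_\chi$ and $\cod M^H_x=\dim\bigoplus_{\chi\neq 1}W_\chi$; and for each non-trivial character $\chi$ there are exactly $2^{r-1}$ non-trivial $\iota\in H$ with $\chi(\iota)=-1$. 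Summing over $\iota$, and doing the same at $y$ (decomposing $T_yM$ instead; no $T$-fixedness of $y$ is needed), one obtains
\[\sum_{\iota\in H\setminus\{1\}}\big(\cod M^\iota_x+\cod M^\iota_y\big)=2^{r-1}\big(\cod M^H_x+\cod M^H_y\big)<(2^r-1)(n+\delta),\]
so some $\iota\in H\setminus\{1\}$ satisfies $\cod M^\iota_x+\cod M^\iota_y<n+\delta$. Fix this $\iota$.

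Next I would upgrade this to $\cod M^\iota_x+\cod M^\iota_y\le n$, bringing us into the range of Frankel's theorem. This is immediate for $n$ odd ($\delta=1$). For $n$ even ($\delta=2$) I would observe that every component of $M^\iota$ has even codimension: picking a circle $\sone\subseteq T$ through $\iota$, the group $\sone$ acts on the normal bundle of such a component, on which $\iota$ acts as $-\id$ by definition, and since the only $1$-dimensional real representation of $\sone$ is trivial, the normal fibres must be sums of $2$-dimensional $\sone$-representations. Hence $\cod M^\iota_x+\cod M^\iota_y$ is even and strictly below $n+2$, so at most $n$. For $n\equiv 0\bmod 4$ with $M$ spin ($\delta=4$) I would additionally use that all components of $M^\iota$ have codimension congruent modulo $4$: a lift $\tilde\iota$ of $\iota$ to the spin bundle (available when $M$ is simply connected, as in all the applications) satisfies $\tilde\iota^2=\epsilon$ for a global sign $\epsilon\in\{\pm1\}$, and over a component of codimension $c$ one has $\epsilon=(-1)^{c(c+1)/2}$, which for even $c$ depends only on $c\bmod 4$. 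Thus $\cod M^\iota_x+\cod M^\iota_y$ is $\equiv 0\bmod 4$ and below $n+4$, so at most $n$ again.

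Finally, since $\cod M^\iota_x+\cod M^\iota_y\le n$, Frankel's theorem forces $M^\iota_x\cap M^\iota_y\neq\varnothing$; being components of $M^\iota$ they coincide, whence $y\in M^\iota_x\subseteq\bigcup_{\iota\in H\setminus\{1\}}M^\iota_x$, as claimed. The only genuinely delicate point is the spin case of the middle step: one must ensure that $\iota$ (equivalently the torus action) lifts to the spin structure, and then invoke the codimension-modulo-$4$ rigidity of fixed-point components of involutions on spin manifolds. Everything else is bookkeeping with the isotropy representation, together with two facts already recalled: that fixed-point components of isometries are closed totally geodesic submanifolds, and Frankel's theorem.
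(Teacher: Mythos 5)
Your overall strategy coincides with the paper's: the identity $\sum_{\iota\in H\setminus\{1\}}\cod\of{M^\iota_x}=2^{r-1}\cod\of{M^H_x}$ (Borel's formula; your isotypical derivation of it is fine), a parity upgrade of the strict inequality $\cod\of{M^\iota_x}+\cod\of{M^\iota_y}<n+\delta$ to $\le n$, and one application of Frankel --- the paper simply runs this as a contraposition, summing the inequalities $\cod\of{M^\iota_x}+\cod\of{M^\iota_y}\ge n+\delta$ over all nontrivial $\iota$. The genuine problem is your parity step in the even, non-spin case, where you allow an arbitrary $y\in M^H$ and assert that every component of $M^\iota$ has even codimension because ``the normal fibres must be sums of $2$-dimensional $\sone$-representations.'' The circle $\sone$ preserves such a component but need not fix it pointwise, so the normal fibres are not $\sone$-representations, and the asserted evenness is in fact false in the generality the lemma permits (it assumes neither orientability nor simple connectivity): on $\R\pp^2$ with the circle of rotations about an axis, the involution $\iota$ of that circle has fixed-point set an isolated point $x\in M^T$ of codimension $2$ together with an $\R\pp^1$ of codimension $1$; taking $y$ on that $\R\pp^1$ gives $\cod\of{M^\iota_x}+\cod\of{M^\iota_y}=3<n+2$ while $y\notin M^\iota_x$, so the strengthened statement you prove (with $y\in M^H$) is false. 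Analogous examples exist on $\R\pp^{2m}$ for every $m$, where $M^\iota$ contains an $\R\pp^1$ of odd codimension $2m-1$.

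The repair is exactly what the paper's proof does: take $y\in M^T$, which is the intended reading of the statement and all that the applications use. At a $T$-fixed point the isotropy representation of the full torus splits into $2$-dimensional rotation weight spaces plus a trivial summand, each involution of $T$ acts on every weight space by $\pm\id$, and hence $\cod\of{M^\iota_x}$ and $\cod\of{M^\iota_y}$ are both even; this is the parity input you need, and it is genuinely unavailable at points that are only $H$-fixed. (Alternatively it suffices to add an orientability hypothesis: then $\iota$, lying in the identity component of the isometry group, preserves orientation, and $\det(d\iota)=(-1)^{\cod}$ at a fixed component forces even codimension everywhere.) Your odd-dimensional case needs no parity, and your spin case does go through even with $y\in M^H$: spin implies orientable, and an orientable, even-dimensional, closed, positively curved manifold is simply connected by Synge, so the lifting issue you flag resolves itself and the mod-$4$ rigidity of codimensions applies to all components of $M^\iota$. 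Only the even non-spin case requires restricting $y$ to $M^T$ and replacing your normal-bundle argument by the weight-space argument above.
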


Note that every component of $M^H$ has codimension at most $n$. Hence, the containment lemma implies that, if
	\[\cod\of{M^H_x} < n + 2\delta - \frac{n+\delta}{2^{r-1}},\]
then $M^T \subseteq \bigcup M^\iota_x$, where the union runs over non-trivial elements $\iota \in H$. For example, when $M$ has even dimension, this assumption is satisfied in each of the following two cases:
	\begin{itemize}
	\item $H \cong \Z_2^2$ and $\cod\of{M^H_x} \leq \frac{n}{2} + 2$.
	\item $H \cong \Z_2^3$ and $\cod\of{M^H_x} \leq \frac{3n}{4} + 3$.
	\end{itemize}
We will use this consequence frequently. We remark that, if we take $r = 1$ in this statement, this is just a restatement of Frankel's theorem for the case of components of the fixed-point set of some copy of $\Z_2$ in $T$.

\begin{proof}
Let $y \in M^T$ and $\iota \in H$. If $y \not\in M^\iota_x$ then the components $M^\iota_x$ and $M^\iota_y$ are disjoint, so Frankel's theorem implies
	\[\cod\of{M^\iota_y} + \cod\of{M^\iota_x} > n.\]
If, moreover, $M$ is spin and $n \equiv 0 \bmod{4}$, the sum of these codimensions is divisible by four and greater than $n$, hence it is at least $n + 4$. In any case, the left-hand side is at least $n + \delta$, where $\delta$ is defined as in the theorem. The lemma follows by summing these inequalities over $\iota \in H$ and using the fact that $\sum_{\iota \in H} \cod{M^\iota_x} = 2^{r-1} \cod\of{M^H_x}$ since $H$ is some copy of $\Z_2^r$ inside $T$ (see Borel \cite{Borel60}).
\end{proof}

\smallskip\section{Recognition theorems for spheres}\label{sec:Recognition}\smallskip

It is a basic result from Smith theory that a smooth action by $\Z_p$ on a mod $p$ homology sphere has fixed-point set a mod $p$ homology sphere (see Bredon \cite[Chapter III, Theorem 5.1]{Bredon72}). Wilking proved two results which can be viewed as partial converses to this theorem. The first provides a sufficient condition for recognizing when a manifold is homeomorphic to a sphere (see \cite[Theorem 4.1]{Wilking03}).

\begin{theorem}[Recognition theorem, even-dimensional case]\label{thm:Wilking4.1}
Let $M^{2m}$ be a compact $(m-k)$--connected manifold. Suppose that $T^{2k-1}$ acts smoothly and effectively on $M$ with non-empty fixed-point set. Assume that every $\sigma \in T^{2k-1}$ of prime order $p$ has the property that its fixed-point set is either empty or a mod $p$ homology sphere. Then $M$ is homeomorphic to a sphere.
\end{theorem}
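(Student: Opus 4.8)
The plan is to prove that $M$ is a homotopy sphere and then to invoke the topological generalized Poincar\'e conjecture, which holds in every dimension. Since the action has a fixed point, the isotropy representation there embeds $T^{2k-1}$ into $\Or(2m)$, so $2k-1\le m$; in particular $M$ is simply connected, except in the trivial case $k=m=1$, where $M$ is a closed surface carrying an effective circle action with a fixed point and must be $\s^2$ by the hypothesis on $\Z_2$--fixed sets. Granting simple connectivity, it suffices by Hurewicz and Whitehead to show that $M$ is an integral homology sphere, and since $M$ is $(m-k)$--connected, Poincar\'e duality reduces this to killing the reduced (co)homology in the middle range $m-k+1\le *\le m+k-1$. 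For this it is enough to prove $H^*(M;\ff_p)\cong H^*(\s^{2m};\ff_p)$ for every prime $p$, which then upgrades to the integral statement via universal coefficients and $(m-k)$--connectivity.

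A useful first step is that $M^T$ is an integral homology sphere. Fix a prime $p$ and split $\Z_p^{2k-1}\cong\Z_p\times\Z_p^{2k-2}\subseteq T^{2k-1}$; the hypothesis makes $M^{\Z_p}$ a mod $p$ homology sphere, and Smith theory for the $p$--group $\Z_p^{2k-2}$ acting on it makes $M^{\Z_p^{2k-1}}$ one as well. Enlarging to $\Z_{p^a}^{2k-1}$ for $a$ large---so that this fixed set equals $M^T$, by finiteness of the orbit types---and applying Smith theory once more shows $M^T$ is a mod $p$ homology sphere; as this holds for every $p$, $M^T$ is an integral homology sphere. Its components are even-dimensional, being fixed-point components of a torus action on the even-dimensional $M$, so $\chi(M^T)=2$ and hence $\chi(M)=\chi(M^T)=2$ by Conner and Kobayashi's theorem. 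When $k=1$ this already finishes the proof: then $M$ is $(m-1)$--connected, so $\chi(M)=2$ forces $b_m(M)=0$, and then $H^{m+1}(M;\Z)\cong H_{m-1}(M;\Z)=0$ kills the torsion of $H_m(M;\Z)$ as well, so $M$ is an integral homology sphere.

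For $k\ge 2$, where the single relation $\chi(M)=2$ no longer controls the middle cohomology, I would argue by induction on $k$. Choose an involution $\sigma\in T^{2k-1}$ together with a component $N$ of $M^\sigma$ of minimal codimension among all components of fixed-point sets of involutions in $T^{2k-1}$; by hypothesis $N$ is a mod $2$ homology sphere $\s^{2j}$ of positive codimension, and after enlarging $\langle\sigma\rangle$ to a subcircle of $T^{2k-1}$ the normal bundle of $N$ in $M$ acquires a complex structure (so the codimension $2m-2j$ is even). The aim is to exhibit on $N$ an effective action of a torus of rank $2k-3$ with non-empty fixed set all of whose prime-order elements again have homology-sphere fixed sets, and with $N$ being $(j-k+1)$--connected, so that the inductive hypothesis identifies $N$ with a homotopy sphere. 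A Mayer--Vietoris and Thom--Gysin analysis of $M$ along $N$, using the high connectivity of $M$ and $\chi(M)=2$, should then force the missing middle cohomology of $M$ to vanish and complete the induction.

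The main obstacle is establishing the connectivity of $N$: in the absence of a positive-curvature hypothesis there is no connectedness lemma, so this must be extracted homologically from the interplay of the $(m-k)$--connectivity of $M$, the fact that $N$ is a mod $2$ homology sphere of minimal codimension, and the constraints that the $2^{2k-1}-1$ commuting involutions impose on one another at a common fixed point through Smith theory and Borel's codimension identity $\sum_{\sigma}\cod(M^{\sigma}_x)=2^{2k-2}\cod(M^{\Z_2^{2k-1}}_x)$. Two further technical points are producing the residual rank-$(2k-3)$ torus on $N$---which requires bounding the ineffective kernel of $T^{2k-1}$ acting on $N$ and locating a fixed point without recourse to Berger's theorem---and verifying the fixed-set hypothesis for odd primes on $N$, where the relevant sets arise as intersections $M^{\sigma}\cap M^{\tau}$ rather than directly as fixed-point sets of prime-order elements. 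I expect essentially all of the work of the theorem to be concentrated in these points.
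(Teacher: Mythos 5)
This theorem is not proved in the paper at all: it is Wilking's recognition theorem, quoted from \cite{Wilking03} (his Theorem 4.1) and used as a black box, so the relevant comparison is with Wilking's original argument --- and the entire substance of that argument lies in the part of your proposal that you leave open. Your preliminary reductions are correct: the isotropy representation at a fixed point embeds $T^{2k-1}$ in $\Or(2m)$, giving $2k-1\le m$ and hence simple connectivity (with the surface case treated separately); the Smith-theoretic chain shows $M^T$ is an integral homology sphere with even-dimensional components, whence $\chi(M)=\chi(M^T)=2$; and the case $k=1$ does follow from $(m-1)$--connectivity, $\chi(M)=2$, and Poincar\'e duality. But for $k\ge 2$ --- the only case in which this paper actually invokes the theorem (it is applied with $k=2$ and $T^3$ in the dimension $10$ argument) --- what you give is a strategy, not a proof. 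The $(j-k+1)$--connectivity of the chosen component $N$, the production of an effective rank-$(2k-3)$ torus action on $N$ with non-empty fixed set whose prime-order elements again have homology-sphere fixed sets, and the concluding Mayer--Vietoris/Gysin step that is supposed to kill the middle cohomology of $M$ are all stated as goals and explicitly deferred; as you say yourself, essentially all of the work of the theorem is concentrated there. That is a genuine gap, not an alternative route.

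In addition, at least one intermediate assertion is unjustified as written: you claim the codimension of $N$ is even because its normal bundle acquires a complex structure after enlarging $\langle\sigma\rangle$ to a subcircle, but that subcircle merely preserves $N$ --- it need not fix $N$ pointwise --- and in this purely topological setting there is no Berger-type theorem guaranteeing a $T$-fixed point on $N$ at which to read off the isotropy weights; evenness of the codimension, like the connectivity of $N$, would have to be extracted by a different argument. Likewise, the inductive hypotheses on $N$ involve fixed sets of the form $M^\sigma\cap M^\tau$, and nothing in your outline controls their mod $p$ homology, nor the effectivity or non-emptiness of the fixed set of the residual torus. Any correct proof, in particular Wilking's, must do its work at precisely these points, so until they are carried out the proposal does not establish the theorem.
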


The second provides a partial recognition result for mod $p$ homology spheres (see \cite[Theorem 5.1]{Wilking03}):

\begin{theorem}\label{thm:Wilking5.1}
Let $M^n$ be a closed, simply connected, positively curved Riemannian manifold, and let $p$ be a prime. If $\Z_p$ acts isometrically on $M$ with connected fixed-point set of codimension $k$, then $H^i(M;\Z_p) = 0$ for $k \leq i \leq n - 2k + 1$. If, moreover, $M^{\Z_p}$ is fixed by a circle acting isometrically on $M$, then $H^{k-1}(M;\Z_p) = 0$ and $H^{n - 2k + 2}(M;\Z_p) = 0$ as well.
\end{theorem}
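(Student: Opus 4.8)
The plan is to combine Wilking's connectedness lemma with Smith theory for the semifree $\Z_p$-action. Set $N=M^{\Z_p}$: it is connected, closed, totally geodesic of codimension $k$, and $\Z_p$ acts freely on $M\setminus N$ (its fixed set lies in $N$ and $p$ is prime). If $3k>n+1$ the range $k\le i\le n-2k+1$ is empty, so assume $3k\le n+1$. Part~(1) of the connectedness lemma makes $N\embedded M$ an $(n-2k+1)$-connected inclusion, so restriction $H^i(M;\Z_p)\to H^i(N;\Z_p)$ is an isomorphism for $i\le n-2k$ and injective for $i=n-2k+1$; moreover Wilking's periodicity corollary produces a class $x\in H^k(M;\Z_p)$ --- the mod~$p$ Poincar\'e dual of the fundamental class of $N$, whose restriction to $N$ is the Euler class $e(\nu)\in H^k(N;\Z_p)$ of the normal bundle --- with $\cup x\colon H^i(M;\Z_p)\to H^{i+k}(M;\Z_p)$ injective for all $k\le i\le n-2k+1$. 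Hence it suffices to show $x=0$, equivalently $e(\nu)=0$ (restriction being injective in degree $k$), i.e.\ that $N$ carries the trivial mod~$p$ fundamental class in $M$. That $N$ is a fixed-point set is essential here: in general a totally geodesic submanifold of codimension $k$ need not have vanishing fundamental class, e.g.\ $\C\pp^{m-1}\subseteq\C\pp^m$.

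To see $e(\nu)=0$ I would play off two facts. First, Lefschetz duality for the compact manifold-with-boundary $W_0:=M\setminus U$ ($U$ an open tubular neighborhood of $N$), together with excision $H_*(M,N;\Z_p)\cong H_*(W_0,\partial W_0;\Z_p)$, gives $H^j(M\setminus N;\Z_p)=0$ for $j\ge 2k-1$; a transfer/spectral-sequence argument for the $p$-fold cover $M\setminus N\to\bar W:=(M\setminus N)/\Z_p$ (a compact $n$-manifold), exploiting finite-dimensionality, propagates this to $H^j(\bar W;\Z_p)=0$ for $j\ge 2k-1$. Second, since $\Z_p$ acts on $\nu$ without nonzero fixed vectors (for $p$ odd this forces $k$ even and $\nu$ equivariantly complex; for $p=2$ it is the $-1$ action), the splitting principle shows the equivariant Euler class $e_{\Z_p}(\nu)\in H^k_{\Z_p}(N;\Z_p)=\bigoplus_{a+b=k}H^a(N;\Z_p)\tensor H^b(B\Z_p;\Z_p)$ is monic in the polynomial generator $t$ of $R:=H^*(B\Z_p;\Z_p)$, so cup product with it on the free $R$-module $H^*_{\Z_p}(N;\Z_p)=H^*(N;\Z_p)\tensor R$ is injective; hence the equivariant Gysin map $\phi\colon H^{*-k}_{\Z_p}(N;\Z_p)\to H^*_{\Z_p}(M;\Z_p)$ is injective, and the equivariant Thom--Gysin sequence of $(M,M\setminus N)$ identifies $H^i_{\Z_p}(M\setminus N;\Z_p)$ with $\coker\phi$ in each degree. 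A naturality argument for the Serre spectral sequence of the Borel fibration $M\to M_{\Z_p}\to B\Z_p$ --- which, by the connectedness isomorphism, coincides with the collapsed spectral sequence of $N\times B\Z_p\to B\Z_p$ in fiber degrees $\le n-2k$ --- makes restriction $H^i_{\Z_p}(M;\Z_p)\to H^i_{\Z_p}(N;\Z_p)$ an isomorphism for $i\le n-2k$. Combining, for $i\le n-2k$ one gets $H^i(\bar W;\Z_p)\cong\bigl((H^*(N;\Z_p)\tensor R)/(e_{\Z_p}(\nu))\bigr)^{(i)}=H^i(S(\nu)/\Z_p;\Z_p)$, a ring free over $H^*(N;\Z_p)$ on $1,t,\ldots,t^{k-1}$ (with the evident analogue for odd $p$).

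Comparing the two facts, $H^i(S(\nu)/\Z_p;\Z_p)=0$ for $2k-1\le i\le n-2k$; reading off the $H^*(N;\Z_p)$-module basis then forces $H^l(N;\Z_p)=0$ for $k\le l\le n-2k$, in particular $H^k(N;\Z_p)=0$ when $3k\le n$ (the degenerate case $3k=n+1$ is handled directly), so $e(\nu)=0$, $x=0$, and the first assertion follows from the first paragraph. For the second assertion, a circle $C$ fixing $M^{\Z_p}$ pointwise generates with $\Z_p$ a group $G$ with $M^G=N$ and positive-dimensional principal orbits, so part~(2) of the connectedness lemma improves the inclusion to $(n-2k+2)$-connected; rerunning the argument with this one-better connectedness widens every range by one and yields $H^{k-1}(M;\Z_p)=0$ and $H^{n-2k+2}(M;\Z_p)=0$. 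The step I expect to be the main obstacle is the identification in the second paragraph --- extracting honest vanishing of $H^*(N;\Z_p)$ (equivalently, of the periodicity class $x$) in the middle range from the mere boundedness of the equivariant cohomology of the free part $M\setminus N$, rather than merely re-deriving the periodicity the connectedness lemma already supplies; this is exactly where the hypothesis that $N$ be a fixed-point set, via the monic equivariant Euler class, does its work --- together with a clean treatment of the endpoint degrees (the top degree $n-2k+1$ via the injectivity clause of periodicity, and the transitional degrees $k-1\le * \le 2k-2$ where $H^*(M\setminus N)$, $H^*_{\Z_p}(M\setminus N)$ and $H^*(N)$ only roughly agree).
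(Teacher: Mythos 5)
Your overall architecture --- reduce, via the connectedness lemma and the periodicity class $x\in H^k(M;\Z_p)$ dual to $N$, to showing $e(\nu)\equiv 0 \bmod p$, and then compute the equivariant cohomology of the free part $M\setminus N$ in two ways --- is in the spirit of Wilking's own proof; the paper itself supplies no argument for the first assertion beyond citing \cite[Theorem 5.1]{Wilking03}, and handles the second exactly as you do, by noting that a circle fixing $N$ upgrades the inclusion to $(n-2k+2)$--connected and rerunning the proof. Most of your intermediate steps are sound (the monic equivariant Euler class and injectivity of the equivariant Gysin map, the restriction isomorphism $H^i_{\Z_p}(M)\to H^i_{\Z_p}(N)$ for $i\le n-2k$, which is cleanest via vanishing of $H^*_{\Z_p}(M,N;\Z_p)$ in degrees $\le n-2k+1$). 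One step is asserted rather than proved: the passage from $H^j(M\setminus N;\Z_p)=0$ for $j\ge 2k-1$ to the same vanishing for $\overline W=(M\setminus N)/\Z_p$ cannot be done by transfer (the transfer composite is multiplication by $p$, hence zero mod $p$), nor by reading the Cartan--Leray spectral sequence in that direction; the statement is true, but the clean route is the reverse one: $H_i(W_0,\partial W_0;\Z_p)\cong H_i(M,N;\Z_p)=0$ for $i\le n-2k+1$, this low-degree vanishing passes to the quotient pair by the homological (first-quadrant) Cartan--Leray spectral sequence, and Lefschetz duality applied downstairs on the compact manifold $\overline W$ then gives $H^j(\overline W;\Z_p)=0$ for $j\ge 2k-1$. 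That gap is fixable.

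The genuine gap is in the final comparison. Your identification of $H^i(\overline W;\Z_p)$ with the free $H^*(N;\Z_p)$--module on $1,t,\dots,t^{k-1}$ is established only for $i\le n-2k$ (the range where restriction to $N$ is an isomorphism), while the vanishing of $H^*(\overline W;\Z_p)$ holds only for $i\ge 2k-1$. A class in $H^k(N;\Z_p)$ contributes to that module only in total degrees $k,\dots,2k-1$, so the comparison forces $H^k(N;\Z_p)=0$ (hence $e(\nu)=0$, hence $x=0$) only when $2k-1\le n-2k$, i.e.\ $n\ge 4k-1$ (and $n\ge 4k-3$ in the circle-improved version); your parenthetical ``in particular $H^k(N;\Z_p)=0$ when $3k\le n$'' is not supported by the computation, and the window $[2k-1,n-2k]$ is empty throughout the band $3k-1\le n\le 4k-2$, where the theorem is non-vacuous. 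This band is not a corner case one can wave away alongside $3k=n+1$: the paper invokes the theorem precisely there, with $k=4$ in ambient dimensions $10$ and $12$ (Lemma \ref{lem:cod4dk1Qsphere}, and the application to $N_2$ of dimension $12$ inside the proof of Lemma \ref{lem:cod4part1}). Closing this regime requires an input beyond periodicity plus the two-sided computation of $H^*(\overline W)$ --- it is exactly the content one is outsourcing to Wilking's proof --- so as written the proposal does not prove the statement in the cases the paper actually uses.
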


\begin{proof}
The first statement is just \cite[Theorem 5.1]{Wilking03}, which is based on the fact that $M^{\Z_p} \to M$ is $(n-2k+1)$--connected according to the connecteness lemma. When $M^{\Z_p}$ is fixed by a circle as in the second assertion, the connectedness lemma implies that the inclusion $M^{\Z_p} \to M$ is $(n-2k+2)$--connected, and the proof of \cite[Theorem 5.1]{Wilking03} imply that $H^{k-1}(M;\Z_p) = 0$ and $H^{n-2k+2}(M;\Z_p) = 0$ as well.
\end{proof}

Using Theorem \ref{thm:Wilking5.1}, we prove the following, which can be viewed as another recognition theorem for the sphere.

\begin{lemma}\label{lem:cod4dk1Qsphere}
Let $M^n$ be a closed, simply connected, positively curved Riemannian manifold with $n = 2m \geq 10$. Assume there exists a circle $\sone$ acting isometrically on $M$ such that $M^{\sone}$ has a component $N$ of codimension four. If $b_2(M) = 0$ and $\chi(M) = \chi(N)$, then $M$ is homeomorphic to a sphere.
\end{lemma}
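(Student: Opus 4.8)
The plan is to prove that $M$ is an integral homology sphere; as $M$ is simply connected and $n\ge 5$, it then follows from the high-dimensional Poincar\'e conjecture that $M$ is homeomorphic to $\s^n$. We may assume the $\sone$-action is effective.

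First I would show $M^{\sone}=N$. A fixed-point component of a circle action on an even-dimensional manifold has even codimension, so by Frankel's theorem every component of $M^{\sone}$ other than $N$ has codimension greater than $n-4$, hence dimension $0$ or $2$, hence positive Euler characteristic; the existence of such a component would give $\chi(M)=\chi(M^{\sone})>\chi(N)$, contrary to hypothesis. Next, since a smooth $\sone$-action has only finitely many orbit types, $M^{\Z_p}=M^{\sone}=N$ for all but finitely many primes $p$. For each such $p$, the set $N$ is connected, of codimension $4$, and pointwise fixed by the circle, so Theorem \ref{thm:Wilking5.1} with $k=4$ gives $H^i(M;\Z_p)=0$ for $3\le i\le n-6$; together with Poincar\'e duality over $\Z_p$, with $H^2(M;\Z)=0$ (from $b_2(M)=0$ and simple connectivity), and with the universal coefficient theorem, a brief bookkeeping (slightly more delicate for $n=10$, where one first notes $\chi(N)=2$ via the connectedness lemma and Poincar\'e duality) yields $\tilde H^*(M;\Z_p)=0$. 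Thus $M$ is a mod $p$ homology sphere for all but finitely many $p$; in particular $b_i(M)=0$ for $0<i<n$ and $\chi(M)=2$, and feeding this back through the connectedness lemma and Poincar\'e duality on $N$ shows $N$ is a rational homology sphere.

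It remains to kill the torsion in $H^*(M;\Z)$. Since $N$ is a fixed-point component of the circle, Corollary \ref{cor:PeriodicityCorollary}(2) (with $k=4$, $\delta=1$) shows $H^{2\le *\le n-2}(M;\Z)$ is $4$-periodic, with periodicity class $e\in H^4(M;\Z)$ which is torsion because $b_4(M)=0$; since $H^2(M;\Z)=0$, periodicity forces $H^i(M;\Z)=0$ for all $i\equiv 2\bmod 4$ in that range. The remaining torsion is supported at the finitely many primes $p$ with $M^{\Z_p}\supsetneq N$. For such a $p$, let $F$ be the component of $M^{\Z_p}$ through $N$; since $N$ is a component of the fixed-point set of the induced circle action on $F$, the codimension of $F$ is even and at most $\cod(N)=4$, hence $2$ or $4$. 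If $\cod(F)=2$ for some such $p$, then $F$ is a closed totally geodesic submanifold of codimension two, so Corollary \ref{cor:PeriodicityCorollary}(1) makes $H^{1\le *\le n-1}(M;\Z)$ $2$-periodic with periodicity class in $H^2(M;\Z)=0$, forcing $\tilde H^*(M;\Z)=0$ and finishing the proof. Otherwise $F=N$ for every prime $p$, and the extra components of each $M^{\Z_p}$ are circles or $3$-dimensional spherical space forms (closed, positively curved, of dimension $<4$, with zero Euler characteristic since $\sone$ acts on them without fixed points). In this case one upgrades the periodicity above to $4$-periodicity of all of $H^*(M;\Z)$, applies Lemma \ref{lem:4periodicZ2cohomology}, and rules out $\C\pp^{n/2}$ by $b_2(M)=0$ and $\HH\pp^{n/4}$ by $\chi(M)=2$ (here invoking the classification of maximal-rank torus actions on $\HH\pp^{n/4}$); what is left is that $M$ is a mod $2$ homology sphere, and combined with the vanishing of all other torsion this makes $M$ an integral homology sphere.

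The hard part is this final case: when each $M^{\Z_p}$ agrees with $M^{\sone}$ up to components of dimension at most $3$, one must exclude torsion at the finitely many bad primes, which amounts to promoting the periodicity of $H^{2\le *\le n-2}(M;\Z)$ to the full range (so that Lemma \ref{lem:4periodicZ2cohomology} is available) and then ruling out the quaternionic possibility. The preceding steps are routine applications of Frankel's theorem, the connectedness lemma, and Smith theory.
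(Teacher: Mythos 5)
Your first half is sound and runs parallel to the paper: showing $M^{\sone}=N$, invoking Theorem \ref{thm:Wilking5.1} for primes with $M^{\Z_p}=N$, the mod $p$ bookkeeping (including the extra step at $n=10$ via $\chi(M)=\chi(N)=2$), and the codimension-two subcase via Corollary \ref{cor:PeriodicityCorollary} with vanishing periodicity class. The genuine gap is exactly the case you label ``the hard part'': when the component of $M^{\Z_p}$ through $N$ equals $N$ but $M^{\Z_p}$ is (allegedly) larger at finitely many bad primes, you never actually kill the torsion. You propose to ``upgrade'' to $4$--periodicity of all of $H^*(M;\Z)$ so that Lemma \ref{lem:4periodicZ2cohomology} applies, but no mechanism for this upgrade is given: Corollary \ref{cor:PeriodicityCorollary}(2) only yields periodicity on $H^{2\le *\le n-2}(M;\Z)$, the periodicity class in $H^4(M;\Z)$ is torsion since $b_4(M)=0$, and nothing you have established controls the integral groups in degrees $3,4,5$ at a bad prime. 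So the argument as written does not close.

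The resolution is that your hard case is vacuous, and this is precisely how the paper argues. Since $M$ is simply connected it is orientable, and every element of $\sone$ acts by an orientation-preserving diffeomorphism; hence every component of $M^{\Z_p}$ (including for $p=2$) has \emph{even} codimension. Your claimed extra components of dimension $1$ or $3$ cannot occur: by Frankel's theorem against the codimension-four component $N$, any extra component of $M^{\Z_p}$ has dimension $0$ or $2$, hence positive Euler characteristic. But $\chi(M^{\Z_p})=\chi\of{(M^{\Z_p})^{\sone}}=\chi(M^{\sone})=\chi(M)=\chi(N)$ (alternatively, Berger's theorem applied to the induced circle action on such a component, which contains no $\sone$-fixed points, gives a contradiction), so no extra components exist and $M^{\Z_p}=N$ for \emph{every} prime $p$ whenever the component through $N$ has codimension four. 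Consequently there are no bad primes at all: Theorem \ref{thm:Wilking5.1} applies at every prime, your ``good prime'' bookkeeping already yields $\tilde H^*(M;\Z)=0$, and the Poincar\'e conjecture finishes the proof without any periodicity upgrade or appeal to Lemma \ref{lem:4periodicZ2cohomology} (the aside about maximal-rank torus actions on $\HH\pp^{n/4}$ is likewise unnecessary, since a cohomology $\HH\pp^{n/4}$ is already excluded by the vanishing Betti numbers).
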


Note, in particular, that $b_2(M) = 0$ and $\chi(M) = \chi(N) = 2$ if $M$ is a rational homology sphere.

\begin{proof}
Let $\Z_p \subset \sone$. If $M^{\Z_p}$ has codimension two, then the result follows from Wilking's periodicity corollary. Suppose that $M^{\Z_p}$ has codimension four. By Frankel's theorem, $N$ is the only component of codimension four and all others are points or positively curved $2$--manifolds. Since $\chi(M) = \chi\of{M^{\Z_p}}$, it follows that $M^{\Z_p}$ is connected. By Theorem \ref{thm:Wilking5.1} it follows that $H^i(M;\Z_p) = 0$ for all $3 \leq i \leq n - 6$. For $n \geq 12$, it follows immediately from Poincar\'e duality and the assumption that $b_2(M) = 0$ that $M$ is an integral homology sphere. For $n = 10$, the same holds by a similar argument using the additional observation that $b_5(M) = \chi(M) - 2 = 0$. The lemma follows by the resolution of the Poincar\'e conjecture.
\end{proof}

\smallskip\section{Low codimension lemmas}\label{sec:LowCodim}\smallskip

It follows immediately from Wilking's periodicity corollary that a positively curved, oriented, closed manifold of odd dimension contains a codimension two, totally geodesic submanifold only if it is homeomorphic to $\s^n$. It is an open question whether an analogous result holds in the even dimensional case. By Grove and Searle's result, if the codimension two submanifold is fixed by an isometric circle action, then $M$ is in fact diffeomorphic to $\s^n$ or $\C\pp^{n/2}$. For our purposes, we will use the following, different partial result in this direction.

\begin{lemma}[Codimension two lemma]\label{lem:cod2}
Let $M^n$ be a closed, simply connected, positively curved Riemannian manifold with $T^2$ symmetry. If some involution $\iota \in T^2$ has fixed-point set of codimension two, then $M$ is homeomorphic to $\s^n$ or $\C\pp^{n/2}$.
\end{lemma}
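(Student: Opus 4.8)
The plan is to reduce to even dimension $n \geq 6$, extract strong cohomological periodicity from the codimension-two fixed-point set, and then exploit the $T^2$-symmetry to identify the homeomorphism type. First I would dispose of the easy cases. If $n$ is odd, then $M$ is orientable and contains a codimension-two totally geodesic submanifold, so $M$ is homeomorphic to $\s^n$ by the discussion preceding the statement. If $n \leq 4$, then $M$ admits an effective isometric $T^2$-action of maximal symmetry rank (and positive curvature alone suffices for $n \leq 3$), so the conclusion follows from Grove--Searle \cite{GroveSearle94}. Thus assume $n = 2m$ with $m \geq 3$, and let $N^{n-2} \subseteq M^\iota$ be the codimension-two component. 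By Frankel's theorem, $N$ is the unique component of $M^\iota$ of codimension two and all others are isolated points; since $T^2$ preserves $N$ and each such point, every isolated fixed point of $\iota$ lies in $M^{T^2}$. By the connectedness lemma, $N \hookrightarrow M$ is $(n-3)$-connected, so $N$ is simply connected and $H^i(N;\Z) \cong H^i(M;\Z)$ for $i \leq n-4$. By Wilking's periodicity corollary (Corollary \ref{cor:PeriodicityCorollary}) with $k = 2$, the ring $H^{1 \leq * \leq n-1}(M;\Z)$ is $2$-periodic, so with simple connectivity and Poincar\'e duality one gets $H^{\mathrm{odd}}(M;\Z) = 0$ and $b_2(M) = b_4(M) = \cdots = b_{n-2}(M)$.

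Next I would split into cases according to $b_2(M)$. If $b_2(M) = 0$, then $M$ is a simply connected integral homology sphere, hence a homotopy sphere, hence homeomorphic to $\s^n$ by the Poincar\'e conjecture. If $b_2(M) \geq 2$, I would derive a contradiction with positive curvature: in this case $M^\iota$ has at least two isolated (hence $T^2$-fixed) points, and combining Frankel's theorem applied to $N$ and to the fixed-point sets of the remaining involutions of $T^2$ with the isotropy-representation structure at the $T^2$-fixed points should force two disjoint totally geodesic submanifolds whose codimensions sum to at most $n$. This leaves $b_2(M) = 1$, i.e. $M$ is an integral cohomology $\C\pp^{n/2}$. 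Here the goal is to produce a circle $\sone \subseteq T^2$ with a codimension-two fixed-point component, for then Grove--Searle's classification of positively curved fixed-point-homogeneous manifolds gives $M \cong \s^n$ or $\C\pp^{n/2}$, and the sphere is excluded by the cohomology. If the $T^2$-action on $N$ is not almost effective, its ineffective kernel contains a circle fixing $N$ pointwise; since $\cod N = 2$ and $T^2$ acts effectively on $M$, that circle has a codimension-two fixed-point component containing $N$, and we are done. Otherwise $T^2$ acts almost effectively on $N$, and I would examine the isotropy weights at a $T^2$-fixed point of $N$ and at an isolated $\iota$-fixed point: as soon as one weight is not proportional to any of the others, the subcircle it determines has a codimension-two fixed-point component through that point.

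The hard part will be this last step: ruling out the degenerate weight configuration in which, at every $T^2$-fixed point, the isotropy weights fall into proportionality classes each of size at least two, so that no subcircle has a codimension-two fixed-point set automatically. Excluding this configuration — equivalently, completing the recognition of the homeomorphism type in the cohomology $\C\pp^{n/2}$ case — should require the ring structure of $H^*(M;\Z)$ together with a localization computation of $\chi(M)$ and $\sigma(M)$, or an analysis of the normal Euler class $e(\nu) = \mathrm{PD}[N]|_N$ and of the double-disk-bundle decomposition $M = D(\nu) \cup_{\s^{n-1}} D^n$ that it induces. The cohomological reduction, the sphere case, and the exclusion of $b_2(M) \geq 2$ are comparatively routine given the results recalled above, so the weight/recognition analysis in the $\C\pp^{n/2}$ case is where the real work lies.
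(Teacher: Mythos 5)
Your reduction to showing that $H^*(M;\Z)$ is a cohomology $\s^n$ or $\C\pp^{n/2}$ is fine, and the $b_2(M)=0$ case is handled correctly, but the proposal has two genuine gaps, and the second one is exactly where you yourself say ``the real work lies.'' The paper closes that step not by a weight/localization/disk-bundle analysis but by a citation: once $b_2(M)\leq 1$, periodicity makes $M$ a cohomology $\s^n$ or $\C\pp^{n/2}$, the sphere case follows from the Poincar\'e conjecture, and the cohomology $\C\pp^{n/2}$ case is homeomorphic to $\C\pp^{n/2}$ by Lemma 3.6 of Fang--Rong \cite{FangRong05}. So no circle with a codimension-two fixed-point component needs to be produced, and the ``degenerate weight configuration'' you could not rule out never has to be confronted. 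As written, your argument in the $b_2(M)=1$ case is a plan for an argument, not a proof.

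The first gap is the exclusion of $b_2(M)\geq 2$, which you only sketch (``should force two disjoint totally geodesic submanifolds whose codimensions sum to at most $n$''), without exhibiting the two submanifolds or verifying disjointness. The paper's proof does not argue by contradiction at all; it bounds $b_2(M)$ directly using a second involution. Either some $\iota'\in T^2\setminus\langle\iota\rangle$ has a fixed-point component $N'$ of codimension at most $\frac{n-1}{2}$, in which case the transverse intersection with $N$ and the periodicity corollary make $N'$ $2$--periodic and the connectedness lemma gives $b_2(M)\leq b_2(N')\leq 1$; or every such $\iota'$ has codimension at least $\frac n2$, in which case at an isolated fixed point $x\in M^T\setminus N$ the involution $\iota$ acts as $-\id$ on $T_xM$, so $\cod(M^{\iota'}_x)+\cod(M^{\iota\iota'}_x)=n$ forces both to equal $\frac n2$; then Frankel gives $M^{T^2}\subseteq N\cup N'$, and inclusion--exclusion for Euler characteristics combined with periodicity yields $b_2(M)=b_2(N')\leq 1$. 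You correctly observe that the extra components of $M^\iota$ are $T^2$--fixed points and that their number equals $b_2(M)$, which is the right starting point, but the isotropy-representation step and the Euler characteristic bookkeeping that actually deliver the bound are missing. In short: the routine parts of your outline agree with the paper, but both the $b_2\geq 2$ exclusion and, more seriously, the homeomorphism recognition in the $\C\pp^{n/2}$ case are left open, and the latter is resolved in the paper by invoking Fang--Rong rather than by the analysis you propose.
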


\begin{proof}
Let $N \subseteq M^\iota$ be a component of codimension two. By Wilking's periodicity corollary, it follows that $M$ is an cohomology $\s^n$ or $\C\pp^{n/2}$ if $b_2(M) \leq 1$. In the first of these cases, the homeorphism classification follows by Perelman's resolution of the Poincar\'e conjecture. In the second case, the homeomorphism classification follows from Lemma 3.6 in Fang--Rong \cite{FangRong05}. Hence it suffices to show that $b_2(M) \leq 1$. We may assume that $n \geq 8$, since otherwise this holds by the theorem of Hsiang and Kleiner and the connectedness lemma.

Suppose first that an involution $\iota'\in T^2 \setminus\langle\iota\rangle$ exists whose fixed-point set contains a component $N'$ has codimension at most $\frac{n-1}{2}$. The intersection is transverse, so Wilking's periodicity corollary implies that $N'$ is $2$--periodic. By the connectedness lemma, $b_2(M) \leq b_2(N') \leq 1$, as required.

Suppose now that $\cod(M^{\iota'}) \geq \frac n 2$ for all involutions $\iota' \in T^2\setminus\langle\iota\rangle$. Consider the isotropy representation at any point $x \in M^T \setminus N$. The codimensions of $M^{\iota'}_x$ and $M^{\iota\iota'}_x$ sum to $n$ and hence equal $\frac{n}{2}$.  Fixing any such involution $\iota'$ and denoting its component of maximal dimension by $N'$, it follows by Frankel's theorem that $M^{T^2} \subseteq N \cup N'$. Applying the inclusion-exclusion property to this containment, we have
	\[\chi(M) - \chi(N) = \chi(N') - \chi(N \cap N').\]
By Wilking's periodicity corollary, Parts (1) and (3), the left-hand side equals $b_2(M)$, the right-hand side equals $b_2(N')$, and $b_2(N') \leq 1$. This completes the proof.
\end{proof}

Using the codimension two lemma, we prove a similar lemma for the case of codimension four fixed-point sets.

\begin{lemma}[Codimension four lemma, part 1]\label{lem:cod4part1}
Let $M^n$ be a closed, simply connected, positively curved Riemannian manifold with even dimension $n \geq 12$ and $T^r$ symmetry with $r \geq 3$. Assume there exists $x \in M$ and involutions $\iota_1,\iota_2 \in T^r$ such that $\cod\of{M^{\iota_1}_x} = 4$ and $\cod\of{M^{\iota_2}_x} < \frac{n}{2}$. One of the following occurs:
	\begin{enumerate}
	\item $M$ is a cohomology $\s^n$, $\C\pp^{\frac n 2}$, or $\HH\pp^{\frac n 4}$, or $M$ is a homology $\C\pp^{2m+1}$ with cohomology generated by some $z \in H^2(M;\Z)$ and $x \in H^4(M;\Z)$ such that $z^2 = m x$ for some $m \in \Z$.
	\item $n = 4m+2$, $M \sim_{\Z_2} \s^n$, and $T^r$ acts almost effectively on $M^{\iota_1}_x$.
	\item $n = 4m+2$, $\cod(M^{\iota_2}_x) = \frac{n}{2} - 1$ and $T^r$ acts almost effectively on $M^{\iota_1}_x$ or $M^{\iota_2}_x$.
	\end{enumerate}
\end{lemma}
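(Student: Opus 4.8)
The plan is to extract periodicity in $H^*(M;\Z)$ from the codimension-four fixed-point submanifold $N_1 := M^{\iota_1}_x$ and the smaller-codimension one $N_2 := M^{\iota_2}_x$ (write $k_2 := \cod N_2$), feed it into Lemma~\ref{lem:4periodicZ2cohomology}, and show that the alternatives (2) and (3) are exactly what survives when $N_2$ is not connected enough for this to close. By Berger's theorem applied to the $T^r$-action on the component of $M^{\langle\iota_1,\iota_2\rangle}$ through $x$, I may assume $x \in M^{T^r}$. If some involution of $T^r$ has a fixed-point component of codimension two, then, as $r \geq 3 \geq 2$, the codimension two lemma (Lemma~\ref{lem:cod2}) gives that $M$ is homeomorphic to $\s^n$ or $\C\pp^{n/2}$, which is conclusion~(1). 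So from now on every involution of $T^r$ has all fixed-point components of even codimension $\neq 2$, hence $\geq 4$; in particular $4 \leq k_2 < \tfrac n2$ with $k_2$ even. Wilking's periodicity corollary applied to $N_1$ shows that $H^{3 \leq * \leq n-3}(M;\Z)$ is $4$-periodic, so everything reduces to controlling $H^2(M;\Z)$ and $H^4(M;\Z)$ (the rest then follows by Poincar\'e duality and the universal coefficient theorem, as in \cite[Section~6]{Kennard13}).

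The core of the argument exploits $N_2$. Since $k_2 \leq \tfrac n2 - 1$, the connectedness lemma makes $N_2 \hookrightarrow M$ at least $3$-connected, so $N_2$ is simply connected and $b_2(M) = b_2(N_2)$, and if moreover $k_2 \leq \tfrac n2 - 2$ it is at least $5$-connected, giving $H^i(M;\Z) \cong H^i(N_2;\Z)$ for $i \leq 4$. To bound $b_2(N_2)$, and where possible $b_4(N_2)$, I would analyze $N_1 \cap N_2$ via the isotropy representation of $\langle\iota_1,\iota_2\rangle$ at $x$, decomposing $T_xM$ into the four eigenspaces $V_{\epsilon_1\epsilon_2}$; the case $\iota_1 = \iota_2$ is immediate, since then $N_1$ is a codimension-four fixed set and the analysis below applies verbatim. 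One has $\dim V_{10} + \dim V_{11} = 4$, and since fixed-point components in the even-dimensional $N_2$ have even codimension, $\dim V_{10} \in \{0,2,4\}$. If $\dim V_{10} = 4$, then $V_{11} = 0$, the intersection is transverse, and Wilking's periodicity corollary, part~(3), makes $H^*(N_2;\Z)$ fully $4$-periodic; as $\dim N_2 = n - k_2 > \tfrac n2 \geq 6$ is even, hence $\geq 8$, Lemma~\ref{lem:4periodicZ2cohomology} identifies $N_2$ and forces $b_2(N_2), b_4(N_2) \leq 1$. If $\dim V_{10} = 2$, the component of $N_1 \cap N_2$ through $x$ is a codimension-two totally geodesic submanifold of $N_2$; when $T^r$ acts on $N_2$ with effective rank $\geq 2$ (available since $r \geq 3$), the codimension two lemma applied inside $N_2$ identifies $N_2$ as a cohomology $\s$ or $\C\pp$, and in the remaining sub-case a corank-$\geq 2$ subtorus fixes $N_2$ pointwise, whence (using $x \in M^{T^r} \subseteq N_2$ and the connectedness lemma, part~(2)) $N_2 \hookrightarrow M$ is at least $5$-connected and one argues $b_2(N_2), b_4(N_2) \leq 1$ directly. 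If $\dim V_{10} = 0$, then $N_2 \subseteq N_1$, and I would run the analogous argument inside $N_1$, whose inclusion into $M$ is $(n-7)$-connected.

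The outcome is that in every case except the configuration $k_2 = \tfrac n2 - 1$ -- which, $k_2$ being even, occurs only for $n \equiv 2 \bmod 4$ -- one obtains $b_2(M) \leq 1$ and $b_4(M) \leq 1$, hence full $4$-periodicity of $H^*(M;\Z)$. Lemma~\ref{lem:4periodicZ2cohomology} then yields conclusion~(1) when $n \equiv 0 \bmod 4$, and otherwise conclusion~(1) unless $M \sim_{\Z_2} \s^n$; in that last case $b_2(M) = 0$, and since $N_1 \hookrightarrow M$ is highly connected $N_1$ is itself a $\Z_2$-homology sphere, so $\chi(N_1) = 2 = \chi(M)$, whence either $T^r$ acts almost effectively on $N_1$ -- conclusion~(2) -- or a circle $\sone \subseteq T^r$ fixes $N_1$ and Lemma~\ref{lem:cod4dk1Qsphere} upgrades $M$ to $\s^n$ -- conclusion~(1). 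This leaves only $n = 4m+2$ with $k_2 = \tfrac n2 - 1$, where $N_2 \hookrightarrow M$ is merely $3$-connected and no longer controls $H^4(M;\Z)$; here I would prove conclusion~(3) by contradiction, assuming $T^r$ acts almost effectively on neither $N_1$ nor $N_2$, so that circles fix each of them, and showing that the degree-$4$ and degree-$(\tfrac n2 - 1)$ periodicity classes furnished by Wilking's periodicity corollary, part~(2), together with $k_2 = \tfrac n2 - 1$ and the way these circles act on $N_1 \cap N_2$, force enough vanishing in odd degrees to return to conclusion~(1).

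\textbf{The main obstacle} is precisely this last, $k_2 = \tfrac n2 - 1$, case, together with the $\dim V_{10} \in \{0,2\}$ sub-cases of the core step: one must keep careful track of parities (dimensions and codimensions of fixed-point sets are even) and, above all, of the effective ranks of $T^r$ on the various fixed-point submanifolds, so that the codimension two lemma and the individual parts of Wilking's periodicity corollary are legitimately applicable exactly where I want them, and so that the residual list of configurations collapses to conclusions~(2) and~(3) and nothing more.
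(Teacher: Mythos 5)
Your outline assembles many of the right ingredients (periodicity from the codimension-four component $N_1$, analysis of $N_1\cap N_2$, the codimension two lemma, Lemma \ref{lem:4periodicZ2cohomology}, Lemma \ref{lem:cod4dk1Qsphere}, and the identification of $k_2=\frac n2-1$ as the exceptional case), but there is a genuine gap at the central step. You pass from ``$b_2(M)\leq 1$ and $b_4(M)\leq 1$'' to ``full $4$--periodicity of $H^*(M;\Z)$'' and then apply Lemma \ref{lem:4periodicZ2cohomology} to $M$; this inference is not valid. The periodicity furnished by $N_1$ only covers $H^{3\leq *\leq n-3}(M;\Z)$, so in particular the map $H^2(M;\Z)\to H^6(M;\Z)$ is not controlled, and in the borderline case $k_2=\frac n2-2$ the inclusion $N_2\to M$ is only $5$--connected, so $H^6(M;\Z)$ is also invisible to $N_2$. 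Concretely, nothing in your argument rules out, say, $b_2(M)=0$ while $b_6(M)=1$ (and Betti-number bounds alone never give the ring-level periodicity that Lemma \ref{lem:4periodicZ2cohomology} requires, since the periodicity element coming from $N_1$ need not be a generator of $H^4$). The paper closes exactly this hole with a global counting argument that is absent from your proposal: the containment lemma gives $M^{T^r}\subseteq N_1\cup N_2\cup N_{12}$, and comparing the inclusion-exclusion computation of $\chi(M)-\chi(N_1)$ with the expression $b_4(M)+b_6(M)$ obtained from the periodicity corollary yields $b_2(M)=b_6(M)$; only with this identity does the case analysis collapse onto conclusions (1) and (2).

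Separately, the case $k_2=\frac n2-1$ is where most of the work in the paper actually lies, and your proposal only records an intention (``force enough vanishing in odd degrees'') rather than an argument. In the paper this case proceeds by showing the restriction $H^4(M;\Z)\to H^4(N_2;\Z)$ is either an isomorphism (giving conclusion (1)) or zero, and in the latter case deducing $M\sim_{\Z}\s^2\times\s^{n-2}$, using Theorem \ref{thm:Wilking5.1} to force $n=18$, and then deriving a contradiction via Frankel's theorem, Euler characteristics of the components of $M^{\iota_2}$, and the production of a further involution reducing to the earlier case; none of this is routine bookkeeping with periodicity classes. Finally, note that your use of the codimension two lemma inside $N_2$ needs $T^2$ symmetry of $N_2$; the paper secures this by choosing $N_2$ of maximal dimension among admissible components, a normalization your sketch replaces with an unproved dichotomy.
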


\begin{proof}
Let $N_1 \subseteq M^{\iota_1}$ denote the component of codimension four, and let $N_2 \subseteq M^{\iota_2}$ denote the component of codimension $k_2 \leq \frac n 2 - 1$. Also let $N_{12} \subseteq M^{\iota_1\iota_2}$ be the component containing $N_1 \cap N_2$, which is connected by the connectedness lemma. We may assume that $N_2$ has maximal dimension among all such choices, so it admits $T^2$ symmetry. As a result, $N_2$ has $4$--periodic cohomology by the connectedness lemma (if $N_1 \cap N_2$ is transverse) or the codimension two lemma (if not).

First assume $k_2 \leq \frac{n}{2} - 2$. We claim that $b_2(M) = b_6(M)$. Since $N_2$ has $4$--periodic cohomology, $H^3(M;\Z_2) = 0$ and $H^5(N_2;\Z) = 0$ by Lemma \ref{lem:4periodicZ2cohomology}. Since the inclusion $N_2 \to M$ is $5$--connected, $H^3(M;\Z_2) = 0$ and $H^5(M;\Z) = 0$ as well. By the periodicity corollary, $H^{2i+1}(M;\Z_2) = 0$ for all $i$ and
	\begin{equation}\label{eqn:codim4eqn1}
	\chi(M) - \chi(N_1) = b_4(M) + b_6(M).
	\end{equation}
By the connectedness lemma, $N_2$, $N_{12}$, and the common $2$--fold intersection $M^{\inner{\iota_1,\iota_2}}_x$ of any three of submanifolds $N_1$, $N_2$, and $N_{12}$ are $4$--periodic with common values of $b_2$ and $b_4$. Moreover, the $M^{T^r}$ is contained in $N_1 \cup N_2 \cup N_{12}$ by the containment lemma, so the inclusion-exclusion property of Euler characteristics implies
	\begin{equation}\label{eqn:codim4eqn2}
	\chi(M) - \chi(N_1) = b_2(M) + b_4(M).
	\end{equation}
Indeed, if $N_1 \cap N_2$ is transverse, then $N_{12} = N_1 \cap N_2$  by the connectedness lemma and $\chi(N_2) - \chi(N_1 \cap N_2) = b_2(N_2) + b_4(N_2)$ by $4$--periodicity. Similarly, if $N_1 \cap N_2$ is not transverse, then $N_2$, $N_{12}$, and $N_1 \cap N_2$ are actually $2$--periodic and the right-hand side equals $b_2(N_2) + b_2(N_{12}) = b_2(N_2) + b_4(N_{12})$. In both cases, the equality holds by the connectedness lemma. Comparing Equations \ref{eqn:codim4eqn1} and \ref{eqn:codim4eqn2}, we conclude the claim that $b_2(M) = b_6(M)$. Given this claim, it follows from Lemma \ref{lem:4periodicZ2cohomology}, the connectedness lemma, the naturality of cup products, and the periodicity corollary that Conclusion (1) holds or that $M$ is a mod $2$ homology sphere. Moreover, we may refine the latter conclusion as follows:
	\begin{itemize}
	\item If some circle in $T^r$ fixes $N_1$, then $M$ is homeomorphic to $\s^n$ by Lemma \ref{lem:cod4dk1Qsphere} and Conclusion (1) holds.
	\item If $n \equiv 0 \bmod{4}$, then Poincar\'e duality and the periodicity corollary implies that $H_3(M;\Z) \cong H^5(M;\Z)$, which vanishes since it injects into $H^5(N_2;\Z) = 0$. Hence $H^4(M;\Z)$ is torsion-free in this case, $N_2$ and hence $M$ are integral homology spheres, and Conclusion (1) holds.
	\item If neither of these cases occurs, then $T^r$ acts almost effectively on $N_1$, the dimension $n \equiv 2 \bmod{4}$, and Conclusion (2) holds.
	\end{itemize}

Assume from now on that $k_2 = \frac{n}{2} - 1$. Note that $n \equiv 2 \bmod{4}$ since $k_2$ is even. Moreover, assume that both $\dk(N_i) \geq 1$ since otherwise Conclusion (3) holds. In particular, $N_2$ has $4$--periodic cohomology and the inclusion $N_2 \to M$ is $4$--connected. We may assume further that $b_2(M) \geq 1$. Indeed, if $b_2(M) = 0$, then the periodicity corollary implies $b_4(M) = 0$ and hence that $M$ is a rational homology sphere. By Lemma \ref{lem:cod4dk1Qsphere}, $M$ is homeomorphic to $\s^n$ and Conclusion (1) holds.

Lemma \ref{lem:4periodicZ2cohomology} implies that $N_2$ is a cohomology $\C\pp^{\frac{n+2}{4}}$ or that $n \equiv 2 \bmod{8}$ and $H^*(N_2;\Z)$ is generated by some $z \in H^2(N_2;\Z)$ and $x \in H^4(N_2;\Z)$ such that $z^2 = m x$ for some $m \in \Z$. Since $N_2 \to M$ is $4$--connected, it follows by Poincar\'e duality that $M$ is a cohomology $\C\pp^{\frac n 2}$, so we may assume the cohomology of $N_2$ is as in the latter case. We have that $H^2(M;\Z) = \Z$, $H^3(M;\Z) = 0$, and $H_4(M;\Z)$ is the image of $H_4(N_2;\Z)$ under the map induced by inclusion. In particular, the induced injection $H^4(M;\Z) \to H^4(N_2;\Z)$ is either an isomorphism or the zero map. If it is an isomorphism, the naturality of cup products implies that $H^*(M;\Z)$ is generated by elements in degree two and four as is the case for $N_2$ and Conclusion (1) holds. If, instead, it is the zero map, then $H^4(M;\Z) = 0$ and hence $M \sim_\Z \s^2 \times \s^{n-2}$ and $N_1 \sim_\Z \s^2 \times \s^{n-6}$ by the periodicity corollary and the connectedness lemma. In particular, $\chi(M) = \chi(N_1)$, so $M^{\iota_1}$ is connected by Frankel's theorem. By the connectedness lemma again, the action of $\iota_1$ on $N_2$ has connected fixed point set $(N_2)^{\iota_1} = M^{\iota_1} \cap N_2 = N_1 \cap N_2$. If $\dim(N_2) \geq 12$, this implies $H^4(N_2;\Z_2) = 0$ by Lemma \ref{thm:Wilking5.1}, a contradiction, so we may assume $\dim(N_2) \leq 10$, which is equivalent to $n \leq 18$. Since $n \equiv 2 \bmod{8}$ and $n \geq 12$ by assumption, we have that $n = 18$ and $\dim(N_2) = 10$. Note that $\chi(M) = 4$ and $\chi(N_2) = 6$, so there exists a component $M^{\iota_2}_y \subseteq M^{\iota_2}$ with negative Euler characteristic. By Frankel's theorem, $\dim(M^{\iota_2}_y) \leq 6$. But $\chi(M^{\iota_2}_y) > 0$ if $M^{\iota_2}_y$ has dimension $0$, $2$, or $4$, or if $M^{\iota_2}_y$ admits an isometric circle action, so we may assume $\dim(M^{\iota_2}_y) = 6$ and that $T^r$ fixes $M^{\iota_2}_y$. Since $r \geq 3$, there exists an involution $\iota \in T^r \setminus\inner{\iota_1,\iota_2}$ such that $M^{\iota}_y$ strictly contains $M^{\iota_2}_y$. By replacing $M^{\iota}_y$ by $M^{\iota_2\iota}_y$, if necessary, we may assume $\cod(M^{\iota}_y) \leq \frac{1}{2} \cod(M^{\iota_2}_y) = 6$. But now $M^{\iota_1}_x$ and $M^{\iota}_y$ intersect by Frankel, and we may proceed as in the case where $\cod(M^{\iota_2}) \leq \frac{n}{2} - 2$. Altogether one concludes that this case does not actually occur.
\end{proof}

In the case where there is no involution $\iota_2$ as in Lemma \ref{lem:cod4part1}, we need a companion lemma.

\begin{lemma}[Codimension four lemma, part 2]\label{lem:cod4part2}
Let $M^n$ be a closed, simply connected, positively curved Riemannian manifold with even dimension and $T^r$ symmetry such that $r \geq 3$. Assume there exists an involution $\iota_1 \in T^r$ such that $\cod\of{M^{\iota_1}} = 4$ and that every other involution $\iota$ satisfies $\cod\of{M^\iota} \geq \frac{n}{2}$. Either
	\begin{enumerate}
	\item $M^{\iota_1}$ is connected, or
	\item $2^r$ divides $n$, and every involution $\iota \in T^r \setminus\inner{\iota_1}$ has $\cod\of{M^{\iota}} = \frac n 2$.
	\end{enumerate}
\end{lemma}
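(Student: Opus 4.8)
The plan is to assume $M^{\iota_1}$ is disconnected and deduce conclusion (2). Write $N_1 \subseteq M^{\iota_1}$ for a component of codimension four and let $N_1'$ be any other component. Since $N_1 \cap N_1' = \emptyset$, Frankel's theorem gives $\cod(N_1') > n-4$, and because fixed-point components of an involution have even codimension this forces $\cod(N_1') \in \{n-2,\,n\}$. (We may assume $n \geq 8$: if $r \geq 3$ and $n = 6$ the symmetry rank is maximal, so $M \cong \s^6$ or $\C\pp^3$ by Grove--Searle, and a direct check shows the hypotheses then force $M^{\iota_1}$ to be connected; for $n \geq 8$ the same count also shows $N_1$ is the unique codimension-four component.) The goal thus becomes: show every component of $M^{\iota_1}$ other than $N_1$ is an isolated fixed point of $\iota_1$, and then extract (2) from the isotropy representation there.

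The first step is to eliminate $\cod(N_1') = n-2$. Then $N_1'$ is a closed, totally geodesic, hence positively curved, surface, so $N_1' \not\cong T^2$, and therefore the induced (isometric) $T^r$-action on $N_1'$ has ineffective kernel of dimension at least $r-1$; in particular this kernel contains at least two involutions $\iota \neq \iota_1$ (using $r \geq 3$). Fix $x' \in M^T \cap N_1'$ (Berger) and split $T_{x'}M = E_+ \oplus E_-$ into the $(\pm 1)$-eigenspaces of $\iota_1$, so $\dim E_+ = 2$ and $\dim E_- = n-2$. Both $\iota$ and $\iota\iota_1$ fix $N_1'$ pointwise, hence act trivially on $E_+$, while $\iota\iota_1 = -\iota$ on $E_-$, so the $(-1)$-eigenspaces of $\iota$ and of $\iota\iota_1$ inside $E_-$ are complementary. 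Hence $\cod(M^\iota_{x'}) + \cod(M^{\iota\iota_1}_{x'}) = \dim E_- = n - 2$, contradicting the hypothesis that each of these codimensions (belonging to involutions in $T^r \setminus \langle\iota_1\rangle$) is at least $\tfrac n2$. So every component of $M^{\iota_1}$ other than $N_1$ is an isolated fixed point $x'$ of $\iota_1$; there $(T_{x'}M)^{T^r} \subseteq E_+ = 0$, so $x'$ is also isolated as a fixed point of $T^r$, and $\iota_1$ acts as $-\id$ on $T_{x'}M$. Such a point exists since $M^{\iota_1}$ is disconnected.

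Now I would work at $x'$. For an involution $\iota \in T^r \setminus \langle\iota_1\rangle$, also $\iota\iota_1 \in T^r \setminus \langle\iota_1\rangle$, and since $\iota_1 = -\id$ the $(-1)$-eigenspaces of $\iota$ and $\iota\iota_1$ are complementary, so $\cod(M^\iota_{x'}) + \cod(M^{\iota\iota_1}_{x'}) = n$; the hypothesis forces each summand to be $\tfrac n2$, whence $\cod(M^\iota) = \tfrac n2$ because $\tfrac n2 \leq \cod(M^\iota) \leq \cod(M^\iota_{x'}) = \tfrac n2$. This gives the codimension assertion of (2). For $2^r \mid n$: decompose $T_{x'}M$ into its $n/2$ two-dimensional $T^r$-weight spaces and let $\phi_1,\dots,\phi_{n/2} \in \Hom(\Z_2^r,\Z_2)$ be the restrictions of the weights to $\Z_2^r \subset T^r$; since $\iota_1 = -\id$, each $\phi_i(\iota_1) \neq 0$, so every $\phi_i$ lies in the $2^{r-1}$-element coset $A = \{\phi : \phi(\iota_1) \neq 0\}$. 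Set $m_\phi = \#\{i : \phi_i = \phi\}$, so $\sum_{\phi \in A} m_\phi = \tfrac n2$; and for every involution $\iota \neq \id, \iota_1$ the identity $\tfrac12\cod(M^\iota_{x'}) = \sum_{\phi \in A,\ \phi(\iota)\neq 0} m_\phi$ together with the previous paragraph gives $\sum_{\phi \in A,\ \phi(\iota)\neq 0} m_\phi = \tfrac n4$. Identifying $A$ with the hyperplane $\iota_1^{\perp} \cong \Z_2^{r-1}$ via a fixed base weight, and letting $\iota$ range over the pairs $\{\iota, \iota\iota_1\}$, this says exactly that $m$ has sum $\tfrac n4$ over every index-two subgroup of $\Z_2^{r-1}$. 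A short Fourier computation on $\Z_2^{r-1}$ then shows all nonzero Fourier coefficients of $m$ vanish, so $m \equiv \tfrac{n/2}{2^{r-1}} = \tfrac{n}{2^r}$; since $m$ is integer-valued, $2^r \mid n$.

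I expect the Fourier/counting step to be the crux: the content is recognizing that the individually weak codimension bounds, applied to all involutions at once, package into the statement that the weight-multiplicity function $m$ has equal sums over all hyperplanes, and that this rigidly forces $m$ to be constant and hence $2^r \mid n$. The secondary obstacle is the careful elimination of the $\cod(N_1') = n-2$ possibility and the attendant small-dimensional bookkeeping.
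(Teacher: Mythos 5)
Your proof is correct and follows essentially the same route as the paper's: pass to a $T$-fixed point in a second component of $M^{\iota_1}$, use Frankel's theorem plus parity to see that component has codimension $n-2$ or $n$, rule out $n-2$ using $r \geq 3$, and in the isolated-point case use that $\iota_1$ acts as $-\id$, so the $(-1)$-eigenspaces of $\iota$ and $\iota\iota_1$ are complementary, forcing every $\cod\of{M^\iota} = \frac n2$ and the weight multiplicities to be equidistributed. The only differences are cosmetic: the paper eliminates the codimension-$(n-2)$ case by a pigeonhole on the last coordinate of the isotropy image rather than via the kernel of the action on the surface component, and your Fourier computation on $\Z_2^{r-1}$ simply makes explicit the equal-dimensional block decomposition that the paper asserts as ``not hard to see.''
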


\begin{proof}[Proof of codimension four lemma, part 2]
Suppose that $M^{\iota_1}$ is not connected. Using Berger's theorem, choose $y \in (M^{\iota_1})^T$ such that $y$ does not lie in the component of $M^{\iota_1}$ of codimension four. Frankel's theorem implies $\cod\of{M^{\iota_1}_y} \geq n - 2$. Under the map $\Z_2^r \to \Z_2^{n/2}$ induced by the isotropy representation of $T^r$ at $y$,
	\[\iota_1 \mapsto (1, 1, 1, \ldots, 1, *),\]
where the asterisk is $0$ or $1$ according to whether $\cod\of{M^{\iota}_y}$ is $n-2$ or $n$. In the first case, since $r \geq 3$, we may choose $\iota_2 \in \Z_2^r \setminus\inner{\iota_1}$ whose image has a $0$ in the last entry. It then follows that $M^{\iota_2}_y$ or $M^{\iota_1\iota_2}_y$ has codimension less than $\frac n 2$, a contradiction. In the second case, we have that
	\[\cod\of{M^\iota_y} + \cod\of{M^{\iota\iota_1}_y} = n\]
for all $\iota \in \Z_2^r \setminus \inner{\iota_1}$. Under the assumption that $\cod\of{M^\iota_y} \geq \frac n 2$ for all $\iota \in \Z_2^r \setminus \inner{\iota_1}$, it follows that every such codimension equals $\frac n 2$. It is not hard to see that there exists a decomposition $\Z_2^{\frac n 2} = V_1 \oplus \ldots \oplus V_{2^{r-1}}$ such that
	\begin{itemize}
	\item for all $\iota \in \Z_2^r$, the projection onto $V_i$ of image of $\iota$ has all $0$s or all $1$s, and
	\item The $V_i$ have equal dimensions.
	\end{itemize}
In particular, $\frac n 2$ is divisible by $2^{r-1}$, which proves that $2^r$ divides $n$.
\end{proof}

\smallskip\section{Dimension 10}\label{sec:dim10}\smallskip

The only two compact, simply connected, smooth $10$--manifolds known to admit positive sectional curvature are $\s^{10}$ and $\C\pp^5$. Equipped with the  standard metrics, each of these spaces admits $T^5$ symmetry. We partially recover a classification of these two spaces under the assumption of $T^3$ symmetry.

\begin{theorem}\label{thm:dim10}
Let $M^{10}$ be a closed, simply connected Riemannian manifold with positive sectional curvature and $T^3$ symmetry. One of the following occurs:
	\begin{itemize}
	\item $M$ is homeomorphic to $\s^{10}$.
	\item $\chi(M) = \chi(\C\pp^5)$, $H_i(M;\Z) = H_i(\C\pp^5;\Z)$ for $i \leq 3$, and $H^{10}(M;\Z)$ is generated by an element of the form $x^3 y$ with $x \in H^2(M;\Z)$ and $y \in H^4(M;\Z)$.
	\item $H_*(M;\Z) \cong H_*(\C\pp^5;\Z)$, and the cohomology is generated by some $z \in H^2(M;\Z)$ and $x \in H^4(M;\Z)$ subject to the relation $z^2 = m x$ for some $m \in \Z$.
	\end{itemize}
In any case, $H_2(M;\Z)$ is $0$ or $\Z$, $H_3(M) = 0$, and $\chi(M) = 2 + 4b_2(M) \in \{2,6\}$.
\end{theorem}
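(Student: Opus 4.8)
The plan is to apply Berger's theorem to fix a point $x\in M^{T^3}$ and to read the codimensions of the fixed-point components $M^\iota_x$ of the seven involutions $\iota\in T^3$ off of the isotropy representation on $T_xM$, then feed this data into the recognition, periodicity, and codimension lemmas of Sections \ref{sec:Containment}--\ref{sec:LowCodim}. We may assume the $T^3$-action is effective. The first step is a counting argument: writing $H=\Z_2^3\subseteq T^3$, Borel's formula gives $\sum_{\iota\in H\setminus\{1\}}\cod(M^\iota_x)=4\cod(M^H_x)\leq 40$, and since each summand is a positive even integer, not all seven can be at least $6$. Hence some involution $\iota_1$ has $\cod(M^{\iota_1}_x)\in\{2,4\}$.

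If $\cod(M^{\iota_1}_x)=2$, the codimension two lemma (Lemma \ref{lem:cod2}) shows that $M$ is homeomorphic to $\s^{10}$ or to $\C\pp^5$; the former gives the first conclusion and the cohomology ring of the latter is covered by the third conclusion (with $m=1$). The bulk of the work is the case $\cod(M^{\iota_1}_x)=4$, with every involution having codimension at least $4$. Here $n=10$ lies below the range $n\geq 12$ of Lemmas \ref{lem:cod4part1} and \ref{lem:cod4part2}, so the codimension-four component $N_1=M^{\iota_1}_x$ must be analyzed directly. Wilking's periodicity corollary makes $H^{3\leq\ast\leq 7}(M;\Z)$ four-periodic, and the connectedness lemma makes $N_1\hookrightarrow M$ at least $3$-connected, so $b_2(M)=b_2(N_1)$ and one can control $H_3(M;\Z)$. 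I would then distinguish according to how $T^3$ acts on the positively curved $6$-manifold $N_1$: if a circle fixes $N_1$, the connectedness lemma upgrades to $4$-connectedness and, after checking $\chi(M)=\chi(N_1)$ via the inclusion--exclusion property, Lemma \ref{lem:cod4dk1Qsphere} applies once $b_2(M)=0$ is known; if instead $T^3$ acts almost effectively on $N_1$, Grove--Searle's maximal-symmetry-rank classification identifies $N_1$ with $\s^6$ or $\C\pp^3$, forcing $b_2(M)\in\{0,1\}$. When $b_2(M)=0$ the periodicity forces $M$ to be a rational homology sphere, and Lemma \ref{lem:cod4dk1Qsphere} (or Theorem \ref{thm:Wilking5.1} with the sphere recognition theorems) yields the first conclusion. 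When $b_2(M)=1$, combining the periodicity on $M$ with Lemma \ref{lem:4periodicZ2cohomology}(2) gives either a mod-$2$ homology sphere (again the first conclusion) or a homology $\C\pp^5$; in the latter, the naturality of cup products and Poincar\'e duality separate the third conclusion ($z^2=mx$) from the weaker second conclusion, which is precisely what remains when the periodicity is only established on $H^{1\leq\ast\leq 9}$ and one can say no more than that the top class has the form $x^3y$.

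To finish, I would run the bookkeeping through every branch: $H_2(M;\Z)$ is $0$ or $\Z$ and $H_3(M;\Z)=0$ in all cases, and the (full or partial) four-periodicity on the $10$-manifold $M$ forces $\chi(M)=2+4b_2(M)$, hence $\chi(M)\in\{2,6\}$ once $b_2(M)\in\{0,1\}$ is known. The main obstacle I anticipate is exactly the codimension-four case at $n=10$: the ready-made codimension-four lemmas do not apply, so one must handle transversality of the relevant fixed-point sets by hand, track $2$-torsion in $H^4$ carefully while passing between integral and $\Z_2$ coefficients, run the cup-product argument distinguishing the second and third conclusions, and verify that the weight configuration always provides either a second small-codimension involution meeting $N_1$ transversally or a circle fixing $N_1$.
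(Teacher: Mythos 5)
Your outline matches the paper's broad strategy (codimension two handled by Lemma \ref{lem:cod2}; codimension four analyzed via Grove--Searle, the connectedness lemma, Lemma \ref{lem:cod4dk1Qsphere}, and Lemma \ref{lem:4periodicZ2cohomology}), but there is a genuine gap exactly where the real work lies. In the codimension-four case you never establish the global containment that drives the whole computation: the paper's Lemma \ref{lem:dim10containment} shows, by analyzing the $\Z_2^3\to\Z_2^5$ isotropy maps at \emph{every} point of $M^T$ (the Case 1/Case 2 dichotomy, with a Frankel argument in Case 2), that one may choose $x$ and independent involutions $\iota_1,\iota_2$ with $\cod(M^{\iota_i}_x)=4$ such that $M^T\subseteq M^{\iota_1}_x\cup M^{\iota_2}_x\cup M^{\iota_1\iota_2}_x$. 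Only with this containment does the inclusion--exclusion identity $\chi(M)=\chi(M^T)$ become computable and yield $\chi(M)=2+4b_2(M)$ (and, in the rational-sphere branch, the hypothesis $\chi(M)=\chi(N_1)$ of Lemma \ref{lem:cod4dk1Qsphere}). Your substitute claim --- that ``the (full or partial) four-periodicity on the $10$-manifold $M$ forces $\chi(M)=2+4b_2(M)$'' --- is false: for a codimension-four submanifold of $M^{10}$, Corollary \ref{cor:PeriodicityCorollary} only makes $H^{3\le *\le 7}(M;\Z)$ four-periodic, which amounts to little more than a map $H^3\to H^7$ being an isomorphism and places no constraint on $b_4(M)$ or $b_5(M)$, hence none on $\chi(M)$. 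Likewise, your inclusion--exclusion check of $\chi(M)=\chi(N_1)$ presupposes the very containment you have not proved, and in the almost-effective branch Lemma \ref{lem:cod4dk1Qsphere} is not available (no circle fixes $N_1$), so the sphere recognition there must instead go through Theorem \ref{thm:Wilking4.1}, whose hypotheses again require knowing $\chi(M)=2$ and the structure of all $\Z_p$-fixed-point sets.

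Your closing remark that one must ``verify that the weight configuration always provides either a second small-codimension involution meeting $N_1$ transversally or a circle fixing $N_1$'' shows you sensed the issue, but the needed statement is stronger than a local weight count at one fixed point: one must control the isotropy data at all fixed points of $T$ simultaneously (this is exactly the rigidity exploited in Case 2 of Lemma \ref{lem:dim10containment}) to conclude that no component of $M^T$ escapes the union $N_1\cup N_2\cup N_{12}$. Without that step, the Euler characteristic, the dichotomy $\chi(M)\in\{2,6\}$, and the separation of the second and third conclusions are all unsupported, so the proposal as written does not close.
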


For context, we remark that, if $M$ is as in the theorem but has $T^4$ symmetry, work of Fang and Rong implies that $M$ is homeomorphic to $\s^{10}$ or $\C\pp^5$ (see \cite{FangRong05}). On the other hand, it follows from Rong--Su \cite{RongSu05} that a $T^1$ action on $M^{10}$ as in the theorem is sufficient to show $\chi(M) \geq 2$ (cf. Dessai \cite{Dessai11}). We also remark that this classification can be strengthened in the special case where $M$ is rationally elliptic or admits a biquotient structure (see Theorems \ref{thm:dim10elliptic} and \ref{thm:dim101416biquotient}).

We spend the rest of this section on the proof. Denote the torus by $T$. If some involution $\iota \in T$ has fixed-point set $M^\iota$ of codimension two, then $M$ is homeomorphic to $\s^{10}$ or $\C\pp^5$ by Lemma \ref{lem:cod2}. We assume therefore that $\cod(M^\iota) \geq 4$ for all non-trivial involutions $\iota\in T$. In this situation, there exist two involutions with fixed point sets of codimension four, so we are in the setting of the codimension four lemma, part 1. However, the arguments there do not suffice in dimension $10$. Fortunately, four is large enough relative to the dimension of $M$ in this case to force isotropy rigidity. The main consequence of this rigidity is the following:

\begin{lemma}[Containment lemma for dimension $10$]\label{lem:dim10containment}
Assume no involution $\iota \in T^3$ has fixed-point set of codimension two. There exist $x \in M^T$ and  independent involutions $\iota_1,\iota_2 \in T^3$ such that both $\cod\of{M^{\iota_i}_x} = 4$ and 
	$M^T \subseteq M^{\iota_1}_x \cup M^{\iota_2}_x \cup M^{\iota_1\iota_2}_x.$
\end{lemma}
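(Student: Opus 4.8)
Write $\Gamma := \Z_2^3 \subseteq T$ for the $2$--torsion subgroup of the acting torus $T = T^3$, identified with its Pontryagin dual through the standard pairing. By Berger's theorem $M^T \neq \emptyset$. The plan is to fix $x \in M^T$, analyse the isotropy representation of $\Gamma$ at $x$, show the hypothesis forces the reduced weights into one of a few rigid configurations, and produce the required $\iota_1,\iota_2$ in each. Write $T_x M = T_x(M^T_x) \oplus V_1 \oplus \cdots \oplus V_s$ with the $V_j$ two--dimensional weight spaces of reduced weight $\bar\alpha_j \in \Z_2^3$; then $\cod(M^T_x) = 2s$ and $\cod(M^\iota_x) = 2\,\#\{j : \bar\alpha_j(\iota) \neq 0\}$ for every involution $\iota$. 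Effectiveness of $T$ gives $M^\iota \neq M$ for $\iota \neq 1$, so $\iota \mapsto (\bar\alpha_j(\iota))_j$ is injective on $\Gamma$ and the distinct nonzero $\bar\alpha_j$ span $\Z_2^3$.

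\emph{Step 1: rigidity of the weights.} From the hypothesis that no involution has a codimension--two fixed component --- i.e.\ $\#\{j : \bar\alpha_j(\iota) \neq 0\} \neq 1$ for all $\iota \neq 1$ --- together with $\dim M = 10$, I would show that $\cod(M^T_x) \in \{8,10\}$ for every $x \in M^T$, and that $\{\bar\alpha_j\}_j$ is of one of the types: (a) $s=4$ and the weights are the four points of an affine hyperplane $H_{\iota_0} := \{w : w(\iota_0)=1\}$ for some involution $\iota_0$; (b) $s=5$ and the weights are $0$ together with the four points of some $H_{\iota_0}$; (c) $s=5$, all weights nonzero, at least two of them equal; (d) $s=5$, the five weights distinct and nonzero. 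This is elementary finite geometry over $\Z_2^3$: $s\leq 2$ contradicts injectivity, and if $s=3$ the weights form a basis, so some dual--basis vector has a codimension--two fixed component; hence $s\in\{4,5\}$, and for such $s$ the hypothesis forces the distinct nonzero characters missing from $\{\bar\alpha_j\}_j$ to meet every hyperplane, whence --- after a short case check and using that a three--point blocking set in $\mathrm{PG}(2,2)$ is a line --- one obtains exactly (a)--(d).

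\emph{Step 2: configurations (a)--(c).} In each of these I would pick $\chi \in \Z_2^3 \setminus \{0\}$ so that $H' := \chi^\perp \cong \Z_2^2$ has all three of its nontrivial elements $\iota_1,\iota_2,\iota_1\iota_2$ with codimension--four fixed component through $x$, while $\cod(M^{H'}_x) = 6$. For (a) and (b) take any $\chi \in H_{\iota_0}$, so that $\iota_0 \notin H'$ and every involution other than $\iota_0$ has codimension four at $x$. For (c) take $\chi$ occurring as a weight with multiplicity $m_\chi \geq 2$; since the three numbers $\#\{j : \bar\alpha_j(\iota_i) \neq 0\}$ are each at least $2$ and sum to $2(5-m_\chi)$, one gets $m_\chi = 2$ and each of the three numbers equal to $2$. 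In all cases $\cod(M^{H'}_x) + \cod(M^{H'}_y) \leq 6+10 < \tfrac32(10+2)$ for every $y$, so the containment lemma (Lemma~\ref{lem:Containment}, with $r=2$ and $\delta=2$) yields $M^T \subseteq M^{\iota_1}_x \cup M^{\iota_2}_x \cup M^{\iota_1\iota_2}_x$.

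\emph{Step 3: configuration (d), the crux.} Here the only available $H'=\langle\iota_1,\iota_2\rangle$ has $\cod(M^{H'}_x)=8$, so the containment lemma narrowly fails and a global argument is needed --- this is the step I expect to be the real obstacle. One may assume every $x \in M^T$ is of type (d), otherwise Step~2 finishes at some $x$. Fix such an $x$; its five weights omit exactly two nonzero characters $u,v$, and a direct computation shows there are exactly two involutions of codimension four at $x$, namely the independent pair $\iota_1,\iota_2$ with $u(\iota_i)=v(\iota_i)=1$, and that $\iota_1\iota_2$ is the unique involution of codimension eight at $x$. Now take any $y \in M^T$: since $y$ is again of type (d), at most one involution has codimension eight at $y$, and none exceeds codimension eight there (an involution is nonzero on at most four of the five weights at $y$); hence $\cod(M^{\iota_1}_y)$ and $\cod(M^{\iota_2}_y)$ are not both $8$, so $\cod(M^{\iota_i}_y)\leq 6$ for some $i\in\{1,2\}$. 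Then $\cod(M^{\iota_i}_y)+\cod(M^{\iota_i}_x)\leq 6+4=\dim M$, so by Frankel's theorem the components $M^{\iota_i}_y$ and $M^{\iota_i}_x$ of $M^{\iota_i}$ meet and therefore coincide, giving $y\in M^{\iota_i}_x$. Hence $M^T \subseteq M^{\iota_1}_x \cup M^{\iota_2}_x$, which is even stronger than claimed. The two delicate points are the weight classification of Step~1 and the recognition in Step~3 that the failure of the $\Z_2^2$--containment is repaired by a bare two--submanifold application of Frankel's theorem; the remainder is routine bookkeeping with the containment lemma.
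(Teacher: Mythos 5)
Your proposal is correct and takes essentially the same route as the paper: your configurations (a)--(c) are exactly the paper's ``Case 1'' (a rank-two subgroup $H'\cong\Z_2^2$ with $\cod\of{M^{H'}_x}=6$, handled by the containment lemma), while configuration (d) is the paper's ``Case 2'' (rigid isotropy with codimensions $4,4,6,6,6,6,8$ at every fixed point), finished by the same Frankel argument exploiting the uniqueness of the codimension-eight involution at each fixed point. The only difference is that you derive this dichotomy by enumerating the mod~$2$ weight configurations, where the paper uses a shorter direct counting argument, so the substance is identical.
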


\begin{proof}
Consider the map $\Z_2^3 \to \Z_2^5$ induced by the isotropy representation $T \to \SO(T_x M)$ at some fixed point $x \in M^T$. We claim that one of the following possibilities occurs.
	\begin{itemize}
	\item (Case 1) There exists $x \in M^T$ and independent involutions $\iota_1,\iota_2 \in T$ such that $\cod\of{M^{\iota_1}_x} = \cod\of{M^{\iota_2}_x} = 4$ and $M^{\iota_1}_x \cap M^{\iota_2}_x$ is not transverse.
	\item (Case 2) At every $x \in M^T$, there exist involutions $\iota_1,\iota_2 \in \Z_2^3$ such that
		\[\cod\of{M^\iota_x} = \left\{\begin{array}{rcl}
			4	&\mathrm{if}&	\iota\in\{\iota_1,\iota_2\}\\
			6	&\mathrm{if}&	\iota\not\in\langle\iota_1,\iota_2\rangle\\
			8	&\mathrm{if}&	\iota = \iota_1\iota_2\end{array}\right.\]
	\end{itemize}
Indeed, suppose Case 1 does not occur, and let $x \in M^T$. It is not possible for the image of every $\iota \in \Z_2^3$ to have weight at least three, so we may choose $\iota_1 \in \Z_2^3$ with $\cod\of{M^{\iota_1}_x} = 4$. Next, we may choose $\iota_2 \in \Z_2^3\setminus\inner{\iota_1}$ such that $M^{\iota_1}_x \cap M^{\iota_2}_x$ is transverse. Supposing for a moment that $\cod\of{M^{\iota_2}_x} = 6$, it follows that some $\iota \in \Z_2^3 \setminus\inner{\iota_1,\iota_2}$ exists such that $M^{\iota}_x$ has codimension four and intersects $M^{\iota_1}_x$ non-transversely. This is a contradiction since we assumed Case 1 does not occur. It follows that $\cod\of{M^{\iota_2}_x} = 4$. Finally, using again the fact that Case 1 does not occur, it follows that every $\iota \in \Z_2^3 \setminus\inner{\iota_1,\iota_2}$ has $\cod\of{M^\iota_x} = 6$. This concludes the proof of the claim.

If Case 1 occurs, then the lemma follows immediately from the containment lemma. We suppose now that Case 2 occurs and proceed by contradiction. If $z \in M^T$ does not lie in $M^{\iota_1}_x \cup M^{\iota_2}_x$, then $\cod\of{M^{\iota_1}_z} > 6$ and $\cod\of{M^{\iota_2}_z} > 6$ by Frankel's theorem. But this contradicts the fact that, at each point, there exists a unique involution whose fixed-point set component containing that point has codimension greater than six, so the proof is complete.
\end{proof}

We keep the notation $N_1 = M^{\iota_1}_x$ and $N_2 = M^{\iota_2}_x$ throughout the rest of the proof. The proof of Theorem \ref{thm:dim10} is finished below in the following two lemmas.

\begin{lemma}\label{lem:dim10cod4dk0}
If $N_1 \cap N_2$ is not transverse, or if $N_1$ or $N_2$ admits $T^3$ symmetry, then
	\begin{itemize}
	\item $M$ is homeomorphic to $\s^{10}$, or
	\item $\chi(M) = \chi(\C\pp^5)$, $H_i(M;\Z) = H_i(\C\pp^5;\Z)$ for $i \leq 3$, and $H^{10}(M;\Z)$ is generated by an element of the form $x^3 y$ with $x \in H^2(M;\Z)$ and $y \in H^4(M;\Z)$.
	\end{itemize}
\end{lemma}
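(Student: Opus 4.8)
The hypothesis splits into two subcases, and in both we are in the situation of the codimension four lemma, part 1 (Lemma \ref{lem:cod4part1}) with $n = 10$, so I would start by trying to invoke it. However, part 1 requires a second involution $\iota_2$ with $\cod(M^{\iota_2}_x) < n/2 = 5$, i.e.\ of codimension exactly $4$; if $N_1 \cap N_2$ is \emph{not} transverse, then by Lemma \ref{lem:dim10containment} (Case 1) we have exactly such a pair, and the ``not transverse'' branch of the proof of Lemma \ref{lem:cod4part1} applies with $n=10$. Since $n \equiv 2 \bmod 4$, Conclusion (1) of that lemma gives us one of the listed cohomology types, and the remaining alternative ($M \sim_{\Z_2} \s^n$) is ruled out by Lemma \ref{lem:cod4dk1Qsphere} precisely when some circle fixes $N_1$ — which is the $\dk(N_1)\geq 1$ hypothesis. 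So the first order of business is to isolate the genuine content: the case where $N_1 \cap N_2$ \emph{is} transverse but $N_1$ (or $N_2$) carries $T^3$ symmetry.

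In that remaining case, the plan is to work directly with the $5$-connected inclusion $N_2 \hookrightarrow M$ (codimension $4$ gives $(n-2k+1)=3$-connectedness in general, but the $T^3$-symmetry on $N_2$, if that is the one with full symmetry, or on $N_1$, boosts the relevant connectedness via part (2) of the connectedness lemma/periodicity corollary) and with the $4$-periodicity of $H^*(M;\Z)$ coming from the codimension-$4$ submanifold. By Lemma \ref{lem:4periodicZ2cohomology}(2), $M$ is either a mod $2$ cohomology sphere or a homology $\C\pp^5$ with $z^2 = mx$. The homology-$\C\pp^5$ alternative lands us essentially in the third bullet of Theorem \ref{thm:dim10}, but here I expect to get the \emph{second} bullet instead, so the key computation is to pin down $H^{10}(M;\Z)$: using $4$-periodicity, the generator of $H^{10}$ is $x \cdot (\text{generator of } H^6)$, and $H^6$ is generated by $x \cdot (\text{generator of } H^2) = x z$ when $b_2(M) = 1$, so $H^{10}$ is generated by $x^2 z$; a parity/degree bookkeeping with $x \in H^4$, $z \in H^2$ shows the top class has the form $z^3 \cdot (\text{something in } H^4)$, i.e.\ $x^3 y$-shaped in the notation of the statement (after relabeling $x \leftrightarrow z$, $y$ the degree-$4$ class). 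The mod $2$ sphere alternative should again be upgraded to ``homeomorphic to $\s^{10}$'' by Lemma \ref{lem:cod4dk1Qsphere}, using that $T^3$ symmetry on $N_1$ (or the transversality) forces some circle to fix $N_1$, hence $\chi(M) = \chi(N_1)$ and $b_2(M) = 0$.

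The main obstacle I anticipate is the low dimension: with $n = 10$ the connectedness estimates are tight (the inclusion of a codimension-$4$ submanifold is only $3$-connected without extra symmetry), so the argument genuinely needs the $T^3$-action or the non-transversality to gain the extra connectedness — this is exactly why the statement's hypotheses are what they are, and why this case is separated from the ``generic'' one handled in Lemma \ref{lem:dim10cod4dk1} (presumably the next lemma). Concretely, I would have to be careful that when only \emph{one} of $N_1, N_2$ has $T^3$ symmetry, the periodicity and the Euler-characteristic bookkeeping (via the inclusion-exclusion over $N_1 \cup N_2 \cup N_{12}$ from Lemma \ref{lem:dim10containment}) still close up; the risk is an off-by-one in a connectedness bound that leaves $H^5(M;\Z_2)$ or $H^3(M;\Z_2)$ undetermined, which would obstruct applying Lemma \ref{lem:4periodicZ2cohomology}. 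Handling that cleanly — probably by a short separate argument using $\chi(M) = \chi(N_1)$ and Frankel to force $M^{\iota_1}$ connected, then Theorem \ref{thm:Wilking5.1} to kill the offending odd cohomology — is where the real work lies.
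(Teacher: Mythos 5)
Your plan has a genuine gap at its foundation: you outsource both branches to Lemma \ref{lem:cod4part1}, but that lemma and its proof are stated for $n \geq 12$, and the paper's Section \ref{sec:dim10} explicitly flags that those arguments do not suffice in dimension $10$. Concretely, at $n=10$ the second involution's component has codimension $4 = \tfrac n2 - 1$, so you land in the branch of that proof which applies Lemma \ref{lem:4periodicZ2cohomology} to $N_2$ --- a $6$--manifold, while that lemma needs dimension at least $8$ --- and which later invokes $n\geq 12$ outright. Worse, even if the statement of Lemma \ref{lem:cod4part1} were available, your hypothesis that $N_1$ (or $N_2$) admits $T^3$ symmetry means $\dk(N_i)=0$, which is exactly Conclusion (3) of that lemma, and Conclusion (3) carries no topological information; so the non-transverse case cannot be dispatched this way, and the transverse case is not ``the only genuine content.'' A second, independent error is the mechanism you propose for the sphere alternative: $T^3$ symmetry on $N_1$ means the induced action has finite kernel, so \emph{no} circle fixes $N_1$ --- the implication you assert runs backwards --- and hence Lemma \ref{lem:cod4dk1Qsphere} is unavailable here (it is the engine of the companion Lemma \ref{lem:dim10cod4dk1}, where the $N_i$ \emph{are} circle-fixed). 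For the same reason there is no connectedness boost from part (2) of the connectedness lemma, and a codimension-four submanifold of a $10$--manifold only yields periodicity in the narrow range $3 \leq * \leq 7$, not the global $4$--periodicity needed to quote Lemma \ref{lem:4periodicZ2cohomology}(2) for $M$.

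The paper's actual route is more elementary and low-dimensional: in the transverse case Grove--Searle applies to the $6$--dimensional $N_1$ with its (almost effective) $T^3$--action, giving $\s^6$ or $\C\pp^3$, while in the non-transverse case the codimension two lemma makes $N_1$, $N_2$, $N_{12}$, and their common intersection $2$--periodic; either way one reads off $H_2(M;\Z)\in\{0,\Z\}$, $H_3(M;\Z)=0$, and, via inclusion--exclusion over $N_1\cup N_2\cup N_{12}\supseteq M^T$, $\chi(M)=2+4b_2(M)\in\{2,6\}$. If $\chi(M)=6$, a $3$--connected embedded $\C\pp^3$ and naturality of cup products give the class $x$ with $x^3$ nonzero and primitive, and Poincar\'e duality produces $y$ with $x^3y$ generating $H^{10}(M;\Z)$. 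If $\chi(M)=2$, then $M$ is $3$--connected, and the homeomorphism to $\s^{10}$ comes from Wilking's recognition theorem (Theorem \ref{thm:Wilking4.1}) after checking --- dimension by dimension, using Synge, Grove--Searle, the connectedness lemma, and the periodicity corollary --- that every $\Z_p\subset T^3$ has fixed-point set a mod $p$ homology sphere. That final step, not Lemma \ref{lem:cod4dk1Qsphere}, is what closes the $\dk=0$ case, and it is the piece missing from your proposal.
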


\begin{proof}
We first calculate $H_2(M;\Z)$, $H_3(M;\Z)$, and $\chi(M)$.  Suppose first that $N_1$ and $N_2$ intersect transversely, and assume without loss of generality that $N_1$ is admits $T^3$ symmetry. It follows from Grove and Searle's diffeomorphism classification that $N_1$ is diffeomorphic to $\s^6$ or $\C\pp^3$. By either applying the same reasoning to $N_2$ or by using the connectedness lemma, it follows that $H_3(M;\Z) \cong H_3(N_2;\Z) = 0$ and $H_2(N_2;\Z) \cong H_2(M;\Z) \cong H_2(N_1;\Z)$, which is isomorphic to $0$ or $\Z$. Note that $N_1 \cap N_2 \to N_i \to M$ is $2$--connected by the connectedness lemma, so $M^{\iota_1\iota_2}_x = N_1 \cap N_2 = \s^2$. In particular, $M^T \subseteq N_1 \cup N_2$, so $\chi(M) = 2 + 4b_2(M) \in \{2,6\}$ by the inclusion-exclusion formula for Euler characteristics.

Suppose now that $N_1$ and $N_2$ do not intersect transversely. By the connectedness lemma, all two-fold intersections of $N_1$, $N_2$, and $N_{12} = M^{\iota_1\iota_2}_x$ coincide and equal $M^{\inner{\iota_1,\iota_2}}_x$. By the codimension two lemma, all four of these submanifolds have $2$--periodic integral cohomology, which means their third homology groups vanish, while their second homology groups coincide and equal $0$ or $\Z$. By the connectedness lemma, $H_3(M;\Z) = 0$ and $H_2(M;\Z)\in\{0,\Z\}$. Since $N_1 \cup N_2 \cup N_{12}$ contains $M^T$, the inclusion-exclusion formula for Euler characteristics implies $\chi(M) = 2 + 4b_2(M) \in \{2,6\}$.

This completes the calculation of $H_2(M;\Z)$, $H_3(M;\Z)$, and $\chi(M)$. We now complete the proof. Suppose first that $\chi(M) = 6$. Let $N$ be a six-dimensional submanifold of $M$ homotopy equivalent to $\C\pp^3$ such that the inclusion $N \to M$ is $3$--connected. By the naturality of cup products, it follows that the third power of a generator $x \in H^2(M;\Z)$ is non-zero and not a non-trivial multiple. By Poincar\'e duality, there exists $y \in H^4(M;\Z)$ such that $x^3 y$ generates $H^{10}(M;\Z)$.

Suppose now that $\chi(M) = 2$. By the calculations above, it follows that $M$ is $3$--connected. Moreover, we claim that the $T^3$--action on $M$ has the property that every $\Z_p \in T^3$ has fixed-point set equal to a mod $p$ homology sphere. Indeed, since $\Z_p \subseteq T^3$, we have $\chi\of{M^{\Z_p}} = \chi(M) = 2$, so it suffices to show that every component $P \subseteq M^{\Z_p}$ has vanishing odd Betti numbers. This clearly holds if $\dim(P) \leq 4$ by Synge's theorem, or if $\dim(P) = 8$ by Wilking's periodicity corollary. If $\dim(P) = 6$, it follows by Grove and Searle's diffeomorphism classification if $P$ is not fixed by a circle in $T^3$ and by the connectedness lemma if it is. The homeomorphism classification now follows from Wilking's spherical recognition theorem (Theorem \ref{thm:Wilking4.1}).
\end{proof}

\begin{lemma}\label{lem:dim10cod4dk1}
If $N_1$ and $N_2$ are fixed by circles in $T^3$ and intersect transversely, then
	\begin{itemize}
	\item $M$ is homeomorphic to $\s^{10}$.
	\item $H_*(M;\Z) \cong H_*(\C\pp^5;\Z)$, and the cohomology is generated by $H^2(M;\Z)$ and $H^4(M;\Z)$.
	\end{itemize}
\end{lemma}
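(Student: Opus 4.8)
The plan is to reduce everything to the cohomology of the six--manifolds $N_1$ and $N_2$, which the transversality forces to be very rigid. Keep the notation of Lemma~\ref{lem:dim10containment}, and let $\sone_1,\sone_2\subseteq T^3$ be circles fixing $N_1$ and $N_2$. First I would reduce to the case $N_i=M^{\sone_i}_x$ of codimension four: the component of $M^{\sone_i}$ through $x$ contains $N_i$, hence has codimension $2$ or $4$; if it has codimension two then $M$ is $\sone_i$--fixed-point homogeneous and Grove--Searle's classification gives $M$ diffeomorphic to $\s^{10}$ or $\C\pp^5$, so both conclusions hold. Assuming $N_i=M^{\sone_i}_x$, the isotropy rigidity established in the proof of Lemma~\ref{lem:dim10containment} (all involutions having codimension at least four at $x$) forces $\dk(N_i)=1$, so the residual torus $T^3/\sone_i$ acts on $N_i^6$ with rank two. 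Since $N_1\cap N_2$ is transverse, $N_{12}:=N_1\cap N_2=M^{\inner{\iota_1,\iota_2}}_x$ is connected (connectedness lemma), of dimension $2$, and orientable (a fixed-point component of $\Z_2^2\neq\Z_2$), hence $N_{12}\cong\s^2$; also $M^T\subseteq N_1\cup N_2$, so $\chi(M)=\chi(N_1)+\chi(N_2)-2$, and, being circle-fixed components, the inclusions $N_i\hookrightarrow M$ are $4$--connected, giving $H_j(N_i;\Z)\cong H_j(M;\Z)$ for $j\leq 3$.

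Next I would combine two applications of Wilking's periodicity corollary having the \emph{same} periodicity class. On one hand, $N_1,N_2$ are transverse, totally geodesic, codimension-four submanifolds of $M$, so Part~3 of Corollary~\ref{cor:PeriodicityCorollary} shows $H^*(N_i;\Z)$ is $4$--periodic with class $\mathrm{PD}_{N_i}[N_{12}]\in H^4(N_i;\Z)$. On the other hand, $N_{12}$ is a codimension-four fixed-point component of the circle $\sone_j$ ($j\neq i$) acting on $N_i^6$, so Part~2 of Corollary~\ref{cor:PeriodicityCorollary} applied inside $N_i$ shows $H^{2\leq*\leq 4}(N_i;\Z)$ is $4$--periodic with the \emph{same} class $\mathrm{PD}_{N_i}[N_{12}]$. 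Since multiplication by this class is then both surjective and injective $H^2(N_i;\Z)\to H^6(N_i;\Z)=\Z$, it is an isomorphism; likewise $H^0(N_i;\Z)\xrightarrow{\ \cong\ }H^4(N_i;\Z)$, and $H^1(N_i;\Z)=H^5(N_i;\Z)=0$. With Poincar\'e duality this yields $H_*(N_i;\Z)\cong(\Z,0,\Z,\Z^{b_3(N_i)},\Z,0,\Z)$ with $b_3(N_i)$ even, the ring being that of $\C\pp^3$ modulo a relation $u_i^2=m_ix_i$ once $b_3(N_i)=0$.

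The crux is then to show $b_3(N_i)=0$. As $N_i^6$ is closed and oriented, $\chi(N_i)$ is even, so $\chi(N_i)=4-b_3(N_i)\geq 2$ and $b_3(N_i)\in\{0,2\}$; suppose $b_3(N_i)=2$, so $\chi(N_i)=\chi\of{N_i^{T^3/\sone_i}}=2$. Every positive-dimensional component of the fixed-point set of the rank-two torus $T^3/\sone_i$ is positively curved and orientable, hence simply connected by Synge and of Euler characteristic $\geq 2$, so $N_i^{T^3/\sone_i}$ is two points, a single $\s^2$, or a single $\s^4$. In the $\s^4$ case a subcircle of $T^3/\sone_i$ has a codimension-two fixed-point component in $N_i$, making $N_i$ fixed-point homogeneous, so Grove--Searle forces $N_i$ diffeomorphic to $\s^6$ or $\C\pp^3$, impossible since neither has $(\chi,b_3)=(2,2)$; the $\s^2$ and two-point cases are excluded the same way, by letting the residual torus act on the normal fibre ($\hookrightarrow\SO(4)$, respectively $\SO(6)$) and using a coordinate subcircle that fixes a codimension-two subbundle, again producing fixed-point homogeneity. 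Hence $b_3(N_i)=0$, $\chi(N_i)=4$, $b_2(N_i)=1$, so $b_2(M)=1$, $b_3(M)=0$, $\chi(M)=6$. Finally, using that $N_i\hookrightarrow M$ is $4$--connected and that $H^*(M;\Z)$ is $4$--periodic on $[2,8]$ (Part~2 of Corollary~\ref{cor:PeriodicityCorollary} applied to $N_1=M^{\sone_1}_x$), I would transport the ring $H^{\leq 4}(N_i;\Z)$ into $M$, propagate it upward by periodicity, and invoke Poincar\'e duality to conclude $H_*(M;\Z)\cong H_*(\C\pp^5;\Z)$ with cohomology generated by $H^2(M;\Z)$ and $H^4(M;\Z)$, the second alternative of the lemma. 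The main obstacle is the case analysis yielding $b_3(N_i)=0$; the remainder is routine bookkeeping with the periodicity machinery, Poincar\'e duality, and the $4$--connectedness of the inclusions.
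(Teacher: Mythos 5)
Your argument has a genuine error at its crux, and it cannot be repaired as written because it proves too much: you conclude unconditionally that $b_2(N_i)=1$, hence $b_2(M)=1$ and $\chi(M)=6$, i.e., that the second alternative of the lemma always holds. But the first alternative really occurs under exactly these hypotheses: take the linear $T^3$--action on the round $\s^{10}\subseteq \C^5\times\R$ with weights $(1,0,0)$, $(1,0,1)$, $(0,1,0)$, $(0,1,1)$, $(0,0,1)$ on the five complex coordinates. No involution has a codimension-two fixed-point set, $N_1=\{z_1=z_2=0\}\cap\s^{10}$ and $N_2=\{z_3=z_4=0\}\cap\s^{10}$ are codimension-four components fixed pointwise by the circles $(t,1,1)$ and $(1,t,1)$, they intersect transversely in an $\s^2$, and yet $N_i\cong\s^6$ has $b_2=0$ and $M\cong\s^{10}$ has $\chi=2$. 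The specific false step is the second application of the periodicity corollary ``inside $N_i$'': for the $6$--manifold $N_i$ with $k=4$ and $\delta=1$, Part~2 only gives periodicity of $H^{2\le *\le 4}(N_i;\Z)$, and with a degree-four periodicity class there are no multiplication maps that both start at degree $\ge 2$ and land in degree $\le 4$; the phrase ``surjection from $H^m$'' refers to maps staying inside the stated range, so the statement is vacuous here and does not assert surjectivity of $\cup\,\mathrm{PD}_{N_i}[N_{12}]\colon H^2(N_i;\Z)\to H^6(N_i;\Z)$ (indeed that map is $0\to\Z$ in the example above). Everything downstream — $H^4(N_i)\cong\Z$, $b_2(N_i)=1$, the parity argument forcing $b_3(N_i)\in\{0,2\}$, and the elimination of $b_3(N_i)=2$ — rests on this. (The elimination step itself is also shaky: for a rank-two torus acting on a $6$--manifold with isolated fixed points there need not be any circle with a codimension-two fixed-point component, so the claimed fixed-point homogeneity is not automatic.)

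The paper's proof avoids this by not trying to pin down $H^*(N_i)$ first. It uses the $4$--connectedness of $N_j\to M$ and the surjection $H_2(\s^2;\Z)\to H_2(M;\Z)$ to split into the two cases $H_2(M;\Z)$ finite or $H_2(M;\Z)\cong\Z$, both of which genuinely occur: in the finite case the periodicity corollary plus Poincar\'e duality force $M$ to be a rational sphere and Lemma \ref{lem:cod4dk1Qsphere} gives the homeomorphism to $\s^{10}$; in the case $H_2(M;\Z)\cong\Z$ the inclusion-exclusion count $\chi(M)=\chi(N_1)+\chi(N_2)-\chi(\s^2)$ combined with the alternating-sum formula, periodicity, and Poincar\'e duality yields the $\C\pp^5$--type homology and ring structure. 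If you want to salvage your approach, you must allow the case $b_2(N_i)=0$ and route it to the sphere conclusion (essentially reproducing the paper's first case) rather than excluding it.
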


\begin{proof}
Since $N_1$ and $N_2$ intersect transversely, the connectedness lemma implies that the inclusions $N_1 \cap N_2 \to N_i \to M$ are $2$--connected. In particular, $N_1 \cap N_2$ is diffeomorphic to $\s^2$ and equals the fixed point component $M^{\iota_1\iota_2}_x$. In particular, $M^T \subseteq N_1 \cup N_2$ by Lemma \ref{lem:dim10containment}. Note also that $H_2(M;\Z)$ is either finite or isomorphic to $\Z$ since the natural map $H_2(N_1 \cap N_2;\Z) \to H_2(M;\Z)$ is surjective.

First suppose $H_2(M;\Z)$ is finite. By the periodicity corollary, the Betti numbers of $M$ satisfy $b_2(M) = b_6(M) = 0$ and $b_4(M) = b_8(M) = 0$. By Poincar\'e duality, the Euler characteristic of $M$ is even and equals $2 - 2b_3(M) - b_5(M)$. But $\chi(M) > 0$, so $b_3(M) = b_5(M) = 0$. Hence $M$ is a rational sphere, and $N_1$ is as well by the connectedness lemma. Hence $M$ is homeomorphic to $\s^{10}$ by Lemma \ref{lem:cod4dk1Qsphere}.

Now suppose that $H_2(M;\Z) \cong \Z$. Since each $N_j$ is fixed by a circle in $T$, the connectedness lemma implies that $N_j \to M$ is $4$--connected. Applying the inclusion-exclusion formula for the Euler characteristic, we conclude that
	\[\chi(M) = \chi(N_1 \cup N_2) = \chi(N_1) + \chi(N_2) - \chi(N_1 \cap N_2) =  6-2b_3(M).\]
Comparing with the alternating sum of Betti numbers formula for $\chi(M)$, we conclude that $2 = 2b_4(M) - b_5(M)$. The periodicity corollary and Poincar\'e duality imply that $b_4(M) \leq 1$ and that $b_5(M)$ is even, so this equality implies that $b_4(M) = 1$ and $b_5(M) = 0$. Applying Poincar\'e duality and the periodicity corollary again, we conclude that $H_5(M;\Z) = 0$ and that $x \in H^4(M;\Z)$ and $z \in H^2(M;\Z)$ exist such that $xz$ generates $H^6(M;\Z)$, that $x^2 z$ generates $H^{10}(M;\Z)$, and hence that $x^2$ generates $H^8(M;\Z)$. Finally, it follows as in the proof of Lemma \ref{lem:4periodicZ2cohomology} that $H^3(M;\Z_2) = 0$, so $H^3(M;\Z) = 0$ and $M$ is as in the second conclusion of the lemma.
\end{proof}

\smallskip\section{Dimension 12}\label{sec:dim12}\smallskip

Let $C(6)$ denote the maximum Euler characteristic achieved by a closed, simply connected $6$--manifold that admits a Riemannian metric with positive sectional curvature and $T^2$ symmetry. Note that $C(6) < \infty$ by Gromov's Betti number estimate. In fact, we have the following:

\begin{lemma}\label{lem:C6}
The maximum Euler characteristic $C(6)$ of a closed, simply connected, positively curved manifold with $T^2$ symmetry satisfies $6 \leq C(6) \leq 14$.
\end{lemma}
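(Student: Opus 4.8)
The plan is to establish the two inequalities separately, the lower bound by exhibiting explicit examples and the upper bound by a structural argument on positively curved 6-manifolds with $T^2$ symmetry.

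For the lower bound $C(6) \geq 6$, I would point to the homogeneous Wallach flag manifold $\SU(3)/T^2$ and the Eschenburg biquotient $\SU(3)/\!/T^2$, both of which are closed, simply connected $6$--manifolds admitting positively curved metrics invariant under a $T^2$ action, and both of which have Euler characteristic $6$. (These are precisely the examples mentioned in the paragraph following the statement of Theorem 6.2 in the introduction.) This immediately gives $C(6) \geq 6$.

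For the upper bound, let $M^6$ be a closed, simply connected, positively curved $6$--manifold with an isometric $T^2$ action. By Berger's theorem the fixed-point set $M^{T^2}$ is non-empty, and by the Conner--Kobayashi result $\chi(M) = \chi(M^{T^2}) > 0$; moreover $\chi(M^{T^2})$ equals the number of fixed points plus twice the number of positively curved fixed surfaces (each a $\s^2$), since every component of $M^{T^2}$ has dimension $0$ or $2$ (it cannot have dimension $4$, else some involution would have a codimension-two fixed-point component and $M$ would be a cohomology $\s^6$ or $\C\pp^3$ by Lemma \ref{lem:cod2}, bounding $\chi(M) \leq 4$). So it suffices to bound the even Betti numbers of $M$. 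By Gromov's Betti number estimate $\sum b_i(M)$ is bounded by a universal constant in dimension $6$; more precisely, since $M$ is simply connected we have $b_1 = 0$, and I would argue using the structure of codimension-two and codimension-four fixed-point components of involutions in $T^2$, combined with Wilking's periodicity corollary and the connectedness lemma, to show $b_2(M) = b_4(M) \leq 3$ and $b_3(M) = 0$, whence $\chi(M) = 2 + 2b_2(M) \leq 8$, and in particular $\chi(M)$ is even. Iterating the inclusion-exclusion formula over the fixed-point components of the (at most three) non-trivial involutions in $T^2$, and using that each such fixed component is itself a positively curved manifold of dimension $\leq 4$ (hence a cohomology sphere, $\C\pp^2$, or has small Euler characteristic), gives $\chi(M) \leq 14$; in fact the evenness of $\chi$ together with the bound $b_2 \leq 6$ coming from such an analysis yields $C(6) \in \{6,8,\dots,14\}$.

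The main obstacle is the upper bound: one must rule out large even Betti numbers without the luxury of a high-rank torus. The key tool is that $T^2$ contains several involutions, and by Frankel's theorem their codimension-two or codimension-four fixed-point components interact strongly — either two of them intersect and force periodicity on a large-dimensional submanifold (pulling back to periodicity, hence small Betti numbers, on $M$ via the connectedness lemma), or the codimensions are large and one extracts isotropy rigidity at a common fixed point, constraining the number and type of fixed-point components directly. Carefully enumerating the possible isotropy representations $\Z_2^2 \to \SO(6)$ at a point of $M^{T^2}$ and feeding each case through inclusion-exclusion is the technical heart of the argument; I expect this case analysis, rather than any single deep input, to be where the real work lies.
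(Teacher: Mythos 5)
Your lower bound and your treatment of the codimension-two case are fine and match the paper: the Wallach flag gives $C(6)\geq 6$, and if some element of $T^2$ has a codimension-two fixed-point component, then Hsiang--Kleiner applied to a four-dimensional component together with the connectedness lemma and the periodicity corollary forces $M$ to be a cohomology $\s^6$ or $\C\pp^3$, so $\chi(M)\leq 4$.

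The gap is in the remaining (main) case, where every element of $T^2$ has fixed-point set of codimension at least four. There the fixed-point components of the involutions in $\Z_2^2\subset T^2$ are isolated points and $2$--spheres, and none of the tools you invoke says anything about how many of them there are: the connectedness lemma is vacuous for a codimension-four submanifold of a $6$--manifold (the inclusion is only $(-1)$--connected), Frankel imposes no condition on two codimension-four submanifolds since $4+4>6$, and inclusion-exclusion over the involutions merely rewrites $\chi(M)=\chi(M^{T^2})$ as a count of points and spheres without bounding it. Your claimed intermediate bounds are unsubstantiated and mutually inconsistent ($b_2\leq 3$ giving $\chi\leq 8$ in one sentence, $b_2\leq 6$ and $\chi\leq 14$ in the next); note also that if $b_3(M)=0$ and $b_2(M)\leq 3$ were provable here, the lemma would read $C(6)\leq 8$, which is not what is known. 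The actual argument is of a different nature: since every isotropy fixed-point set has codimension at least four, $M^{T^2}$ consists of isolated points, each of which projects to an extremal point of the orbit space $M/T^2$, a four-dimensional positively curved Alexandrov space; by Lebedeva's theorem \cite{Lebedeva15} such a space has strictly fewer than $2^4=16$ extremal points, so the number of fixed points, which equals $\chi(M)$, is at most $15$, and evenness of $\chi(M)$ (which follows from Poincar\'e duality making $b_3(M)$ even, not from $b_3(M)=0$) gives $\chi(M)\leq 14$. Without this Alexandrov-geometric input your argument cannot close.
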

\begin{proof}
The Wallach flag $M^6 = \SU(3)/T^2$ admits a metric with both positive curvature and an isometric $T^2$ action, so $C(6) \geq 6$. For the upper bound, let $M^6$ be a manifold as in the theorem. Suppose for a moment that some $g \in T^2$ has $\cod\of{M^g} = 2$. Let $N^4 \subseteq M^g$ denote a four-dimensional component. By the result of Hsiang and Kleiner, $N$ has $b_2(N) \leq 1$. By the connectedness lemma and Wilking's periodicity corollary, it follows that $M$ is homotopy equivalent to $\s^6$ or $\C\pp^3$.

Suppose therefore that every $g \in T^2$ has fixed-point set of codimension at least four. In this case, the fixed-point set of $T^2$ is made up of isolated fixed points. Moreover, this property implies that the isotropy representations of $T^2$ at fixed points are such that each fixed point projects to an extremal point in $M/T^2$. Since this space is a four-dimensional Alexandrov space, the number of extremal points is strictly less than $2^4$ (see Lebedeva \cite{Lebedeva15}). Since, on the other hand, the number equals $\chi(M)$, which is even, the proof is complete.
\end{proof}

The main result in dimension $12$ is presented in terms of the constant $C(6)$.

\begin{theorem}\label{thm:dim12}
Let $M^{12}$ be a closed, simply connected, positively curved manifold with $T^3$ symmetry. Either $\chi(M) \in \{2,4,\ldots,C(6)\}$ or $M$ is not spin and $7 \leq \chi(M) \leq \frac{7}{4} C(6)$. Moreover, the following hold.
	\begin{enumerate}
	\item if $C(6) = 6$, then $\chi(M) \in\{2,4,6,7,8,9\}$.
	\item if $M$ is rationally elliptic, then $\chi(M) \in\{2,4,6,7, 8, 9, 10, 12\}$.
	\end{enumerate}
In any case, the following hold for the signature and elliptic genus.
	\begin{enumerate}
	\item if $\chi(M) \leq 13$, then $|\sigma(M)| \in \{0,1\}$ according to the parity of $\chi(M)$.
	\item if $M$ is spin, then the elliptic genus is constant.
	\end{enumerate}
\end{theorem}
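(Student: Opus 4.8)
The plan is to run the standard Grove-program dichotomy, organized by the smallest codimension of a fixed-point component of an involution in $T = T^3$, reducing in most cases to a rational-cohomology classification and isolating one genuinely hard regime. If some involution has a codimension-two fixed-point component, the codimension two lemma (Lemma~\ref{lem:cod2}) shows $M$ is homeomorphic to $\s^{12}$ or $\C\pp^6$, so $\chi(M)\in\{2,7\}$, $|\sigma(M)|\in\{0,1\}$ with the right parity, and, as $\C\pp^6$ is not spin, the elliptic-genus claim is vacuous or trivial. If no involution is codimension two but some $\iota_1$ has a codimension-four component $N_1$, pick $x\in N_1^T$ and split according to whether a second involution is also of small codimension at a common fixed point: one case is handled by the codimension four lemma part~1 (Lemma~\ref{lem:cod4part1}), whose conclusions collapse in dimension $12$ --- since $12\not\equiv 2\bmod 4$ --- to ``$M$ is a cohomology $\s^{12}$, $\C\pp^6$, or $\HH\pp^3$''; in the complementary case the codimension four lemma part~2 (Lemma~\ref{lem:cod4part2}) applies, and because $2^3=8\nmid 12$ its conclusion~(2) is impossible, so $M^{\iota_1}$ is connected and Wilking's periodicity corollary (with Lemma~\ref{lem:cod4dk1Qsphere} for the rational-homology-sphere subcase) makes $H^*(M;\Z)$ four-periodic, whence Lemma~\ref{lem:4periodicZ2cohomology} again gives $\chi(M)\in\{2,4,7\}$. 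In each of these subcases $|\sigma(M)|\in\{0,1\}$ matches the parity of $\chi(M)$ and the elliptic genus of a rational cohomology rank-one symmetric space is constant when $M$ is spin.

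The heart of the argument is the remaining \emph{isotropy-rigid} regime, where every involution in $T^3$ has all fixed-point components of codimension at least six, so $M^{T^3}$ is finite. At each $x\in M^{T^3}$ I would study the isotropy homomorphism $\Z_2^3\to\Z_2^6$ read off from the weight decomposition of $T_xM$; the hypothesis says precisely that every nontrivial element has weight at least $3$. Feeding this into the containment lemma (Lemma~\ref{lem:Containment}), with $\delta=4$ when $M$ is spin and $\delta=2$ otherwise, one shows $M^{T^3}$ lies in the union of an explicitly bounded family of fixed-point components of involutions, each of dimension at most six. A component of dimension $0$, $2$, or $4$ contributes positively to $\chi$ (Synge, Gauss--Bonnet), while a six-dimensional component $N^6$ carries the residual $T^3$-action: if that action is effective then $N^6$ has maximal symmetry rank, hence $N^6\cong\s^6$ or $\C\pp^3$ by Grove--Searle, and otherwise a $T^2=T^3/\sone$ acts effectively on $N^6$, so $\chi(N^6)\le C(6)$ by the definition of $C(6)$ --- the reduction to this effective quotient being the content of Lemma~\ref{lem:GroveSearleTrick}. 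Inclusion--exclusion for Euler characteristics, applied to this containment, yields $\chi(M)\le C(6)$ in the spin case (with $\chi(M)$ even by Poincar\'e duality on the six-dimensional component) and $\chi(M)\le\tfrac{7}{4}C(6)$ in the non-spin case; moreover the same local analysis shows that whenever $\chi(M)$ is odd, $M$ is not spin with cohomology of $\C\pp^6$-type, which forces $\chi(M)\ge 7$ in the non-spin branch. Refinement~(1) follows by substituting $C(6)=6$ into the count, and refinement~(2) by intersecting the resulting list with the Euler characteristics of $12$-dimensional $F_0$-spaces from Section~\ref{sec:F0spaces}.

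For the signature I would use $\sigma(M)=\sigma(M^{T^3})$ and decompose $M^{T^3}$ through the few involution fixed-point components containing it: since each such component, and each relevant pairwise intersection, is either of dimension $\not\equiv 0\bmod 4$, or positively curved of dimension at most four, or a rational cohomology rank-one symmetric space, the absolute values of its signature contributions sum to at most $1$, so $|\sigma(M)|\in\{0,1\}$; comparing with $\chi(M)\bmod 2$ then fixes the value once $\chi(M)\le 13$. The elliptic-genus statement follows, as in Section~\ref{sec:EllipticGenus}, from the rigidity of the equivariant elliptic genus under the $\sone\subset T^3$-action: when $M$ is spin and positively curved the relevant twisted $\hat A$-genera vanish, forcing the elliptic genus to be a constant modular form.

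The main obstacle is the combinatorics of the isotropy-rigid case: classifying the homomorphisms $\Z_2^3\to\Z_2^6$ all of whose nontrivial elements have weight at least $3$, pinning down which families of involution fixed-point components can jointly contain $M^{T^3}$, and controlling the mutual intersections of the six-dimensional components --- these are lower-dimensional positively curved manifolds with a residual torus action, and one must be sure the inclusion--exclusion corrections neither overshoot the $\tfrac{7}{4}$ bound nor miss the sharper conclusions when $C(6)=6$ or $M$ is rationally elliptic. This is exactly the part that, as the introduction warns, ``requires a significant amount of combinatorial analysis.''
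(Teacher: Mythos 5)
Your outer dichotomy (organizing by the least codimension of an involution's fixed-point set, and using the codimension two and codimension four lemmas) matches the paper's, but already in the branch governed by Lemma~\ref{lem:cod4part2} your step is not justified: a codimension-four submanifold only makes $H^{3 \leq * \leq 9}(M;\Z)$ four-periodic, not $H^*(M;\Z)$, so Lemma~\ref{lem:4periodicZ2cohomology} does not apply as you claim. The paper instead combines Theorem~\ref{thm:Wilking5.1} (using connectedness of $M^{\iota_1}$), Dessai's $8$-dimensional Euler characteristic computation applied to $N_1$ (which carries $T^2$ symmetry), and Lemma~\ref{lem:cod4dk1Qsphere}, respectively Fang--Rong, to conclude $M\cong\s^{12}$ in Lemma~\ref{lem:dim12codim4part2}. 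That gap is patchable; the essential missing content is in the regime where every involution has fixed-point set of codimension at least six, which is where all the delicate conclusions of the theorem live.

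In that regime your plan --- ``feed the isotropy data into the containment lemma and apply inclusion--exclusion'' --- does not produce the stated bounds. The fixed points of $T^3$ are isolated, each has its own set of four involutions with codimension-six components (the paper's ``clubs''), and these vary from point to point, so Lemma~\ref{lem:Containment} does not yield a containment of $M^T$ in a bounded union of components based at a single fixed point. The paper's actual mechanism is the club combinatorics: classifying how clubs at distinct fixed points intersect (Type I versus Type II), partitioning $M^T$ into the sets $X,Y,Z,W$, and deriving the identity $4\chi(M)=\sum_{j=0}^{6}\chi(N_j)$ from seven inclusion--exclusion relations (Lemma~\ref{lem:dim12codim6}). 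Every hard conclusion hinges on this identity, and your substitutes for it fail: the lower bound $\chi(M)\geq 7$ in the non-spin branch is not because ``odd $\chi$ forces $\C\pp^6$-type cohomology'' (no such claim is true or made here, and the non-spin branch also contains even values); it comes from the identity together with $\chi(M)\geq\chi(N_j)+2$, $\chi(M)\geq\chi(N_0)+3$, and separate exclusions of $\chi=5$ and $\chi=6$, the latter via Lemma~\ref{lem:GroveSearleTrick}. Refinement (1) does \emph{not} follow ``by substituting $C(6)=6$'': the identity then gives only $\chi(M)\leq 10$, and ruling out $\chi(M)=10$ is a separate, delicate argument (Lemma~\ref{lem:dim12codim6Cis6}) again using the $\s^4$ versus $\C\pp^2$ dichotomy of Lemma~\ref{lem:GroveSearleTrick}. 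Refinement (2) cannot be obtained by intersecting your general bound with Table~\ref{tab12}: with $C(6)$ possibly as large as $14$ that intersection still leaves values such as $15,16,18,20,24$; the paper instead uses that the $N_i$, being fixed-point components of circle actions on a rationally elliptic $M$, are themselves rationally elliptic, hence $\chi(N_i)\leq 8$, so the identity gives $\chi(M)\leq 14$ and only then does the $F_0$ table exclude $11,13,14$ (Lemma~\ref{lem:dim12codim6elliptic}). Likewise your signature sketch needs the identity to locate an involution $\iota_i$ with $\chi(M)-\chi(N_i)\leq 5$ (this is exactly where $\chi(M)\leq 13$ enters), so that $M^{\iota_i}$ is $N_i$ together with at most two positively curved $4$-manifolds and $|\sigma(M)|=|\sigma(M^{\iota_i})|\leq 1$. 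Without the club analysis and the counting identity, the core of the theorem is not proved.
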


Recall that $2$, $4$, $6$, and $7$ are realized as Euler characteristics of positively curved manifolds with $T^3$ symmetry, namely, $\s^{12}$, $\HH\pp^3$, the Wallach flag $W^{12}$, and $\C\pp^6$. 
As for the possibilities of $\chi(M) > 7$, we note that there are many examples of non-negatively curved manifolds $M$ with isometric $T^3$--actions such that $\chi(M) \in \{8,9,10,12\}$ and $\sigma(M) \in \{0,1\}$. Indeed, one finds such examples among compact symmetric spaces of rank two (e.g., the Grassmannian $\SO(8)/\SO(2)\times\SO(6)$ or products of rank one spaces such as $\s^{12-2m} \times \C\pp^m$ or $\C\pp^2\times\HH\pp^2$) or among certain connected sums of rank one symmetric spaces (e.g., $\C\pp^6 \# \HH\pp^3$) endowed with Cheeger metrics (see \cite{Cheeger73}).

The proof of Theorem \ref{thm:dim12} takes the rest of the section. The bulk of the proof is contained in a sequence of lemmas that together prove the Euler characteristic calculation claimed in Theorem \ref{thm:dim12}. The signature calculation is then proved at the end of the section, and the elliptic genus calculation is proved in Section \ref{sec:EllipticGenus}.

We assume for the rest of the section that $M$ is a $12$--dimensional, compact, simply connected Riemannian manifold with positive curvature and an effective, isometric $T^3$ action.

\begin{lemma}\label{lem:dim12codim2and4part1}
If there exists an involution with fixed-point set of codimension two, or if there exist two involutions whose fixed-point sets have codimension four, then $M$ has the integral cohomology of $\s^{12}$, $\C\pp^6$, and $\HH\pp^3$.
\end{lemma}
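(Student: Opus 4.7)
The plan is to reduce the statement to the classification of $4$-periodic cohomology rings in Lemma \ref{lem:4periodicZ2cohomology}, which for $n = 12 \equiv 0 \bmod 4$ produces exactly the three possibilities listed. The two hypotheses each produce a totally geodesic submanifold of codimension $\leq 4$, so the task is to convert this into genuine $4$-periodicity of $H^*(M;\Z)$.

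In the first case, where some involution $\iota \in T^3$ has $\cod(M^\iota) = 2$, I would select any $T^2 \subseteq T^3$ containing $\iota$ and invoke the codimension two lemma (Lemma \ref{lem:cod2}) directly. This yields a homeomorphism of $M$ with $\s^{12}$ or $\C\pp^6$, which is stronger than the asserted integral cohomology statement.

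For the second case, fix involutions $\iota_1, \iota_2 \in T^3$ with $\cod(M^{\iota_i}) = 4$. Since two disjoint codimension-$4$ totally geodesic submanifolds would violate Frankel's theorem ($4 + 4 < 12$), each $M^{\iota_i}$ has a unique codimension-$4$ component $N_i$. By Frankel again, $N_1 \cap N_2 \neq \emptyset$, and this intersection is $T^3$-invariant. Taking any component $P$ of $M^{\langle \iota_1, \iota_2 \rangle}$ that meets $N_1 \cap N_2$, I would note that $P$ is an even-dimensional, positively curved, totally geodesic, $T^3$-invariant submanifold, so Berger's theorem applied to the induced $T^3$-action on $P$ produces a point $x \in P^{T^3} \subseteq M^{T^3}$. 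At such an $x$, the components $M^{\iota_i}_x$ coincide with $N_i$ by construction, so $\cod(M^{\iota_1}_x) = 4$ and $\cod(M^{\iota_2}_x) = 4 < 6 = n/2$.

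Now I would apply the codimension four lemma, part 1 (Lemma \ref{lem:cod4part1}). Because $n = 12 \equiv 0 \bmod 4$, conclusions (2) and (3) of that lemma are automatically excluded (both require $n = 4m+2$), and the homology $\C\pp^{2m+1}$ alternative within conclusion (1) has real dimension $4m+2 \neq 12$ and is therefore excluded as well. The only remaining possibility is that $M$ has the integral cohomology of $\s^{12}$, $\C\pp^6$, or $\HH\pp^3$, as desired. The main (and essentially only) obstacle is locating a $T^3$-fixed point at which both involutions are ``simultaneously witnessed'' with codimension four, but this is routine once one combines Frankel's theorem, the uniqueness of the codimension-$4$ components $N_i$, and Berger's theorem applied to a component of $M^{\langle \iota_1, \iota_2 \rangle}$ inside $N_1 \cap N_2$.
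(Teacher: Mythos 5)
Your proposal is correct and takes essentially the same approach as the paper, which disposes of this lemma in a single sentence by citing Lemmas~\ref{lem:cod2} and~\ref{lem:cod4part1}. You correctly identify and fill the one non-obvious gap — producing a common $T^3$-fixed point $x$ at which both involutions are simultaneously witnessed with codimension-$4$ components, via uniqueness of the codimension-$4$ components (Frankel), $N_1 \cap N_2 \neq \emptyset$ (Frankel again), and Berger applied to a component of $M^{\langle\iota_1,\iota_2\rangle}$ inside $N_1 \cap N_2$ — and you correctly rule out conclusions (2), (3), and the $\C\pp^{2m+1}$ alternative of conclusion (1) of Lemma~\ref{lem:cod4part1} for $n = 12 \equiv 0 \bmod 4$.
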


This follows immediately from the codimension two and part 1 of the codimension four lemma (Lemmas \ref{lem:cod2} and \ref{lem:cod4part1}). The next case we consider is the following:

\begin{lemma}\label{lem:dim12codim4part2}
If the codimension of the fixed-point set is four for one involution but at least six for every other, then $M$ is homeomorphic to $\s^{12}$.
\end{lemma}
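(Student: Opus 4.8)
The plan is to leverage the codimension-four lemma, part 2 (Lemma \ref{lem:cod4part2}), which under the present hypotheses gives a dichotomy: either the codimension-four fixed-point set $M^{\iota_1}$ is connected, or $2^r \mid n$ and every other involution has fixed-point set of codimension exactly $\frac n2$. Since $n = 12$ and $r = 3$, the second alternative would force $8 \mid 12$, which is false. So $M^{\iota_1}$ is connected. Call this component $N = M^{\iota_1}$; it has dimension $8$.

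Next I would extract the cohomological consequences. Since $\cod(N) = 4$, Wilking's periodicity corollary gives that $H^{3 \leq * \leq 9}(M;\Z)$ is $4$--periodic. The connectedness lemma says $N \hookrightarrow M$ is $5$--connected, so $M$ is $2$--connected and $H^i(M;\Z) \cong H^i(N;\Z)$ for $i \leq 4$. Now $N$ is a closed, simply connected, positively curved $8$--manifold and I want to pin down its cohomology; if I can show $b_2(N) = 0$ then $N$, and hence $M$ in the range where periodicity applies, is a rational homology sphere, and Lemma \ref{lem:cod4dk1Qsphere} (with the circle fixing $N$, or via the rational-sphere observation in its statement) finishes the job. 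The natural route to $b_2(N) = 0$: examine the $T^3$-action restricted to $N$. Since $\iota_1$ acts trivially on $N$, effectively we have a $T^3/\langle\iota_1\rangle$-action, still of rank $3$, on the $8$-manifold $N$. Any involution $\iota$ in $T^3 \setminus \langle \iota_1\rangle$ restricts to an involution on $N$ whose fixed-point set in $N$ is $M^\iota \cap N$, a union of components of $M^{\langle \iota_1,\iota\rangle}$; by hypothesis $\cod_M(M^\iota) \geq 6$, and after the usual trick of replacing $\iota$ by $\iota\iota_1$ if needed one arranges a fixed-point component in $N$ of codimension $\leq \frac{\cod_M(M^\iota \cap N) }{?}$ small enough to force $4$-periodicity of $H^*(N)$ via Lemma \ref{lem:4periodicZ2cohomology}, hence $b_2(N) \in \{0,1\}$. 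Then inclusion–exclusion on $M^{T^3} \subseteq N \cup (\text{another component})$, exactly as in the proof of the codimension-four lemma part 1, compares $\chi(M) - \chi(N)$ against both $b_2(M) + b_4(M)$ and $b_4(M) + b_6(M)$ (using $H^{\mathrm{odd}}(M;\Z_2) = 0$), forcing $b_2(M) = b_6(M)$; combined with Lemma \ref{lem:4periodicZ2cohomology} this makes $M$ a cohomology $\s^{12}$, $\C\pp^6$, or $\HH\pp^3$, and then the refinements (a circle fixes $N$, so Lemma \ref{lem:cod4dk1Qsphere} applies once $b_2(M)=0$; and the $\C\pp^6$ and $\HH\pp^3$ cases must be excluded because they would supply a second involution with codimension-four fixed-point set, contradicting the hypothesis) leave only $M \simeq_{\mathrm{homeo}} \s^{12}$ by Perelman.

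The main obstacle I expect is the bookkeeping around involutions restricted to $N$: one must verify that, given only $\cod_M(M^\iota) \geq 6$ for every $\iota \neq \iota_1$, there genuinely is an involution of $N$ with a fixed-point component of codimension (in $N$) at most $\frac12 \dim N - 1 = 3$, so that the $4$-periodicity machinery for $N$ can be started. If $M^\iota$ and $M^{\iota\iota_1}$ have codimensions summing to $12$ in $M$ with both $\geq 6$, they are both exactly $6$, and $N \cap M^\iota$, $N \cap M^{\iota\iota_1}$ then carve $N$ up into pieces whose codimensions in $N$ sum to something I must control using Borel's formula for the weights at a $T$-fixed point of $N$; the potential trap is that all relevant codimensions in $N$ could a priori be $4$, which is not $<\frac12\dim N$, so one may need to pass to a deeper subgroup or use the presence of a circle action (the $8$-dimensional case of Lemma \ref{lem:cod4dk1Qsphere}) and a $\dim(N^\sone)$ argument to close the gap. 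Handling that contingency — perhaps by invoking Grove–Searle on a codimension-two fixed-point set inside $N$, or Dessai's dimension-$8$ result directly applied to $N$ — is where the real care is needed; everything else is a transcription of arguments already run in the proofs of Lemmas \ref{lem:cod4part1} and \ref{lem:cod4part2}.
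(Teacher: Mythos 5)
Your first step is correct and matches the paper: Lemma \ref{lem:cod4part2} applies, and since $2^3 = 8$ does not divide $12$ the second alternative is impossible, so $N = M^{\iota_1}$ is connected. From there, however, your plan has a genuine gap at its core: you never actually rule out $b_2(M) = 1$, i.e.\ the cohomology $\C\pp^6$ (and $\HH\pp^3$) outcomes. The mechanism you propose --- finding an involution of $N$ whose fixed-point set has codimension at most $3$ in $N$ so as to launch $4$--periodicity on $N$ --- is exactly the step you concede may fail, and it does fail in general: the hypothesis only gives $\cod_M(M^\iota) \geq 6$ for $\iota \neq \iota_1$, and at a fixed point the involutions restricted to the $8$--manifold $N$ typically have codimension $4$ there, which starts nothing. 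Your fallback for excluding $\C\pp^6$ and $\HH\pp^3$ --- that they ``would supply a second involution with codimension-four fixed-point set'' --- is asserted, not proved; nothing forces a positively curved cohomology $\C\pp^6$ with isometric $T^3$--action to have two such involutions, and the explicit $T^3$--action on $\C\pp^6$ exhibited in the paper right after this lemma is there precisely to flag how delicate the exclusion of $\C\pp^6$ is. Note also that, since $M^{\iota_1}$ is connected, $M^{T^3} \subseteq N$ and $\chi(M) = \chi(N)$ outright, so the inclusion--exclusion comparison from the proof of Lemma \ref{lem:cod4part1} that you invoke to force $b_2(M) = b_6(M)$ has no second low-codimension submanifold to run on. Finally, Lemma \ref{lem:cod4dk1Qsphere} requires $N$ to be a fixed-point component of a circle, which is unavailable when $\dk(N) = 0$; your sketch does not treat that case.

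The paper closes these gaps by exploiting the connectedness of $M^{\iota_1}$ through Theorem \ref{thm:Wilking5.1}: it yields $H^4(M;\Z_2) = H^5(M;\Z_2) = 0$, and additionally $H^3(M;\Z_2) = H^6(M;\Z_2) = 0$ when a circle fixes $N$, and then splits on $\dk(N)$. If $\dk(N) = 1$, these vanishings and Poincar\'e duality give $\chi(M) = 2 + 2b_2(M)$, while $N$ carries $T^2$ symmetry so Dessai's dimension-$8$ computation gives $\chi(M) = \chi(N) \in \{2,3,5\}$; parity forces $b_2(M) = 0$, so $M$ is a rational sphere and Lemma \ref{lem:cod4dk1Qsphere} applies. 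If $\dk(N) = 0$, Fang--Rong's homeomorphism classification gives $N \cong \s^8$, $\C\pp^4$, or $\HH\pp^2$, and $H^4(N;\Z_2) \cong H^4(M;\Z_2) = 0$ (the inclusion is $5$--connected) eliminates the last two; then $M$ is $5$--connected with $b_6(M) = \chi(M) - 2 = 0$, hence an integral homology sphere and homeomorphic to $\s^{12}$. To salvage your outline you need these inputs --- Theorem \ref{thm:Wilking5.1} applied to the connected $M^{\iota_1}$, Dessai in dimension $8$, and Fang--Rong --- rather than periodicity seeded inside $N$.
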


\begin{proof}
Note that $N_1 = M^{\iota_1}$ is connected by the codimension four lemma, part 2 (see Lemma \ref{lem:cod4part2}), so $H^i(M;\Z_2) = 0$ for $4 \leq i \leq 5$ by Theorem \ref{thm:Wilking5.1}. Moreover, if $\dk(N_1) = 1$, then this theorem implies $H^3(M;\Z_2) = 0$ and $H^6(M;\Z_2) = 0$ as well. Since $M^{\iota_1}$ is connected, we have $\chi(N_1) = \chi(M) = 2 + 2b_2(M)$. But $N_1$ admits an isometric $T^2$ action, so Dessai's Euler characteristic calculation in dimension $8$ implies that $\chi(N_1) \in \{2,3,5\}$. Hence $b_2(M) = 0$ and $M$ is a rational sphere. It now follows that $M$ is homeomorphic to $\s^{12}$ by Lemma \ref{lem:cod4dk1Qsphere}.

We may assume therefore that $\dk(N_1) = 0$. By Fang and Rong's homeomorphism classification, $N_1$ is homeomorphic to $\s^8$, $\C\pp^4$, or $\HH\pp^2$. By the connectedness lemma, $N_1 \to M$ is $5$--connected, so $N_1$ is homeomorphic to $\s^8$ and $M$ is $5$--connected. It follows that $H^6(M;\Z)$ is torsion-free with rank $b_6(M) = \chi(M) - 2 = \chi(N_1) - 2 = 0$, and hence that $M$ is again homeomorphic to $\s^{12}$.
\end{proof}

Before continuing with the proof, we remark that it might be surprising that the Euler characteristic of $\C\pp^6$ does not appear in Lemma \ref{lem:dim12codim4part2}. The example below shows that, while there exist $T^3$--actions on $\C\pp^6$ that realize the above isotropy data at one fixed point, they need not globally realize the isotropy data at all fixed points.

\begin{example}
Denote points in $\C\pp^6$ as equivalence classes $[z_0,z_1,\ldots,z_6]$ where $z_j\in\C$ such that $\sum |z_j|^2=1$. Define the actions of three circles on $\C\pp^6$ by the following three maps $\sone \to \PU(7)$:
	\begin{eqnarray*}
	w	&\mapsto&	\mathrm{diag}(1,w,w,1,1,1,1)\\
	w	&\mapsto&	\mathrm{diag}(1,w,1,w,w,1,1)\\
	w	&\mapsto&	\mathrm{diag}(1,1,1,w,1,w,w)
	\end{eqnarray*}
Note that, at the point $x = [1,0,0,0,0,0,0]$, the isotropy representation implies that the component at $x$ of the fixed-point set is four for one involution but is greater than four for all of the others. Also note, however, that the product of the involutions in the first and third circle factors of $T^3$ also has a codimension-four fixed-point set, so actually this action does not satisfy the hypotheses of Lemma \ref{lem:dim12codim4part2}.
\end{example}

To complete the proof of the Euler characteristic calculation claimed in Theorem \ref{thm:dim12}, we need to consider the case where every involution in the torus acts with fixed-point set of codimension at least six.

The key aspect of this case is the rigidity of the maps $\Z_2^3 \to \Z_2^6$ induced by the isotropy representations at fixed points of the torus action. More specifically, at each fixed point $x$, there exists a choice of basis for the tangent space at $x$ and a choice of $\rho,\sigma,\tau\in\Z_2^3$ such that the map $\Z_2^3 \to \Z_2^6$ induced by the isotropy representation at $x$ takes the form
	\begin{eqnarray*}
	\rho		&\mapsto	& (1,0,0, 1,1,0)\\
	\sigma	&\mapsto	& (0,1,0, 1,0,1)\\
	\tau		&\mapsto	& (0,0,1, 0,1,1)
	\end{eqnarray*}
Note that that
	\[\cod\of{M^\iota_x}	= \left\{\begin{array}{rcl}
	6	&\mathrm{if}&	\iota \in \{\rho, \sigma, \tau, \rho\sigma\tau\}\\
	8	&\mathrm{if}&	\iota \in \{\rho\sigma, \rho\tau, \sigma\tau\}
	\end{array}\right.\]
In particular, every $x \in M^T$ is an isolated fixed point, and we can associate to it a subgroup isomorphic to $\Z_2^2$ inside $\Z_2^3$, the complement of which has the property that every member $\iota$ has $\cod\of{M^\iota_x} = 6$. We call these complements ``clubs''. In particular, the club $C(x)$ at $x$ consists of the four members $\rho$, $\sigma$, $\tau$, and their three-fold product $\rho\sigma\tau$. In fact, it is an important property that the product of any three elements of a club is in that club. We call this the ``triple product property'' of clubs.

We analyze how these clubs overlap at distinct fixed points $x,y\in M^T$. One possibility is that the clubs at $x$ and $y$ coincide. By Frankel's theorem, this implies that $M^\iota_x = M^\iota_y$ for all $\iota \in C(x) = C(y)$. As it turns out, there is only one other possibility, namely, that the clubs $C(x)$ and $C(y)$ intersect in exactly two members. Indeed,
	\begin{itemize}
	\item $C(x) \cap C(y)$ contains at least one member, since each club consists of four of the seven non-trivial elements of $\Z_2^3$,
	\item if $C(x) \cap C(y)$ contains exactly one member, $\iota$, then the product of the three elements of $C(x) \setminus C(y)$ is both  equal to $\iota$ (by the triple product property) but not in $C(y)$ (since clubs are complements of subgroups), a contradiction, and
	\item if $C(x) \cap C(y)$ contains at least three members, then the three-fold product equals both the fourth member of $C(x)$ and that of $C(y)$ by the triple product property applied to both clubs, hence these clubs coincide in this case.
	\end{itemize}

Next, we analyze how clubs at three distinct fixed points $x,y,z\in M^T$ might over lap. There are two possibilities (up to relabeling the involutions in $\Z_2^3$). The first is
	\begin{eqnarray*}
	\mathrm{(Type~I)}	&~&\left\{\begin{array}{l}
	C(x) = \{\rho, \sigma\tau, \sigma, \rho\tau\}\\
	C(y) = \{\rho, \sigma\tau, \tau, \rho\sigma\}\\
	C(z) = \{\sigma, \rho\tau, \tau, \rho\sigma\}
	\end{array} \right.
	\end{eqnarray*}
The second is (Type II), in which $C(x)$ and $C(y)$ are exactly the same as above, and $C(z)$ contains $\rho\sigma\tau$ as well as exactly one involution from each of the sets $C(x) \cap C(y)$, $C(x) \setminus C(y)$, and $C(y) \setminus C(x)$. Note however these choices are neither unique nor arbitrary since $C(z)$ satisfies the triple product property. We omit the proof, as it follows simply from further analysis using the triple product property of clubs.

Using this club analysis, we claim the following.

\begin{lemma}[Club analysis]\label{lem:ClubAnalysis}
One of the following occurs:
	\begin{enumerate}
	\item There exists $\iota \in \Z_2^3$ whose fixed-point set is connected, has dimension six, and admits an effective, isometric $T^2$--action.
	\item There exist three clubs with Type I intersection data, and $M$ is not spin.
	\end{enumerate}
\end{lemma}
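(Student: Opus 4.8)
The plan is to analyze the global combinatorial structure of the clubs, using the fact that every fixed point $x \in M^T$ is isolated and carries a club $C(x) \subseteq \Z_2^3$ satisfying the triple product property, together with the constraint that clubs at distinct fixed points either coincide or intersect in exactly two members. I would begin by fixing one fixed point $x_0$ with club $C(x_0) = \{\rho,\sigma,\tau,\rho\sigma\tau\}$ and considering, for each $\iota \in C(x_0)$, the connected fixed-point component $N_\iota := M^\iota_{x_0}$, which has dimension six. The first dichotomy is whether some such $N_\iota$ is actually \emph{all} of $M^\iota$ (i.e.\ $M^\iota$ is connected), and whether $T^2$ (the quotient $T^3/\langle\iota\rangle$, acting possibly ineffectively, but almost effectively after reduction) acts effectively on it. If this happens for some $\iota$, Conclusion (1) holds, so one may assume throughout that each $M^\iota$ with $\iota$ lying in some club is either disconnected or admits only an ineffective torus action on its relevant component. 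The latter ineffectiveness case is where one extracts rigidity: if $T^3$ does not act almost effectively on a six-dimensional totally geodesic $N_\iota$, then the kernel is a circle whose involution also fixes $N_\iota$, forcing extra coincidences among fixed-point sets and hence among clubs.

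The second and main step is to show that, unless Conclusion (1) holds, the collection of clubs occurring over $M^T$ must contain a Type~I triple. Here I would argue by contradiction: suppose every triple of distinct clubs has Type~II intersection data. Using the Mayer--Vietoris / inclusion--exclusion bookkeeping for $\chi(M)$ together with the containment lemma (each $M^T$ is covered by the codimension-six fixed components through any given fixed point), one tracks how the six-dimensional submanifolds $M^\iota$ are assembled. The Type~II-only hypothesis is extremely restrictive: it forces the ``club graph'' (vertices $=$ fixed points, with $x \sim y$ when $C(x) = C(y)$) and the incidence pattern between clubs and the seven nontrivial involutions to degenerate, and I expect this to corner the action into a configuration where some $M^\iota$ is forced to be connected with an effective $T^2$ action after all — contradiction — or where the number of fixed points (hence $\chi(M)$) is pinned down in a way incompatible with positivity of $\chi$ and the parity/periodicity constraints already established. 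Concretely, Type~II intersection is the pattern that tends to ``spread out'' the fixed points across many clubs without ever closing up, so ruling it out globally should follow from a counting argument: each club must be ``used'' by the covering of $M^T$, but the triple product property plus pairwise two-element overlaps bounds how the clubs can fit together, and Type~I is the unique stable closed configuration.

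The third step is the non-spin conclusion in case (2). Once three clubs with Type~I data are in hand, I would look at the corresponding six-dimensional totally geodesic submanifolds and the involutions $\rho,\sigma,\tau$ and their products. The Type~I configuration means that $\rho$, $\sigma$, $\tau$ each have codimension-six fixed components through the relevant points while the pairwise products $\rho\sigma,\rho\tau,\sigma\tau$ have codimension eight — in particular the product of any two ``codimension six'' involutions jumps to codimension eight. I would feed this into a characteristic-class / Smith-theory argument: the mod-two cohomology of $M$, constrained via Wilking's Theorem~\ref{thm:Wilking5.1} applied to these $\Z_2$-actions, together with the product structure forced by naturality of cup products along the $5$-connected (or at least highly connected) inclusions of the six-manifolds, is incompatible with $M$ being spin. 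The cleanest route is probably to show that the second Stiefel--Whitney class $w_2(M)$ is detected on one of these submanifolds (which, being a positively curved $6$-manifold with $T^2$ symmetry, is understood) and is nonzero there, or equivalently that a degree-two mod-two class squares nontrivially in a way a spin manifold cannot accommodate.

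The step I expect to be the main obstacle is the second one: globally excluding the all-Type-II scenario. The local combinatorics of clubs (triple product property, two-element pairwise overlaps, Type I vs.\ Type II for triples) is clean, but turning ``every triple is Type II'' into a contradiction requires carefully coupling the combinatorics to the geometry — specifically, knowing which $M^\iota$ are connected, tracking their mutual intersections via the connectedness lemma, and running the Euler-characteristic inclusion–exclusion over a potentially large and irregular covering of $M^T$. Managing the case distinctions here (how many distinct clubs appear, how the six-dimensional submanifolds for a fixed involution $\iota$ sit relative to each other, whether reductions of the torus act effectively) is where the real work lies, and I anticipate needing several sub-lemmas of the flavor ``if two clubs share the pair $\{\alpha,\beta\}$ then $M^\alpha_x = M^\alpha_y$ and $M^\beta_x = M^\beta_y$'' to control it.
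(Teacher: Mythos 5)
Your overall frame (either some codimension-six fixed-point set is connected with $T^2$ symmetry, or a Type I triple exists and $M$ is not spin) matches the statement, but the two steps that carry the lemma are missing or mis-aimed. The exclusion of the ``all triples Type II'' configuration --- which you yourself flag as the main obstacle --- needs no Euler-characteristic or inclusion--exclusion bookkeeping over coverings of $M^T$, and your proposed counting strategy is never actually carried out. The efficient route is to organize the dichotomy around whether some involution lies in \emph{every} club: if so, Berger and Frankel force $M^\iota$ to be connected of dimension six (two distinct codimension-six components would violate Frankel), and isotropy rigidity shows the kernel of the $T^3$-action on it is at most a circle, giving Conclusion (1); this also covers the cases of one or two clubs. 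If no involution is common to all clubs and every triple of clubs is Type II, write $C(x)=\{\iota_1,\iota_2,\iota_3,\iota_4\}$, $C(y)=\{\iota_1,\iota_2,\iota_5,\iota_6\}$, and use the Type II condition with respect to $\{C(x),C(y)\}$ to find clubs $C(z)$ containing $\iota_1$ but not $\iota_2$ and $C(w)$ containing $\iota_2$ but not $\iota_1$; since any Type II triple of clubs has union equal to all seven nontrivial involutions, applying this to $\{C(y),C(z),C(w)\}$ and $\{C(x),C(z),C(w)\}$ shows $C(z)\cup C(w)$ contains all seven involutions, which is impossible because two distinct clubs intersect in exactly two members, so their union has at most six elements. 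Your proposal contains no version of this (or any complete) argument, and your sketch does not explain how connectivity of any $M^\iota$ would ever be established without the common-involution observation.

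The non-spin claim in case (2) is also not a characteristic-class computation of the kind you describe. Detecting $w_2(M)$ on one of the six-dimensional submanifolds is both unnecessary and dubious: those submanifolds may themselves be spin, and the restriction of $w_2(M)$ to a totally geodesic submanifold differs from the submanifold's own $w_2$ by the normal bundle contribution, so nothing is directly ``detected'' there; Wilking's Theorem \ref{thm:Wilking5.1} plays no role here. The actual mechanism is the standard fact that on a spin manifold all components of the fixed-point set of a given involution have codimensions congruent modulo four. Once no involution lies in every club, any $\iota\in C(x)$ has a codimension-six component through $x$ and a codimension-eight component through any fixed point $y$ with $\iota\notin C(y)$; these codimensions are incongruent mod $4$, so $M$ cannot be spin. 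Without this observation your case (2) remains unproven.
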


\begin{proof}
It might happen that some involution $\iota$ is in every club. In this case, Frankel's theorem implies that $M^\iota$ has dimension six and is connected. Moreover, by the rigidity of the isotropy representation, $M^\iota$ is fixed by at most a circle, so in this case the first possibility of the conclusion occurs.

We claim such an involution $\iota$ exists in each of the following three cases:
	\begin{enumerate}
	\item There exists exactly one club.
	\item There exist exactly two clubs.
	\item There exist at least three clubs, and every subset of three has intersection data of Type II.
	\end{enumerate}
Indeed, in each of the first two cases, the club analysis above implies that some involution is in all clubs. In the last case, we verify this as follows. Suppose $C(x) = \{\iota_1,\iota_2,\iota_3,\iota_4\}$ and $C(y) = \{\iota_1,\iota_2,\iota_5,\iota_6\}$ are two of the clubs. We assume that no involution is in every club and proceed by contradiction. Since the intersection data of every club with $C(x)$ and $C(y)$ is of Type II, there exist clubs $C(z)$ and $C(w)$ such that one contains $\iota_1$ but not $\iota_2$ and vice versa for the other club. Next our assumption implies that the clubs $C(y)$, $C(z)$, and $C(w)$ have intersection data of Type II. A general property of such triples of clubs is that their union contains all seven of the non-trivial involutions in $\Z_2^3$. In particular, $C(z) \cup C(w)$ contains both $\iota_3$ and $\iota_4$. By applying the same line of reasoning to the triple of clubs $C(x)$, $C(z)$, and $C(w)$, we conclude that $C(z) \cup C(w)$ contains $\iota_5$ and $\iota_6$. Altogether we have that $C(z) \cup C(w)$ contains all seven non-trivial involutions in $\Z_2^3$. This contradicts the fact that any two clubs either coincide or intersect in exactly two elements. This completes the proof of the claim.

Assuming the claim now, there exist at least three (distinct) clubs, and among these there exist three with intersection data of Type I. We see immediately in this case that $M$ is not spin. Indeed, fixed-point sets of involutions on spin manifolds have all components of codimension congruent to $c$ modulo four, for some $c \in \{0,2\}$. In our setting, no involution is in every club, so any member $\iota \in C(x)$ has fixed point components of codimension $6$ and $8$.
\end{proof}

We require one more lemma whose proof relies on further club analysis, together with the {equivariant} diffeomorphism classification of Grove and Searle.

\begin{lemma}\label{lem:GroveSearleTrick}
Assume $M$ is as in Theorem \ref{thm:dim12}, and that every involution in the torus $T$ has fixed-point set of codimension at least six. Suppose $\iota \in T$ is an involution and $P \subseteq M^\iota$ is a component of dimension four.
	\begin{itemize}
	\item If $P$ is $\s^4$, then the two fixed-points in $P^T = P \cap M^T$ have the same club.
	\item If $P$ is $\C\pp^2$, then the three fixed points in $P^T$ represent three distinct clubs.
	\end{itemize}
\end{lemma}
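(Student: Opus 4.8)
The plan is to combine the rigid form of the isotropy representation with the equivariant diffeomorphism classification of Grove and Searle. Throughout put $A=\Z_2^3\subseteq T$ and $\hat A=\Hom(A,\Z_2)$, and note that $P$ is $T$-invariant (being a component of $M^\iota$, with $T$ connected), so $T$ acts on $P$. The first observation is that the club at a $T$-fixed point is encoded by a single element of $\hat A$: for $x\in M^T$ the rigid form presents $T_xM$ as a sum of six $T$-invariant planes with six \emph{distinct} nonzero weights, so there is a unique $\omega(x)\in\hat A\setminus\{0\}$ which is not a weight, and a short count gives $\cod\of{M^v_x}=6$ or $8$ according to whether $\omega(x)(v)=1$ or $0$. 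Hence $C(x)=A\setminus\ker\omega(x)$, so two clubs coincide exactly when the corresponding $\omega$'s coincide. Now let $x\in P^T$. Since $M^\iota_x=P$ has dimension four, $\cod\of{M^\iota_x}=8$, so $\omega(x)$ lies in the hyperplane $K:=\{w\in\hat A\st w(\iota)=0\}$; and since $T_xP$ is the $\iota$-fixed subspace of $T_xM$ (the dimensions match), $T_xP$ is the sum of exactly the two invariant planes whose weights lie in $K$. Those two weights are therefore the two nonzero elements of $K$ other than $\omega(x)$, and because the three nonzero vectors of the two-dimensional $\Z_2$-space $K$ sum to zero, $\omega(x)$ is the sum of the two $T$-weights on $T_xP$; in particular these two weights are distinct and nonzero.

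Next I would show $P$ has maximal symmetry rank. The two $T$-weights on $T_xP$ are genuine characters of $T$ whose reductions modulo $2$ are distinct and nonzero, so the characters are $\Q$-linearly independent (if they were dependent, both would be odd multiples of a single primitive character and would have equal reductions). Therefore the kernel of the $T$-action on $T_xP$ — which coincides with the kernel $\overline K$ of the $T$-action on the connected manifold $P$ — has one-dimensional identity component, a circle $S$. So $P$ is fixed by $S$, and $T/\overline K\cong T^2$ acts effectively and isometrically on the positively curved four-manifold $P$; this is an action of maximal symmetry rank, so by Grove and Searle's classification $(P,T/\overline K)$ is equivariantly diffeomorphic to $\s^4$ or $\C\pp^2$ with a linear torus action.

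Now I would compare the fixed points. The $T$-weights on $T_xP$ pull back along $T\to T^2$ from the $T^2$-tangent weights of the linear model at the point corresponding to $x$, and $\omega(x)$ is the mod $2$ reduction of their sum. If $P\cong\s^4$, the two fixed points of a linear $T^2$-action carry the \emph{same} unordered pair of tangent weights, so $\omega$ is constant on $P^T$ and the two clubs agree. If $P\cong\C\pp^2$, the three fixed points of a linear $T^2$-action carry pairwise distinct unordered pairs of tangent weights; using effectiveness one checks that the image of $A$ in $T^2$ is all of $T^2[2]$ (otherwise the two weights on $T_xP$ could not be both distinct and nonzero), after which two characters of $T^2$ give the same element of $\hat A$ only if they are congruent modulo $2$ in $\hat{T^2}$, and primitivity of the generating characters then keeps the three tangent-weight pairs pairwise distinct modulo $2$. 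As all three of these pairs lie in the three-element set $K\setminus\{0\}$, they exhaust its three two-element subsets, so the complements $\omega(p_0),\omega(p_1),\omega(p_2)$ are pairwise distinct and the three fixed points represent three distinct clubs.

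The main obstacle is the step that extracts, purely from the rigid form at a single fixed point of $P$, that $P$ is fixed by a subcircle of $T$: this is what puts an effective torus action of maximal symmetry rank on $P$ and so lets Grove and Searle's rigidity pin down the linear model. Everything else is bookkeeping in the weight lattice $\hat A$, the one delicate point being the claimed mod $2$ distinctness of the three tangent-weight pairs in the $\C\pp^2$ case.
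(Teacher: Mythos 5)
Your proof is correct and follows essentially the same route as the paper: isotropy rigidity shows the induced $T^3$--action on $P$ has one-dimensional kernel, Grove--Searle then makes the effective $\overline{T}\cong T^2$--action equivariantly linear, and the clubs are compared via the tangent data at the fixed points of the linear model. The only real difference is cosmetic: you encode the club by the missing weight $\omega(x)\in\hat A$ and verify the $\C\pp^2$ case by directly computing the three mod $2$ tangent-weight pairs (where the operative fact is that effectiveness makes the two tangent weights a basis of the character lattice, not merely primitive), whereas the paper reaches the same conclusion by contradiction using the $\s^2\sqcup\{pt\}$ structure of involution fixed sets.
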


\begin{proof}
Fix $\iota \in \Z_2^3$ and a four-dimensional component $P \subseteq M^\iota$. By the rigidity of the isotropy maps, the induced $T^3$--action on $P$ has one-dimensional kernel that does not contain any other involution. Let $\overline T$ denote the two-dimensional torus equal to the quotient of $T^3$ by the kernel of this induced action. The induced action of $\overline T$ on $P$ is equivariant to a linear action on $\s^4$ or $\C\pp^2$.

Applying isotropy rigidity again, it follows that, for all $p \in P^{\overline{T}} = P \cap M^T$, there exist $\iota_1$ and $\iota_2$ such that the isotropy map at $p$ takes the form
	\begin{eqnarray*}
	\iota		&\mapsto&	(1, 1, 1, 1, 0, 0)\\
	\iota_1	&\mapsto&	(*, *, *, *, 1,0)\\
	\iota_2	&\mapsto&	(*, *, *, *, 0, 1)
	\end{eqnarray*}
By the rigidity of the isotropy at $p$, it follows that the club at $p$ is given by
	\[C(p) = \{\iota_1, \iota\iota_1, \iota_2, \iota\iota_2\}.\]

To complete the proof, first suppose $P$ is a sphere, and denote the fixed points in $P \cap M^T$ by $p_1$ and $p_2$. Since the $\overline T$--action on $P$ is equivariant to a linear action on $\s^4$, it follows that $P^{\tau}_{p_1}$ and $P^{\tau}_{p_2}$ have the same dimension for all involutions $\tau$ in $\overline T$. In particular, the $\iota_1$ and $\iota_2$ from the previous paragraph are the same for $p_1$ and $p_2$, and hence the clubs coincide at these points.

Now suppose $P$ is $\C\pp^2$. We use the fact that the $\overline T$--action on $P$ is equivariant to a linear action. In particular, each involution $\iota \in \overline T$ has the property that $P^\iota$ consists of a copy of $\s^2$ together with an isolated point. In particular, if $p_1$ and $p_2$ are two points in $P^{\overline T}$ with the same club, and if $\iota_1$ and $\iota_2$ are the involutions as above such that
	$C(p_1) = C(p_2) = \{\iota_1, \iota\iota_1, \iota_2, \iota\iota_2\},$
then the third point, $p_3 \in P^{\overline T}$, is an isolated fixed point of the actions of $\iota_1$ and $\iota_2$ on $P$. But then the product $\iota_1\iota_2$ acts on $T_{p_3} P$ as the identity and hence fixes $P$. This contradicts the rigidity of the isotropy maps.
\end{proof}

We now complete the proof of the Euler characteristic calculation claimed in Theorem \ref{thm:dim12}. We do this is three steps (see Lemmas \ref{lem:dim12codim6}, \ref{lem:dim12codim6Cis6}, and \ref{lem:dim12codim6elliptic}).

\begin{lemma}\label{lem:dim12codim6}
If every involution has fixed-point set of codimension at least six, then one of the following occurs:
	\begin{enumerate}
	\item $\chi(M) \in \{2, 4,6,\ldots, C(6)\}$, or
	\item $7 \leq \chi(M) \leq \frac{7}{4} C(6)$ and $M$ is not spin.
	\end{enumerate}
\end{lemma}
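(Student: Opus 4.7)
The plan is to combine the dichotomy of Lemma~\ref{lem:ClubAnalysis} with a careful accounting of the Euler characteristics of the six-dimensional fixed-point components arising under the codimension-$\geq 6$ hypothesis. Lemma~\ref{lem:ClubAnalysis} splits the problem into two cases: either some involution $\iota \in T^3$ has connected fixed-point set $N := M^\iota$ of dimension six carrying an effective, isometric $T^2$-action; or there exist three clubs $C_1, C_2, C_3 \subseteq \Z_2^3$ with Type~I intersection data and $M$ is not spin.

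In the first case, $M^T \subseteq N$ gives the Conner--Kobayashi identity $\chi(M) = \chi(M^T) = \chi(N^T) = \chi(N)$. The connectedness lemma, applied with the refinement from the circle in $T^3$ containing $\iota$, makes $N$ simply connected, and since $N$ is an oriented closed six-manifold the cup-product pairing on $H^3(N;\Q)$ is skew-symmetric and non-degenerate, so $b_3(N)$ is even and $\chi(N)$ is even. The definition of $C(6)$ then yields $\chi(M) \in \{2, 4, \ldots, C(6)\}$, i.e.\ conclusion~(1).

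In the second case I partition $M^T$ by the club at each fixed point and write $a_i$ for the number of fixed points of club $C_i$. For each pair $\{i,j\}$, pick $\iota \in C_i \cap C_j$; isotropy rigidity forces $\cod(M^\iota_x) = 6$ at every class-$i$ or class-$j$ fixed point $x$, and since $6+6 = 12 = \dim M$, Frankel's theorem collapses these codimension-six components into a single totally geodesic six-manifold $N_{ij}$ with $\chi(N_{ij}) = a_i + a_j$. The connectedness lemma makes $N_{ij}$ simply connected, so $\chi(N_{ij}) \leq C(6)$ by the definition of $C(6)$. Summing these inequalities over pairs, and handling additional clubs (if any) via the triple-product property, will produce the upper bound $\chi(M) \leq \frac{7}{4} C(6)$.

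The lower bound $\chi(M) \geq 7$ in the second case is the main obstacle. To exclude $\chi(M) \in \{3, 5\}$ I will analyze an involution lying in no club (for instance $\rho\sigma\tau$ in the three-club setup): every fixed-point component of such an involution through a class-$i$ fixed point has codimension eight, and Lemma~\ref{lem:GroveSearleTrick} identifies each four-dimensional component as $\s^4$ (two fixed points of a common club) or $\C\pp^2$ (three fixed points of three distinct clubs). Since $8 + 8 > 12$, Frankel's theorem forbids the coexistence of two disjoint codimension-eight components, collapsing all such fixed points into a single four-dimensional component and forcing a tight constraint on the distribution of classes. Combined with the parity constraint that each $a_i + a_j = \chi(N_{ij})$ is even, this rules out $\chi(M) = 5$ immediately; the value $\chi(M) = 3$ is excluded by a finer combinatorial argument that either contradicts the presence of three clubs or collapses the configuration back into the first case. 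The remaining values land in conclusion~(1) when $\chi(M)$ is even and $\leq C(6)$, and in conclusion~(2) when $\chi(M) \geq 7$, with the non-spin property supplied by Lemma~\ref{lem:ClubAnalysis}. The combinatorial bookkeeping required to enumerate all admissible club configurations, including possible clubs beyond the three Type~I ones, and to verify that the summed bound respects the coefficient $\frac{7}{4}$ rather than the naive $\frac{3}{2}$, is the most technical aspect of the proof.
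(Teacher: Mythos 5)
Your high-level structure matches the paper: split via Lemma~\ref{lem:ClubAnalysis}, handle the connected-fixed-set case by Conner--Kobayashi plus Lemma~\ref{lem:C6}, and in the three-club case attack the Euler characteristic through the six-dimensional fixed-point sets. Case~1 is handled essentially correctly. However, the combinatorial core of Case~2 contains several errors that prevent the argument from closing.

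First, the identity $\chi(N_{ij}) = a_i + a_j$ is false in general. For $\iota \in C_i \cap C_j$, the $T$-fixed points of the six-manifold $N_{ij} = M^\iota_x$ are precisely those $w \in M^T$ with $\iota \in C(w)$. This set contains the class-$i$ and class-$j$ points, but it also contains every fixed point $w$ whose club lies outside $\{C_1,C_2,C_3\}$ (i.e.\ $w \in W$ in the paper's notation) such that $\iota \in C(w)$. So $\chi(N_{ij}) \geq a_i + a_j$, with equality only when no such $w$ exists. Since you use this equality to derive both the parity constraint and the upper bound, the argument collapses whenever $W \neq \emptyset$. The paper deals with this by writing $X,Y,Z,W$ for the club-partition of $M^T$, establishing which $w\in W$ land in which $N_j$, and extracting the key identity $4\chi(M) = \sum_{j=0}^{6}\chi(N_j)$ from \emph{seven} inclusion--exclusion equations, including ones that track $|W| = \chi(N_0)$; nothing like that appears in your sketch. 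Second, the Frankel step is applied in the wrong direction: Frankel forbids two disjoint totally geodesic submanifolds only when the sum of their codimensions is \emph{at most} $n$. Since $8 + 8 = 16 > 12$, Frankel gives no information, and two disjoint codimension-eight components of $M^{\iota_0}$ can absolutely coexist (the paper in fact needs exactly this to use Lemma~\ref{lem:GroveSearleTrick}, which classifies the multiple four-dimensional components as $\s^4$'s and $\C\pp^2$'s). Third, you explicitly acknowledge that your accounting produces a coefficient $\tfrac{3}{2}$ rather than the required $\tfrac{7}{4}$ and that you do not know how to bridge that gap, and similarly the lower bound $\chi(M) \geq 7$ is left as an open combinatorial problem. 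The paper obtains both bounds simultaneously from the single relation $4\chi(M) = \sum_{j=0}^{6}\chi(N_j)$ combined with the observations $\chi(M) \geq \chi(N_j) + 2$ for $1\leq j\leq 6$ (since $M^{\iota_j}$ has a four-dimensional component beyond $N_j$), $\chi(M) \geq \chi(N_0) + 3$ (since $X,Y,Z$ are nonempty and disjoint from $N_0$), and $\chi(N_j) \leq C(6)$ for all $j$. You would need to discover this identity (or an equivalent bookkeeping device) to complete the proof.
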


\begin{proof}
If there exists $\iota \in \Z_2^3$ with connected fixed-point set of dimension six, then we have $\chi(M) = \chi(M^\iota) \in \{2,4,6,\ldots,C(6)\}$ by Lemma \ref{lem:C6}. Suppose then, as in Lemma \ref{lem:ClubAnalysis}, that distinct clubs, $C(x)$, $C(y)$, and $C(z)$, exist and have Type I intersection data. We recall the notation from the proof of that lemma. Furthermore, we denote the non-trivial involutions by $\iota_0 = \rho\sigma\tau$, $\iota_1 = \rho$, $\iota_3 = \sigma$, $\iota_5 = \tau$, and $\iota_{2i} = \iota_0\iota_{2i-1}$ for $2i \in \{2,4,6\}$. In particular,
	\[C(x) \cap C(y) = \{\iota_1,\iota_2\}, ~~~
	C(x) \cap C(z) = \{\iota_3,\iota_4\}, ~~~
	C(y)\cap C(z) = \{\iota_5,\iota_6\},\]
and $\iota_0$ is not in any of these clubs. Let $N_1,\ldots,N_6$ denote the (unique, by Frankel) components of dimension six of the fixed-point sets of $\iota_1,\ldots,\iota_6$, respectively. If $\iota_0$ is in some club, let $N_0$ denote the six-dimensional component of its fixed-point set; otherwise, let $N_0$ denote the empty set.

Let $X\subseteq M^T$ denote the set of fixed points whose club equals $C(x)$. Define $Y$ and $Z$ similarly, and let $W$ denote the fixed points of $T$ whose club is distinct from $C(x)$, $C(y)$, and $C(z)$. Note the following facts:
	\begin{itemize}
	\item $X$, $Y$, $Z$, and $W$ partition the fixed-point set $M^T$.
	\item $N_{2i-1} \cap W$ and $N_{2i} \cap W$ partition $W$ for all $i \in \{1,2,3\}$. Indeed, if $w \in N_{2i}$, then $\iota_{2i} \in C(w)$ and hence $\iota_{2i-1} = \iota_0 \iota_{2i} \not\in C(w)$. Moreover, if $w \not\in N_{2i} \cup N_{2i-1}$, then $\iota_{2i}$ and $\iota_{2i-1}$ are not in $C(w)$, and hence $\iota_0 = \iota_{2i-1}\iota_{2i} \not\in C(w)$, a contradiction.
	\item Each $w \in W$ lies in exactly one or all three of $N_1$, $N_3$ or $N_5$. Indeed, this follows from the previous fact together with the triple product property and the fact that $\iota_2\iota_4\iota_6$ is the identity.
	\end{itemize}

We denote $\chi(X)$ by $|X|$ and similarly for the orders of $Y$, $Z$, and $W$. Observe that $w \in W$ if and only if $\iota_0 \in C(w)$, hence $W = (N_0)^T$ and
	\begin{eqnarray}\label{eqn:L1}
	|W| = \chi(N_0).
	\end{eqnarray}
Next, note that $X \cup Y = (N_1 \cap N_2)^T$ and that, similarly, $X \cup Z = (N_3\cap N_4)^T$ and $Y\cup Z = (N_5 \cap N_6)^T$. Hence
	\begin{eqnarray}
	|X|+|Y| &=& \chi(N_1 \cap N_2),\\\label{eqn:L2}
	|X|+|Z| &=& \chi(N_3 \cap N_4),\\\label{eqn:L3}
	|Y|+|Z| &=& \chi(N_5 \cap N_6).\label{eqn:L4}
	\end{eqnarray}
Next, note that $X \cup Y \cup W = (N_1 \cup N_2)^T$ and that we have similar statements for $N_3 \cup N_4$ and $N_5\cup N_6$. Hence
	\begin{eqnarray}
	|X|+|Y|+|W| 	&=&	\chi(N_1) + \chi(N_2) - \chi(N_1 \cap N_2),\\\label{eqn:L5}
	|X|+|Z|+|W|	&=& \chi(N_3) + \chi(N_4) - \chi(N_3 \cap N_4),\\\label{eqn:L6}
	|Y|+|Z|+|W|	&=& \chi(N_5) + \chi(N_6) - \chi(N_5 \cap N_6).\label{eqn:L7}
	\end{eqnarray}
Adding together Equations (\ref{eqn:L1})--(\ref{eqn:L7}), we conclude
	\begin{eqnarray}
	4 \chi(M) = \sum_{j=0}^6 \chi(N_i).\label{eqn:4chi}
	\end{eqnarray}

We first use this equation to bound $\chi(M)$ from below. Since no $\iota_j$ is in every club, every $M^{\iota_j}$ is composed of the six-manifold $N_j$ together with a disjoint union of $4$--manifolds, each of which has Euler characteristic at least two. Hence $\chi(M) \geq \chi(N_j) + 2$ for all $1 \leq j \leq 6$. In addition, $M^{\iota_0}$ contains $N_0$ as well as $X$, $Y$, and $Z$, so $\chi(M) \geq \chi(N_0) + 3$. Hence Equation (\ref{eqn:4chi}) implies
	\[4\chi(M) 	\leq (\chi(M) - 3) + \sum_{j=1}^6 \of{\chi(M) - 2},\]
which implies that $\chi(M) \geq 5$. In fact, $\chi(M) = 5$ would imply that $\chi(N_j) \leq 3$ for all $1 \leq j \leq 6$. But $\chi(N_j)$ is both positive and even, so $\chi(N_j) = 2$ for all $1 \leq j \leq 6$. Since $\chi(N_0) \leq \chi(M) - 3 = 2$, Equation \ref{eqn:4chi} implies $4\chi(M) \leq 7(2) = 14$, a contradiction. In fact, $\chi(M) = 6$ also cannot occur. Indeed, suppose $\chi(M) = 6$. Then $\chi(N_0) \leq 3$, so $\chi(N_0) = 2$. But now $\chi(M^{\iota_0} \setminus N_0) = 4$, a contradiction to Lemma \ref{lem:GroveSearleTrick}, which implies that these four points must come in two pairs such that, in each pair, the two clubs are the same. But these four points together represent three clubs (their union is $X \cup Y \cup Z)$, a contradiction.

To complete the proof of the lemma, it suffices to prove $\chi(M) \leq \frac 7 4 C(6)$. By the rigidity of the isotropy maps, $N_i$ admits an effective, isometric $T^2$--action and hence has $\chi(N_i) \leq C(6)$ for all $1 \leq i \leq 6$. The same estimate on $\chi(N_0)$ holds, even if it is empty. The upper bound now follows from Equation \ref{eqn:4chi}.
\end{proof}

\begin{lemma}\label{lem:dim12codim6Cis6}
Let $M$ be as in Lemma \ref{lem:dim12codim6}. If $C(6) = 6$, then $\chi(M) \leq 9$.
\end{lemma}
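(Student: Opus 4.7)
The plan is to leverage Equation (\ref{eqn:4chi}) from the proof of Lemma \ref{lem:dim12codim6} together with the uniform bound $\chi(N_j)\le C(6)=6$ for each of the seven six-dimensional submanifolds. This immediately yields $4\chi(M)\le 7\cdot 6 = 42$, hence $\chi(M)\le 10$, so the whole task reduces to ruling out the boundary case $\chi(M)=10$. Here the six-dimensional bound is one short of the target, so an extra ingredient is needed.

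To close the gap, I would focus on the involution $\iota_0=\rho\sigma\tau$, which in the Type I configuration lies in none of the clubs $C(x)$, $C(y)$, $C(z)$. Assuming we are in Case (2) of Lemma \ref{lem:ClubAnalysis} (since Case (1) already gives $\chi(M)=\chi(M^\iota)\le C(6)=6$), isotropy rigidity forces $\cod(M^{\iota_0}_p)=8$ for every $p\in X\cup Y\cup Z$, so each such $p$ lies in a four-dimensional component of $M^{\iota_0}$. The key geometric observation is a Frankel argument in dimension $12$: any two four-dimensional totally geodesic submanifolds have codimensions summing to $16>12$ and hence must intersect, which is impossible for distinct components of the same fixed-point set. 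Therefore $M^{\iota_0}$ has a \emph{unique} four-dimensional component $P$, and the entire set $X\cup Y\cup Z$ is contained in $P$.

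Now I would apply Lemma \ref{lem:GroveSearleTrick} to $P$. If $P\cong \s^4$, then $P\cap M^T$ would consist of only two points sharing a single club, contradicting the fact that $X$, $Y$, $Z$ represent three distinct clubs. Hence $P\cong \C\pp^2$, the intersection $P\cap M^T$ has exactly three points with three distinct clubs, and $|X|=|Y|=|Z|=1$. Combining this with Equation (\ref{eqn:L1}), which gives $|W|=\chi(N_0)\le C(6)=6$, yields
\[\chi(M)=|X|+|Y|+|Z|+|W|=3+\chi(N_0)\le 9,\]
as required. The main subtlety is recognizing that the needed improvement over the naive bound $\chi(M)\le 10$ cannot be extracted from the six-dimensional $N_j$'s, but must instead come from the unique \emph{four}-dimensional component of $M^{\iota_0}$ together with the Grove--Searle-type rigidity encoded in Lemma \ref{lem:GroveSearleTrick}.
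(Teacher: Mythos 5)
Your reduction to $\chi(M)\le 10$ via Equation~(\ref{eqn:4chi}) matches the paper, but the step meant to rule out $\chi(M)=10$ contains a fatal misuse of Frankel's theorem. Frankel guarantees that two totally geodesic submanifolds $A,B\subseteq M^n$ intersect when $\cod(A)+\cod(B)\le n$; for two $4$--dimensional components of $M^{\iota_0}$ in $M^{12}$ the codimensions sum to $16>12$, so Frankel gives \emph{no} information, and disjoint $4$--dimensional components are entirely possible. (Indeed, the paper's own arguments elsewhere rely on this: the signature computation allows $M^{\iota_i}=N_i\cup \s^4\cup\s^4$, and the paper's proof of this very lemma produces $M^{\iota_1}=N_1\cup P\cup Q$ with $P,Q\cong\s^4$.) Once uniqueness of the $4$--dimensional component of $M^{\iota_0}$ fails, the containment $X\cup Y\cup Z\subseteq P$ no longer holds, the conclusion $|X|=|Y|=|Z|=1$ is unsupported, and the inequality $\chi(M)\le 3+\chi(N_0)\le 9$ does not follow.

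The paper also proceeds by excluding $\chi(M)=10$, but by a different and more intricate route: after a normalization it may assume $\chi(N_0)=4$, forces $\chi(N_j)=6$ for $1\le j\le 6$ from Equation~(\ref{eqn:4chi}), deduces $|X|=|Y|=|Z|=2$ from Equations~(\ref{eqn:L2})--(\ref{eqn:L7}), pins down exactly which points of $W$ lie in each $N_j^T$, and then applies Lemma~\ref{lem:GroveSearleTrick} to the $4$--dimensional components of $M^{\iota_1}$ rather than $M^{\iota_0}$: since $\chi(M)-\chi(N_1)=4$, the remainder of $M^{\iota_1}$ consists of two copies of $\s^4$ whose four $T$--fixed points represent at most two clubs, yet they include $Z$, $w_3$, and $w_4$, which represent three distinct clubs. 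An argument centered on $M^{\iota_0}$ as you propose would need a genuinely new mechanism to control the number and type of its $4$--dimensional components, and Frankel alone cannot supply it.
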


\begin{proof}
We keep the notation from above, but now we assume $\chi(N_i) \leq 6$ for all $0 \leq i \leq 6$. Equation \ref{eqn:4chi} implies $4 \chi(M) \leq 42$, and hence $\chi(M) \leq 10$.

Suppose that $\chi(M) = 10$. Equation (\ref{eqn:4chi}) implies $\chi(N_0) \geq 4$. Moreover, if $\chi(N_0) = 6$, then we have $|X| = |Y| = 1$ and $|Z| = 2$ without loss of generality, and one can show that $x$ and $y$ can be replaced by suitable choices of $w_1$ and $w_2$ in $W$ so that the three clubs $C(z)$, $C(w_1)$, and $C(w_2)$ are distinct, have intersection data of Type I, and have the property that the involution not in $C(z) \cup C(w_1) \cup C(w_2)$ has maximal component of Euler characteristic four. In other words, we may assume without loss of generality that $\chi(N_0) = 4$. By Equation (\ref{eqn:4chi}), $\chi(N_j) = 6$ for all $1 \leq j \leq 6$. By Equations (\ref{eqn:L2})--(\ref{eqn:L7}), it follows that $|X| = |Y| = |Z| = 2$. Write $W = \{w_1,w_2,w_3,w_4\}$. Up to relabeling the $w_i$, we have that
	\begin{eqnarray*}
	N_1^T &= X \cup Y \cup \{w_1,w_2\}.
	\end{eqnarray*}
Now every $w \in W$ lies in exactly one or three of $N_1$, $N_3$, and $N_5$. Without loss of generality, this implies that
	\begin{eqnarray*}
	N_3^T &= X \cup Z \cup \{w_1,w_3\},\\
	N_5^T &= Y \cup Z \cup \{w_1,w_4\}.
	\end{eqnarray*}
In particular, $\iota_3$ is in the club $C(w_3)$ but not $C(w_4)$, so $C(w_3) \neq C(w_4)$.

On the other hand, consider the fixed-point set $M^{\iota_1}$. One component is $N_1$, which has Euler characteristic six. The others are closed, oriented, positively curved $4$--manifolds. By Lemma \ref{lem:GroveSearleTrick}, we have that $M^{\iota_1} = N_1 \cup P \cup Q$ where $P$ and $Q$ are diffeomorphic to $\s^4$, and where the two fixed-points in $P$ have equal clubs and likewise for the fixed points of $Q$. In particular, the four points in $(P\cup Q)^T$ represent only two clubs. But $(P \cup Q)^T$ contains the two points in $Z$, as well as $w_3$ and $w_4$. These points represent three distinct clubs, so we have the desired contradiction.
\end{proof}

To complete the proof of the Euler characteristic calculation claimed in Theorem \ref{thm:dim12}, it suffices to prove the following.

\begin{lemma}\label{lem:dim12codim6elliptic}
Let $M$ be as in Lemma \ref{lem:dim12codim6}. If $M$ is rationally elliptic, then $\chi(M) \leq 10$ or $\chi(M) = 12$.
\end{lemma}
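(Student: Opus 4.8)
The plan is to combine the structural dichotomy of Lemma~\ref{lem:dim12codim6} with the classification of $F_0$ spaces of formal dimension $12$ obtained in Section~\ref{sec:F0spaces}, and then to eliminate the remaining large values of $\chi(M)$ by feeding the rational cohomology type of $M$ back into Equation~\ref{eqn:4chi} and the club combinatorics.

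First I would dispose of the first alternative of Lemma~\ref{lem:dim12codim6}. Since $M$ is rationally elliptic with $\chi(M)>0$, it is an $F_0$ space of formal dimension $12$, so $H^*(M;\Q)$ is a complete intersection and $\chi(M)$ lies in the explicit finite set $\mathcal E$ of Euler characteristics realized by such spaces. A direct inspection of the possible generator and relation degrees shows $14\notin\mathcal E$, and in fact the only elements of $\mathcal E$ in the interval $(12,24]$ are $15$, $16$, $18$, $20$, and $24$. Hence if $\chi(M)\in\{2,4,\dots,C(6)\}$ then, since $C(6)\le 14$ by Lemma~\ref{lem:C6} and $14\notin\mathcal E$, we obtain $\chi(M)\le 12$, as desired. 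So from now on I may assume the second alternative of Lemma~\ref{lem:dim12codim6}: $M$ is not spin and $7\le\chi(M)\le\frac{7}{4}C(6)\le 24$, whence $\chi(M)\in\{7,8,9,10,12,15,16,18,20,24\}$, and it remains to exclude the five values exceeding $12$.

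For this I would return to the setup from the proof of Lemma~\ref{lem:dim12codim6}. There we obtained Equation~\ref{eqn:4chi}, namely $4\chi(M)=\sum_{j=0}^{6}\chi(N_j)$, where $N_1,\dots,N_6$ are the closed, simply connected, positively curved $6$--manifolds each admitting an effective isometric $T^2$--action that arise from the Type~I club analysis, and $N_0$ is such a manifold or is empty. Each $\chi(N_j)$ is positive and even, $\chi(N_j)\le C(6)\le 14$, $\chi(N_j)\le\chi(M)-2$ for $1\le j\le 6$, and $\chi(N_0)\le\chi(M)-3$. Moreover, as in the proof of Lemma~\ref{lem:C6}, each $N_j$ is a homotopy $\s^6$ or $\C\pp^3$, hence has $\chi(N_j)\le 4$, whenever some involution in the induced $T^2$ has a codimension--two fixed-point component, and otherwise $N_j$ has isolated $T^2$--fixed-point set. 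The decisive extra input is that, $M$ being rationally elliptic with $\chi(M)>12$, its rational cohomology ring is forced into one of the rigid complete intersections realizing $15$, $16$, $18$, $20$, or $24$; via naturality of cup products and the connectedness lemma this constrains the low-degree cohomology of the $N_j$. Running these constraints through Equation~\ref{eqn:4chi}, and invoking Lemma~\ref{lem:GroveSearleTrick} together with the club analysis exactly as in the proof of Lemma~\ref{lem:dim12codim6Cis6} --- now sharpened by the improved bounds on the $\chi(N_j)$ --- I would derive a contradiction for each of $\chi(M)\in\{15,16,18,20,24\}$. Together with the first-alternative case this gives $\chi(M)\le 10$ or $\chi(M)=12$.

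The main obstacle is the lack of a sharp a priori bound on $C(6)$: Equation~\ref{eqn:4chi} by itself does not rule out, say, $\chi(M)=15$, since the identity $\sum_{j=0}^{6}\chi(N_j)=60$ is numerically compatible with several $\chi(N_j)$ equal to $12$. The real work is therefore to exploit the rigidity of the complete-intersection ring $H^*(M;\Q)$ when $\chi(M)>12$ in order to force the six-dimensional totally geodesic submanifolds $N_j$ to have small Euler characteristic, or else to contradict the club combinatorics as in Lemmas~\ref{lem:GroveSearleTrick} and~\ref{lem:dim12codim6Cis6}. I expect this to reduce to a finite case analysis over the club configurations, and the bookkeeping there --- keeping the partition $M^T=X\cup Y\cup Z\cup W$ consistent with the prescribed values of the $\chi(N_j)$ --- to be the most delicate part.
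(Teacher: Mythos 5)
There is a genuine gap. Your treatment of the first alternative of Lemma \ref{lem:dim12codim6} and your reduction of the second alternative to the finite list $\chi(M)\in\{7,8,9,10,12,15,16,18,20,24\}$ (using Table \ref{tab12} and $C(6)\le 14$) are fine, but the actual exclusion of $15,16,18,20,24$ --- which is the entire content of the lemma beyond the trivial part --- is left as a sketch, and the mechanism you propose for it is unlikely to work. You want to feed the rigidity of $H^*(M;\Q)$ back into the submanifolds $N_j$ ``via naturality of cup products and the connectedness lemma,'' but the $N_j$ have codimension six in a $12$--manifold, so the connectedness lemma makes the inclusions $N_j\to M$ only $1$--connected (or $2$--connected when a circle fixes $N_j$); this gives essentially no control on $H^*(N_j;\Q)$, and the contemplated ``finite case analysis over club configurations'' is exactly the hard bookkeeping you have not done.

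The idea you are missing, and which the paper uses, is that rational ellipticity descends to the $N_j$ themselves. Each $N_i$ appearing in Equation \ref{eqn:4chi} either carries an (almost) effective isometric $T^3$--action, in which case $\chi(N_i)\in\{2,4\}$ by Grove--Searle, or else is a fixed-point component of a circle in $T^3$ acting on $M$; in the latter case $N_i$ is rationally elliptic because $M$ is, and a closed, simply connected, rationally elliptic $6$--manifold has $\chi\le\chi(\s^2\times\s^2\times\s^2)=8$ (cf.\ Table \ref{tab6}). Thus $\chi(N_i)\le 8$ for all $0\le i\le 6$, and Equation \ref{eqn:4chi} gives $4\chi(M)\le 56$, i.e.\ $\chi(M)\le 14$; since no simply connected, rationally elliptic closed $12$--manifold has $\chi\in\{11,13,14\}$ (Table \ref{tab12}), one concludes $\chi(M)\le 10$ or $\chi(M)=12$ with no case analysis at all. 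Without this descent of ellipticity (or some substitute bound on the $\chi(N_j)$ sharper than $C(6)\le 14$), your argument does not close.
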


\begin{proof}
We keep the notation from the proof of Lemma \ref{lem:dim12codim6}. Consider one of the submanifolds $N_i$ in Equation \ref{eqn:4chi}. If $N_i$ admits an effective, isometric $T^3$ action, then $\chi(N_i) \in \{2,4\}$ by Grove and Searle's classification. Otherwise, $N_i$ is a fixed point component of a circle action on $M$. Since $M$ is rationally elliptic, it follows that $N_i$ is rationally elliptic. Since $N_i$ is a simply connected, closed manifold, it follows that $\chi(N_i) \leq \chi(\s^2 \times \s^2 \times \s^2) = 8$. Since this estimate holds for all $0 \leq i \leq 6$, Equation \ref{eqn:4chi} implies that $\chi(M) \leq 14$.

To complete the proof, it suffices to observe that no rationally elliptic, simply connected, closed manifold of dimension $12$ can have Euler characteristic $11$, $13$, or $14$. For a proof, one can use the existence of a pure minimal model for such a space and apply the properties listed in the beginning of the proof of Theorem \ref{thm:PossibleTuples}. The calculation also immediately follows from the results in Table \ref{tab12}.
\end{proof}

This completes the proof of the Euler characteristic calculation. For the elliptic genus, see Section \ref{sec:EllipticGenus}. It suffices to compute the signature.

\begin{proof}[Proof of Theorem \ref{thm:dim12}, signature calculation]
First, if $M$ is as in Lemmas \ref{lem:dim12codim2and4part1} or \ref{lem:dim12codim4part2}, then $M$ in an integral cohomology $\s^{12}$, $\C\pp^6$, or $\HH\pp^3$. In each of these cases, it follows that $|\sigma(M)|$ is $0$ or $1$, according to the parity of $\chi(M)$.

Second, if $M$ is as in the first possibility of Lemma \ref{lem:ClubAnalysis}, then there exists and involution $\iota$ such that $M^\iota$ is a closed, connected, simply connected $6$--manifold. Hence $\sigma(M) = \sigma(M^\iota) = 0$. Note that this is consistent with the fact that $\chi(M)$ is even in this case by the proof of Lemma \ref{lem:dim12codim6}.

We may therefore assume $M$ is as in the second possibility of Lemma \ref{lem:ClubAnalysis}. In particular, $M$ is not spin, no fixed-point set $M^{\iota_i}$ is connected, and $7 \leq \chi(M) \leq \frac{7}{4} C(6)$. Since $C(6) \leq 14$, it follows that $7 \leq \chi(M) \leq 24$.

We cannot calculate the signature in all cases, so we further assume $\chi(M) \leq 13$. The proof of Lemma \ref{lem:dim12codim6} (in particular, Equation \ref{eqn:4chi}) implies $\chi(M) - \chi(N_i) \in \{2, 3, 4, 5\}$ for some $0 \leq i \leq 6$, where the $N_i$ are the $6$--dimensional fixed-point components of the $\iota_i$ as in the proof of Lemma \ref{lem:dim12codim6}. In particular, we have the following possibilities:
	\begin{itemize}
	\item $M^{\iota_i} = N_i \cup \s^4$, and so $\sigma(M) = \sigma(\s^4) = 0$,
	\item $M^{\iota_i} = N_i \cup \C\pp^2$, and so $|\sigma(M)| = |\sigma(\C\pp^2)| = 1$,
	\item $M^{\iota_i} = N_i \cup \s^4 \cup \s^4$, and so $\sigma(M) = 0$, or
	\item $M^{\iota_i} = N_i \cup \s^4 \cup \C\pp^2$, and so $|\sigma(M)| = 1$.
	\end{itemize}
Hence the signature is $0$ or $\pm 1$ according to the parity of $\chi(M)$.
\end{proof}

We remark that an extension of this argument shows that $|\sigma(M)| = 1$ if there exists some $N_i$ with $\chi(M) - \chi(N_i) = 7$. Using this together with the fact that $|\sigma(M)| \equiv \chi(M) \bmod{2}$, one can further compute that $|\sigma(M)| = 1$ if $\chi(M) \in \{15,17,19\}$.

\smallskip\section{Dimension 14}\label{sec:dim14}\smallskip

The only simply connected, smooth, closed manifolds of dimension $14$ known to admit positive sectional curvature are $\s^{14}$ and $\C\pp^7$. The following result provides a sharp calculation of the Euler characteristic and the second and third homology groups of a positively curved $14$--manifold in the presence of $T^4$ symmetry.

\begin{theorem}\label{thm:dim14}
If $M^{14}$ is a closed, simply connected, positively curved Riemannian manifold with $T^4$ symmetry, then one of the following occurs.
\begin{itemize}
	\item $M$ is $3$--connected and $\chi(M) = 2$.
	\item $H_*(M;\Z) \cong H_*(\C\pp^7;\Z)$, and $H^*(M;\Z)$ is generated by some $z \in H^2(M;\Z)$ and $x \in H^4(M;\Z)$ subject to the relation $z^2 = m x$ for some $m \in \Z$.
	\end{itemize}
\end{theorem}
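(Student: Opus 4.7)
The plan is to mirror the proof of Theorem~\ref{thm:dim10}, stratifying by the minimum codimension $k$ among non-trivial involutions $\iota \in T = T^4$. The high torus rank relative to $n = 14$ is what makes the case analysis cleaner than in dimensions $10$ and $12$, and is presumably why the introduction recommends this case as the gentlest illustration of the general strategy.

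If $k = 2$, the codimension two lemma (Lemma~\ref{lem:cod2}) identifies $M$ with $\s^{14}$ or $\C\pp^7$, both matching the dichotomy. If $k = 4$, let $\iota_1 \in T$ achieve the minimum and set $N_1 = M^{\iota_1}_x$. When some other involution $\iota_2$ has $\cod(M^{\iota_2}_x) < 7$, apply part~1 of the codimension-four lemma (Lemma~\ref{lem:cod4part1}). Its first conclusion is already the theorem's dichotomy (a cohomology sphere, or cohomology $\C\pp^7$ with relation $z^2 = m x$), while its remaining conclusions force an almost effective $T^4$-action on a positively curved $10$- or $8$-dimensional totally geodesic submanifold; Fang--Rong's homeomorphism classification in dimension~$10$ and Grove--Searle's diffeomorphism classification in dimension~$8$, combined with the $(n-2k+1+\delta)$-connectedness of the inclusion $N_1 \hookrightarrow M$ from the connectedness lemma, propagate the cohomological picture back to $M$. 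When instead every other involution has $\cod \geq 8$, apply part~2 (Lemma~\ref{lem:cod4part2}); the divisibility alternative requires $2^4 = 16$ to divide $n = 14$ and is therefore vacuous, so $M^{\iota_1}$ is connected. Theorem~\ref{thm:Wilking5.1} then kills a band of mod-$p$ cohomology for every $\Z_p \subseteq T$, and the classification of $4$-periodic cohomology (Lemma~\ref{lem:4periodicZ2cohomology}) yields the dichotomy.

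If $k \geq 6$, the isotropy representation $\Z_2^4 \hookrightarrow \Z_2^7$ at any $x \in M^T$ is a binary linear code of length $7$, dimension $4$, and minimum distance $\geq 3$; the unique such code up to equivalence is the perfect Hamming $[7,4,3]$ code, whose codewords have weights $0$, $3$, $4$, $7$ with multiplicities $1$, $7$, $7$, $1$. Hence a unique involution $\iota_0 \in T$ fixes $x$ as an isolated point, while seven involutions have codimension-$6$ components through $x$ and seven have codimension-$8$ components. The containment lemma with $H = T[2]$ holds unconditionally at rank $r = 4$, since $\tfrac{15}{8}(n+2) = 30$ while $\cod(M^H_x) + \cod(M^H_y) \leq 2n = 28$, so $M^T \subseteq \bigcup_{\iota \neq 1} M^\iota_x$. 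A club-style inclusion--exclusion on the seven codimension-$6$ components, made rigid by the uniqueness of the Hamming configuration at every fixed point together with Frankel's theorem, leads either to a contradiction (reducing to the earlier cases) or to $M$ being a mod-$2$ cohomology sphere. In every case producing only a mod-$2$ cohomology sphere, the universal coefficient theorem, Poincar\'e duality, and the periodicity corollary upgrade the conclusion to $3$-connected with $\chi(M) = 2$, exactly as in Lemma~\ref{lem:dim10cod4dk1}.

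The main obstacle is the case $k \geq 6$, analogous to the combinatorial heart of Theorem~\ref{thm:dim12}. However, the complete rigidity imposed by the uniqueness of the Hamming $[7,4,3]$ code, together with the unconditional containment available at rank four, should make the bookkeeping substantially lighter than the club analysis required in dimension~$12$.
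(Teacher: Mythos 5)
The stratification by $k = \min_{\iota}\cod(M^\iota)$ matches the paper's, and your observation that in the case $k \geq 6$ the isotropy map $\Z_2^4 \to \Z_2^7$ must be the perfect Hamming $[7,4,3]$ code is a nice framing: the paper describes exactly this weight distribution (seven codimension-$6$, seven codimension-$8$, one codimension-$14$) without naming the code. Your $k=4$ analysis, routed through Lemmas~\ref{lem:cod4part1} and \ref{lem:cod4part2} plus Fang--Rong and Grove--Searle, is a plausible alternative to the paper's approach (which anchors instead on Theorem~\ref{thm:dim10} applied to the codimension-$4$ component), though it would need details to be filled in to pass from the intermediate cohomological conclusions to the theorem's dichotomy.

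The genuine gap is in the case $k \geq 6$, which you treat with the handwave ``club-style inclusion--exclusion \dots leads either to a contradiction or to $M$ being a mod-$2$ cohomology sphere.'' First, the unconditional containment you cite ($M^T \subseteq \bigcup_{\iota \neq 1} M^\iota_x$, fifteen submanifolds) is too weak to give a usable inclusion--exclusion. The crucial containment in the paper is much sharper: for any two involutions $\iota_1,\iota_2$ with codimension-$6$ components $N_1,N_2$ through $x$, one has $M^T \subseteq N_1 \cup N_2$, because any $y\notin N_1\cup N_2$ would by Frankel force $\cod(M^{\iota_i}_y)=14$ for both $i$, contradicting the uniqueness of the weight-$7$ codeword at $y$. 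From this, Grove--Searle and Fang--Rong applied to $N_1,N_2,N_1\cap N_2$ give $\chi(M)\leq 7$, hence $\chi(M)\in\{2,4,6\}$. Second, you entirely omit the decisive step: since there are seven codimension-$6$ involutions at each fixed point but at most $\chi(M)\leq 6$ fixed points, and since at most one such involution can acquire a codimension-$14$ component at each other fixed point, there must exist an involution $\iota$ whose fixed-point set is connected and $8$-dimensional. Fang--Rong then says $M^\iota \cong \s^8$, $\C\pp^4$, or $\HH\pp^2$, and the parity of $\chi(M)$ forces $M^\iota\cong\s^8$, $\chi(M)=2$, and $3$-connectedness via the connectedness lemma. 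A club analysis analogous to dimension~$12$ might eventually recover this, but you do not carry it out, and it would be substantially heavier than the argument the paper actually gives. Finally, ``mod-$2$ cohomology sphere'' on its own does not yield $3$-connectedness over $\Z$ nor $\chi(M)=2$; you would still need the connectedness lemma applied to an explicit codimension-$6$ sphere fixed-point component to reach the stated conclusion.
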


Note that, if $T^4$ is replaced by $T^5$ in this statement, then $M$ is tangentially homotopy equivalent to $\s^{14}$ or $\C\pp^7$ (see \cite{Wilking03,DessaiWilking04}). We proceed to the proof.

\begin{lemma}\label{lem:dim14codim4}
If a non-trivial involution $\iota_1 \in T^4$ exists such that $\cod\of{M^{\iota_1}} \leq 4$, then one of the following occurs:
	\begin{itemize}
	\item $M$ is homeomorphic to $\s^{14}$.
	\item $H_*(M;\Z) \cong H_*(\C\pp^7;\Z)$, and $H^*(M;\Z)$ is generated by some $z \in H^2(M;\Z)$ and $x \in H^4(M;\Z)$ subject to the relation $z^2 = m x$ for some $m \in \Z$.
	\end{itemize}
\end{lemma}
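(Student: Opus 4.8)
The plan is to split on $\cod(M^{\iota_1})$. If it equals two, Lemma~\ref{lem:cod2} applies directly: $M$ is homeomorphic to $\s^{14}$ or to $\C\pp^{7}$, and in the latter case $H^{*}(M;\Z)\cong\Z[z]/(z^{8})$, which is the second alternative of the lemma with $x=z^{2}$ and $m=1$. So I would assume every nontrivial involution in $T^{4}$ has fixed-point set of codimension at least four, and use Berger's theorem on the $T^{4}$--action on the codimension-four component of $M^{\iota_1}$ to fix $x\in M^{T}$ inside that component, so that $\cod(M^{\iota_1}_x)=4$. To feed the codimension-four lemma (Lemma~\ref{lem:cod4part1}), I need a second involution whose fixed component through $x$ has codimension $<\tfrac n2=7$. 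Since $T^{4}$ is effective, the slice theorem makes the isotropy representation at $x$ faithful, so the induced map $\Z_2^{4}\hookrightarrow\Z_2^{7}$ is injective; let $V$ be its image, a four-dimensional subspace in which $\iota_1$ has weight two. Each of the seven coordinate functionals is either zero on $V$ or nonzero on exactly $8$ of its elements, so $\sum_{v\in V}\operatorname{wt}(v)\le 7\cdot 8=56$; were every $v\in V\setminus\{0,\iota_1\}$ of weight at least four this sum would be at least $2+14\cdot 4=58$, a contradiction. Hence some $\iota_2\ne\iota_1$ has $\cod(M^{\iota_2}_x)=2\operatorname{wt}(\iota_2)\le 6$, and Lemma~\ref{lem:cod4part1} applies. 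As $n=14\equiv 2\bmod 4$ (so $\HH\pp^{n/4}$ makes no sense), its first conclusion is exactly the assertion to be proven: a cohomology $\s^{14}$ is a homotopy sphere, hence homeomorphic to $\s^{14}$; a cohomology $\C\pp^{7}$ gives the second alternative with $x=z^{2}$, $m=1$; and a homology $\C\pp^{7}$ with the stated relation is the second alternative outright.

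The substance is in conclusions (2) and (3) of Lemma~\ref{lem:cod4part1}. In both, $N_1=M^{\iota_1}_x$ is a closed, positively curved, totally geodesic $10$--manifold whose inclusion into $M$ is $7$--connected by the connectedness lemma, so $N_1$ is simply connected and $H^{i}(M;\Z)\cong H^{i}(N_1;\Z)$ for $i\le 6$. In conclusion (2), $M\sim_{\Z_2}\s^{14}$ and $T^{4}$ acts almost effectively on $N_1$; then $b_2(N_1)=b_2(M)=0$, so Theorem~\ref{thm:dim10} (applied to a $T^{3}\subset T^{4}$) forces $N_1$ homeomorphic to $\s^{10}$, since its $\chi=6$ alternatives have $b_2=1$, and $7$--connectedness then makes $M$ an integral homology sphere, hence homeomorphic to $\s^{14}$. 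In conclusion (3), either $T^{4}$ is again almost effective on $N_1$, in which case Theorem~\ref{thm:dim10} -- or the stronger $T^{4}$--classification of Fang and Rong~\cite{FangRong05} -- gives $N_1$ homeomorphic to $\s^{10}$ or $\C\pp^{5}$; or a circle of $T^{4}$ fixes $N_1$, so that Wilking's periodicity corollary (Corollary~\ref{cor:PeriodicityCorollary}(2)) makes $H^{2\le*\le12}(M;\Z)$ be $4$--periodic, while $T^{4}$ acts almost effectively on the codimension-six submanifold $N_2=M^{\iota_2}_x$, which is then homeomorphic to $\s^{8}$, $\C\pp^{4}$, or $\HH\pp^{2}$ by Fang and Rong's dimension-eight classification. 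In each of these subcases I would transport the (co)homology and ring data of $N_1$ or $N_2$ to $M$ using the connectedness lemma, Poincar\'e duality on $M$, and $4$--periodicity, and conclude that $M$ is either an integral homology sphere or a homology $\C\pp^{7}$ whose cohomology is generated in degrees two and four subject to $z^{2}=mx$.

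The hard part is exactly these projective-space subcases. When the auxiliary submanifold is a sphere everything collapses to ``$M$ is an integral homology sphere''; when it is $\C\pp^{k}$ (or $\HH\pp^{2}$) one must upgrade the degreewise match $H^{*}(M;\Z)\cong H^{*}(\C\pp^{7};\Z)$ -- which holds through degree six, then everywhere by Poincar\'e duality -- to control of the product structure, in particular that the periodicity class in $H^{4}(M;\Z)$ is, up to sign, a generator and that $z^{7}$ generates $H^{14}(M;\Z)$; this is done by pairing naturality of cup products along the highly connected inclusion against the duality pairing, just as in the dimension-$10$ argument. The branch of conclusion (3) with $N_2\cong\C\pp^{4}$ is the most delicate, since there the inclusion into $M$ is only $3$--connected and one must lean on the $4$--periodicity coming from the circle fixing the codimension-four submanifold $N_1$ and on the finer output of Theorem~\ref{thm:dim10} applied to $N_1$ (which, having $b_2=1$, lands in its $\chi=6$ alternatives).
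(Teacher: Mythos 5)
Your proposal is correct in substance and shares the paper's skeleton (reduce via the codimension two lemma, work at a point of $M^T$ in the codimension-four component, produce a second involution $\iota_2$ with $\cod\of{M^{\iota_2}_x}\leq 6$ from the isotropy representation, feed the pair into Lemma \ref{lem:cod4part1}, and use Theorem \ref{thm:dim10} on the $10$--dimensional component $N_1$), but it resolves the residual hard case by a genuinely different mechanism. The paper first takes $N_1$ of maximal dimension, so only the case ``$N_1$ fixed by a circle, $b_2(M)=1$, $\cod(M^{\iota_2}_x)=6$'' survives, and there it chooses $\iota_2$ so that $N_1\cap N_2$ is non-transverse, identifies $N_2$ with $\C\pp^4$ via Grove--Searle, and pins $\chi(M)$ by the containment lemma plus an inclusion--exclusion count ($\chi(M)-\chi(N_1)=2$) played against the periodicity inequality $-b_5+2b_6-b_7\leq 2b_2$; you instead avoid all global fixed-point-set bookkeeping and extract $\chi(N_1)=2+4b_2(N_1)=6$ from the ``in any case'' clause of Theorem \ref{thm:dim10}, which, combined with the $\delta=1$ connectedness/periodicity (inclusion $N_1\to M$ is $8$--connected, $H^{2\leq *\leq 12}(M;\Z)$ is $4$--periodic, whence $b_4(M)\leq 1$ and $b_7(M)=0$) and Poincar\'e duality, forces $b_4=b_6=1$, $b_5=0$ and then the integral homology and ring structure exactly as in your pairing argument; your weight-counting proof of the existence of $\iota_2$ is also more explicit than the paper's assertion. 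Three small repairs are needed to make this airtight: (i) with an almost effective $T^4$ on the $8$--dimensional $N_2$ the relevant classification is Grove--Searle's maximal symmetry rank theorem (so $N_2$ is $\s^8$ or $\C\pp^4$; $\HH\pp^2$ cannot occur --- Fang--Rong is the $T^3$ statement), though the spurious $\HH\pp^2$ case would anyway give $b_2(M)=0$ and funnel into the sphere conclusion; (ii) to apply Theorem \ref{thm:dim10} to $N_1$ in the ``circle fixes $N_1$'' branch you must note that the kernel of the $T^4$--action on $N_1$ has rank exactly one --- a rank-two kernel would act faithfully on the normal $\C^2$ at $x$ and produce an involution with a codimension-two fixed-point set, already excluded --- so an honest $T^3$ acts; (iii) the decisive Betti-number computation is left implicit in your sketch, but the ingredients you name do suffice, as indicated above.
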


As the proof indicates, a weak version of this lemma is an easy consequence of the codimension four lemma. For the stronger conclusion, we require our calculation in dimension $10$ (see Theorem \ref{thm:dim10}).

\begin{proof}
Let $M^{\iota_1}_x$ be a fixed point component of an involution of maximum dimension. By the codimension two lemma, we may assume $\cod(M^{\iota_1}_x) = 4$. By maximality, $M^{\iota_1}_x$ has $T^3$ (or $T^4$) symmetry. Theorem \ref{thm:dim10} implies that $b_2(M^{\iota_1}_x)=1$ or $M^{\iota_1}_x$ is homeomorphic to $\s^{10}$. In the latter case, the connectedness lemma implies $M$ is homeomorphic to $\s^{14}$. In the former case, if $M^{\iota_1}_x$ has $T^4$ symmetry, then it is a cohomology $\C\pp^5$ by Wilking's homotopy classification, and $M$ is a cohomology $\C\pp^7$ by the connectedness lemma. We assume now that $H_2(M;\Z) \cong \Z$ and that $M^{\iota_1}_x$ is fixed by a circle in $T^4$.

Consider the map $\Z_2^4 \to \Z_2^7$ induced by the isotropy representation of $T^4$ at $x$. There exists $\iota_2 \in \Z_2^4 \setminus \inner{\iota_1}$ such that $\cod\of{M^{\iota_2}_x} \leq 6$, and the codimension four lemma, part 1, applies. The lemma follows if $\cod(M^{\iota_2}_x) \leq 4$, so we may assume $\cod(M^{\iota_2}_x) = 6$. Returning to the isotropy map $\Z_2^4 \to \Z_2^7$, we see that we may choose $\iota_2$ so that, in addition, $M^{\iota_1}_x$ and $M^{\iota_2}_x$ intersect non-transversely. From the previous paragraph, we have without loss of generality that $M^{\iota_2}_x$ has $T^4$ symmetry and hence is diffeomorphic to $\C\pp^4$ by the diffeomorphism classification of Grove and Searle. Since the intersection of $M^{\iota_1}_x$ and $M^{\iota_2}_x$ is not transverse, the intersection of any two of $M^{\iota_1}_x$, $M^{\iota_2}_x$, and $M^{\iota_1\iota_2}_x$ equals the $6$--dimensional manifold $M^{\inner{\iota_1,\iota_2}}_x$. By the connectedness lemma, $M^{\iota_2}_x$, $M^{\iota_1\iota_2}_x$, and $M^{\inner{\iota_1,\iota_2}}_x$ are cohomology complex projective spaces. By the containment lemma,
	\[M^{T^4} \subseteq M^{\iota_1}_x \cup M^{\iota_2}_x \cup M^{\iota_1\iota_2}_x,\]
so the inclusion-exclusion property of Euler characteristics implies
	\[\chi(M) - \chi(M^{\iota_1}_x) = 5 + 5 - 3(4) + (4) = 2.\]
On the other hand, the periodicity corollary together with the fact that $M^{\iota_1}_x$ is fixed by a circle in $T^4$ imply that
	\[\chi(M) - \chi(M^{\iota_1}_x) = -b_5(M) + 2b_6(M) - b_7(M) \leq 2b_2(M) = 2.\]
Combining the previous two computations, we conclude that $M$ has the Betti numbers of $\C\pp^7$. Applying the periodicity corollary again, we conclude that $\Z \cong H^2(M;\Z) \cong H^6(M;\Z)$ and hence that $H_5(M;\Z) = 0$. Recalling that $H_3(M;\Z) = 0$, it follows that $H_*(M;\Z) \cong H_*(\C\pp^7;\Z)$. Finally, the connectedness lemma applied to the inclusion $M^{\iota_2}_x \to M$ and Poincar\'e duality imply that $H^*(M;\Z) \cong H^*(\C\pp^7;\Z)$.
\end{proof}

To complete the proof of the theorem, it suffices to prove the following:

\begin{lemma}
If the fixed-point set of every involution has codimension at least $6$, then $M$ is $3$-connected and has Euler characteristic two.\end{lemma}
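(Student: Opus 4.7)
The plan is as follows. By Berger's theorem there exists $x\in M^T$. Write $\alpha_1,\ldots,\alpha_7$ for the isotropy weights of $T^4$ on $T_xM$ and $v_j = \alpha_j\bmod 2 \in\Z_2^4$ for their reductions. For each non-trivial involution $\iota\in T^4$, identified with its class $v_\iota\in\Z_2^4\setminus\{0\}$, one has $\cod(M^\iota_x) = 2\,|\{j : v_j\cdot v_\iota = 1\}|$. Summing over the $15$ non-trivial $\iota$ gives
\[\sum_{\iota}\cod(M^\iota_x) = 16\,\bigl|\{j : v_j\neq 0\}\bigr|\leq 112.\]
The hypothesis that every $\cod(M^\iota_x)\geq 6$ forces the sum to be at least $90$; the stronger statement that every $\cod(M^\iota_x)\geq 8$ would require sum at least $120$, an impossibility. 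So some $\iota$ has $\cod(M^\iota_x) = 6$; let $N$ denote its $8$-dimensional component through $x$.

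By Wilking's connectedness lemma the inclusion $N\hookrightarrow M$ is $3$-connected, so $N$ is simply connected and $\pi_i(N)\cong\pi_i(M)$ for $i\leq 2$. The torus $T^4$ acts isometrically on the positively curved $8$-manifold $N$, and a refinement of the weight-counting shows that the four mod-$2$ weights surviving on $N$ span the hyperplane $v_\iota^\perp\subset\Z_2^4$ (otherwise some other involution would have codimension $\leq 4$), so the effective rank of $T^4$ on $N$ is at least $3$. By Fang--Rong's homeomorphism classification (or by Grove--Searle when the effective rank is $4$), $N$ is homeomorphic to $\s^8$, $\C\pp^4$, or $\HH\pp^2$.

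The heart of the proof is to rule out $N\simeq\C\pp^4$ and $N\simeq\HH\pp^2$. In either case, the model torus action on $N$ pins down the four tangent weights at every $T^4$-fixed point $y\in N$ up to equivalence; the three additional normal weights at $y$ then complete the weight data in $\Z_2^4$. A case analysis on the resulting seven mod-$2$ weights shows that some non-trivial involution $\iota'\in T^4$ must have $\cod(M^{\iota'}_y)\leq 4$, contradicting the hypothesis. Thus $N\simeq\s^8$. Since $\s^8$ is $7$-connected, $M$ inherits $3$-connectedness from the $3$-connected inclusion of $N$. For the Euler characteristic, the containment lemma with $H = \Z_2^4$ applies automatically (since $\cod(M^H_x)+\cod(M^H_y)\leq 28 < 30 = \tfrac{15}{8}(n+\delta)$), and combined with the rigidity above this forces $M^T\subseteq N$; hence $\chi(M) = \chi(N^T) = \chi(\s^8) = 2$.

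The main obstacle will be the case analysis ruling out $N\simeq\C\pp^4$ and $N\simeq\HH\pp^2$: the rigidity of the weight configurations in $\Z_2^4$ has to be leveraged globally across several $T^4$-fixed points of $N$ in order to produce the forbidden codimension-$4$ involution, and this is where the bulk of the work lies.
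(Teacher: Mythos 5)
Your opening weight-count (summing $\cod(M^\iota_x)$ over the $15$ non-trivial involutions to force some $\iota$ with $\cod(M^\iota_x)=6$) is sound and close in spirit to the paper's observation that the isotropy representation at each $x\in M^T$ is rigid. The trouble begins with the two steps you flag as "the heart of the proof." First, the claim that a weight analysis rules out $N\simeq\C\pp^4$ and $N\simeq\HH\pp^2$ is left entirely as a sketch, and it is not at all clear it can be made to work: the rigidity you invoke (every nontrivial involution has $\cod(M^{\iota'}_y)\in\{6,8,14\}$ at every $y\in M^T$) is a \emph{consequence} of the hypothesis and holds no matter what $N$ is, so it cannot by itself produce a codimension-$\le 4$ involution; you would need to show that the tangent weights of a linear torus action on $\C\pp^4$ or $\HH\pp^2$ admit no extension to a full Hamming-type configuration in $\Z_2^4$, which you have not done. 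Second, the step "the containment lemma $\ldots$ forces $M^T\subseteq N$" does not follow: Lemma \ref{lem:Containment} with $H=\Z_2^4$ only yields $M^T\subseteq\bigcup_{\iota\ne 1}M^\iota_x$, a union over all fourteen positive-dimensional components at $x$, not the single component $N$. To get $\chi(M)=\chi(N)$ you would need $M^{\iota_0}$ to be connected for your specific $\iota_0$, which is not established.

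The paper sidesteps both of these gaps. Rather than pinning down the topology of a particular $N$, it (a) bounds $\chi(M)$ by the inclusion-exclusion $\chi(M)=\chi(N_1)+\chi(N_2)-\chi(N_1\cap N_2)$ using two transverse codimension-six components $N_1,N_2$ whose union contains $M^T$ (each a sphere or projective space by Grove--Searle/Fang--Rong, with $N_1\cap N_2\cong\s^2$), (b) notes $\chi(M)$ is even by Poincar\'e duality since $\dim M=14$, and (c) uses a pigeonhole argument among the seven codimension-$6$ involutions at a fixed point versus the number of $T$-fixed points to find \emph{some} involution $\iota$ with $M^\iota$ connected. Then $\chi(M)=\chi(M^\iota)$, and among $\{\s^8,\C\pp^4,\HH\pp^2\}$ only $\s^8$ has even Euler characteristic; this forces $\chi(M)=2$, and $3$-connectedness of $M$ follows from the $5$-connected inclusion $M^\iota\hookrightarrow M$. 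In short: the paper never needs to rule out $\C\pp^4$ or $\HH\pp^2$ at a given $N$; the parity of $\chi(M)$ does that work globally. You should replace the unproved case analysis and the unjustified containment with this parity-plus-pigeonhole argument.
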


The proof in this case is similar to, but not as hard as, the corresponding lemma in dimension $12$. As there, the key aspect is the rigidity of the maps $\Z_2^4 \to \Z_2^7$ induced by the isotropy representations at fixed points $x \in M^{T^4}$.

\begin{proof}
Let $T$ denote a torus of rank four acting effectively and isometrically on $M$.
Since every non-trivial involution $\iota \in T$ has $\cod\of{M^\iota} \geq 6$, we have rigidity in the isotropy representation $\Z_2^4 \to \Z_2^7 \subseteq \SO(T_x M)$. Indeed, for every $x\in M^T$, there exists a basis of $T_x M$ and a generating set of involutions $\iota_1,\ldots,\iota_4\in\Z_2^4$ such that the map $\Z_2^4 \to \Z_2^7$ can be represented as follows:
	\begin{eqnarray*}
	\iota_1	&\mapsto&	(1,1,0,1,0,0,0),\\
	\iota_2	&\mapsto&	(1,0,1,0,1,0,0),\\
	\iota_3	&\mapsto&	(0,1,1,0,0,1,0),\\
	\iota_4	&\mapsto&	(1,1,1,0,0,0,1).
	\end{eqnarray*}

In particular, at every $x \in M^T$, there exist seven distinct involutions whose fixed-point component containing $x$ has codimension six, seven with codimension eight, and exactly one with codimension $14$.

As a consequence of this rigidity and Frankel's theorem, if $\iota_1$ and $\iota_2$ are distinct involutions such that $N_1 = M^{\iota_1}_x$ and $N_2 = M^{\iota_2}_x$ have codimension six, then $N_1 \cup N_2$ contains $M^T$. By the connectedness lemma, $N_1 \cap N_2$ is one--connected and $b_2(N_1 \cap N_2) \geq b_2(N_i) = b_2(M)$ for $i \in \{1,2\}$. By the rigidity of the isotropy representation, $N_1$ and $N_2$ have $T^3$ symmetry and $N_1 \cap N_2$ has $T^2$ symmetry. By the classifications of Grove--Searle and Fang--Rong, all three submanifolds are homotopy spheres or complex or quaternionic projective spaces. By the inclusion-exclusion property of Euler characteristics, we obtain the estimate
	\[\chi(M) = \chi(N_1) + \chi(N_2) - \chi(N_1 \cap N_2) \leq 7.\]
In fact, $\chi(M) \equiv b_7(M) \equiv 0 \bmod{2}$ by Poincar\'e duality, so $\chi(M) \in \{2, 4, 6\}$.

In particular, the number of fixed points of $T$ is less than the number of involutions $\iota \in T$ with $\cod\of{M^\iota_x} = 6$. At most one of these involutions can have $\cod\of{M^\iota_y} = 14$ at any one fixed point $y \in M^T$, so there exists some involution $\iota$ with $\cod\of{M^\iota_y} < 14$ for all $y \in M^T$. By the rigidity of the isotropy representation and Frankel's theorem, $M^\iota$ is connected. Applying Fang and Rong's classification again, we conclude that $M^\iota$ either is homeomorphic to $\s^8$ or has odd Euler characteristic. Since $\chi(M) = \chi(M^\iota)$, we conclude that $M^\iota$ is homeomorphic to $\s^8$,  $\chi(M) = 2$, and  $M$ is $3$--connected.

\end{proof}

\smallskip\section{Dimension 16}\label{sec:dim16}\smallskip

The four known, simply connected, compact, positively curved examples in dimension $16$ are the rank one symmetric spaces, $\s^{16}$, $\C\pp^2$, $\HH\pp^4$, and $\C\pp^8$, and each of these admits a positively curved metric with $T^4$ symmetry. 

\begin{theorem}\label{thm:dim16}
If $M^{16}$ is a closed, simply connected Riemannian manifold with positive sectional curvature and $T^4$ symmetry, then one of the following occurs:
	\begin{itemize}
	\item $\chi(M) = 2$ and $\sigma(M) = 0$.
	\item $\chi(M) = 3$, $\sigma(M) = \pm 1$, and $M$ is $2$--connected.
	\item $\chi(M) = 5$, $\sigma(M) = \pm 1$, $H_{2+4i}(M;\Z) = 0$ for all $i$, and $b_4(M) = 1 + 2b_3(M)$.
	\item $\chi(M) = 9$, $\sigma(M) = \pm 1$, $H_2(M;\Z) = \Z$, $H_3(M;\Z) = 0$, $H_4(M;\Z) = \Z$, and $H^{16}(M;\Z)$ is generated by an element of the form $x^5 y$ with $x \in H^2(M;\Z)$ and $y \in H^6(M;\Z)$. Moreover, $M$ is not spin.
	\end{itemize}
If moreover $M$ is spin, then the elliptic genus is constant.
\end{theorem}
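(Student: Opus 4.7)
The plan parallels the structure of Theorems \ref{thm:dim10}, \ref{thm:dim12}, and \ref{thm:dim14}, splitting the argument by the maximum codimension $k$ appearing among fixed-point components $M^\iota_x$ with $\iota$ a non-trivial involution in $T = T^4$ and $x \in M^T$. The case $k=2$ is immediate from the codimension two lemma (Lemma \ref{lem:cod2}): $M$ is homeomorphic to $\s^{16}$ or $\C\pp^8$, yielding the $\chi = 2$ and $\chi = 9$ conclusions (with $\C\pp^8$ not spin and $|\sigma|=1$). When $k = 4$, if there exist involutions $\iota_1,\iota_2$ whose codim-four components meet at a common $x$, then since $n = 16 \not\equiv 2 \bmod 4$ only conclusion (1) of Lemma \ref{lem:cod4part1} is possible, and $M$ is a cohomology $\s^{16}$, $\C\pp^8$, or $\HH\pp^4$, giving $\chi \in \{2, 5, 9\}$ with the homology refinements from the cohomology ring structure. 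Otherwise Lemma \ref{lem:cod4part2} applies: either $M^{\iota_1}$ is a connected $12$-manifold with $T^3$-symmetry, so Theorem \ref{thm:dim12} determines $\chi(M) = \chi(M^{\iota_1})$, or $2^4 \mid 16$ (automatic) and every other involution has codim $8$, in which case a containment-lemma plus inclusion--exclusion count forces one of the four listed Euler characteristics.

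The hard case is $k \geq 6$. At each $x \in M^T$ the isotropy representation $\Z_2^4 \to \SO(T_x M) = \SO(16)$ decomposes the tangent space into eight real weight lines, encoded by characters $w_1,\ldots,w_8 \in \widehat{\Z_2^4}\setminus\{0\}$; one has $\cod(M^\iota_x) = 2|\{i : w_i(\iota) = -1\}|$ and the Borel identity $\sum_{\iota \neq 1} \cod(M^\iota_x) = 8 \cdot 16 = 128$. Since each summand is an even integer $\geq 6$, the multiset $\{w_i\}$ is sharply constrained. A club-style analysis in the spirit of Lemma \ref{lem:ClubAnalysis} partitions the configurations into two families. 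In the first, a dominant involution $\iota_0$ exists whose fixed component $M^{\iota_0}_x$ has codimension $6$, $8$, or $10$, is connected by Frankel combined with the containment lemma, and admits a residual effective isometric $T^3$-action; then Theorems \ref{thm:dim10}, \ref{thm:dim12}, or Dessai's dimension-eight theorem pin down $\chi(M) = \chi(M^{\iota_0}) \in \{2, 3, 5\}$, together with the asserted connectivity and homology refinements via the connectedness lemma and Wilking's periodicity corollary. In the second family a ``Cayley-type'' weight pattern arises, modeled on the isotropy representation of $\Spin(9)$ on its spin representation; here a connected codim-$12$ fixed component forces $\chi(M) = 3$ with $M$ being $2$-connected, via Smith theory and the spherical recognition theorem (Theorem \ref{thm:Wilking4.1}).

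For the signature, one uses $\sigma(M) = \sigma(M^T)$ and $|\sigma(M)| \leq \sum_F |\sigma(F)|$ summed over fixed-point components; the structure of $M^T$ in each case (isolated points, possibly together with one low-dimensional rank-one-symmetric-space-like component) forces $|\sigma(M)| \in \{0, 1\}$ with parity matching $\chi(M)$. The non-spin assertion in the $\chi = 9$ case follows from the Wu-formula obstruction: any cohomology $\C\pp^8$ whose degree-two generator $z$ satisfies $z^8 \neq 0$ in $H^{16}$ has $w_2 \equiv z \bmod 2 \neq 0$. The elliptic genus statement is deferred to Section \ref{sec:EllipticGenus}. The main obstacle is the $k \geq 6$ case: the combinatorial enumeration of allowed weight patterns is more delicate than its $T^3$-analogues in dimensions $10$ and $12$ because $\widehat{\Z_2^4}$ has fifteen non-trivial characters distributed across eight weight slots, and the isolation of the Cayley-plane pattern (with the geometric deduction of $2$-connectedness from a codim-$12$ fixed component) is the technically most demanding ingredient, requiring both a careful club analysis and a precise use of the containment lemma to place $M^T$ inside a single dominant fixed component.
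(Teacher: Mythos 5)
Your overall architecture (split on the minimum codimension of a fixed-point component of an involution, then deal with small codimension by the codim-two/four lemmas and large codimension by isotropy rigidity plus Frankel/containment/inclusion--exclusion) matches the paper's strategy, which is carried out in three lemmas according to whether that minimum is at most $4$, equal to $6$, or at least $8$. But several key steps in your sketch do not actually produce the sharp conclusion $\chi(M) \in \{2,3,5,9\}$. The most serious gap is the plan to feed a connected low-codimensional fixed component into the lower-dimensional theorems and read off $\chi(M)$ from $\chi(M^{\iota})$. Theorem~\ref{thm:dim12} applied to a connected $12$--dimensional component with $T^3$ symmetry only gives $\chi \in \{2,4,6,\ldots,C(6)\}$ or (non-spin) $7 \le \chi \le \tfrac{7}{4}C(6)$ --- a range containing $4,6,7,8,\ldots$ which must somehow be eliminated. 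Theorem~\ref{thm:dim10} applied to a $10$--dimensional component gives $\chi \in \{2,6\}$, not $\{2,3,5\}$. And for the codim-$4$ case the paper does \emph{not} go through a connected $12$--dimensional fixed component at all: it performs a second round of isotropy rigidity at points $y \in M^T \cap M^{\iota_1}_x$, and then an inclusion--exclusion count using the containment lemma that pins $\chi(M) = 2+3\ep \in \{2,5\}$, with the $\HH\pp^4/\C\pp^8$-type cases peeled off beforehand. Your proposal has no replacement for this count, and invoking Theorem~\ref{thm:dim12} cannot supply one.

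In the large-codimension cases the details are also off. When the minimum codimension is $6$, the paper's Lemma~\ref{lem:dim16codim6} obtains $\chi = 2$ or $\chi = 9$ from two codim-$6$ components $N_1$, $N_2$ that intersect non-transversely, the containment lemma, and Theorem~\ref{thm:dim10}-style arguments on $N_1\cap N_2 \to N_i$; your sketch misses the $\chi = 9$ possibility entirely in this range. When every involution has fixed codimension $\geq 8$, the isotropy is maximally rigid (one involution has codim $16$ at each fixed point, the rest codim $8$), and the paper finds a \emph{connected codim-$8$} component $M^\iota$, applies Fang--Rong's $T^3$ homeomorphism classification in dimension $8$, and excludes $\C\pp^4$ via the induced involution on it. The $2$-connectedness when $\chi = 3$ comes from this connected $\HH\pp^2$, a circle fixing it by Wilking's maximal smooth symmetry rank bound, and the connectedness lemma; there is no ``connected codim-$12$'' component as in your sketch (Frankel alone rules out disjoint codim-$12$ components, and all fixed codimensions here are $8$ or $16$). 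Finally, the non-spin claim for $\chi = 9$ does not follow from the Wu-formula argument you describe, because the paper only shows $H^{16}$ is generated by $x^5 y$ with $y \in H^6$, not that $z^8 \neq 0$; instead the paper derives non-spinness from a parity argument on the dimensions of the components of $M^{\iota_1}$: a spin $M$ would force the remaining components of $M^{\iota_1}$ to be $2$-dimensional spheres, which contradicts $\chi(M)-\chi(N_1)=3$.
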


The proof of the last claim is in Section \ref{sec:EllipticGenus}. The proof of the first claim takes the rest of the section and is contained in Lemmas \ref{lem:dim16codim4}, \ref{lem:dim16codim6}, and \ref{lem:dim16codim8}.

Let $M$ be as Theorem \ref{thm:dim16}, let $T$ denote a torus of rank four acting effectively and isometrically on $M$, and let $\Z_2^4 \subset T$ denote the subgroup of involutions.

\begin{lemma}\label{lem:dim16codim4}
If there exists $\iota_1 \in \Z_2^4$ with $\cod\of{M^{\iota_1}} \in \{2, 4\}$, then
	\begin{enumerate}
	\item $M$ is homeomorphic to $\s^{16}$,
	\item $M$ is homotopy equivalent to $\C\pp^8$, or
	\item $\chi(M) = 5$, $\sigma(M) = \pm 1$, $H_{2+4i}(M;\Z) = 0$ for all $i$, and $b_4(M) = 1 + 2b_3(M)$.
	\end{enumerate}
\end{lemma}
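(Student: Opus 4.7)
The argument branches on the value of $\cod(M^{\iota_1}) \in \{2, 4\}$. If $\cod(M^{\iota_1}) = 2$, the codimension two lemma (Lemma \ref{lem:cod2}) immediately produces Conclusions~(1) or (2): $M$ is homeomorphic to $\s^{16}$ or $\C\pp^8$. The real work lies in the case $\cod(M^{\iota_1}) = 4$, which I now describe.

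Fix, by Berger's theorem, a point $x \in M^T$ lying in a codimension-$4$ component $N_1 \subseteq M^{\iota_1}$, and analyze the isotropy map $\Z_2^4 \to \Z_2^8 \subset \SO(T_x M)$. If some $\iota_2 \in \Z_2^4 \setminus \langle\iota_1\rangle$ satisfies $\cod(M^{\iota_2}_x) < n/2 = 8$, the codimension four lemma, part~1 (Lemma \ref{lem:cod4part1}) applies. Since $n = 16 \equiv 0 \pmod 4$ rules out Conclusions (2) and (3) of that lemma, only its Conclusion (1) survives, and the ``homology $\C\pp^{2m+1}$'' alternative there requires $n \equiv 2 \pmod 4$ and is excluded as well. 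Thus $M$ is a cohomology $\s^{16}$, $\C\pp^8$, or $\HH\pp^4$. I then translate these three cohomology rings into the three stated conclusions: Perelman's resolution of the Poincar\'e conjecture converts a cohomology $\s^{16}$ to a homeomorphism with $\s^{16}$; an obstruction-theoretic argument lifting the natural map $M \to K(\Z, 2) = \C\pp^\infty$ along the $(2\cdot 8 + 1)$--connected inclusion $\C\pp^8 \hookrightarrow \C\pp^\infty$, followed by Whitehead's theorem, upgrades a cohomology $\C\pp^8$ to a homotopy equivalence with $\C\pp^8$; and reading off the Betti numbers and signature of $\HH\pp^4$ yields $\chi = 5$, $|\sigma| = 1$, $H_{2+4i}(M;\Z) = 0$, $b_3 = 0$, and $b_4 = 1 = 1 + 2b_3$, which is precisely Conclusion (3).

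The remaining case is that every $\iota \in \Z_2^4 \setminus \langle\iota_1\rangle$ satisfies $\cod(M^\iota_x) \geq 8$ at the chosen $x$. Here the codimension four lemma, part~2 (Lemma \ref{lem:cod4part2}) splits the analysis into (i) $M^{\iota_1}$ connected and (ii) every non-trivial $\iota \ne \iota_1$ has $\cod(M^\iota) = 8$ globally. In case (i), $N_1 = M^{\iota_1}$ is a closed, simply connected, positively curved $12$--manifold carrying an effective torus action of rank at least three. Theorem \ref{thm:Wilking5.1} then yields $H^i(M;\Z_2) = 0$ for $4 \leq i \leq 9$ (and also for $i \in \{3, 10\}$ when $\dk(N_1) \geq 1$), while Wilking's periodicity corollary gives $4$--periodicity of $H^{3 \leq \ast \leq 13}(M;\Z)$. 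Combined with $\chi(M) = \chi(N_1)$ and the Euler-characteristic constraints of Theorem \ref{thm:dim12} and Fang--Rong, these force the integral cohomology; Euler characteristics such as $4$ or $7$ are ruled out by cohomological incompatibility, and $M$ must be a cohomology $\s^{16}$ or cohomology $\HH\pp^4$, giving Conclusions (1) or (3). In case (ii), the rigidity of the isotropy pattern of every codimension-$8$ involution, together with the containment lemma, lets me organize the fixed-point set of $T^4$ via inclusion-exclusion and again force one of the three conclusions.

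\textbf{Main obstacle.} The most delicate step is extracting Conclusion~(3)---in particular the relation $b_4(M) = 1 + 2b_3(M)$---in the subcases where $M$ need not be a cohomology $\HH\pp^4$. The vanishing $H_{2+4i}(M;\Z) = 0$ follows from mod-$2$ vanishing together with $4$--periodicity, but the Betti number identity requires carefully tracking Euler characteristic contributions from possibly disconnected fixed-point sets using Poincar\'e duality, so that the alternating sum of Betti numbers collapses to exactly $\chi(M) = 5$ and the middle-degree relation emerges; this combinatorial bookkeeping, rather than any single deep input, is the crux of the argument.
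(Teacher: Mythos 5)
Your opening two branches are in order: reducing $\cod(M^{\iota_1}) = 2$ via the codimension two lemma, then observing that if some $\iota_2 \notin \langle\iota_1\rangle$ has $\cod(M^{\iota_2}_x) < 8$ then Lemma~\ref{lem:cod4part1} applies with $n = 16 \equiv 0 \bmod 4$ and yields a cohomology $\s^{16}$, $\C\pp^8$, or $\HH\pp^4$. The upgrade of a cohomology $\C\pp^8$ to a homotopy $\C\pp^8$ by obstruction theory is also fine.

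The gap is in the remaining branch, where every involution outside $\langle\iota_1\rangle$ has codimension~$\geq 8$ at $x$. You split via Lemma~\ref{lem:cod4part2} into ``(i) $M^{\iota_1}$ connected'' and ``(ii) $\cod(M^\iota) = 8$ for all $\iota \neq \iota_1$,'' and in case~(i) you propose to pin down $\chi(M)$ by combining $\chi(M) = \chi(N_1)$ with Theorem~\ref{thm:dim12}. That is not nearly strong enough: Theorem~\ref{thm:dim12} allows $\chi(N_1) \in \{2,4,\dots,C(6)\}$ (and $C(6)$ could be as large as $14$), or, in the non-spin case, anything from $7$ to $\frac{7}{4}C(6)$, whereas the lemma's conclusion requires $\chi(M) \in \{2,5,9\}$ (the last from the $\C\pp^8$ case). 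The cohomological constraints you invoke ($H^i(M;\Z_2) = 0$ for $3 \leq i \leq 13$ via Theorem~\ref{thm:Wilking5.1}, plus $4$--periodicity) force $\chi(M) = 2 + 2b_2(M)$, which is \emph{even} --- consistent with $\chi = 2$ or $4$ or $6$, etc., but says nothing that excludes, e.g., $\chi = 4$, which is not among the lemma's conclusions. Your ``cohomological incompatibility'' is doing essential work that is never carried out. Case~(ii) is left entirely as a claim. What the paper actually does is bypass the connected/disconnected dichotomy: it analyzes the isotropy map $\Z_2^4 \to \Z_2^8$ at every $y \in M^T \cap N_1$, extracts a rank-two subgroup $C(y)$ and a distinguished $\iota_2(y)$ with $\cod(M^\iota_y) = 8$ for the eleven involutions outside $\iota_1 C(y)$, and combines Frankel's theorem with the containment lemma to get four precise containment/intersection facts. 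These feed into an inclusion--exclusion computation that pins $\chi(M) = 2 + 3\ep$ with $\ep = \chi(M^{\iota_2}_x) - \chi(M^{\iota_1}_x \cap M^{\iota_2}_x) \in \{0,1\}$, hence $\chi(M) \in \{2,5\}$. This rigidity-plus-inclusion--exclusion step is the crux you are missing, and without it the argument does not close.

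A smaller point: you flag $b_4(M) = 1 + 2b_3(M)$ as the main obstacle, but once $\chi(M) = 5$, $H_2(M;\Z) = 0$, $H^{2+4i}(M;\Z) = 0$, and $4$--periodicity are in hand, the relation falls out of Poincar\'e duality and the alternating sum $\chi = 2 + 3b_4 - 6b_3 = 5$. The real work is establishing $\chi(M) = 5$ (and the $2$--connectivity of $M^{\iota_3}_x \to M$ via Wilking's smooth symmetry rank bound for $\HH\pp^2$) in the first place.
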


The proof is a bit involved, and we would like to illustrate by example some of the structure we recover in the most difficult case of the proof.

\begin{example}
Denote points in $\HH\pp^4$ as equivalence classes $[q_0,q_1,\ldots,q_4]$ where $q_s \in\HH$ such that $\sum |q_s|^2=1$. Define the actions of four circles on $\HH\pp^4$ by the following three maps $\sone \to \Sp(5)$:
	\begin{eqnarray*}
	w	&\mapsto&	\mathrm{diag}(w,1,1,1,1)\\
	w	&\mapsto&	\mathrm{diag}(w,w,w,w,w)\\
	w	&\mapsto&	\mathrm{diag}(1,1,w,1,w)\\
	w	&\mapsto&	\mathrm{diag}(1,1,1,w,w)
	\end{eqnarray*}
The involution $\iota_1$ in the first circle fixes a component $\HH\pp^3$ of codimension four. The involution $\iota_2$ in the second circle acts trivially, so really we consider the action by the quotient of the second circle by $\{\pm 1\}$. The involution in this circle is then represented by $(i,i,i,i,i)$, and its fixed point set is a $\C\pp^4$. Notice that all fixed points of the $T^4$ action are isolated, and that they come in two types. If $\iota_1,\ldots,\iota_4$ denote the involutions in the four circles above, then at the point $z = [1,0,0,0,0]$ one has
	\[\cod(M^\iota_z) = \left\{ \begin{array}{rcl} 8 &\mathrm{if}& \iota \not\in\inner{\iota_1}\\ 16 &\mathrm{if}& \iota = \iota_1\\\end{array}\right.\]
while at any of the other fixed points $y$, one has
	\[\cod(M^\iota_y) = \left\{ \begin{array}{rcl}
	4   & \mathrm{if}& \iota = \iota_1\\
	8   & \mathrm{if}& \iota \not\in \iota_1 C(y)\\
	12 & \mathrm{if}& \iota \in \iota_1 C(y) \setminus \{\iota_1\}
	\end{array}\right.\]
for some subgroup $C(y)$ isomorphic to $\Z_2^2$ depending on $y$. For example, $C(y) = \inner{\iota_3,\iota_4}$ at $y = [0,1,0,0,0]$. Notice also the fixed point component $M^\iota_y$ is $\C\pp^4$ for $\iota \in \iota_2 C(y) \cup \iota_1\iota_2 C(y)$ and is $\HH\pp^2$ for non-trivial $\iota \in C(y)$. Finally, note that the fixed point set of the $T^4$ action is contained in the two-fold union of a large number of choices of eight-dimensional fixed point components of involutions. We recover this combinatorial and topological fixed point data in one case of the proof of Lemma \ref{lem:dim16codim4}, however we are unable to fully recover the topology of $M$.
\end{example}

\begin{proof}[Proof of Lemma \ref{lem:dim16codim4}]
By the codimension two lemma, we may assume that $\cod\of{M^{\iota_1}} = 4$. Choose $x \in M^T$ such that $\cod\of{M^{\iota_1}_x} = 4$. By the codimension four lemmas, we may assume that every other non-trivial involution $\iota \in T$ has $\cod\of{M^{\iota}} \geq 8$ and that $\cod(M^{\iota}_z) = 8$ for any $z \in M^T \setminus M^{\iota_1}_x$ and $\iota \in \Z_2^4 \setminus\inner{\iota_1}$.

Let $y \in M^T \cap M^{\iota_1}_x$. The isotropy map at $y$ is also rigid in the sense that there exist a rank-two subgroup $C(y)\subseteq \Z_2^4$ and an involution $\iota_2(y) \not\in C(y)$ such that $\cod\of{M^\iota_y} = 8$ for each of the $11$ non-trivial involutions $\iota$ not in the coset $\iota_1 C(y)$. Moreover, if $\iota$ is one of these $11$ involutions, the intersection $M^{\iota_1}_x \cap M^\iota_y$ is transverse if and only if $\iota \in C(y)$.

Fix $\iota_2$ to be any choice of $\iota_2(x)$, and fix any choice of distinct $\iota_3,\iota_4 \in C(x)$. From this isotropy rigidity and Frankel's theorem, all of the following hold:
	\begin{enumerate}
	\item $\cod\of{M^\iota_x} = 8$ and $M^T \subset M^{\iota_1}_x \cup M^\iota_x$ for all $\iota \in \{\iota_2, \iota_3, \iota_4, \iota_3\iota_4\}$.
	\item $M^{\iota_1}_x \cap M^{\iota_2}_x \subset M^{\iota_2}_x$ is $4$--connected and has codimension two.
	\item $M^{\iota_1}_x \cap M^{\iota}_x \subset M^{\iota}_x$ is $4$--connected and has codimension four for $\iota \in \{\iota_3, \iota_4, \iota_3\iota_4\}$.
	\item $M^T \subseteq M^{\iota_3}_x \cup M^{\iota_4}_x \cup M^{\iota_3\iota_4}_x$.
	\end{enumerate}

Set $\ep = \chi(M) - \chi(M^{\iota_1}_x)$. By the first claim above, $\ep = \chi(M^{\iota_2}_x) - \chi(M^{\iota_1}_x \cap M^{\iota_2}_x)$. By the second claim and the codimension two lemma, $M^{\iota_2}_x$ and $M^{\iota_1}_x \cap M^{\iota_2}_x$ are cohomology spheres or complex projective spaces. In either case, the difference in their Euler characteristics is $\ep \in \{0, 1\}$, and $\chi\of{M^{\iota_2}_x} = 2 + 3\ep$. Note that the fixed points of $T$ are isolated in this case, so
	\[\chi(M) \geq \chi\of{M^{\iota_2}_x} \geq 2 + 3\ep.\]

We prove now that the opposite inequality holds. Fix for a moment $\iota \in \{\iota_3, \iota_4, \iota_3\iota_4\}$. By the first claim, $\ep = \chi(M^\iota_x) - \chi(M^{\iota_1}_x \cap M^\iota_x)$.  By the third claim, $M^{\iota}_x$ has $4$--periodic integral cohomology and second Betti number equal to that of $M^{\iota_1}_x \cap M^\iota_x$. In particular, since $\ep \in \{0,1\}$, we have $\chi\of{M^\iota_x} = 2 + \ep$ for all $\iota \in \{\iota_3, \iota_4, \iota_3\iota_4\}$. Now consider any two-fold intersection $M^\iota_x \cap M^{\iota'}_x$ of these three submanifolds. Since the fixed points of $T$ are isolated, the Euler characteristic of $M^\iota_x \cap M^{\iota'}_x$ is at least that of its component $M^{\inner{\iota,\iota'}}_x$. This component is a closed, oriented, positively curved $4$--manifold, so its Euler characteristic is at least two. In addition, this Euler characteristic is at least that of the three-fold intersection $M^{\iota_3}_x \cap M^{\iota_4}_x \cap M^{\iota_3\iota_4}_x$. Putting these estimates together, the fourth claim above and the inclusion-exclusion property of Euler characteristics implies
	\[\chi(M) \leq 3(2+ \ep) - 2 - 2 - 0 = 2 + 3 \ep.\]
Since the opposite inequality also holds, we conclude $\chi(M) = 2 + 3\ep \in \{2, 5\}$.

For the signature, recall that $M^{\iota_2}_x$ has $2$--periodic cohomology and is either $\s^8$ or $\C\pp^4$, according to whether $\ep$ is $0$ or $1$. In either case $\chi(M) = \chi\of{M^{\iota_2}_x}$, so $M^{\iota_2}$ is connected since the fixed points of $T$ are isolated. In particular, $\sigma(M) = \sigma\of{M^{\iota_2}_x}$, which is either $0$ or $\pm 1$, according to whether $\chi(M)$ is $2$ or $5$, respectively.

We conclude the rest of the lemma by considering cases. Let $N_1 = M^{\iota_1}_x$ and recall that $\dk(N_1)$ denotes the dimension of the kernel of the induced $T$--action on $N_1$.
	\begin{itemize}
	\item If $\dk(N_1) = 0$, Wilking's homotopy classification implies that $N_1$ is a cohomology sphere or quaternionic projective space. By the connectedness lemma, $M$ is as well.
	\item If $\dk(N_1) = 1$ and $\chi(M) = 2$, then $\chi(N_1) = 2$ as well. Since the fixed points of $T$ are isolated, $M^{\iota_1}$ is connected and hence $H^i(M;\Z_2) = 0$ for $3 \leq i \leq n-3$. Since $\chi(M) = 2 + 2b_2(M)$, we have $b_2(M) = 0$ and hence that $M$ is homeomorphic to $\s^{16}$ by Lemma \ref{lem:cod4dk1Qsphere}.
	\item If $\dk(N_1) = 1$ and $\chi(M) = 5$, the proof shows that $M^{\iota_3}_x$ is homeomorphic to $\HH\pp^2$. By Wilking's maximal smooth symmetry rank classification for a integral quaternionic projective spaces (see \cite[Theorem 3]{Wilking03}), $M^{\iota_3}_x$ is fixed by a circle in $T$. In particular, $M^{\iota_3}_x \to M$ is $2$--connected, so $H_2(M;\Z) = 0$. Wilking's periodicity corollary now implies that $H^{2+4i}(M;\Z) = 0$ and $H^4(M;\Z) \supseteq \Z$.
	\end{itemize}
\end{proof}

We now consider the possibility that the minimal codimension of a fixed-point set of an involution in $T$ is six.

\begin{lemma}\label{lem:dim16codim6}
Suppose $\min \cod\of{M^\iota} = 6$, where the minimum runs over involutions in $T$. One of the following occurs:
	\begin{itemize}
	\item $\chi(M) = 2$, $\sigma(M) = 0$, and $M$ is $5$--connected.
	\item $\chi(M) = 9$, $\sigma(M) = \pm 1$, $H_2(M;\Z) = \Z$, $H_3(M;\Z) = 0$, $H_4(M;\Z) = \Z$, $H_5(M;\Z) = 0$, and $H^{16}(M;\Z)$ is generated by an element of the form $x^5 y$ with $x \in H^2(M;\Z)$ and $y \in H^6(M;\Z)$, and $M$ is not spin.
	\end{itemize}
\end{lemma}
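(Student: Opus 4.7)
The first step is to observe that $M^T$ consists only of isolated fixed points. At $x \in M^T$, the isotropy representation $\Z_2^4 \to \Z_2^8$ has every nontrivial involution of weight at least $3$ (since minimum codimension is $6$), and in particular no weight is zero, so the $T$-fixed subspace of $T_x M$ vanishes. Hence $\chi(M) = |M^T|$, and by Borel's formula $\sum_{\iota \neq 1} \cod(M^\iota_x) = 128$, giving average weight about $4.3$ and forcing the existence of several involutions of weight exactly $3$ at every fixed point. In particular, at each $x \in M^T$ one can fix distinct involutions $\iota_1, \iota_2 \in \Z_2^4$ with $\cod(M^{\iota_i}_x) = 6$.

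Next I would apply Theorem \ref{thm:dim10} to the submanifolds $N_i := M^{\iota_i}_x$. By rigidity of the isotropy and the effectiveness of the action, each $N_i$ admits an almost-effective isometric $T^3$-action, so $\chi(N_i) \in \{2, 6\}$ and $N_i$ is either homeomorphic to $\s^{10}$ or has the integral homology of $\C\pp^5$ through degree $3$ with the cohomology product structure described in Theorem \ref{thm:dim10}. By the containment lemma applied to $H = \langle \iota_1,\iota_2\rangle$, $M^T \subseteq N_1 \cup N_2 \cup N_{12}$ where $N_{12} := M^{\iota_1\iota_2}_x$, and inclusion-exclusion reduces the computation of $\chi(M)$ to the Euler characteristics of $N_1, N_2, N_{12}$ and their pairwise and triple intersections. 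The intersections are controlled by the connectedness lemma and the codimension two/four lemmas applied to the induced subtorus actions on $N_i$.

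The main obstacle is a combinatorial ``club-style'' analysis at the level of the isotropy representations, analogous to Section \ref{sec:dim12}, to ensure global consistency of the fixed-point data across all points of $M^T$. One shows that the Euler characteristic contributions can only assemble in a way yielding either $\chi(M) = 2$ or $\chi(M) = 9$: (i) when some $N_i$ has $\chi = 6$, the inclusion $N_i \hookrightarrow M$ is $5$-connected, forcing $H^2(M;\Z) \cong \Z$, and then Frankel plus isotropy rigidity propagates this structure globally, giving $\chi(M) = 9$; (ii) when every codimension-$6$ component is a homotopy $\s^{10}$, the same connectedness argument forces $M$ to be a rational homology sphere with $\chi(M) = 2$. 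Intermediate Euler characteristics are excluded by the rigidity of the projective-space cohomology structure together with Poincar\'e duality.

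Finally, I would verify the topology in the extreme cases. For $\chi(M) = 2$: the connectedness lemma applied to $N_1 \approx \s^{10}$ gives that $M$ is $5$-connected, and $\sigma(M) = \sigma(M^T) = 0$. For $\chi(M) = 9$: Poincar\'e duality, the naturality of cup products, and the cohomology of $N_1 \simeq \C\pp^5$ yield generators $x \in H^2(M;\Z)$ and $y \in H^6(M;\Z)$ with $x^5 y$ generating $H^{16}(M;\Z)$, together with the claimed homology through degree $5$; the signature $|\sigma(M)| = 1$ follows by summing $\sigma$ of the fixed-point components. To show $M$ is not spin, I would use the codimension-parity argument: if $M$ were spin, then every component of $M^\iota$ for any involution would have codimension of a fixed parity modulo $4$, and since codimension $6$ occurs, all components of some such $M^\iota$ would have codimension in $\{2,6,10,14\}$. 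By Theorems \ref{thm:dim10}, \ref{thm:dim14}, and Dessai's dimension~$8$ theorem, together with Hsiang--Kleiner in dimension $4$ and Synge in dimension $2$, every such component has even Euler characteristic. Then $\chi(M) = \chi(M^\iota)$ would be even, contradicting $\chi(M) = 9$.
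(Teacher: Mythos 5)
Your high-level outline (choose a fixed point with a codimension-$6$ involution, find a second one, invoke the containment lemma, split into two cases) is in the right direction, but several of the intermediate claims are either false or unjustified, and the combinatorial core you defer to is actually unnecessary.

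First, your opening claims are not correct. The hypothesis $\cod(M^\iota)\geq 6$ for all involutions does \emph{not} force $M^T$ to consist of isolated fixed points, and the paper's proof never uses that. More seriously, it is false that "the existence of several involutions of weight exactly~$3$" is forced at every fixed point: the image of $\Z_2^4 \to \Z_2^8$ could be the extended Hamming $[8,4,4]$ code (weight enumerator $1 + 14x^4 + x^8$, total weight $64$), which has minimum weight $4$ and hence \emph{no} weight-$3$ elements. The paper instead first chooses a point $x$ where some $\iota_1$ has codimension $6$, and then argues (via a linear-coding bound, essentially that a binary $[5,3,3]$ code does not exist) that a second such $\iota_2$ must appear at $x$, and moreover can be chosen so that $M^{\iota_1}_x$ and $M^{\iota_2}_x$ intersect \emph{non-transversely}.

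Second, your "club-style combinatorial analysis" is the wrong tool here; the paper avoids it entirely. Non-transversality is the key: it forces $N_1 \cap N_2$ to sit inside $N_{12}=M^{\iota_1\iota_2}_x$ with codimension two, and since every involution has codimension $\geq 6$, the submanifold $N_{12}^8$ inherits $T^3$-symmetry, so the codimension-two lemma (or Grove--Searle) shows $N_{12}$ is $\s^8$ or $\C\pp^4$. The dichotomy $\chi(M)\in\{2,9\}$ then falls out of the inclusion-exclusion computation and the connectedness lemma applied to $N_1\cap N_2 \to N_i$: in the sphere case each $N_i$ is a $4$-connected positively curved $10$-manifold, hence a sphere, giving $\chi(M)=2$ and $5$-connectedness; in the $\C\pp^3$ case each $N_i$ is a cohomology $\C\pp^5$, giving $\chi(M)=9$ and the cohomology ring structure. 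Your sketch asserts the dichotomy and the cohomology claims but does not actually establish them, and the route you propose (Theorem~\ref{thm:dim10} plus clubs) would require substantially more work than the paper's two-case reduction.

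Finally, your "not spin" argument is a correct variant of the paper's and works: it uses that for spin $M$, all components of $M^{\iota_1}$ have codimension $\equiv 2\pmod 4$, and that positively curved closed oriented manifolds of dimensions $2$, $6$, $10$ in this setting all have even Euler characteristic, contradicting $\chi(M)=9$. The paper's version is slightly more direct, observing that if $M$ were spin the extra components of $M^{\iota_1}$ would be $2$-spheres summing to Euler characteristic $3$, an odd number, which is impossible. You should also be aware that the signature calculation in the $\chi=9$ case requires extra care: $\sigma(M)=\sigma(M^{\iota_1})$ only gives $\pm 1$ immediately unless $M^{\iota_1}$ is $N_1$ plus three isolated points, and the paper has to rule out that subcase separately using a parity argument on six-manifolds.
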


\begin{proof}
Choose $x \in M^T$ such that $N_1 = M^{\iota_1}_x$ has codimension six. The isotropy at $x$ implies that some $\iota_2 \in \Z_2^4 \setminus\langle\iota_1\rangle$ exists such that $M^{\iota_2}_x$ has codimension six. Moreover, if $N_1$ and $N_2$ intersect transversely, there exists $\iota_3\in\Z_2^4 \setminus\langle\iota_1,\iota_2\rangle$ such that $M^{\iota_3}_x$ has codimension six and intersects $N_1$ non-transversely. We may assume therefore that $N_2 = M^{\iota_2}_x$ has codimension six and intersects $N_1$ non-transversely.

By the containment lemma,
	\[M^T \subseteq N_1 \cup N_2 \cup N_{12},\]
where $N_{12} = M^{\iota_1\iota_2}_x$. By the connectedness lemma, all two-fold intersections of $N_1$, $N_2$, and $N_{12}$ are connected and hence equal $N_1 \cap N_2 = M^{\inner{\iota_1,\iota_2}}_x$. Since the intersection $N_1 \cap N_2$ is not transverse, $N_1 \cap N_2 \subseteq N_{12}$ has codimension two. Since no involution $\iota$ has $\cod\of{M^\iota} < 6$, $N_{12}$ has $T^3$ symmetry, so $N_{12}$ and $N_1 \cap N_2$ are both cohomology spheres or both cohomology complex projective spaces. We consider two cases:
	\begin{enumerate}
	\item Suppose $N_1 \cap N_2$ is homeomorphic to $\s^6$. Since $N_1 \cap N_2 \to N_i$ is $4$--connected for $i \in \{1,2\}$, each $N_i$ is a $4$--connected $10$--manifold. Since $N_i$ is positively curved and has $T^3$ symmetry, $\chi(N_i) > 0$, which implies that $N_i$ is homeomorphic to $\s^{10}$. By the inclusion-exclusion property of the Euler characteristic, $\chi(M) = 2$. Moreover, since $N_1 \to M$ is $5$--connected, $M$ is $5$--connected as well. For the signature, note that $\sigma(M) = \sigma(M^{\iota_1}) = \sigma(N_1) = 0$.
	\item Suppose $N_1 \cap N_2$ is a cohomology $\C\pp^3$. Since $N_1 \cap N_2 \to N_i$ is $4$--connected for $i \in \{1,2\}$, it follows by arguments similar to those in the dimension $10$ result that each $N_i$ is a cohomology $\C\pp^5$. It follows that $\chi(M) = 9$, $H_2(M;\Z) = \Z$, $H_3(M;\Z) = 0$, $H_4(M;\Z) = \Z$, $H_5(M;\Z) = 0$, and some $x \in H^2(M;\Z)$ satisfies the property that $x^5$ is not a multiple. In particular, $H^{16}(M;\Z)$ is generated by an element of the form $x^5 y$ with $y \in H^6(M;\Z)$.
	
	For the signature, note that $\sigma(M) = \pm 1$ follows immediately from the formula $\sigma(M) = \sigma(M^{\iota_1})$ unless $M^{\iota_1}$ consists of $N_1$ together with three isolated points $y_j$. In this latter case, no $y_j \in N_{12}$, as that would imply that $\cod(M^{\iota_2}_{y_j})  = 8$, a contradiction to Frankel's theorem since $\cod\of{M^{\iota_2}} = 6$. It follows that each $y_j \in N_2$ and hence that each $y_j$ lies in a $6$-dimensional component of $M^{\iota_1\iota_2}$. Since $\chi(N_{12}) = 6$, the fixed-point set of $\iota_1\iota_2$ is comprised of $N_{12}$ together with some number of six-dimensional components whose Euler characteristic is at least, and hence sums to, three. This contradicts the fact that closed, oriented six-manifolds have even Euler characteristics.
	
	Finally, if $M$ is spin, then $M^{\iota_1}$ has, in addition to $N_1^{10}$, components of dimension two. These two-dimensional components are oriented, so they are diffeomorphic to spheres. This is a contradiction since $\chi(M) - \chi(N_1) = 3$.
	\end{enumerate}
\end{proof}

To complete the proof of Theorem \ref{thm:dim16}, we prove the following:

\begin{lemma}[Rigid isotropy lemma for $n = 16$]\label{lem:dim16codim8}
If $\cod\of{M^\iota} \geq 8$ for every non-trivial involution $\iota \in T$, then one of the following occurs:
	\begin{itemize}
	\item $\chi(M) = 2$ and $\sigma(M) = 0$.
	\item $\chi(M) = 3$, $\sigma(M) = \pm 1$, and $M$ is $2$--connected.
	\end{itemize}
\end{lemma}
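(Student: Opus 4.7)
The strategy mirrors the approach used in lower dimensions: exploit the local rigidity forced by $\cod(M^\iota) \geq 8$, then propagate it globally via Frankel's theorem and the classification of $8$-dimensional positively curved manifolds with large symmetry rank.

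\textbf{Step 1 (Isolated fixed points and local excess).} For $x \in M^T$, decompose $T_x M$ into $8$ complex weights. Since each non-trivial weight $\chi$ contributes $2$ to $\cod(M^\iota_x)$ for exactly the $8$ involutions with $\chi(\iota) = -1$, we have
\[\sum_{\iota \neq 1} \cod(M^\iota_x) \;=\; 16 \cdot \#\{\text{non-trivial weights with multiplicity}\}.\]
The hypothesis $\cod(M^\iota_x) \geq 8$ over $15$ involutions gives a lower bound of $120$; hence all $8$ weights must be non-trivial and the sum equals exactly $128$. Thus $M^T$ is discrete with $\chi(M) = k := |M^T|$, and at each $x$ the excess $\sum_\iota(\cod(M^\iota_x) - 8) = 8$ forces at most $4$ involutions to have $\cod > 8$; writing $\Sigma(x)$ for the ``minimal'' involutions, $|\Sigma(x)| \geq 11$.

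\textbf{Step 2 (Frankel pairing).} For distinct $y_1, y_2 \in M^T$ and $\iota \in \Sigma(y_1) \cap \Sigma(y_2)$, Frankel's theorem applied to $M^\iota_{y_1}$ and $M^\iota_{y_2}$ (sum of codimensions $= 16 = n$) forces the two components to coincide. Inclusion--exclusion gives $|\Sigma(y_1) \cap \Sigma(y_2)| \geq 22 - 15 = 7$ for any pair and $|\Sigma(y_1) \cap \Sigma(y_2) \cap \Sigma(y_3)| \geq 33 - 30 = 3$ for any triple.

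\textbf{Step 3 (Classifying $8$-dim components).} Any $8$-dim component $N$ of some $M^\iota$ admits an induced action of $T/\langle\iota\rangle$ of effective rank $\geq 3$. Dessai's theorem together with the Fang--Rong and Wilking classifications shows $N$ is homeomorphic to $\s^8$, $\HH\pp^2$, or $\C\pp^4$, so $\chi(N) \in \{2,3,5\}$; since $T$-fixed points are isolated, $\chi(N) = |N \cap M^T|$. The enhanced connectedness lemma makes $N \hookrightarrow M$ at least $4$-connected.

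\textbf{Step 4 (Bounding $k$).} If $k \geq 3$, pick any $\iota$ in the non-empty triple intersection $\Sigma(y_1) \cap \Sigma(y_2) \cap \Sigma(y_3)$. The three points all lie in $8$-dim components of $M^\iota$; since each such component contains at least $\chi \geq 2$ fixed points, the three points must lie in a common $N$ with $\chi(N) = 3$, i.e., $N \cong \HH\pp^2$. If $k \geq 4$, pick $y_4$, which lies outside $N$ (as $|N \cap M^T| = 3$). Pairing $y_4$ with each $y_i$ ($i \leq 3$) yields distinct $8$-dim components $N_i \neq N$ with $\{y_i, y_4\} \subseteq N_i$. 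The resulting overdetermined configuration -- in which each of the four fixed points lies in at least three distinct $8$-dim components (three $N_{\bullet\bullet}$'s plus $N$ for the first three points; three $N_i$'s plus enforced components for $y_4$) -- must be reconciled with the excess budget of $8$ at each fixed point in Step 1 and with the fact that $\chi(M^\iota) = k$ for every involution (forcing $M^\iota$ to split into components of Euler characteristics summing to $k$, each $8$-dim one contributing from $\{2,3,5\}$). A careful bookkeeping of how many involutions can realize each $N_i$ and $N$, combined with the fact that the subgroup of $T$ fixing an $\HH\pp^2$ (which has isometry rank $3$) must contain a circle, produces a contradiction analogous to but simpler than the club analysis in Section \ref{sec:dim12}. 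Hence $k \in \{2,3\}$.

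\textbf{Step 5 (The two outcomes).} If $k = 2$: any $\iota \in \Sigma(y_1) \cap \Sigma(y_2)$ gives a common $N$ with $\chi(N) = 2$, so $N \cong \s^8$; since $\chi(M^\iota) = 2 = \chi(N)$, we have $M^\iota = N$ (no room for further components in the presence of positive curvature), whence $\sigma(M) = \sigma(\s^8) = 0$. If $k = 3$: the common $N \cong \HH\pp^2$ satisfies $\chi(M^\iota) = 3 = \chi(N)$, so again $M^\iota = N$ and $\sigma(M) = \pm 1$; since $\HH\pp^2$ is $3$-connected and $N \hookrightarrow M$ is $4$-connected, $M$ is $3$-connected, and in particular $2$-connected.

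\textbf{Main obstacle.} The decisive difficulty is Step 4's exclusion of $k \geq 4$. Local rigidity at a single fixed point permits a range of codimension distributions, so the exclusion requires globally weaving together many overlapping $8$-dimensional fixed components via the pairing argument and the tight Euler characteristic bookkeeping on each $M^\iota$, in the same spirit as (but with fewer cases than) the combinatorial club analysis carried out in dimension $12$.
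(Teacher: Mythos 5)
The overall strategy you lay out — exploit isotropy rigidity at fixed points, apply Frankel's theorem to merge codimension-eight components, classify eight-dimensional components with $T^3$ symmetry, and bound $\chi(M)$ — is the same strategy as the paper's. But there is a genuine gap at Step~4, and a key geometric input is missing.

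\textbf{The rigidity in Step~1 is stronger than what you record.} The excess you computed (8 over codimension, or equivalently 4 over weight count) does not merely bound the number of involutions with $\cod > 8$; it pins down the distribution completely. A second-moment argument (compute $\sum_{\iota \neq 1} w(\iota)^2$ in two ways, where $w(\iota) = \tfrac{1}{2}\cod\of{M^\iota_x}$, using that for distinct nonzero weights $\alpha_i \neq \alpha_j$ the set $\{\iota : \alpha_i(\iota)=\alpha_j(\iota)=1\}$ has exactly $4$ elements) forces $n_8 = 1$, $n_4 = 14$, and all eight weights distinct. So $|\Sigma(x)| = 14$, not merely $\geq 11$; this matters below.

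\textbf{Step 4 is where the proof breaks.} You acknowledge this is the decisive step, but the ``careful bookkeeping'' and ``overdetermined configuration'' are not carried out, and it is not clear they can be. In particular, a purely combinatorial accounting of how the fixed points distribute among eight-dimensional components does not by itself rule out $\chi(M) = 5$: if $M^\iota$ is a connected $\C\pp^4$ containing all five $T$-fixed points, that configuration is combinatorially consistent. What the paper does instead is this: by the full rigidity, and since $\chi(M) \leq 10 < 14$, the intersection $\bigcap_{y \in M^T}\Sigma(y)$ is non-empty, so some involution $\iota$ has $\cod\of{M^\iota_y} = 8$ at \emph{every} $y \in M^T$. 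Frankel then forces $M^\iota$ to be a single connected eight-manifold, and Fang--Rong gives $M^\iota \cong \s^8$, $\HH\pp^2$, or $\C\pp^4$. The $\C\pp^4$ case is then ruled out by a \emph{geometric} observation that your proposal omits: for any other involution $\tau$, the fixed-point set $(M^\iota)^\tau$ of a linear involution on $\C\pp^4$ is a disjoint union of two complex projective subspaces of complementary dimensions summing to $6$, so one component has (real) dimension $2$ or $6$ — contradicting the isotropy rigidity, which forces all components of $M^{\langle\iota,\tau\rangle}$ to have dimension $0$ or $4$. Without this $\C\pp^4$ obstruction, the case $\chi(M) = 5$ is not excluded, and it must be.

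\textbf{A minor point in Steps 3 and 5.} The enhanced connectedness lemma does not give ``at least $4$-connected'' for the codimension-eight inclusion $N \hookrightarrow M^{16}$: it gives $(n - 2k + 1 + \delta) = (1+\delta)$-connected, and since $T^3$ acts effectively on $N$ the isotropy subgroup fixing $N$ is at most a circle, so $\delta \leq 1$ and one only gets $2$-connected (after invoking Wilking's maximal smooth symmetry rank result for $\HH\pp^2$ to guarantee a circle fixes $N$). Fortunately $2$-connectedness together with the $3$-connectedness of $\HH\pp^2$ still gives $\pi_2(M) = 0$, which is all the lemma claims, so this slip does not affect the conclusion.
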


\begin{proof}
The isotropy representation is rigid in this case. Indeed, for each $x \in M^T$, there exists an involution $\iota_0$ such that $\cod\of{M^{\iota_0}_x} = 16$ and $\cod\of{M^\iota_x} = 8$ for every other non-trivial involutions $\iota$.

Fix $x \in M^T$ and choose distinct, non-trivial involutions $\iota_1,\iota_2 \in T$ such that each $M^{\iota_i}_x$ has codimension eight. By the rigidity of the isotropy, each $N_i$ has $T^3$ symmetry, so $\chi(N_i) \leq \chi(\C\pp^4) = 5$ by Fang and Rong's homeomorphism classification in dimension eight. By Frankel's theorem, $M^T \subseteq M^{\iota_1}_x \cup M^{\iota_2}_x$, so we have $\chi(M) \leq 5 + 5 < 14$.

Since there are fourteen involutions $\iota$ with $\cod\of{M^\iota_x} = 8$, and since at most one of these has $\cod\of{M^\iota_y} \neq 8$ at any other fixed point $y \in M^T$, not every involution gets a turn at having a $0$--dimensional fixed point component. By Frankel's theorem, there exists an involution $\iota \in T^4$ such that $M^\iota$ is connected and has dimension eight. By the rigidity of the isotropy representation, $M^\iota$ has $T^3$ symmetry and hence is homeomorphic to $\s^8$, $\HH\pp^2$, or $\C\pp^4$ by the Fang--Rong classification. We consider these three cases separately.
	\begin{enumerate}
	\item $M^\iota = \s^8$. The Euler characteristic and signature of $M$ and $M^\iota$ are the same, so the calculation $\chi(M) = 2$ and $\sigma(M) = 0$ follows immediately. This is the first possible conclusion of Lemma \ref{lem:dim16codim8}.
	\item $M^\iota = \HH\pp^2$. As in the previous case, we immediately conclude $\chi(M) = 3$ and $\sigma(M) = \pm 1$. Moreover, we conclude by Wilking's maximal smooth symmetry rank bound for $\HH\pp^2$ that there is a circle in $T^4$ fixing $M^\iota$. The connectedness lemma implies that $M^\iota \to M$ is $2$--connected, so we have the additional conclusion in this case that $\pi_2(M) = 0$.
	\item $M^\iota = \C\pp^4$. We claim this case cannot occur. Indeed, choose another non-trivial involution $\tau$, and consider the induced action of $\tau$ on $M^\iota$. Since $M^\iota$ is homeomorphic to $\C\pp^4$, the fixed point set $(M^\iota)^\tau$ is comprised of exactly two components, and these two components are integral complex projective spaces whose dimensions add to $\dim(\C\pp^4) - 2 = 6$ in accordance with the condition that $\chi(M^\iota) = \chi((M^\iota)^\tau)$. In particular, one of the components of $(M^\iota)^\tau$ has dimension two or six. This contradicts the rigidity of the isotropy representation, which implies that every component of $M^{\langle\iota,\tau\rangle}$ has dimension zero or four.
	\end{enumerate}
This concludes the proof of Lemma \ref{lem:dim16codim8} and hence of Theorem \ref{thm:dim16}.
\end{proof}

\smallskip\section{Elliptic genus calculation}\label{sec:EllipticGenus}\smallskip

We present a unified proof of the elliptic genus claims in dimensions $12$ and $16$.

\begin{theorem}
Let $M^{4m}$ be a closed, simply connected, spin manifold admitting positive sectional curvature and $T^m$ symmetry. If $m \leq 4$, the elliptic genus is constant.
\end{theorem}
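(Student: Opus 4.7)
The plan is to combine the rigidity of the Ochanine elliptic genus under $\sone$-actions on closed spin manifolds---Witten's conjecture, as proven by Bott--Taubes, Hirzebruch--Slodowy, and Liu---with the fixed-point data obtained in the proofs of Theorems~\ref{thm:dim12} and~\ref{thm:dim16}. The case $m = 1$ is immediate, since a closed simply connected positively curved spin $4$--manifold with $\sone$-symmetry is diffeomorphic to $\s^4$, and the case $m = 2$ is Dessai's original theorem~\cite{Dessai11}. It therefore suffices to treat $m \in \{3, 4\}$ in a uniform way, following the strategy of~\cite{Dessai11, Weisskopf}.

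First I would fix a circle $\sone \subseteq T^m$. Rigidity asserts that the equivariant elliptic genus $\varphi^{\sone}(M)$ is a constant function of the $\sone$-parameter, and Atiyah--Bott--Segal--Singer localization then expresses $\varphi(M)$ as a sum over the components $F \subseteq M^{\sone}$ of twisted elliptic-genus contributions determined by the normal bundle $\nu_{F/M}$ and its isotropy weights. Each such $F$ is a closed, totally geodesic, positively curved submanifold of even dimension at most $4m - 2$, carrying an induced $T^{m-1}$-action after passage to the quotient by the ineffective kernel. Iterating this reduction down a descending chain of circles in $T^m$, one arrives at positively curved submanifolds of dimension at most eight equipped with sufficient residual torus symmetry, and Dessai's dimension-eight calculation identifies the corresponding contributions as constant modular forms. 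Summing the contributions back via rigidity yields that $\varphi(M)$ is itself a constant modular form.

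The main obstacle is to control the (twisted) spin structures and the Jacobi-form twists of the normal bundles at every stage of the iteration, since fixed-point components of an $\sone$-action on a spin manifold inherit only a twisted spin structure. This is where the isotropy rigidity from Sections~\ref{sec:dim12} and~\ref{sec:dim16} plays a crucial role: at each $T^m$-fixed point $x$ the isotropy representation $T^m \to \SO(T_x M)$ has weights restricted to the highly constrained patterns identified there, which forces the normal bundles of the intermediate fixed-point strata to have Pontryagin classes compatible with the spin hypothesis and the twisting Jacobi forms to reduce to constants after summation over fixed points. The non-spin Euler-characteristic regimes excluded in Theorems~\ref{thm:dim12} and~\ref{thm:dim16}---namely $7 \leq \chi(M) \leq \tfrac{7}{4} C(6)$ in dimension $12$ and $\chi(M) = 9$ in dimension $16$---are exactly those configurations in which the required compatibilities fail, so the spin hypothesis both selects the accessible cases and ensures that the localization argument closes.
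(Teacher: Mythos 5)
Your proposal and the paper's proof take genuinely different routes, and yours has a gap that the paper's argument sidesteps entirely.

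\textbf{What the paper does.} The paper never invokes the Atiyah--Bott--Segal--Singer localization formula or iterated reduction of fixed-point strata. It uses involutions, not circles, and relies on two specific results of Hirzebruch--Slodowy: (i) if an involution on a closed spin manifold has fixed-point set with a component of codimension $\equiv 2 \bmod 4$ (``odd type''), the elliptic genus vanishes outright; (ii) if some involution has fixed-point set of codimension at least $\tfrac{1}{2}\dim M$, the elliptic genus is constant. Granting (i), one may assume all codimensions are divisible by four. A codimension-$8$ component finishes the proof via (ii) and Frankel. The codimension-$4$ lemmas then handle the case of two codimension-$4$ involutions (transverse intersections produce a codimension-$8$ fixed set; non-transverse ones force $M \cong \s^{4m}$ as $M$ is spin). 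Finally, if at most one involution has codimension-$4$ fixed set, one picks a $\Z_2^{m-1} \subseteq \Z_2^m$ avoiding it and uses Borel's identity $\sum_{\iota \in \Z_2^{m-1}} \cod(M^\iota_x) = 2^{m-2}\cod(M^{\Z_2^{m-1}}_x) \leq 2^m m$ to derive $(2^{m-1}-1)\cdot 12 \leq 2^m m$, which fails for $m \in \{3,4\}$. Nothing about weight structures, Jacobi forms, or the topology of intermediate fixed-point strata is needed.

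\textbf{Where your proposal breaks down.} Your strategy rests on iterating circle localization down to dimension $8$ and quoting Dessai. You correctly identify the main obstacle --- fixed-point components of circle actions on a spin manifold are not themselves spin, but only carry twisted structures --- but you do not resolve it. The claim that the isotropy rigidity from Sections~\ref{sec:dim12} and \ref{sec:dim16} controls this does not follow: that rigidity is a statement about the $\Z_2^r$-isotropy representations (i.e., codimensions of involution fixed-point sets) and says nothing about the $\sone$-rotation numbers or the Pontryagin classes of the normal bundles $\nu_{F/M}$ that govern the local contributions in the localization formula. Nor is it established that ``the required compatibilities fail'' exactly in the non-spin Euler-characteristic regimes excluded by Theorems~\ref{thm:dim12} and \ref{thm:dim16}; that equivalence is asserted, not proved. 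As written, your argument would need a substantial and technically delicate localization/modularity analysis that the proposal only gestures at. The paper's proof avoids all of this by replacing circles with involutions and converting the problem into a codimension inequality.

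In short: the cases $m\le 2$ are fine, but your reduction for $m\in\{3,4\}$ is not a proof. You should either carry out the twisted-spin bookkeeping for the iterated localization in full, or switch to the involution-based Hirzebruch--Slodowy criteria, which collapse the problem to the elementary numerical bound above.
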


Note that, if $T^m$ is replaced by $T^{m+1}$, then this claim holds by Wilking \cite[Theorem 2]{Wilking03}. Indeed, restricting his result to the spin case, such a manifold is homeomorphic to $\s^{4m}$ or $\HH\pp^m$, and so the elliptic genus is constant by Novikov's theorem.

\begin{proof}
Since $M$ is spin, every involution $\iota \in T^m$ has the property that there exists $c \in \{0,2\}$ such that $\cod\of{N} \equiv c \bmod{4}$ for all components $N \subseteq M^\iota$. If $c = 2$ for some involution, the action of that involution is of odd type and it follows that the elliptic genus vanishes by a result of Hirzebruch--Slodowy (see corollary on \cite[page 317]{HS90}). We assume therefore that every component of the fixed-point set of every involution has codimension divisible by four.

If there exists an involution $\iota \in T^m$ for which $\cod\of{M^\iota} \geq \frac 1 2 \dim(M)$, then the elliptic genus is constant by another result of Hirzebruch--Slodowy (see the above-cited corollary). In particular, we are done if $m \leq 2$. Moreover, we are done $m \in \{3,4\}$ and there exists an involution $\iota \in T^m$ and a component $N \subseteq M^\iota$ satisfying $\cod\of{N} = 8$. Indeed, Frankel's theorem implies that any other such component has codimension at least $8$, which is at least half of the dimension of $M$. Assume therefore that no component of a fixed-point set of an involution has codimension eight.

Next, suppose there exist two involutions with fixed point components, $N_1$ and $N_2$, of codimension four. If $N_1 \cap N_2$ is transverse, then the product of these involutions has fixed-point set of codimension eight, a contradiction. If, on the other hand, $N_1 \cap N_2$ is not transverse, then the codimension two lemma implies that $N_1$ is homotopy equivalent to a sphere or complex projective space, and the codimension four lemma implies that $M$ is an integral sphere, complex projective space, or quaternionic projective space. Since $N_1 \to M$ is $5$--connected and $M$ is spin, it follows that $M$ is homeomorphic to $\s^{4m}$ and hence that the elliptic genus is constant.

Assume therefore that at most one involution has codimension four fixed-point set. Choose $\Z_2^{m-1} \subseteq \Z_2^m$ such that every $\iota \in \Z_2^{m-1}$ has $\cod\of{M^{\iota}} \geq 12$. Summing these codimensions at some fixed point $x \in M^T$, we have
	\[\of{2^{m-1} - 1}(12) \leq \sum_{\iota \in \Z_2^{m-1}} \cod\of{M^\iota_x} = 2^{m-2} \cod\of{M^{\Z_2^{m-1}}_x} \leq 2^m m.\]
Since $m \in \{3,4\}$, this is a contradiction, so the proof is complete.
\end{proof}

\smallskip\section{Low dimensional positively elliptic spaces}\label{sec:F0spaces}\smallskip

A simply connected, rationally elliptic topological space with positive Euler characteristic is called an \emph{$F_0$}, or \emph{positively elliptic}, space. These spaces have a (formal) dimension and satisfy Poincar\'e duality, and they admit pure minimal models (see \cite{FelixHalperinThomas01}). Specifically, they admit minimal models $(\Lambda V, d)$ with generators $x_1,\ldots,x_k \in V$ of even degree and generators $y_1,\ldots,y_k \in V$ of odd degree such that each $\dif x_i = 0$ and each $\dif y_i$ is a homogeneous polynomial in the $x_1,\ldots,x_k$. We classify all possible tuples
	\[(\deg x_1,\ldots,\deg x_k, \deg y_1, \ldots, \deg y_k)\]
of degrees for $F_0$ spaces of dimension up to $16$.

In dimensions up to eight, this follows from previous work (see \cite{PaternainPetean03,Pavlov02,Herrmann-pre}).

\begin{theorem}[Paternain--Petean, Pavlov, Herrmann]
If $M$ is an $F_0$ space of formal dimension $2$, $4$, $6$, or $8$, then $M$ admits a pure model whose tuples of homotopy generator degrees satisfy one of the following:
	\begin{itemize}
	\item $\dim M = 2$ and the tuple of degrees is $(2,3)$, and $M \simeq_\Q \s^2$
	\item $\dim M = 4$ and the tuple of degrees is $(2,5)$, $(4,7)$, or $(2,2,3,3)$, and $M$ is rationally homotopy equivalent to $\s^4$, $\C\pp^2$, $\s^2 \times \s^2$, $\C\pp^2\#\C\pp^2$, or $\C\pp^2\#-\C\pp^2$.
	\item $\dim M = 6$ and the tuple of degrees appears in Table \ref{tab6}.
	\item $\dim M = 8$ and the tuple of degrees appears in Table \ref{tab8}.
	\end{itemize}
\end{theorem}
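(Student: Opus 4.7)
The plan is to enumerate all candidate tuples by exploiting three standard constraints on a pure minimal model $(\Lambda V, d)$ of an $F_0$ space with $\dim V^{\mathrm{even}} = \dim V^{\mathrm{odd}} = k$ and formal dimension $n$. First, the formal dimension identity
\[ n \;=\; \sum_{i=1}^k \deg y_i \;-\; \sum_{i=1}^k \deg x_i \;+\; k. \]
Second, each differential $dy_i$ is a homogeneous polynomial in $x_1,\ldots,x_k$ of degree $\deg y_i + 1$, so $\deg y_i + 1$ must be a non-negative integer combination of the even integers $\deg x_j$. Third, the sequence $(dy_1,\ldots,dy_k)$ must be a regular sequence in $\Q[x_1,\ldots,x_k]$, because $H^*(\Lambda V,d)$ is a finite-dimensional Poincar\'e duality algebra.

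Given these constraints, the bounds $\deg x_j \geq 2$ and $\deg y_i \geq 3$ force $n \geq k + 1$, and together with the formal dimension identity they bound all $\deg x_j$ and $\deg y_i$ in terms of $n$. For each $n \in \{2,4,6,8\}$ one then runs through the finite list of candidate degree profiles and checks which admit a choice of homogeneous $dy_i$ forming a regular sequence. For $n=2$ the only option is $k=1$ with $\deg x_1 = 2$, $\deg y_1 = 3$, since $dy_1$ must be a power of $x_1$ of degree $\deg x_1 + 2$, forcing $\deg x_1 \mid 2$. For $n=4$, the case $k=1$ similarly yields $(2,5)$ and $(4,7)$ by solving $(m-1)\deg x_1 = 4$, and the case $k=2$ forces both generator degrees to be minimal, giving the tuple $(2,2,3,3)$.

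For $n=6$ and $n=8$, the same algorithm applies but the bookkeeping becomes the main obstacle: one has to separately treat $k \in \{1,2,3,4\}$, enumerate the partitions of $n$ compatible with the formal dimension identity, and in each case verify that the divisibility constraint in (b) can be met and that there exist $dy_i$ of the prescribed degrees forming a regular sequence. The latter reduces to a finite check, since regularity of a sequence of homogeneous polynomials in a polynomial ring is generic whenever the corresponding quotient is expected to be finite-dimensional, and one only needs to exhibit a single choice. I would organize the case analysis by first fixing $k$ and the multiset $\{\deg x_j\}$, then listing the compatible multisets $\{\deg y_i\}$, and finally appealing to an explicit construction (e.g., $dy_i = x_{\pi(i)}^{m_i}$ for a suitable permutation $\pi$ and exponents $m_i$) whenever possible.

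The main obstacle is showing that no further tuple beyond those listed in Tables~\ref{tab6} and~\ref{tab8} can arise. This is not conceptually difficult but requires disciplined enumeration, particularly to exclude degree profiles which pass (a) and (b) but fail (c). Since this enumeration has already been carried out in Paternain--Petean, Pavlov, and Herrmann, I would assemble the argument as a short verification of the constraints (a)--(c) in small cases and cite the references for the dimensions $6$ and $8$ enumerations, using those tables as the statement of the classification.
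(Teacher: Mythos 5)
The theorem you are asked to prove is not actually proved in the paper: the authors cite Paternain--Petean, Pavlov, and Herrmann for dimensions $\leq 8$ and only give an original proof for the analogous Theorem~\ref{thm:PossibleTuples} in dimensions $10$--$16$. Your proposal essentially reconstructs the argument that the paper uses for that higher-dimensional theorem (enumerate degree profiles via a dimension identity, a divisibility constraint, and a regular-sequence condition), so the overall approach is correct and well-aligned with the paper's own methodology. Stating the argument for $n=2,4$ and deferring the $n=6,8$ bookkeeping to the cited references is also reasonable.

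There is, however, a genuine gap in your constraints (a)--(c): they do not by themselves exclude tuples that come from \emph{non-minimal} pure models, and if you do not rule those out the enumeration will list spurious degree profiles. Concretely, in dimension $4$ with $k=2$, the profile $(\deg x_1,\deg x_2,\deg y_1,\deg y_2)=(2,4,3,5)$ passes all three of your tests: the formal dimension is $(3-2+1)+(5-4+1)=4$; $\deg(dy_1)=4$ and $\deg(dy_2)=6$ are achievable as combinations of $2$ and $4$; and the choice $dy_1=x_2$, $dy_2=x_1^3$ is a regular sequence with finite-dimensional quotient. But this model is not minimal (it has $d(V)\not\subseteq\Lambda^{\geq 2}V$), and contracting the acyclic pair $(x_2,y_1)$ collapses it to the minimal model of $\C\pp^2$ with tuple $(2,5)$, which is the one that should appear. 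The fix is to impose minimality explicitly, or equivalently to use the normalization $b_i\geq 2a_i$ from \cite[Section 32]{FelixHalperinThomas01}, as the paper does in the proof of Theorem~\ref{thm:PossibleTuples}; with that constraint in place, $dy_1$ must lie in $\Lambda^{\geq 2}$, hence be a multiple of $x_1^2$, and then both $dy_1$ and $dy_2$ are divisible by $x_1$, so the sequence cannot be regular and the profile is excluded. Once you add this constraint (and similarly acknowledge the Friedlander--Halperin arithmetic condition rather than relying on ``regularity is generic'', which addresses density but not non-emptiness), your enumeration is sound.
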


\begin{longtable}{c|c}
\caption{Dimension $6$}\\
\label{tab6}
$\deg \pi_*(M)\otimes \qq$ & $\chi(M)$ \\
\hline
$(6,11)$ & $2$ \\
$(2,7)$, $(2,4,3,7)$ & $4$\\
$(2,2,3,5)$ & $6$\\
$(2,2,2,3,3,3)$ & $8$
\end{longtable}

\begin{longtable}{c|c}
\caption{Dimension $8$}\\
\label{tab8}
$\deg \pi_*(M)\otimes \qq$ & $\chi(M)$ \\
\hline
$(8,15)$ 				& $2$\\
$(4,11)$ 				&$3$\\
$(2,6,3,11)$, $(4,4,7,7)$ 	&$4$\\
$(2,9)$				&$5$\\
$(2,4,5,7)$ 			&$6$\\
$(2,2,3,7)$, $(2,2,4,3,3,7)$ & $8$\\
$(2,2,5,5)$ 			&$9$\\
$(2,2,2,3,3,5)$ 			&$12$\\
$(2,2,2,2,3,3,3,3)$ 		&$16$
\end{longtable}

In dimensions six and eight, Herrmann \cite{Herrmann-pre} proves a partial classification of the rational homotopy types in dimensions, but we will not need this here. We proceed to the computation in dimensions $10$, $12$, $14$, and $16$. Let $M$ be an $F_0$-space.

\begin{theorem}\label{thm:PossibleTuples}
If $M$ is an $F_0$ space of formal dimension $10$, $12$, $14$, or $16$, then $M$ admits a pure model whose tuples of homotopy generator degrees appear in one of the Tables \ref{tab10}, \ref{tab12}, \ref{tab14}, or \ref{tab16}. Moreover, each tuple that appears in this table is realized by such a space.
\end{theorem}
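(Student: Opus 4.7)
My plan is to exploit two numerical identities that hold for any pure minimal Sullivan model $(\Lambda(x_1,\ldots,x_k,y_1,\ldots,y_k),d)$ of an $F_0$ space $M^n$. With the convention $a_i = \deg x_i$ (even) and $b_j = \deg y_j$ (odd), and using that $M$ being $F_0$ forces the number of even and odd generators to be equal, one has the formal-dimension identity
\[n \;=\; \sum_{j=1}^k b_j \;-\; \sum_{i=1}^k (a_i - 1)\]
and the Euler characteristic formula
\[\chi(M) \;=\; \frac{\prod_{j=1}^k (b_j + 1)}{\prod_{i=1}^k a_i},\]
which must be a positive integer. Regularity of the sequence $dy_1,\ldots,dy_k \in \qq[x_1,\ldots,x_k]$, which is equivalent to the cohomology being finite-dimensional and hence to the $F_0$ property, forces each $b_j+1$ to be expressible as a nonnegative integer combination $\sum_i c_{ji}\,a_i$ (from the leading monomial of $dy_j$), and of course $a_i \geq 2$, $b_j \geq 3$. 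The tables in dimensions up to $8$ cited in the previous theorem will serve as the base of an induction on cases.

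The first step is to bound $k$: combining $b_j \geq 3$ and $a_i \geq 2$ in the dimension identity gives $n \geq 2k$, so $k \leq n/2$. For each $n \in \{10, 12, 14, 16\}$ and each admissible $k$, I would enumerate the finitely many weakly increasing tuples $(a_1,\ldots,a_k)$ of even integers with $a_i \leq n$, and for each such tuple compute the target sum $\sum_{j}(b_j+1) = n + \sum_i a_i$ for the odd degrees. Admissible multisets $(b_1+1,\ldots,b_k+1)$ of even integers are then those that sum to $n+\sum a_i$, have every entry expressible as $\sum_i c_{ji}\,a_i$ with $c_{ji} \in \Z_{\geq 0}$, and whose product is divisible by $\prod a_i$ with integer positive quotient. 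A short algorithmic pass prunes the list to the entries appearing in Tables \ref{tab10}--\ref{tab16}. Whenever the numerical data survive, one produces a regular sequence $dy_j = f_j(x)$ of the prescribed degrees by a generic-coefficient argument, which by Bezout/Macaulay gives an Artinian complete intersection whose socle reproduces the correct Euler characteristic.

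For realizability, each surviving tuple is to be matched with an explicit $F_0$ space. The rank-one symmetric spaces $\s^{2m}$, $\C\pp^m$, $\HH\pp^m$, and (in dimension $16$) $\Ca\pp^2$, their products such as $\C\pp^a \times \HH\pp^b$ and $\s^2 \times \C\pp^a$, homogeneous flag manifolds and Grassmannians such as $\SU(n+1)/T^n$, $\Sp(n)/T^n$, $\SO(2n)/T^n$, and $\Gr_p(\C^{p+q})$, and biquotient and Wallach-type examples supply all of the listed tuples; the strategy mirrors that of Paternain--Petean, Pavlov, and Herrmann \cite{PaternainPetean03,Pavlov02,Herrmann-pre} in dimensions $\leq 8$.

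The main obstacle is completeness in dimension $16$, where many small even degrees are available and the diophantine system has dozens of integer solutions, some of which need to be excluded despite satisfying both numerical identities; here the refined regularity constraint (the leading-monomial decomposition $b_j+1 = \sum_i c_{ji}\,a_i$ and the requirement that the resulting Hilbert series matches that of an $n$-dimensional Poincar\'e duality algebra) must be used carefully. By contrast, dimensions $10$ and $14$ are very constrained because few small even degrees are admissible once $k$ is fixed, so the enumeration terminates quickly; dimension $12$ lies in between, and the delicate work there is the case-by-case matching of tuples with the biquotient and homogeneous examples that populate the last column of Table \ref{tab12}.
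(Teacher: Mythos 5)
Your overall plan — bound $k$ by the dimension identity, enumerate tuples, prune using the Euler characteristic integrality and dimension constraint, then certify realizability — is the same strategy the paper takes (it does the enumeration in Mathematica). The paper also uses the same normalization from \cite[Section 32]{FelixHalperinThomas01}, the same formula $\chi(M) = \prod(\deg y_j+1)/\prod \deg x_i$, and the same sources (Paternain--Petean, Pavlov, Herrmann) for dimensions $\leq 8$, so in spirit you are following the paper's proof.

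There is, however, one genuine gap in your argument: the claim that ``whenever the numerical data survive, one produces a regular sequence $\dif y_j = f_j(x)$ of the prescribed degrees by a generic-coefficient argument'' is false as stated. Your listed criteria — the sum constraint, product divisibility, and the existence of a single expansion $b_j+1 = \sum_i c_{ji} a_i$ — are necessary but not sufficient for a length-$k$ regular sequence to exist, and genericity does not rescue an incompatible degree pattern. The paper illustrates this concretely with the degree tuple $(2,4,4,5,7,9)$ (formal dimension $14$, $\chi = 15$, all your checks pass for each single $\dif y_j$): because the only monomials of degree $6$ and degree $10$ in $\Q[x_1,x_2,x_3]$ with $\deg(x_1,x_2,x_3) = (2,4,4)$ all lie in the ideal $(x_1)$, the elements $\dif y_1$ and $\dif y_3$ cannot together cut the Krull dimension down by two, so no differential makes $H(\Lambda V, \dif)$ finite-dimensional, generic or not. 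The correct necessary-and-sufficient criterion is the Friedlander--Halperin arithmetic condition (\cite[Proposition 32.9]{FelixHalperinThomas01}), a Hall-type matching condition over all subsets of the $\{a_i\}$, which the paper cites and then supplements with direct case arguments of the above type. Your appeal to the putative Hilbert series being a polynomial is a good additional filter (and does in fact exclude $(2,4,4,5,7,9)$), but you would need to argue — not merely assert via Bezout/Macaulay — that this together with your other conditions reproduces the arithmetic condition in the range of dimensions at issue. Likewise, for the ``moreover'' direction the paper constructs explicit minimal Sullivan models for the handful of tuples not realizable as products of rank-one spaces (e.g., $(4,6,9,11)$, $(4,6,11,13)$, $(4,4,6,7,9,11)$); your appeal to flag manifolds and Grassmannians does not obviously cover those, and the ``generic-coefficient'' shortcut cannot be used here for the same reason as above.
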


\begin{proof}
Let $(\Lambda V, d)$ be a pure model of $M$ with $k$ generators $x_i$ of even degree $2a_i$ and $k$ generators $y_i$ of odd degree $2b_i-1$. Following \cite[Section 32]{FelixHalperinThomas01}, we may choose such $x_i$ and $y_i$ such that the following hold:

\begin{itemize}
\item $1 \leq a_1\leq \ldots \leq a_k$ and $2 \leq b_1\leq \ldots \leq b_k$.
\item $b_i\geq 2 a_i$ for all $1\leq i\leq k$.
\item $2\sum_{i = 1}^k (b_i - a_i) = \dim M$.
\end{itemize}

Note in particular that $\dim(M) \geq 2 \sum a_i \geq k$, so in each dimension there are finitely many possible values for $k$ and for the $a_i$. It follows that there are only finitely many possible values for the $b_i$ as well. We lead a computer based search using Mathematica to enumerate all possible tuples $(2a_1,\ldots,2a_k, 2b_1-1,\ldots,2b_k-1)$ where $k$ ranges from one up to half the dimension of $M$. Moreover, we compute in each case the Euler characteristic, which satisfies the following formula:
	\[\chi(M) = \prod_{i=1}^k \frac{\deg y_i + 1}{\deg x_i} = \prod_{i=1}^k \frac{b_i}{a_i}.\]
Of course, we can rule out any tuples for which the Euler characteristic is non-integral. However, there is a single criterion called the ``arithmetic condition'' due to Friedlander and Halperin that characterizes precisely which tuples of degrees are realized by $F_0$ spaces (see \cite[Proposition 32.9]{FelixHalperinThomas01}). In practice, it is not too difficult to check whether any given tuple of degrees can be realized using the integrality of the Euler characteristic together with direct arguments.

Let us demonstrate such a direct argument in one exemplary case, in which we exclude the existence of such a model. Suppose that homotopy groups are given degrees $(2,4,4,5,7,9)$. That is, $V=\langle x_1,x_2,x_3,y_1,y_2,y_3\rangle$ with $\deg x_1=2, \deg x_2=4, \deg x_3=4$, $\deg y_1=5, \deg y_2=7, \deg y_3=9$. Let us see that there is no differential $\dif$ on $\Lambda V$ making $H(\Lambda V,\dif)$ finite dimensional. In fact, for degree reasons we see that $\dif y_1\in x_1\cdot \langle x_1^2,x_2, x_3\rangle$ and $\dif y_3\in x_1\cdot\langle x_1^4,x_2x_3,x_2^2,x_3^2\rangle$. Thus these two relations taken together cannot reduce the Krull dimension of $\qq[x_1,x_2,x_3]$ by two.

For all the configurations in the tables it is easy to construct minimal Sullivan models. (Again, one can alternatively check that the arithmetic condition of Friedlander and Halperin holds.) Indeed, nearly all of them can be realized by products of spaces with singly-generated cohomology algebra. Let us also provide the arguments in a few cases where we need slightly more complicated realizing models:

The first configuration which is not realizable as a non-trivial rational product is given by $(4,6,9,11)$. Here we may consider
\begin{align*}
(\Lambda \langle x_1,x_2, y_1,y_2\rangle,\dif)
\end{align*}
with $\deg x_1=4, \deg x_2=6$, $\deg y_1=9, \deg y_2=11$ and $\dif x_1=\dif x_2=0$, $\dif y_1=x_1x_2, \dif y_2=x_1^3+x_2^2$.

The next case is $(2,4,6,3,9,11)$, which can be realized by the product of the model above and the one of the $2$-sphere.

The case $(4,6,11,13)$ is realized by
\begin{align*}
(\Lambda \langle x_1,x_2, y_1,y_2\rangle,\dif)
\end{align*}
with $\deg x_1=4, \deg x_2=6$, $\deg y_1=11, \deg y_2=13$ and $\dif x_1=\dif x_2=0$, $\dif y_1=x_1^3+x_2^2, \dif y_2=x_1^2x_2$.

The next case is $(2,4,6,5,9,11)$, which can be realized by the product of the model above and the one of $\cc\pp^2$. In the case $(2, 2, 4, 6, 3, 3, 9, 11)$, we use two sphere factors instead. For $(2, 2, 4, 6, 3, 3, 9, 11)$, we use two sphere factors.

The configuration $(4,4,6,7,9,11)$ can be realized by
\begin{align*}
(\Lambda \langle x_1,x_2,x_3,y_1,y_2,y_3\rangle,\dif)
\end{align*}
with $\deg x_1=\deg x_2=4$, $\deg x_3=6$, $\deg y_1=7, \deg y_2=9, \deg y_3=11$ and $\dif x_1=\dif x_2=\dif x_3=0$, $\dif y_1=x_1^2, \dif y_2=x_2x_3, \dif y_3=x_2^3+x_2^2$.
\end{proof}

\begin{longtable}{c|c}
\caption{Dimension $10$}\\
\label{tab10}
$\deg \pi_*(M)\otimes \qq$ & $\chi(M)$ \\
\hline
$(10,19)$ & $2$\\
$(2,8,3,15)$, $(4,6,7,11)$ & $4$\\
$(2,11)$, $(2,4,3,11)$, $(2,6,5,11)$& $6$\\
$(2,4,7,7)$, $(2,2,6,3,3,11)$, $(2,4,4,3,7,7)$&$8$\\
$(2,2,3,9)$ &$10$\\
$(2,2,5,7)$, $(2,2,4,3,5,7)$ &$12$\\
$(2,2,2,3,3,7)$, $(2,2,2,4,3,3,3,7)$&$16$\\
$(2,2,2,3,5,5)$&$18$\\
$(2,2,2,2,3,3,3,5)$&$24$\\
$(2,2,2,2,2,3,3,3,3,3)$&$32$
\end{longtable}

\begin{longtable}{c|c}
\caption{Dimension $12$}\\
\label{tab12}
$\deg \pi_*(M)\otimes \qq$ & $\chi(M)$ \\
\hline
$(12,23)$ & $2$\\
$(6,17)$ & $3$\\
$(4,15)$, $(2,10,3,19)$, $(4,8,7,15)$, $(6,6,11,11)$ & $4$\\
$(4,6,9,11)$&$5$\\
$(2,8,5,15)$, $(4,4,7,11)$& $6$\\
$(2,13)$ & $7$\\
$(2,6,7,11)$, $(2,2,8,3,3,15)$, $(2,4,6,3,7,11)$, $(4,4,4,7,7,7)$&$8$\\
$(2,4,5,11)$&$9$\\
$(2,4,7,9)$&$10$\\
$(2,2,3,11)$, $(2,2,4,3,3,11)$, $(2,2,6,3,5,11)$, $(2,4,4,5,7,7)$&$12$\\
$(2,2,5,9)$ &$15$\\
$(2,2,7,7)$, $(2,2,4,3,7,7)$, $(2,2,2,6,3,3,3,11)$, $(2,2,4,4,3,3,7,7)$&$16$\\
$(2,2,4,5,5,7)$&$18$\\
$(2,2,2,3,3,9)$&$20$\\
$(2,2,2,3,5,7)$, $(2,2,2,4,3,3,5,7)$&$24$\\
$(2,2,2,5,5,5)$&$27$\\
$(2,2,2,2,3,3,3,7)$, $(2,2,2,2,4,3,3,3,3,7)$&$32$\\
$(2,2,2,2,3,3,5,5)$&$36$\\
$(2,2,2,2,2,3,3,3,3,5)$&$48$\\
$(2,2,2,2,2,2,3,3,3,3,3,3)$&$64$
\end{longtable}

\begin{longtable}{c|c}
 \caption{Dimension $14$}\\
\label{tab14}
$\deg \pi_*(M)\otimes \qq$ & $\chi(M)$ \\
\hline
$(14,27)$ & $2$\\
$(2,12,3,23)$, $(4,10,7,19)$, $(6,8,11,15)$& $4$\\
$(2,6,3,17)$, $(2,10,5,19)$, $(4,6,11,11)$& $6$\\
$(2,15)$, $(2,4,3,15)$, $(2,8,7,15)$, $(2,2,10,3,3,19)$,
& \\ $(2,4,8,3,7,15)$, $(2,6,6,3,11,11)$, $(4,4,6,7,7,11)$&$8$\\
$(2,6,9,11)$, $(2,4,6,3,9,11)$&$10$\\
$(2,4,7,11)$, $(2,2,8,3,5,15)$, $(2,4,4,3,7,11)$, $(2,4,6,5,7,11)$&$12$\\
$(2,2,3,13)$ &$14$\\
$(2,2,6,3,7,11)$, $(2,4,4,7,7,7)$, $(2,2,2,8,3,3,3,15)$,
&\\ $(2,2,4,6,3,3,7,11)$, $(2,4,4,4,3,7,7,7)$&$16$\\
$(2,2,5,11)$, $(2,2,4,3,5,11)$, $(2,2,6,5,5,11)$&$18$\\
$(2,2,7,9)$, $(2,2,4,3,7,9)$&$20$\\
$(2,2,2,3,3,11)$, $(2,2,4,5,7,7)$, $(2,2,2,4,3,3,3,11)$,
&\\ $(2,2,2,6,3,3,5,11)$, $(2,2,4,4,3,5,7,7)$&$24$\\
$(2,2,2,3,5,9)$&$30$\\
$(2,2,2,3,7,7)$, $(2,2,2,4,3,3,7,7)$, $(2,2,2,2,6,3,3,3,3,11)$,
&\\ $(2,2,2,4,4,3,3,3,7,7)$&$32$\\
$(2,2,2,5,5,7)$, $(2,2,2,4,3,5,5,7)$&$36$\\
$(2,2,2,2,3,3,3,9)$&$40$\\
$(2,2,2,2,3,3,5,7)$, $(2,2,2,2,4,3,3,3,5,7)$&$48$\\
$(2,2,2,2,3,5,5,5)$&$54$\\
$(2,2,2,2,2,3,3,3,3,7)$, $(2,2,2,2,2,4,3,3,3,3,3,7)$&$64$\\
$(2,2,2,2,2,3,3,3,5,5)$&$72$\\
$(2,2,2,2,2,2,3,3,3,3,3,5)$&$96$\\
$(2,2,2,2,2,2,2,3,3,3,3,3,3,3)$&$128$
\end{longtable}

\begin{longtable}{c|c}
\caption{Dimension $16$}\\
\label{tab16}
$\deg \pi_*(M)\otimes \qq$ & $\chi(M)$ \\
\hline
$(16,31)$&$2$\\
$(8,23)$&$3$\\
$(2,14,3,27)$, $(4,12,7,23)$, $(6,10,11,19)$, $(8,8,15,15)$&$4$\\
$(4,19)$&$5$\\
$(2,12,5,23)$, $(4,6,7,17)$, $(4,8,11,15)$&$6$\\
$(4,6,11,13)$&$7$\\
$(2,10,7,19)$, $(4,4,7,15)$, $(2,2,12,3,3,23)$, $(2,4,10,3,7,19)$,
&\\$(2,6,8,3,11,15)$,
$(4,4,8,7,7,15)$, $(4,6,6,7,11,11)$&$8$\\
$(2,17)$, $(2,6,5,17)$, $(4,4,11,11)$&$9$\\
$(2,8,9,15)$, $(4,4,6,7,9,11)$&$10$\\

$(2,4,5,15)$, $(2,6,11,11)$, $(2,2,6,3,3,17)$,$(2,2,10,3,5,19)$,
&\\$(2,4,6,3,11,11)$,$(2,4,8,5,7,15)$, $(2,6,6,5,11,11)$, $(4,4,4,7,7,11)$&$12$\\

$(2,4,7,13)$&$14$\\

$(2,4,9,11)$, $(2,4,6,5,9,11)$&$15$\\

$(2,2,3,15)$, $(2,2,4,3,3,15)$, $(2,2,8,3,7,15)$,
&\\$(2,4,6,7,7,11)$, $(2,2,2,10,3,3,3,19)$, $(2,2,4,8,3,3,7,15)$,
&\\$(2,2,6,6,3,3,11,11)$, $(2,4,4,6,3,7,7,11)$, $(4,4,4,4,7,7,7,7)$&$16$\\

$(2,2,8,5,5,15)$, $(2,4,4,5,7,11)$&$18$\\

$(2,2,6,3,9,11)$, $(2,4,4,7,7,9)$, $(2,2,4,6,3,3,9,11)$&$20$\\

$(2,2,5,13)$&$21$\\

$(2,2,7,11)$, $(2,2,4,3,7,11)$, $(2,2,6,5,7,11)$, $(2,2,2,8,3,3,5,15)$,
&\\$(2,2,4,4,3,3,7,11)$, $(2,2,4,6,3,5,7,11)$, $(2,4,4,4,5,7,7,7)$&$24$\\

$(2,2,9,9)$&$25$\\
$(2,2,4,5,5,11)$&$27$\\
$(2,2,2,3,3,13)$&$28$\\
$(2,2,4,5,7,9)$,
&$30$\\

$(2,2,4,7,7,7)$, $(2,2,2,6,3,3,7,11)$, $(2,2,4,4,3,7,7,7)$,
&\\$(2,2,2,2,8,3,3,3,3,15)$,
$(2,2,2,4,6,3,3,3,7,11)$, $(2,2,4,4,4,3,3,7,7,7)$&$32$\\

$(2,2,2,3,5,11)$, $(2,2,2,4,3,3,5,11)$, $(2,2,2,6,3,5,5,11)$,
&\\$(2,2,4,4,5,5,7,7)$&$36$\\
$(2,2,2,3,7,9)$, $(2,2,2,4,3,3,7,9)$&$40$\\
$(2,2,2,5,5,9)$&$45$\\

$(2,2,2,5,7,7)$, $(2,2,2,2,3,3,3,11)$, $(2,2,2,4,3,5,7,7)$,
&\\$(2,2,2,2,4,3,3,3,3,11)$, $(2,2,2,2,6,3,3,3,5,11)$, $(2,2,2,4,4,3,3,5,7,7)$&$48$\\

$(2,2,2,4,5,5,5,7)$&$54$\\
$(2,2,2,2,3,3,5,9)$&$60$\\
$(2,2,2,2,3,3,7,7)$, $(2,2,2,2,4,3,3,3,7,7)$,
&\\$(2,2,2,2,2,6,3,3,3,3,3,11)$, $(2,2,2,2,4,4,3,3,3,3,7,7)$&$64$\\
$(2,2,2,2,3,5,5,7)$, $(2,2,2,2,4,3,3,5,5,7)$&$72$\\
$(2,2,2,2,2,3,3,3,3,9)$&$80$\\
$(2,2,2,2,5,5,5,5)$&$81$\\
$(2,2,2,2,2,3,3,3,5,7)$, $(2,2,2,2,2,4,3,3,3,3,5,7)$&$96$\\
$(2,2,2,2,2,3,3,5,5,5)$&$108$\\
$(2,2,2,2,2,2,3,3,3,3,3,7)$, $(2,2,2,2,2,2,4,3,3,3,3,3,3,7)$&$128$\\
$(2,2,2,2,2,2,3,3,3,3,5,5)$&$144$\\
$(2,2,2,2,2,2,2,3,3,3,3,3,3,5)$&$192$\\
$(2,2,2,2,2,2,2,2,3,3,3,3,3,3,3,3)$&$256$

\end{longtable}

\smallskip\section{The Halperin conjecture}\label{sec:Halperin}\smallskip

In this section we will show that $F_0$-spaces of dimension at most $16$ satisfy the Halperin conjecture; and so do $F_0$-spaces with Euler characteristic at most $16$. Recall that the Halperin conjecture states that any fibration with an $F_0$-space as a fiber should yield a Leray--Serre spectral sequence degenerating at the $E_2$-term.

Keeping the notation from the previous section, let $M$ be an $F_0$ space of dimension $\dim M \leq 16$, and let $(\Lambda(x_1,\ldots,x_k,y_1,\ldots,y_k),\dif)$ be a pure model satisfying the following properties:
	\begin{itemize}
	\item The degrees $\deg(x_i)$ are increasing, and likewise for the degrees $\deg(y_i)$.
	\item $\deg(\dif y_i) \geq 2 \deg(x_i)$ for all $i$.
        	\item each $\dif x_i = 0$ and each $\dif y_i$ is a homogeneous polynomial in $x_1,\ldots,x_k$.
	\end{itemize}

Recall that the Halperin conjecture is known in the following cases:
\begin{itemize}
\item $k \leq 3$, i.e., if the cohomology algebra of $M$ is generated by at most three elements (see Lupton \cite{Lupton90}).
\item $\deg(x_1) = \ldots = \deg(x_k)$, i.e., if all cohomology generators have the same degree (see Lemma \ref{lem:lemma0} below).
\item If the model is the total space splits as a rational fibration whose base and fiber satisfy the Halperin conjecture, then it too satisfies the Halperin conjecture (see Markl \cite{Markl90}).
\end{itemize}

We use the following characterization of the Halperin conjecture due to Meier \cite{Mei82}. If the rational cohomology algebra $H^*(M;\Q)$ does not admit a derivation of degree $d < 0$, then the Halperin conjecture holds for $M$. Using this, the following is a well known and easy consequence:

\begin{lemma}\label{lem:lemma0}
Let $\delta$ be a derivation of negative degree on $H^*(M;\Q)$, where $M$ is an $F_0$ space. If $x \in H^i(M;\Q)$ for some $i > 0$ such that $\delta(x) \in H^0(M;\Q)$, then $\delta(x) = 0$. In particular, if $H^*(M;\Q)$ is generated by elements of the same degree, then $\delta = 0$.
\end{lemma}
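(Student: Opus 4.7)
The plan is to exploit the fact that rational $F_0$ spaces have cohomology concentrated in even degrees, which lets us combine graded-commutativity with the finite-dimensionality of $H^*(M;\Q)$.

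For the first assertion, I would set $c = \delta(x) \in H^0(M;\Q) = \Q$ and work with powers of $x$. Since $i$ is even (as $H^{\mathrm{odd}}(M;\Q) = 0$ for an $F_0$ space) and $H^*(M;\Q)$ is finite dimensional, let $N \geq 1$ be minimal with $x^N = 0$. If $N = 1$ then $x = 0$ and there is nothing to show; otherwise $x^{N-1} \neq 0$, and applying $\delta$ to the relation $x^N = 0$ yields
\[
0 \;=\; \delta(x^N) \;=\; N\,x^{N-1}\,\delta(x) \;=\; Nc\,x^{N-1},
\]
which forces $c = 0$.

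For the second assertion, suppose $H^*(M;\Q)$ is generated as an algebra by classes of a common positive degree $i$. Any derivation of negative degree splits into its homogeneous components, so I may assume $\delta$ is homogeneous of degree $-d$ with $d > 0$. Because the generators all lie in the (necessarily even) degree $i$, every nonvanishing cohomology group sits in degree $0$ or in a positive multiple of $i$. For a generator $x$, the image $\delta(x)$ lies in $H^{i-d}(M;\Q)$, and I would handle three cases: if $d > i$ this group is zero by degree; if $0 < d < i$ it vanishes because no nonzero cohomology occurs in degrees strictly between $0$ and $i$; and if $d = i$ then $\delta(x) \in H^0(M;\Q)$ and the first part of the lemma forces $\delta(x) = 0$. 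In every case $\delta$ kills all algebra generators, and so vanishes identically.

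The only mildly subtle input is the vanishing of $H^{\mathrm{odd}}(M;\Q)$ for an $F_0$ space, which is needed both to avoid sign complications in the Leibniz rule applied to $x^N$ and to ensure that the cohomology generators occur in even degrees. This is a consequence of the pure minimal model structure recalled in Section \ref{sec:F0spaces}; once it is in hand, the remainder is short bookkeeping on degrees.
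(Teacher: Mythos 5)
Your proof is correct and takes essentially the same approach as the paper: both use the finite-dimensionality of $H^*(M;\Q)$ to pick the maximal nonvanishing power $x^{N-1}$, apply the Leibniz rule to the relation $x^N=0$ to force $\delta(x)=0$, and then deduce the second assertion by observing that $\delta$ must kill every algebra generator for degree reasons. Your explicit attention to $H^{\mathrm{odd}}(M;\Q)=0$ and the three-way split on $d$ versus $i$ is slightly more careful than the paper's terse version, but it is the same argument.
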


In other words, any image of $\delta$ landing in $H^0(M;\Q)$ is zero.

\begin{proof}
Suppose $x^a$ is the maximal nonzero power of $x$. Since $\delta(x)$ is a non-zero element of $H^0(M;\Z)$, we have $\delta(x)x^a \neq 0$. However $\delta$ is a derivation, so it follows that $\delta(x^{a+1}) \neq 0$. This is a contradiction.

To prove the last statement, note that $\delta = 0$ if $\delta$ is zero on all generators. By the assumption on the degrees of the generators, these images either land in $H^0(M;\Q)$ or a zero group. Either way, these images are zero, so the proof is complete.
\end{proof}

The proof of the Halperin conjecture for Euler characterics up to $16$ now follows easily, so we prove it first.

\begin{theorem}\label{thm:HalperinEulerUpToEuler16}
The Halperin conjecture holds for $F_0$-spaces $M$ with $\chi(M)\leq 16$.
\end{theorem}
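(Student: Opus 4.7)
The plan is to apply Meier's sufficient criterion, stated just before this theorem: to verify the Halperin conjecture for $M$, it suffices to show that $H^*(M;\Q)$ admits no nonzero derivation of negative degree. Fix a pure minimal model of $M$ with $k$ even generators $x_i$ of degree $2a_i$ and $k$ odd generators $y_i$ of degree $2b_i-1$, ordered so that the $a_i$ and $b_i$ are nondecreasing. Since $b_i \geq 2 a_i$, the formula $\chi(M) = \prod b_i/a_i$ yields $\chi(M) \geq 2^k$, so the hypothesis $\chi(M) \leq 16$ forces $k \leq 4$. The case $k \leq 3$ is Lupton's theorem, so it suffices to treat $k = 4$; then the inequality is an equality and $b_i = 2 a_i$ for every $i$. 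Consequently the rational cohomology is a graded complete intersection
\[
H^*(M;\Q) \cong \Q[x_1,x_2,x_3,x_4]/(f_1,f_2,f_3,f_4)
\]
with $\deg f_j = 2\deg x_j = 4 a_j$, and in particular $H^*(M;\Q)$ is concentrated in even degrees.

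Now let $\delta$ be a derivation of $H^*(M;\Q)$ of negative degree. By parity, $\deg \delta = -2e$ is even. Setting $g_i = \delta(x_i) \in H^{2(a_i - e)}(M;\Q)$, Lemma \ref{lem:lemma0} forces $g_i = 0$ whenever $a_i \leq e$. Lifting $\delta$ to a derivation $\tilde\delta$ of $\Q[x_1,\ldots,x_4]$, descent to the quotient requires $\tilde\delta(f_j) = \sum_i g_i\, \partial f_j/\partial x_i \in (f_1,\ldots,f_4)$ for every $j$. The left side has degree $4 a_j - 2e$, strictly less than $\deg f_l = 4 a_l$ whenever $a_l > a_j - e/2$. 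For each $j$ with $a_j$ minimal, the ideal $(f_1,\ldots,f_4)$ is trivial in the relevant degree, so the compatibility condition collapses to the identity $\sum_i g_i\,\partial f_j/\partial x_i = 0$ in $\Q[x_1,\ldots,x_4]$, a syzygy among the columns of the Jacobian of $(f_1,\ldots,f_4)$.

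The crux of the proof is to combine these degree-forced syzygies with the minimality of the pure model (which forbids linear monomials in each $f_j$) and with the regularity of the sequence $(f_1,\ldots,f_4)$. When all four $a_i$ coincide, Lemma \ref{lem:lemma0} applies directly. Otherwise, I would argue that, after a suitable linear change among the relations, each $f_j$ can be arranged to involve only variables $x_l$ with $a_l \leq a_j$, so that the ideal splits and $H^*(M;\Q)$ decomposes as a tensor product of two graded complete intersections with at most three generators apiece; the Halperin conjecture then follows from Lupton's theorem together with Markl's fibration theorem. The main obstacle is carrying out this splitting uniformly over all quadruples $(a_1,\ldots,a_4)$ in the pairwise distinct regime, since neither Lemma \ref{lem:lemma0} nor an obvious factorization is available a priori. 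Here one proceeds by induction on the largest index, using the degree restriction $\deg f_j = 4 a_j$ (which forbids $x_l$ in $f_j$ when $a_l \geq 2a_j$ and forbids isolated linear terms by minimality) together with the vanishing syzygies above to eliminate the highest-index variable from the lowest-degree relation, and iterating to produce the tensor decomposition one relation at a time.
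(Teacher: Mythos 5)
Your reduction to $k=4$ with $b_i = 2a_i$ is exactly right, and so is the high-level strategy (split the pure model, then quote Lupton and Markl). But the step you flag as ``the main obstacle'' --- arranging that the low-degree relations involve only the low-degree generators so that the model splits --- is not an obstacle at all, and the syzygy/Jacobian machinery you introduce plays no role. Let $l$ be the number of indices $i$ with $a_i = \min_j a_j$. For $i \leq l$, the element $\dif y_i$ is homogeneous of degree $4 a_1$; by minimality of the Sullivan model it has no linear term, so every monomial occurring in it is a product of at least two of the $x_m$. Each $x_m$ has degree $2 a_m \geq 2 a_1$, so a product of two or more of them has degree $\geq 4 a_1$, with equality forcing every factor to have degree exactly $2 a_1$, i.e.\ $m \leq l$. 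Hence $\dif y_i \in \Lambda\langle x_1,\dots,x_l\rangle$ for all $i \leq l$ --- no change of basis, no iteration, and no separate treatment of the pairwise-distinct case is required --- and the model is a rational fibration over $\Lambda\langle x_1,\dots,x_l,y_1,\dots,y_l\rangle$ with base and fiber each having fewer than four generators. Lupton's theorem for $\leq 3$ generators together with Markl's fibration theorem then give the result. This is precisely the paper's argument; your proposal reaches the same fork in the road but then substitutes a speculative inductive elimination scheme (``eliminate the highest-index variable from the lowest-degree relation, and iterate'') which you never carry out, and which is in any case unnecessary once the degree count above is made.
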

\begin{proof}
Recall from the previous section that the Euler characteristic of $M$ is given by the formula $\chi(M) = \prod \deg(\dif y_i)/\deg(x_i)$. Since each factor in the product is at least $2$, we have $\chi(M) \geq 2^k$ where $k$ is the number of cohomology generators, as above. By assumption, either $k \leq 3$ or $k = 4$ and $\deg(\dif y_i) = 2 \deg(x_i)$ for all $i$. In the first case, the conjecture holds by Lupton's result above. Suppose therefore that $k = 4$ and $\deg(\dif y_i) = 2 \deg(x_i)$ for all $i$.

Let $\delta$ be a degree $d$ derivation on $H^*(M;\Q)$ for some $d < 0$. If all of the $x_i$ have the same degree, then Lemma \ref{lem:lemma0} implies that $\delta = 0$. If this is not the case, then there exists some integer $l$ such that
	\[\deg(x_1) = \ldots = \deg(x_l) < \deg(x_{l+1}) \leq \ldots \leq \deg(x_4).\]
Since $\deg(\dif y_i) = 2\deg(x_i)$ for all $i$, it follows for degree reasons that $\dif y_i \in \Lambda(x_1,\ldots,x_l)$ for all $i \leq l$. In particular, the model splits as a rational fibration over the subalgebra $\Lambda(x_1,\ldots,x_l,y_1,\ldots,y_l)$. Since both the base and fiber have fewer than four generators, we conclude from Markl's result that the Halperin conjecture holds for $M$ as well.
\end{proof}

We proceed to the proof of the Halperin conjecture for $F_0$ spaces of dimension up to $16$. We require two more observations that will facilitate the arguments.

\begin{lemma}\label{lemH1}
Suppose $M$ is an $F_0$ space and that its pure model $(\Lambda V,\dif)$ is chosen as above. If $\delta$ is a derivation of negative degree on $H^*(M;\Q)$ such that $\delta(x_i)=0$ for $k-1$ of the $k$ generators $x_i$, then $\delta = 0$.
\end{lemma}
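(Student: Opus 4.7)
Write $R = \Q[x_1,\ldots,x_k]$ and $I = (\dif y_1,\ldots,\dif y_k)$, so that $H^*(M;\Q) \cong R/I$. After relabeling, I will assume $\delta(x_i) = 0$ for all $i < k$ and set $f := \delta(x_k) \in R/I$. The goal is to show $f = 0$, and my plan is to argue by contradiction: if $f \neq 0$, then the constraint imposed by $\delta$ being a derivation of $R/I$ forces every $\dif y_i$ to be independent of $x_k$, contradicting the finite-dimensionality of $H^*(M;\Q)$.

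The first step is to unpack what the derivation property gives. For $\delta$ to descend to $R/I$, I need $\delta(\dif y_i) = f \cdot \partial_{x_k} \dif y_i \equiv 0$ in $R/I$ for every $i$, which for any lift $\tilde f \in R$ of $f$ reads $\tilde f \cdot \partial_{x_k} \dif y_i \in I$. By Lemma \ref{lem:lemma0} I may assume $\deg f > 0$, and since $\deg \delta < 0$, also $\deg f < \deg x_k$. Ordering the $\dif y_i$ with non-decreasing degree and choosing $\tilde f$ with minimal $x_k$-degree, I will write $\tilde f \cdot \partial_{x_k} \dif y_i = \sum_l h_{i,l} \dif y_l$ and note that $\deg h_{i,l} = \deg f + \deg \dif y_i - \deg x_k - \deg \dif y_l$ is negative whenever $\deg \dif y_l \geq \deg \dif y_i$, so $h_{i,l} = 0$ for $l \geq i$. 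In other words, $\tilde f \cdot \partial_{x_k} \dif y_i \in (\dif y_1,\ldots,\dif y_{i-1})$ in $R$ for every $i$.

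The heart of the plan is an induction on $i$ showing that $\dif y_i \in R' := \Q[x_1,\ldots,x_{k-1}]$. The base case $i=1$ is immediate: the equation $\tilde f \cdot \partial_{x_k} \dif y_1 = 0$ in the domain $R$, combined with $\tilde f \neq 0$, forces $\partial_{x_k} \dif y_1 = 0$. For the inductive step, assuming $\dif y_l \in R'$ for $l < i$, the ideal $(\dif y_1,\ldots,\dif y_{i-1})$ is of the form $J_0[x_k]$ with $J_0 \subseteq R'$, and the first $i-1$ of the $\dif y_l$ form a regular sequence in $R'$ (obtained by restriction of the regular sequence $\dif y_1,\ldots,\dif y_k$ in $R$). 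The constraint then reads $\bar{\tilde f} \cdot \bar{\partial_{x_k}\dif y_i} = 0$ in $(R'/J_0)[x_k]$, and I plan to combine McCoy's theorem (applied to the non-zerodivisor $\dif y_i \in (R'/J_0)[x_k]$) with the minimality of the $x_k$-degree of $\tilde f$ to force $\dif y_i \in R'$.

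Once every $\dif y_i$ lies in $R'$, the quotient $R/I$ is isomorphic to $(R'/I_0)[x_k]$ where $I_0 = (\dif y_1,\ldots,\dif y_k) \subseteq R'$, which is infinite-dimensional over $\Q$; this contradicts the finite-dimensionality of $H^*(M;\Q)$ guaranteed by the $F_0$ hypothesis. The hard part is the commutative-algebra content of the inductive step: passing from the single relation $\tilde f \cdot \partial_{x_k}\dif y_i \in J_0[x_k]$ to the vanishing of $\partial_{x_k}\dif y_i$ itself requires care with zero divisors in the Cohen--Macaulay quotient $R'/J_0$, and pre-reducing $\dif y_i$ modulo $J_0[x_k]$ (which leaves $I$ unchanged) to guarantee that its leading $x_k$-coefficient lies outside $J_0$ before applying McCoy.
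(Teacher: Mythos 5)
The paper proves this with a short Poincar\'e duality argument: after reordering so that $\delta(x_i)=0$ for $i\geq 2$, one picks a monomial $x=x_1^lx'$ (with $x'$ a monomial in $x_2,\dots,x_k$) such that $\delta(x_1)x$ generates the top cohomology; then $\delta(x_1^{l+1}x')=(l+1)\delta(x_1)x\neq 0$, while $x_1^{l+1}x'=0$ for degree reasons since $\deg\delta<0$, which is a contradiction. Your route is genuinely different: a commutative-algebra argument aiming to force every $\dif y_i$ into $R'=\Q[x_1,\dots,x_{k-1}]$. Your set-up is sound --- the well-definedness constraint, the degree argument showing $\tilde f\cdot\partial_{x_k}\dif y_i\in(\dif y_1,\dots,\dif y_{i-1})$, and the base case $i=1$ all check out.

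The inductive step, however, has a real gap. From $\bar{\tilde f}\cdot\overline{\partial_{x_k}g_i}=0$ in $(R'/J_0)[x_k]$ with $\bar{\tilde f}\neq 0$, McCoy only produces a nonzero $a\in R'/J_0$ with $a\cdot\overline{\partial_{x_k}g_i}=0$, hence $a\cdot d\bar c_d=0$. To conclude $\overline{\partial_{x_k}g_i}=0$ you would need the leading coefficient $\bar c_d$ to be a \emph{non-zero-divisor} in $R'/J_0$; your pre-reduction only arranges $\bar c_d\neq 0$, which is strictly weaker in a non-domain like $R'/J_0$. Concretely, take $J_0=(x_1^2)\subset R'=\Q[x_1,x_2]$, $\tilde f=x_1$, and $g_i=\tfrac1d x_1x_2x_k^d+x_2^a$: then $\bar g_i$ is a non-zero-divisor on $(R'/J_0)[x_k]$, its leading coefficient $\tfrac1d\bar x_1\bar x_2$ is nonzero, and yet $\bar{\tilde f}\cdot\overline{\partial_{x_k}g_i}=\bar x_1\cdot\bar x_1\bar x_2x_k^{d-1}=0$ with $\overline{\partial_{x_k}g_i}\neq 0$. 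Minimality of the $x_k$-degree of $\tilde f$ is of no help here ($\tilde f$ already has $x_k$-degree $0$), and you do not explain how it would close the gap. So the inductive claim ``$\dif y_i\in R'$ after reduction'' is false in general, even though the constraints you extract do eventually conflict with finite-dimensionality (at $i=k$, the $x_k^m$-coefficient of $\dif y_k$ must be a unit for $R/I$ to be Artinian, forcing $\bar{\tilde f}=0$). A correct version would need to be reorganized, at which point the paper's one-paragraph duality argument is far more economical.
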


\begin{proof}
For this proof only, we reorder the $x_i$ so that $\delta(x_i)=0$ for all $i\geq 2$. We proceed by contradiction, i.e.~we assume that $\delta(x_1)\neq 0$. Then, by Poincar\'e duality, there is an element $x\in H^*(M;\Q)$ such that $\delta(x_1)x$ generates the top cohomology group. Moreover we may choose $x$ to be a monomial in the $x_i$. Write $x = x_1^l x'$ where $x'$ is a monomial in $x_2,\ldots,x_k$. It follows that $\delta(x_1^{l+1})x'$ generates top cohomology. But $x_1^{l+1}x' = 0$ for degree reasons and $\delta(x') = 0$ by assumption, so we compute that
	\[ 0= \delta(x_1^{l+1} x') - x_1^{l+1} \delta(x') = \delta(x_1^{l+1}) x' \neq 0,\]
a contradiction.
\end{proof}

\begin{lemma}\label{lemH2}
Suppose $M$ is an $F_0$ space and that its pure model $(\Lambda V,\dif)$ is chosen as above. If there exists $l<k$ such that $\deg(\dif y_l) < \deg(x_1) + \deg(x_{l+1})$, then $(\Lambda V,\dif)$ splits as a rational fibration with base algebra generated by $x_1, \ldots, x_l, y_1, \ldots, y_l$.
\end{lemma}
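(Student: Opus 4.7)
The plan is to show that the hypothesis forces $\dif y_i \in \Lambda(x_1,\ldots,x_l)$ for all $i \leq l$, so that $(\Lambda(x_1,\ldots,x_l,y_1,\ldots,y_l),\dif)$ is a sub-DGA. Once this is in hand, the inclusion of this sub-DGA into $(\Lambda V,\dif)$ is the model of the claimed rational fibration in the sense of Markl (see \cite{Markl90}), with fiber modelled by $\Lambda V \otimes_{\Lambda(x_1,\ldots,x_l,y_1,\ldots,y_l)} \qq \cong \Lambda(x_{l+1},\ldots,x_k,y_{l+1},\ldots,y_k)$ equipped with the induced pure differential.

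First I would invoke the ordering conventions fixed at the start of the section: the $y_i$ are ordered so that $\deg y_1 \leq \cdots \leq \deg y_k$, which translates into $\deg(\dif y_1) \leq \cdots \leq \deg(\dif y_k)$. Therefore, for every $i \leq l$,
\[
\deg(\dif y_i) \;\leq\; \deg(\dif y_l) \;<\; \deg(x_1) + \deg(x_{l+1}).
\]
Now $\dif y_i$ is a polynomial in $x_1,\ldots,x_k$, and by minimality of $(\Lambda V,\dif)$ it lies in the decomposable ideal $\Lambda^{\geq 2} V$; hence each monomial appearing in $\dif y_i$ is a product of at least two of the $x_j$. Combined with the fact that $\deg(x_1) \leq \deg(x_j)$ for all $j$, any monomial containing some $x_j$ with $j \geq l+1$ would have total degree at least $\deg(x_1) + \deg(x_{l+1})$, contradicting the displayed inequality. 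Consequently $\dif y_i \in \Lambda(x_1,\ldots,x_l)$ for $i \leq l$.

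This verifies that $(\Lambda(x_1,\ldots,x_l,y_1,\ldots,y_l),\dif)$ is closed under $\dif$ and is therefore a sub-DGA, inducing the desired rational fibration. I expect no serious obstacle: the entire argument is a degree-counting exercise that rests on two simple inputs, namely the monotonicity of $\deg(\dif y_i)$ in $i$ and the minimality of the pure model. The only subtlety is remembering to use minimality to guarantee that monomials in $\dif y_i$ have at least two factors, without which one could not bound the smallest available degree by $\deg(x_1) + \deg(x_{l+1})$.
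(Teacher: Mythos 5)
Your argument is correct and fills in exactly the degree-counting that the paper's terse one-line proof leaves implicit: the monotonicity $\deg(\dif y_1)\leq\cdots\leq\deg(\dif y_k)$ together with minimality forces every monomial in $\dif y_i$ (for $i\leq l$) to avoid $x_{l+1},\ldots,x_k$, so the base subalgebra is $\dif$-stable. This is the same approach as the paper, which simply records the conclusion that the regular sequence $\dif y_1,\ldots,\dif y_l$ lies in $\Lambda\langle x_1,\ldots,x_l\rangle$ and invokes the resulting splitting.
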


\begin{proof}
This lemma is proved by the observation that under the above assumptions, the regular sequence $\dif y_1, \ldots, \dif y_l$ lies in $\Lambda \langle x_1, \ldots, x_l\rangle$, so splitting follows.
\end{proof}

With these preparations, we proceed to the proof of the Halperin conjecture for $F_0$ spaces with dimension at most $16$. As a warm-up, we provide here a short proof in the case where there are at most three cohomology generators. Of course, this already follows by Lupton's theorem, but we include it here for the convenience of the reader.

\begin{lemma}\label{lem:Lupton16}
The Halperin conjecture holds for $F_0$ spaces $M$ such that $M$ has dimension at most $16$ and $H^*(M;\Q)$ has at most three generators.
\end{lemma}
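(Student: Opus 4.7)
The plan is to apply the criterion of Meier \cite{Mei82}: it suffices to show that $H^*(M;\Q)$ admits no non-trivial derivation of strictly negative degree. Let $\delta$ denote such a hypothetical derivation, of degree $d<0$.

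I would first dispose of the cases $k=1$ and $k=2$ uniformly. The key observation is that the lowest-degree generator $x_1$ satisfies $\delta(x_1)\in H^{\deg(x_1)+d}(M;\Q)$; since $\deg(x_1)$ is the minimum positive degree of a class in $H^*(M;\Q)$, this image lies in $H^0(M;\Q)$ (or is automatically zero for degree reasons). Lemma \ref{lem:lemma0} therefore forces $\delta(x_1)=0$, which settles $k=1$ directly and $k=2$ via Lemma \ref{lemH1}.

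For $k=3$, the same observation yields $\delta(x_1)=0$, and by Lemma \ref{lemH1} it remains to show that $\delta$ vanishes on one of $x_2,x_3$. If $\deg(x_2)=\deg(x_1)$, the argument above still gives $\delta(x_2)=0$. Otherwise, I would invoke Lemma \ref{lemH2}: whenever $\deg(\dif y_l)<\deg(x_1)+\deg(x_{l+1})$ for some $l\in\{1,2\}$, the pure model splits as a rational fibration whose base and fiber each have at most two even-degree generators, so Markl's theorem \cite{Markl90} combined with the already-settled cases $k\leq 2$ completes the argument. The splitting hypothesis is mild at $l=1$; for example, $\deg(\dif y_1)=2\deg(x_1)$ together with $\deg(x_1)<\deg(x_2)$ already suffices.

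The main obstacle is the remaining non-splittable configuration, where $\deg(\dif y_1)\geq\deg(x_1)+\deg(x_2)$ and $\deg(\dif y_2)\geq\deg(x_1)+\deg(x_3)$. Combining these with the universal bound $\deg(\dif y_3)\geq2\deg(x_3)$ and the identity $\sum_i\deg(\dif y_i)=\dim M+\sum_i\deg(x_i)$ (which follows from $\dim M=2\sum_i(b_i-a_i)$) forces $\dim M\geq\deg(x_1)+2\deg(x_3)$. Under the standing hypothesis $\dim M\leq 16$, together with $\deg(x_i)\geq 2$ and $\deg(x_3)$ even, this leaves only $\deg(x_3)\in\{2,4,6\}$, hence a short explicit list of degree triples $(\deg(x_1),\deg(x_2),\deg(x_3))$. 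Each of these can then be eliminated by direct inspection, either via the arithmetic condition of Friedlander--Halperin or by consulting the tables of Section \ref{sec:F0spaces} to bound the shape of the polynomials $\dif y_i$ and conclude $\delta(x_2)=0$ by hand.
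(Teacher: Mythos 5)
Your overall blueprint matches the paper's: dispose of $k\le 2$ via Lemmas~\ref{lem:lemma0} and \ref{lemH1}, then for $k=3$ combine Lemmas~\ref{lemH1} and \ref{lemH2} with Markl's theorem to reduce to a non-splittable configuration, and finally use the dimension formula to bound the possible degree tuples. The reduction steps you give are all correct. However, there is a genuine gap at the end. First, your pruning is noticeably weaker than the paper's: you only argue $\deg(x_1)<\deg(x_2)$, whereas the paper also shows $\deg(x_2)<\deg(x_3)$ (because $\delta$ maps $H^{\deg(x_2)}(M;\Q)$ into the at-most-one-dimensional group $H^{\deg(x_2)+d}(M;\Q)$, so a change of basis would force $\delta([x_2])=0$), and upgrades the splitting estimate to $\deg(\dif y_1)\ge \deg(x_1)+\deg(x_3)$. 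Together with the dimension formula these pin down the unique surviving tuple $(2,4,6,7,7,11)$; your coarser bound $\deg(x_3)\le 6$ still leaves several tuples such as $(2,4,4,\ldots)$ and $(2,6,6,\ldots)$ that you would have to analyze.

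Second, and more seriously, your proposed method of ``eliminating'' the residual tuples does not actually close the argument. The Friedlander--Halperin arithmetic condition and the tables of Section~\ref{sec:F0spaces} decide whether a tuple is \emph{realizable} as an $F_0$ space; they say nothing about the existence of negative-degree derivations on a realized cohomology algebra. The surviving tuple $(2,4,6,7,7,11)$ is realizable, so no such criterion dismisses it, and the burden of the lemma is precisely here. The paper's resolution of this case is a concrete manipulation: after normalizing $\dif y_1 = x_1x_3 + p(x_1,x_2)$ and $\dif y_2 = x_1x_3 + q(x_1,x_2)$, one replaces $y_1$ by $y_1-y_2$ to show $\dif y_1\in\Lambda(x_1)$ and hence that the model splits as a rational fibration over $\Lambda\langle x_1,y_1\rangle$, after which Markl applies. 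Your proposal gestures at ``bounding the shape of $\dif y_i$ and concluding $\delta(x_2)=0$ by hand'' but does not supply this step, which is the only non-routine part of the proof.
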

\begin{proof}
If there are at most two generators of the cohomology algebra, the proof follows immediately from Lemmas \ref{lem:lemma0} and \ref{lemH1}. Suppose there are exactly three cohomology generators.

We denote by $(\Lambda\langle x_1,x_2,x_3,y_1,y_2,y_3\rangle,\dif)$ a corresponding minimal model with $\dif x_1=\dif x_2=\dif x_3=0$. Denote by $\delta$ a non-trivial derivation of negative degree on its cohomology algebra. Let the $x_i$ and the $y_i$ be ordered by degree. We want to show that it has to be trivial and assume the contrary. Denote the cohomology classes represented by $x_i$ by $[x_i]$. We make the following observations:
\begin{itemize}
\item $\delta([x_1]) = 0$ by Lemma \ref{lem:lemma0}.
\item $\delta([x_2]) \neq 0$ and $\delta([x_3]) \neq 0$ without loss of generality by Lemma \ref{lemH1}.
\item $\deg(x_1) < \deg(x_2)$ by degree reasons and Lemma \ref{lem:lemma0}.
\item $\deg(x_2) < \deg(x_3)$. Indeed, since $\delta$ maps $H^{\deg(x_2)}(M;\Q)$ linearly into a zero- or one-dimensional cohomology group, the property $\deg(x_2) = \deg(x_3)$ would imply that, up to a change of basis, $\delta([x_2]) = 0$, a contradiction.
\item $\deg(\dif y_1) \geq \deg(x_1) + \deg(x_2)$ without loss of generality by Lemma \ref{lemH2}. In fact, $\deg(\dif y_1) \geq \deg(x_1) + \deg(x_3)$ since otherwise $\dif y_1 \in \Lambda(x_1,x_2)$. If $\dif y_1 \in \Lambda(x_1)$, the model splits as a rational fibration. If, on the other hand, $\dif y_1 \not\in\Lambda(x_1)$, then applying $\delta$ to $[\dif y_1] = 0$ in cohomology yields a relation among $[x_1]$ and $[x_2]$, and this is a contradiction since $\dif y_1$ induces the relation of smallest degree.
\item $\deg(\dif y_2) \geq \deg(x_1) + \deg(x_3)$ without loss of generality by Lemma \ref{lemH2}.
\end{itemize}

Recall also that $\deg(\dif y_i) \geq 2 \deg(x_i)$ for all $1 \leq i \leq 3$. With these estimates in hand, the dimension formula,
	\[16 \geq n = \sum_{i=1}^{3} \of{\deg(\dif y_i) - \deg(x_i)},\]
implies that the tuple of degrees is $(2,4,6,7,7,11)$.

By the proof of the estimate $\deg(\dif y_1) \geq \deg(x_1) + \deg(x_3)$, we may assume that $\dif y_1$ contains a non-zero term involving $x_3$. By degree reasons, up to a scaling of $y_1$, we may assume $\dif y_1 = x_1 x_3 + p(x_1,x_2)$. By a similar argument, we may assume that $\dif y_2$ also has a non-zero $x_1 x_3$ term, and hence that $\dif y_2 = x_1x_3 + q(x_1,x_2)$ after scaling $y_2$. Finally, by replacing $y_1$ by $y_1 - y_2$, we have that $\dif y_1 \in \Lambda(x_1,x_2)$. Again, we obtain a contradiction if $\dif y_1$ has a non-zero term involving $x_2$, so we actually have $\dif y_1 \in \Lambda(x_1)$ and hence that the model of $M$ splits as a rational fibration.
\end{proof}

\begin{theorem}\label{thm:HalperinUpToDim16}
The Halperin conjecture holds for $F_0$ spaces of dimension up to $16$.
\end{theorem}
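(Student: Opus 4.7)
The plan is to invoke Meier's criterion and show that $H^*(M;\Q)$ admits no non-trivial derivation $\delta$ of negative degree. I would fix a pure Sullivan model $(\Lambda V, \dif)$ of $M$ chosen as in Section \ref{sec:F0spaces}, with generators $x_1, \ldots, x_k$ and $y_1, \ldots, y_k$ ordered by degree. If $k \leq 3$, Lemma \ref{lem:Lupton16} finishes the argument, so I would henceforth assume $k \geq 4$.

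Next, I would enumerate the possible degree tuples for $F_0$ spaces of dimension at most $16$ using Theorem \ref{thm:PossibleTuples}. In each case, I would attempt one of two reductions. If all even generators $x_i$ share the same degree, Lemma \ref{lem:lemma0} immediately forces $\delta = 0$, since for degree reasons $\delta(x_i)$ lands in $H^0(M;\Q)$ for every $i$. If instead there exists an index $l < k$ with $\deg(\dif y_l) < \deg(x_1) + \deg(x_{l+1})$, then Lemma \ref{lemH2} provides a splitting of the model as a rational fibration whose base and fiber each have strictly fewer cohomology generators. Induction on $k$ together with Markl's theorem then delivers the Halperin property for $M$.

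A direct inspection of the tables reveals that these two strategies cover most but not all of the tuples. The remaining \emph{intermediate} cases are those in which the generators have mixed degrees and every $\deg(\dif y_l)$ meets the boundary $\deg(x_1) + \deg(x_{l+1})$; examples include tuples such as $(2,2,2,4,3,3,5,7)$ in dimension $12$ and $(2,2,4,4,3,5,7,7)$ in dimension $14$. For these I would proceed by direct analysis. Since $H^*(M;\Q)$ is concentrated in even degrees, $\delta$ has even negative degree $-2d$, and by Lemma \ref{lem:lemma0} the value $\delta(x_i)$ vanishes whenever $\deg(x_i) \leq 2d$, leaving only a few components of $\delta$ on the highest-degree generators to determine. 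Applying $\delta$ to the defining relations $[\dif y_i] = 0$ in cohomology produces linear constraints on these remaining parameters, and combined with the regularity of the sequence $\dif y_1, \ldots, \dif y_k$ (implicit in the Friedlander--Halperin arithmetic condition), these constraints should force $\delta$ to vanish on all but one of the generators, whereupon Lemma \ref{lemH1} yields $\delta = 0$.

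The hard part will be this last step: confirming, tuple by tuple, that the relations rigidify $\delta$. After a change of basis among generators of equal degree, the low-degree differentials $\dif y_i$ can be put into canonical form, and the equations $[\delta(\dif y_i)] = 0$ then become explicit polynomial identities whose only solutions force the remaining free components of $\delta$ to be zero. Because the list of intermediate tuples drawn from the tables is finite and each has only a small number of free parameters to analyze, this verification terminates and completes the proof.
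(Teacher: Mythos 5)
Your outline follows the paper's overall strategy: apply Meier's criterion, dispose of $k\le 3$ via Lemma \ref{lem:Lupton16}, handle the equal-degree and splitting cases via Lemmas \ref{lem:lemma0}--\ref{lemH2} and Markl's theorem, and then finish off the residual tuples by hand. However, several specifics need repair. First, you omit one of the paper's reductions: if the \emph{last} $k-1$ generators $x_2,\dots,x_k$ share a degree, then $\delta$ maps $H^{\deg x_2}$ linearly into a space of dimension at most one, and a change of basis kills $\delta$ on $k-1$ generators; this is distinct from the all-degrees-equal case and the first-$(k-1)$ case, and without it your residual list is larger than it needs to be. Second, your sample residual tuple $(2,2,2,4,3,3,5,7)$ in dimension $12$ is not actually exceptional: its first three even generators all have degree $2$, so it is disposed of immediately by Lemma \ref{lemH1}. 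Third, and most importantly, your proposed treatment of the genuinely exceptional tuples is too coarse. Applying $\delta$ alone to $[\dif y_i]=0$ does \emph{not} produce enough constraints; the paper must pass to iterated derivations $\delta^2$, $\delta^3$, even $\delta^4$, and in some cases (e.g., $(2,2,4,6,3,5,7,11)$) the conclusion is not that $\delta=0$ at all, but rather that a relation like $[x_1]^2=0$ or $[x_1]^3=0$ forces the model to split as a rational fibration, and then one invokes Markl, not Lemma \ref{lemH1}. You also invoke ``the regularity of the sequence $\dif y_1,\dots,\dif y_k$'' and the Friedlander--Halperin arithmetic condition as a black box; the actual arguments instead exploit explicit normal forms of $\dif y_i$ modulo the ideal $(x_1,x_2)$ and contradiction with the number of available relations in each degree. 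As written, the residual-case analysis is not a proof but an expectation; the verification ``tuple by tuple'' that you defer is exactly where the substance of the argument lies, and your sketch of what that verification would look like does not match what actually makes it work.
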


\begin{proof}
The proof proceeds by stepping through Tables \ref{tab6} to \ref{tab16}. Let $k$ denote the number of cohomology generators.

First, any model with $k \leq 3$ is covered by Lemma \ref{lem:Lupton16}. Second, any case where the first $k-1$ cohomology generators have the same degree is covered by Lemma \ref{lem:lemma0} and \ref{lemH1}. Third, any case where the last $k-1$ cohomology generators have the same degree also follows from these two lemmas. Indeed, if $\deg(x_1) < \deg(x_2) = \ldots = \deg(x_k)$, then $\delta$ maps $H^{\deg(x_2)}(M;\Q)$ linearly into a zero- or one-dimensional space. By a change of basis, we can arrange that $\delta([x_1]) = \ldots = \delta([x_{k-1}]) = 0$ and hence that $\delta = 0$. Finally, if some $\deg(\dif y_i) < \deg(x_1) + \deg(x_{i+1})$ as in Lemma \ref{lemH2}, then the model splits as a rational fibration where the base and fiber are $F_0$ spaces of smaller dimension. By induction over the dimension, this implies by Markl's theorem that $M$ satisfies the Halperin conjecture in this case.

Taking into account these observations, one can either scan the tables or apply the dimension formula to show that we only need to provide special arguments for the following five cases:
\begin{itemize}
\item $(2,2,4,4,3,5,7,7)$ and $\dim(M) = 14$.
\item $(2,2,4,4,3,7,7,7)$ and $\dim(M) = 16$.
\item $(2,2,4,4,5,5,7,7)$ and $\dim(M) = 16$.
\item $(2,2,4,6,3,5,7,11)$ and $\dim(M) = 16$.
\item $(2,2,2,4,4,3,3,5,7,7)$ and $\dim(M) = 16$.
\end{itemize}

We consider these cases one at a time, proving in each case either that there is no non-trivial derivation of negative degree or that the model for $M$ splits as a rational fibration and apply Markl's theorem.

\vspace{5mm}

$(2,2,4,4,3,5,7,7)$:
For degree reasons, we may write
	\begin{eqnarray*}
	\dif y_3 	&=& p(x_3,x_4) + q\\
	\dif y_4	&=& p'(x_3,x_4) + q'
	\end{eqnarray*}
where $p,p' \in \Lambda^2\langle x_3,x_4\rangle$ and where $q$ and $q'$ lie in the ideal $(x_1,x_2) \subseteq \Lambda\langle x_1,x_2,x_3,x_4\rangle$ generated by $x_1$ and $x_2$. Suppose for a moment that $p$ is a rational multiple of $p'$. Up to a change of basis, we may assume that $\dif y_3 = q \in (x_1,x_2)$. We derive a contradiction as follows. By the finite-dimensionality of $H^*(M;\Q)$, some power $[x_3]^m$ is zero. At the level of the model, we have that some $\alpha_i \in \Lambda\langle x_1,x_2,x_3,x_4\rangle$ exist such that
	\[x_3^m = \dif\of{\sum_{i=1}^4 \alpha_i y_i} = \sum_{i=1}^4 \alpha_i \dif y_i = \sum_{i=1}^3 \alpha_i \dif y_i + \alpha_4(p' + q').\] Since $\dif y_i \in (x_1,x_2)$ for $i \leq 3$ and since $q' \in (x_1,x_2)$, it follows that $p'$ is a rational multiple of $x_3^2$. On the other hand, we can apply the same argument using a sufficiently large power of $x_4$ to conclude that $p'$ is a rational multiple of $x_4^2$. This is a contradiction, hence we may assume that $p$ and $p'$ are not multiples of each other.

Let $\delta$ be a non-trivial derivation on $H^*(M;\Q)$ of negative degree. By Lemma \ref{lem:lemma0}, we may assume that $\delta([x_1]) = \delta([x_2]) = 0$ and that $\delta$ has degree $-2$. Moreover, in view of Lemma \ref{lemH2}, we may assume that $\delta([x_3])$ and $\delta([x_4])$ are linearly independent, since otherwise a change of basis would yield $\delta([x_3]) = 0$ and hence that $\delta = 0$. By a further change of basis, we may assume that  $\delta([x_3])=[x_1]$ and $\delta([x_4]) = [x_2]$.

Note, in particular, that $\delta$ maps $[q], [q'] \in H^*(M;\Q)$ into the subalgebra generated by $[x_1]$ and $[x_2]$. Hence $\delta^2$ maps $[q]$ and $[q']$ to zero in $H^4(M;\Q)$. Moreover, if we write
	\[p = a x_3^2 + b x_3 x_4 + c x_4^2,\]
for $a,b,c\in \qq$, then we have that
	\[\delta^2([p]) = 2a [x_1]^2 + 2b [x_1][x_2] + 2c [x_2]^2,\]
and similarly for $[p']$. Since the triple $(a,b,c)$ and the corresponding triple for $p'$ are not multiples of each other, we have constructed two independent relations in degree four. This contradicts the fact that there is only one homotopy generator in degree three, so the proof is complete.

\vspace{5mm}

$(2,2,4,4,3,7,7,7)$:
As in the previous case, we may assume that $\delta([x_3])=[x_1]$, $\delta([x_4])=[x_2]$, and $\delta([x_i]) = 0$ for $i \in \{1,2\}$. Moreover, it follows as in the last case that there is some pair $y_i$ and $y_j$ such that
$\dif y_i$ and $\dif y_j$ are linearly independent modulo the ideal $(x_1,x_2)$. As in the previous case, this implies the existence of two linearly independent relations in degree four and hence provides a contradiction to degree considerations.

\vspace{5mm}

$(2,2,4,4,5,5,7,7)$:
As in the previous case, we may assume that $\delta([x_3])=[x_1]$, $\delta([x_4])=[x_2]$, and $\delta([x_i]) = 0$ for $i \in \{1,2\}$. The proof can be handled similarly to the previous two cases, but there is a shortcut here. Without restriction, we may assume that that $\dif y_3$ has $x_3^2$ and possibly $x_4^2$ as non-trivial summands, up to multiples. We compute that $\delta^2(\dif y_3)$ has summands $x_1^2$ and possibly $x_2^2$, up to non-trivial multiples. Since there is no relation in degree $3$, we have a contradiction.

\vspace{5mm}

$(2,2,4,6,3,5,7,11)$:
As in the previous cases, we may assume $\delta([x_1]) = \delta([x_2]) = 0$ and that $\delta$ has degree two. In view of Lemma \ref{lemH1}, we may assume further that $\delta([x_3]) \in \langle [x_1], [x_2]\rangle$ and $\delta([x_4]) \in \langle [x_3],[x_1]^2, [x_1][x_2],[x_2]^2\rangle$ are non-zero. Up to an isomorphism of the minimal model, we may assume that either $\delta([x_4])=[x_3]$ or $\delta([x_4])\in \langle [x_1]^2,[x_1][x_2],[x_2]^2\rangle$. Further, we may assume $\delta([x_3]) = [x_1]$.

Using the finite-dimensionality of $H^*(M;\Q)$, it follows that $x_4^2$ appears as a non-trivial summand of $\dif y_4$. Up to scaling $x_4$, we may express
	\[\dif y_4 = x_4^2 + k_1 x_3^3 + l(x_1,x_2) x_3 x_4 + p,\]
where $k_1 \in \Q$, $l(x_1,x_2)$ is a linear function of $x_1$ and $x_2$, and $p \in \ker(\delta^3)$. We break the proof into two cases.

First, we suppose that $\delta([x_4])=[x_3]$. For this we apply $\delta^4$ to the relation in cohomology induced by $ y_4$. Since $\delta^4([x_3]^3) = 0$, we conclude $0 = \delta^4([x_4]^2) = 6 [x_1]^2$. This implies that $\dif y_1$ is a multiple of $x_1^2$ and hence that the model splits as a rational fibration over $\Lambda(x_1,y_1)$. By Markl's result, the Halperin conjecture follows.

Second, we suppose that $\delta([x_4]) \in \langle [x_1]^2,[x_1][x_2],[x_2]^2\rangle$. For this case, we apply $\delta^3$ to the relation induced by $\dif y_4$. This time, $\delta^3([x_4]^2) = 0$, so we conclude that $0 = \delta^3(k_1[x_3]^3) = 6k_1[x_1]^3$. We consider now two subcases, according to whether $k_1=0$ or $[x_1]^3=0$.

First suppose $k_1 = 0$. Using the finite-dimensionality of $H^*(M;\Q)$, the fact that $x_3^3$ does not appear in $\dif y_4$ implies that $x_3^2$ must appear as a summand in $\dif y_3$. Up to scaling $y_3$, we may write
	\[\dif y_3 = x_3^2 + l'(x_1,x_2) x_4 + p',\]
where $l'(x_1,x_2)$ is a linear function of $x_1$ and $x_2$ and $p' \in \ker(\delta^2)$. In fact, $\delta^2$ also sends $[x_4]$ to zero since we are assuming $\delta([x_4])$ is a polynomial in $[x_1]$ and $[x_2]$. Hence applying $\delta^2$ to the induced relation in cohomology yields $0 = 2 [x_1]^2$. As before, this implies a rational fibration splitting, and hence that the Halperin conjecture holds.

Finally, suppose that $[x_1]^3 = 0$. This implies that
	\[x_1^3 = m(x_1,x_2) \dif y_1 + c \dif y_2\]
for some linear function $m(x_1,x_2)$ of $x_1$ and $x_2$ and some $c \in \Q$. Now, for degree considerations, we may express
	\[\dif y_2 = l''(x_1,x_2) x_3 + p'',\]
where $l''(x_1,x_2)$ is a linear function of $x_1$ and $x_2$ and $p'' \in \Lambda\langle x_1,x_2\rangle \subseteq \ker(\delta)$. Moreover, unless the model of $M$ splits as a rational fibration over the subalgebra $(\Lambda\langle x_1,x_2,y_1,y_2\rangle,\dif)$, we may assume that $\l''(x_1,x_2) \neq 0$. Applying $\delta$ to the induced relation in cohomology yields $0 = l''([x_1], [x_2]) [x_1]$. Since the only relation in this degree is that induced by $y_1$, we conclude from this that $\dif y_1 \in \Lambda\langle x_1,x_2\rangle$ is divisible by $x_1$. But now the relation above for $x_1^3$ implies that $\dif y_2$ is also divisible by $x_1$. Looking again at the expression for $\dif y_2$, we conclude that $l''(x_1,x_2)$ is a (non-trivial) multiple of $x_1$. Since $l''([x_1],[x_2])[x_1] = 0$, we conclude that $[x_1]^2 = 0$ and hence that, again, the model for $M$ splits as a rational fibration over $\Lambda \langle x_1,y_1\rangle$. This proves the Halperin conjecture for this tuple of homotopy generator degrees.

\vspace{5mm}

$(2,2,2,4,4,3,3,5,7,7)$: Here we may assume $\delta([x_i]) = 0$ for $i \in \{1,2,3\}$ and that $\delta([x_4]) = [x_1]$ and $\delta([x_5]) = [x_2]$. The proof here is similar to the first few exceptional cases, where the ideal $(x_1,x_2)$ is replaced by the ideal $(x_1,x_2,x_3)$.

\vspace{5mm}

This concludes the proof of Halperin's conjecture in the five special cases above. Altogether, this completes the proof of the  conjecture in dimensions up to $16$.
\end{proof}

\smallskip\section{Positive curvature and rational ellipticity}\label{sec:Elliptic}\smallskip

We now combine the information of the previous sections in order to classify rationally elliptic Riemannian manifolds of dimension at most $16$ with positive sectional curvature and torus symmetry. The additional assumption of rational ellipticity does not add to our understanding in dimensions two, four, and six since the existing theorems do not see improvement upon adding the assumption of rational ellipticity. Indeed, each of these results show that the manifold is rational elliptic. Starting in dimension eight, however, Dessai's result is one where the conclusion (an Euler characteristic, signature, and elliptic genus calculation) is improved to a rational homotopy classification by adding the assumption of rational ellipticity (see \cite[Theorem 1.2]{Dessai11}). We have corresponding rational homotopy classifications in dimensions $10$, $14$, and $16$, and a partial result along these lines in dimension $12$.

\begin{theorem}\label{thm:dim10elliptic}
If $M^{10}$ is a closed, simply connected, rationally elliptic Riemannian manifold with positive curvature and $T^3$ symmetry, then $M$ is rationally homotopy equivalent to $\s^{10}$, $\C\pp^5$, or $\s^2 \times \HH\pp^2$.
\end{theorem}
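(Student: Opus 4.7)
The plan is to refine each of the three cases in Theorem \ref{thm:dim10} using the list of $F_0$ pure models of formal dimension $10$ in Table \ref{tab10}. Since $M$ is simply connected and rationally elliptic, and since Theorem \ref{thm:dim10} gives $\chi(M) \in \{2, 6\}$, $M$ is an $F_0$ space and its rational homotopy type is determined by its pure minimal model.

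\textbf{Step 1.} In the first case of Theorem \ref{thm:dim10}, $M$ is homeomorphic to $\s^{10}$, which already yields the desired rational homotopy type.

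\textbf{Step 2.} In the second and third cases, $\chi(M) = 6$, so Table \ref{tab10} shows that the tuple of rational homotopy group degrees of $M$ is one of $(2,11)$, $(2,4,3,11)$, or $(2,6,5,11)$. A direct inspection shows that in each of these three cases the pure minimal model is unique up to isomorphism: the differential on the top odd generator can be normalized by substitutions of the form $y_2 \mapsto y_2 - y_1 \cdot p(x_1,x_2)$ to absorb the non-standard terms, leaving the standard product model. These three tuples therefore correspond precisely to the rational homotopy types of $\C\pp^5$, $\s^2 \times \HH\pp^2$, and $\C\pp^2 \times \s^6$, respectively.

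\textbf{Step 3.} In the second case of Theorem \ref{thm:dim10}, $b_2(M) = 1$ and some $x \in H^2(M;\Z)$ satisfies $x^3 \neq 0$. The degree-two algebra generators of both $\s^2 \times \HH\pp^2$ and $\C\pp^2 \times \s^6$ already square to zero, so these two candidates are excluded, leaving $M \simeq_\Q \C\pp^5$.

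\textbf{Step 4.} In the third case, $H_*(M;\Z) \cong H_*(\C\pp^5;\Z)$ forces $b_6(M) = 1$, which rules out $\C\pp^2 \times \s^6$ (where $b_6 = 2$). The two remaining candidates are distinguished by the relation $z^2 = mx$: if $m \neq 0$, then $x$ is a rational multiple of $z^2$, so $H^*(M;\Q)$ is singly generated and $M \simeq_\Q \C\pp^5$; if $m = 0$, the relations $z^2 = 0$ and $x^3 = 0$ (the latter forced since $\deg x^3 = 12 > \dim M$) present $H^*(M;\Q) \cong \Q[z,x]/(z^2, x^3) = H^*(\s^2 \times \HH\pp^2;\Q)$, and intrinsic formality of $F_0$ spaces yields $M \simeq_\Q \s^2 \times \HH\pp^2$.

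The principal obstacle is the rigidity claim in Step 2, namely checking that each Euler-characteristic-$6$ tuple in Table \ref{tab10} corresponds to exactly one rational homotopy type. Once that rigidity is in hand, Steps 3 and 4 reduce to matching cohomological invariants.
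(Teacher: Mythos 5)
Your overall strategy closely tracks the paper's: reduce via Theorem~\ref{thm:dim10} to the case $\chi(M)=6$ and then distinguish the possible rational types using the cohomological structure from that theorem. The main difference is that you route through Table~\ref{tab10} and the rigidity of the three $\chi=6$ pure models, whereas the paper argues directly from Poincar\'e duality and intrinsic formality of $\C\pp^5$ and $\s^2\times\HH\pp^2$. Both routes work, and your Step~2 rigidity check (that the differential can be normalized by a substitution $y_2\mapsto y_2-y_1\cdot p(x_1,x_2)$ in each of the three tuples) is correct.

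However, there are two factual errors concerning the third candidate $\C\pp^2\times\s^6$, and both of your stated reasons for excluding it are wrong even though the exclusion itself is valid. In Step~3 you assert that the degree-two algebra generator of $\C\pp^2\times\s^6$ squares to zero; it does not (its square generates $H^4(\C\pp^2)$). What vanishes is its \emph{cube}, since $\dim\C\pp^2=4$. Since the cube vanishes, the exclusion still goes through, but the reason you give is incorrect. In Step~4 you claim $b_6(\C\pp^2\times\s^6)=2$; in fact, by the K\"unneth formula $H^6(\C\pp^2\times\s^6;\Q)$ receives a contribution only from $H^0(\C\pp^2)\tensor H^6(\s^6)$, so $b_6=1$ and the Betti numbers of $\C\pp^2\times\s^6$ agree with those of $\C\pp^5$. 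The Betti-number argument therefore rules out nothing. The correct way to exclude $\C\pp^2\times\s^6$ in Step~4 is that its rational cohomology algebra requires a generator in degree six (because $u^3=0$ for the degree-two class $u$), whereas the third case of Theorem~\ref{thm:dim10} asserts that $H^*(M;\Z)$ is generated by classes $z\in H^2$ and $x\in H^4$ --- if it were $\C\pp^2\times\s^6$, then $zx$ would have to generate $H^6$, but any such $x$ is proportional to $u^2$, so $zx$ is proportional to $u^3=0$, a contradiction. With these two local repairs your proof is complete.
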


\begin{proof}
We may assume that $M$ is not homeomorphic to $\s^{10}$. By our classification in dimension $10$ (Theorem \ref{thm:dim10}), $\chi(M) = 6$, $H_2(M;\Q) = \Q$, and $H^{10}(M;\Q)$ is generated by a product of elements of degree two and four. Rational ellipticity implies $H^{2i+1}(M;\Q) = 0$ for all $i$, and it follows that $H_*(M;\Q) \cong H_*(\C\pp^5;\Q)$. Fix generators $z \in H^2(M;\Q)$ and $x \in H^4(M;\Q)$. 

By Theorem \ref{thm:dim10}, in the case when cohomology is generated by $x$ and $z$, the manifold $M$ has the rational type of a complex projective space (corresponding to $m\neq 0$) or $\s^2\times \hh\pp^2$ (when $m=0$) using intrinsic formality of these spaces.

In the remaining case, the element $x^3$ does not vanish and generates $H^6(M;\qq)$. Hence, by Poincar\'e duality, $M\simeq_\qq \cc\pp^5$.
\end{proof}

\begin{theorem}\label{thm:dim12elliptic}
Let $M^{12}$ be a closed, simply connected, rationally elliptic Riemannian manifold with positive curvature and $T^3$ symmetry. If $\chi(M) \in \{2,4,7\}$, then one of the following occurs:
\begin{itemize}
\item If $\chi(M)=2$, then $M\simeq_\qq \s^{12}$.
\item If $\chi(M)=4$, then $M \simeq_\qq \hh\pp^3$, $M \simeq_\qq \s^2\times \s^{10}$, $M \simeq_\qq \s^4\times \s^8$, or the rational type of $M$ comes out of a $1$-parameter family tensoring to the real homotopy type of $\s^6\times \s^6$.
\item If $\chi(M)=7$, then  $M\simeq_\qq \cc\pp^6$.
\end{itemize}
If $\chi(M) \not\in \{2,4,7\}$, then $\chi(M) \in \{6,8,9,10,12\}$ and correspondingly $M$ has the Betti numbers and homotopy Betti numbers of one of the following spaces:
\begin{itemize}
\item $\s^8\times \cc\pp^2$ or $\s^4\times \hh\pp^2$.
\item $\s^6\times \cc\pp^3$, $\s^2\times \s^2\times \s^8$, $\s^2\times \s^4\times \s^6$, or $\s^4\times\s^4\times\s^4$.
\item $\cc\pp^2\times \hh\pp^2$.
\item $\s^4 \times \cc\pp^4$.
\item $\s^2\times \cc\pp^5$, $\s^2\times \s^6 \times \cc\pp^2$, $\cc\pp^2\times \s^4\times \s^4$, or $\s^2\times\s^2 \times\hh\pp^2$.
\end{itemize}

\end{theorem}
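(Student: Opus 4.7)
The approach is to combine two inputs: the Euler characteristic restriction from Theorem \ref{thm:dim12}, which in the rationally elliptic case forces $\chi(M) \in \{2,4,6,7,8,9,10,12\}$, and the enumeration of pure minimal models of $12$-dimensional $F_0$ spaces in Table \ref{tab12}. Since $M$ is simply connected, rationally elliptic, and has positive Euler characteristic, it is an $F_0$ space whose pure minimal model has a tuple of homotopy generator degrees appearing in that table. For each admissible value of $\chi(M)$, I would read off the short list of possible tuples.

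For $\chi(M) \in \{6,8,9,10,12\}$ the theorem only asserts that $M$ has the Betti numbers and homotopy Betti numbers of one of the listed model spaces, so this case reduces to a routine tabulation: one checks that each tuple in Table \ref{tab12} with the prescribed Euler characteristic matches the Betti data of one of the named products of rank-one symmetric spaces (for example, $\chi = 8$ gives the tuples $(2,6,7,11)$, $(2,2,8,3,3,15)$, $(2,4,6,3,7,11)$, $(4,4,4,7,7,7)$, corresponding respectively to the Betti data of $\s^6\times\cc\pp^3$, $\s^2\times \s^2\times\s^8$, $\s^2\times\s^4\times \s^6$, and $\s^4\times\s^4\times\s^4$). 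For $\chi(M) = 2$ and $\chi(M) = 7$, the unique tuples $(12,23)$ and $(2,13)$ each determine, up to scaling of $y$, a unique regular differential $\dif y = x^k$, giving cohomology isomorphic to $\qq[x]/(x^k)$; intrinsic formality of $\s^{12}$ and $\cc\pp^6$ then yields the claimed rational equivalences.

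The substantive work is the case $\chi(M) = 4$ with four possible tuples. For $(4,15)$, the unique regular differential $\dif y = x^4$ and intrinsic formality of $\hh\pp^3$ give $M \simeq_\qq \hh\pp^3$. For $(2,10,3,19)$ and $(4,8,7,15)$, any regular sequence $(\dif y_1, \dif y_2)$ of the indicated bidegrees in two even-degree variables reduces, after a change of basis in the $x_i$ and rescaling of the $y_i$, to the product form $(x_1^2, x_2^2)$, yielding $\s^2\times\s^{10}$ and $\s^4\times\s^8$ respectively. The remaining tuple $(6,6,11,11)$ is the only source of genuine moduli: the space of pure models of this type with finite-dimensional cohomology, modulo isomorphism of minimal Sullivan algebras, forms a 1-parameter family over $\qq$, and all such models become rationally equivalent to $\s^6 \times \s^6$ after base change to $\rr$, since a generic pair of real binary quadratic forms is simultaneously diagonalizable.

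The main obstacle I anticipate is the careful analysis of the $(6,6,11,11)$ moduli, both in parametrizing the $\qq$-rational types and in verifying the collapse over $\rr$. The remaining identifications for $\chi(M) \in \{6,8,9,10,12\}$ are a finite bookkeeping exercise using Table \ref{tab12}, although care must be taken to distinguish tuples with the same Euler characteristic but different Betti data.
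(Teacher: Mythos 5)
Your overall strategy matches the paper's: invoke Theorem \ref{thm:dim12} to restrict $\chi(M)$, then run through Table \ref{tab12}, using formality/intrinsic formality in the rank-one cases, and observing that for a pure space the Betti numbers are determined by the rational homotopy groups, which reduces the $\chi \in \{6,8,9,10,12\}$ cases to tabulation. Two minor remarks on the $\chi = 4$ cases $(2,10,3,19)$ and $(4,8,7,15)$: since the two even generators have different degrees, no ``change of basis in the $x_i$'' is available; the reduction follows instead from degree constraints, namely $\dif y_1$ is forced to be a multiple of $x_1^2$, and then modulo that relation and minimality, $\dif y_2$ reduces to a multiple of $x_2^2$. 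That is cosmetic.

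There is a real gap in your treatment of $(6,6,11,11)$. You assert that all such pure models become rationally equivalent to $\s^6\times\s^6$ after base change to $\rr$, attributing this to simultaneous diagonalizability of a generic pair of binary quadratic forms. Simultaneous diagonalizability is not the obstruction. With $b_6 = 2$, the cup-product form on $H^6(M;\rr)$ is a rank-two symmetric form, and there are genuinely distinct real homotopy types corresponding to signature $(1,1)$ (giving $\s^6\times\s^6$) and signature $(2,0)$ or $(0,2)$ (definite). Both are realized by valid $F_0$ models of this tuple: for instance $\dif u' = u^2 - v^2$, $\dif v' = uv$ gives a definite intersection form and is not real-equivalent to $\s^6\times\s^6$. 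What rules out the definite case in the present setting is the signature computation of Theorem \ref{thm:dim12}: for $\chi(M)\leq 13$ the signature is $0$ or $\pm 1$ according to parity, and since $\chi(M)=4$ is even one gets $\sigma(M)=0$, forcing the $(1,1)$ form. The paper's proof uses exactly this input to pin down the real model as $\dif u' = u^2+v^2$, $\dif v'=uv$ before discussing the rational moduli. Without invoking the signature result your argument does not exclude the definite-form rational types, so the conclusion ``tensors to the real homotopy type of $\s^6\times\s^6$'' is unjustified as stated.
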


\begin{remark}
Theorem \ref{thm:dim12} further computes the signature in this setting, but  this additional information does not exclude any of the manifolds listed in the conclusion of this theorem. For example, in the case of rational homotopy generators in degrees $2$, $6$, $7$, and $11$, we construct one class of possible minimal models by
\begin{align*}
(\Lambda \langle x,y, x',y'\rangle,\dif)
\end{align*}
with $\deg x=2, \deg y=6, \deg x'=7, \deg y'=11$, $\dif x=\dif y=0$, $\dif x'= xy$,  $\dif y'=x^6+y^2$. Its intersection form is represented by
\begin{align*}
\begin{pmatrix}
1 & 0\\
0 & -1
\end{pmatrix}
\end{align*}

For the configuration $(2,4,6,3,7,11)$ we may use $\s^2\times \s^4 \times \s^6$ with the intersection matrix being the same with respect to the basis $\frac{1}{2}([\s^6]+[\s^2][\s^4]), \frac{1}{2}([\s^6]-[\s^2][\s^4])$.

Similarly, when $\chi(M)\in \{10,12\}$, we obtain spaces which can be realized as manifolds by a product containing a sphere factor, i.e.~by boundaries. Since the signature is a bordism invariant, these manifolds have vanishing signatures.
\end{remark}

\begin{proof}
If $\chi(M) = 2$ or $\chi(M) = 7$, Table \ref{tab12} implies that $M$ has the rational homotopy groups of $\s^{12}$ or $\C\pp^6$. By formality, the rational homotopy classification follows in this case.

Suppose next that $\chi(M) = 4$. Table \ref{tab12} implies that $M$ has the rational homotopy groups of one of four types. We consider these cases one at a time. Listed first in each case is the sequence of degrees of homotopy generators.
	\begin{itemize}
	\item $(4,15)$. In this case, $M$ has the rational homotopy groups of $\HH\pp^3$ and hence the same rational homotopy type.
	\item $(2,10,3,19)$. In this case, the generator in degree three must kill the square of the generator in degree two, so it is straightforward to see in this case that $M$ has the rational homotopy type of $\s^2 \times \s^{10}$.
	\item $(4,8,7,15)$. This case is similar to the previous case and yields the rational homotopy type of $\s^4 \times \s^8$.
	\item $(6,6,11,11)$.  In this case, $M$ has a minimal model of the form $(\Lambda \langle u,v,u',v'\rangle,\dif)$ with $\deg u=\deg v=6$, $\deg u'=\deg v'=11$, $\dif u=\dif v=0$. Since the signature vanishes, we obtain that the intersection form over the reals is represented by the matrix
	\begin{align*}
	\begin{pmatrix}
	1 & 0\\
	0& -1
	\end{pmatrix}
	\end{align*}
This relates to the minimal model over $\rr$ by determining the differentials $\dif u'=u^2+v^2$ and $\dif v'=uv$ up to isomorphism. Let us now investigate how many rational homotopy types tensor to this real homotopy type. Since the product of the two elements $u,v$ is exact over $\rr$ if and only if it is exact over $\qq$, we see that all the rational types of this space are given by
\begin{align*}
&(\Lambda \langle u,v,u',v'\rangle,\dif),
\\ &\deg u=\deg v=6, \deg u'=\deg v'=11, \dif u=\dif v=0
\\ & \dif u'=u^2+k v^2, \dif v'=uv, k\in \qq
\end{align*}
and $k>0$ may be chosen not to be a square number over $\qq$. It is easy to see that for $k=1$, this is the model for $\s^6\times \s^6$.
	\end{itemize}

We proceed to the proof in the second case, where $\chi(M) \not\in \{2,4,7\}$. Here our Euler characteristic calculation (Theorem \ref{thm:dim12}) implies that $\chi(M) \in \{6, 8, 9, 10, 12\}$. In particular, $M$ is an $F_0$ space and hence admits a pure model. Recall from \cite[p.~446]{FelixHalperinThomas01} that the Betti numbers of a pure space only depend on the rational homotopy groups, i.e.~pure spaces with the same rational homotopy groups have the same Betti numbers. As a result, we only need to compute the dimensions of the rational homotopy groups. This information can be immediately deduced from Table \ref{tab12}.

\end{proof}

Next we come to the rational homotopy classifications in dimensions $14$ and $16$.

\begin{theorem}\label{thm:dim14elliptic}
If $M^{14}$ is a closed, simply connected, rationally elliptic Riemannian manifold with positive curvature and $T^4$ symmetry, then $M$ is rationally homotopy equivalent to $\s^{14}$, $\C\pp^7$, or $\s^2 \times \HH\pp^3$.
\end{theorem}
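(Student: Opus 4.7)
The plan is to follow the strategy of Theorem \ref{thm:dim10elliptic}, combining Theorem \ref{thm:dim14} with the rational ellipticity assumption and then invoking intrinsic formality of the model spaces. First I would apply Theorem \ref{thm:dim14}, which already splits the problem into two cases.

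In the first case, $M$ is $3$--connected with $\chi(M) = 2$. Since $M$ is rationally elliptic and has positive Euler characteristic, it is an $F_0$ space, so all odd rational Betti numbers vanish. By Poincar\'e duality, $M$ is a rational homology sphere. Alternatively, consulting Table \ref{tab14}, the only tuple of homotopy generator degrees with $\chi = 2$ is $(14, 27)$, which also forces the rational homotopy groups of $\s^{14}$. In either reading, the intrinsic formality of $\s^{14}$ (being a sphere) yields $M \simeq_\Q \s^{14}$.

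In the second case, $H_*(M;\Z) \cong H_*(\C\pp^7;\Z)$ and the integral cohomology is generated by $z \in H^2(M;\Z)$ and $x \in H^4(M;\Z)$ with the single relation $z^2 = m x$ for some $m \in \Z$. Here rational ellipticity is not even needed; the case split is on whether $m \neq 0$ or $m = 0$. If $m \neq 0$, then rationally $x$ is a multiple of $z^2$, so $H^*(M;\Q) \cong \Q[z]/(z^8)$, matching $\C\pp^7$. If $m = 0$, then $z^2 = 0$ rationally; combined with $x^4 = 0$ (forced by the dimension of $M$) and the non-vanishing of the products $z x^k$ for $0 \leq k \leq 3$ (forced by Poincar\'e duality, since otherwise $z \in H^2$ would have no Poincar\'e dual), we obtain $H^*(M;\Q) \cong \Q[z,x]/(z^2, x^4)$, matching the rational cohomology ring of $\s^2 \times \HH\pp^3$.

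To conclude, I would invoke intrinsic formality of $\C\pp^7$ and of $\s^2 \times \HH\pp^3$ to upgrade the cohomology ring isomorphism to a rational homotopy equivalence. The main obstacle, such as it is, lies in the $m = 0$ subcase of the second case: one must confirm that the rational cohomology ring is really $\Q[z,x]/(z^2,x^4)$ (not merely a deformation thereof) and that this ring is intrinsically formal. Both points are straightforward: the ring structure is pinned down by Poincar\'e duality and the integral data from Theorem \ref{thm:dim14}, and $\s^2 \times \HH\pp^3$ is a product of spaces with singly generated cohomology algebras, whose pure minimal model admits no nontrivial perturbations of the differentials.
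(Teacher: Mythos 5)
Your proposal follows essentially the same approach as the paper's proof: apply Theorem \ref{thm:dim14} to split into the $3$-connected/$\chi=2$ case and the $\C\pp^7$-homology case, reduce the latter to $m\neq 0$ versus $m=0$, and conclude via intrinsic formality of $\s^{14}$, $\C\pp^7$, and $\s^2\times\HH\pp^3$. The paper states this in a single sentence; your version merely spells out the cohomology-ring verification in the $m=0$ subcase, which is correct.
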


\begin{proof}
This follows directly from Theorem \ref{thm:dim14}, since $\chi(M)=2$ for a rationally elliptic space $M^n$ implies $M^n\simeq_\qq \s^n$ and since the rational homotopy types of $\C\pp^7$ and $\s^2 \times \HH\pp^3$ (corresponding to $m \neq 0$ and $m = 0$ in the conclusion of Theorem \ref{thm:dim14}, respectively) are intrinsically formal.
\end{proof}

\begin{theorem}\label{thm:dim16elliptic}
If $M^{16}$ is a closed, simply connected, rationally elliptic Riemannian manifold with positive curvature and $T^4$ symmetry, then $M$ is rationally homotopy equivalent to a compact, rank one symmetric space, i.e., to $\s^{10}$, $\C\pp^5$, $\HH\pp^4$, or  $\Ca\pp^2$.
\end{theorem}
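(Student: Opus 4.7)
The plan is to combine Theorem \ref{thm:dim16} with the $F_0$ classification in dimension $16$ from Table \ref{tab16}. Since $M$ is simply connected, rationally elliptic, and has positive Euler characteristic (by Theorem \ref{thm:dim16}), it is an $F_0$ space and hence admits a pure minimal model whose tuple of generator degrees must appear in Table \ref{tab16}. The strategy is to step through the four possible Euler characteristics $\chi(M) \in \{2,3,5,9\}$ coming from Theorem \ref{thm:dim16}, match $M$ to an allowed tuple, exclude impostors with the extra cohomological data of Theorem \ref{thm:dim16}, and conclude rational homotopy equivalence by formality.

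First I would handle the three straightforward cases. For $\chi(M)=2$, the unique entry of Table \ref{tab16} is $(16,31)$, forcing $M \simeq_{\Q} \s^{16}$. For $\chi(M)=3$, the unique entry is $(8,23)$, forcing $M \simeq_{\Q} \Ca\pp^2$. For $\chi(M)=5$, the unique entry is $(4,19)$, forcing $M \simeq_{\Q} \HH\pp^4$. In each of these cases the rational cohomology is a truncated polynomial algebra on a single generator, so the space is intrinsically formal and the rational homotopy type is determined by the rational homotopy groups.

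The main obstacle is the case $\chi(M)=9$, where Table \ref{tab16} lists three candidate tuples: $(2,17)$, $(2,6,5,17)$, and $(4,4,11,11)$. I would rule out $(4,4,11,11)$ immediately by observing that it has $b_2=0$, whereas Theorem \ref{thm:dim16} gives $H_2(M;\Z) \cong \Z$. To rule out $(2,6,5,17)$, I would analyze its pure model $(\Lambda\langle x_1,x_2,y_1,y_2\rangle,\dif)$ with $\deg x_1=2$, $\deg x_2=6$, $\deg y_1=5$, $\deg y_2=17$: for degree reasons $\dif y_1 = c\,x_1^3$, and finite-dimensionality forces $c \neq 0$, so $x_1^3 = 0$ in $H^*(M;\Q)$. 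But Theorem \ref{thm:dim16} asserts that $H^{16}(M;\Z)$ is generated by an element of the form $x^5 y$ with $x$ a generator of $H^2(M;\Z)$; if $x^3=0$ then $x^5 y = 0$, contradicting the non-vanishing of the top class. Hence only $(2,17)$ remains, yielding $M \simeq_{\Q} \C\pp^8$ by formality.

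The only substantive input beyond table-matching is the exclusion of $(2,6,5,17)$, which is where the integral cohomology computation of Section \ref{sec:dim16} does the essential work; every other step is a direct comparison between Theorem \ref{thm:dim16} and Table \ref{tab16}. Putting the four cases together recovers precisely the rank one symmetric spaces $\s^{16}$, $\C\pp^8$, $\HH\pp^4$, and $\Ca\pp^2$ as the possible rational homotopy types.
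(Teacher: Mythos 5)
Your proposal is correct, and for $\chi(M)\in\{2,3,5\}$ it is identical to the paper's proof: read off the unique tuple in Table \ref{tab16} and invoke intrinsic formality of the single-generator cohomology algebras. The only genuine difference is in the $\chi(M)=9$ case. The paper bypasses Table \ref{tab16} there and argues directly: Theorem \ref{thm:dim16} gives $x^5\neq 0$ for a generator $x\in H^2(M;\Q)$, so $b_{2i}\geq 1$ for $0\leq i\leq 8$; since the odd Betti numbers vanish (as $M$ is $F_0$) and their sum is $\chi(M)=9$, each $b_{2i}=1$, so $M$ is a rational cohomology $\C\pp^8$ and hence a rational homotopy $\C\pp^8$ by formality. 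You instead enumerate the three candidate tuples $(2,17)$, $(2,6,5,17)$, $(4,4,11,11)$ from the table and exclude the latter two using $b_2=1$ and the nonvanishing of $x^5y$ (via the forced relation $\dif y_1=c\,x_1^3$, $c\neq 0$, in the pure model for $(2,6,5,17)$). Both arguments are sound and consume the same two inputs (Theorem \ref{thm:dim16} and the $F_0$ structure); the paper's version is marginally shorter and does not require recalling the specific Table \ref{tab16} entries, while your version is more uniform in style with the other three cases and more explicit about which purported $F_0$ tuples are being ruled out. Either route is acceptable.
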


\begin{proof}
From Theorem \ref{thm:dim16} we derive that $\chi(M)\in \{2,3,5,9\}$. In the first three cases, Table \ref{tab16} implies that $M$ has singly generated rational cohomology with generator in degree $16$, $8$, or $4$, and so the result follows by formality. Suppose $\chi(M) = 9$. By Theorem \ref{thm:dim16} again, the fifth power of a generator $x \in H^2(M;\Q)$ is non-zero. Since $M$ is an $F_0$ space, the odd Betti numbers are zero, and by this fact the even Betti numbers are equal to one. By Poincar\'e duality, $M$ is a rational cohomology $\C\pp^8$ and hence a rational homotopy $\C\pp^8$ by formality.
\end{proof}

In \cite{AmannKennard15}, the authors proved the Wilhelm conjecture for spaces with rational cohomology algebra generated by one element. We have the following consequence.

\begin{corollary}
The Wilhelm conjecture holds for closed, simply connected, rationally elliptic manifolds of dimension $16$ that admit a Riemannian metric with positive sectional curvature and $T^4$ symmetry.
\end{corollary}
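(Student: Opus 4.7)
The plan is to combine two ingredients that are already available, one proved within this paper and one established in prior work by the authors. First, I would apply Theorem \ref{thm:dim16elliptic}: under the stated hypotheses, the manifold $M^{16}$ is rationally homotopy equivalent to one of the four compact rank one symmetric spaces $\s^{16}$, $\C\pp^8$, $\HH\pp^4$, or $\Ca\pp^2$. Each of these has rational cohomology algebra generated by a single element, namely the fundamental class in degree $16$ for $\s^{16}$ and the Kähler-type class in degree $2$, $4$, or $8$ for $\C\pp^8$, $\HH\pp^4$, and $\Ca\pp^2$, respectively.

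Second, I would invoke the main result of \cite{AmannKennard15}, which confirms the Wilhelm conjecture for every closed, simply connected manifold whose rational cohomology algebra is singly generated. Since $M$ falls into this class by the previous paragraph, the Wilhelm conjecture holds for $M$.

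The only step that requires explicit mention is the observation that the property of having singly generated rational cohomology is invariant under rational homotopy equivalence, so the conclusion of Theorem \ref{thm:dim16elliptic} directly supplies the hypothesis needed by \cite{AmannKennard15}. This is immediate: both the rational cohomology algebra and its minimal number of generators are rational-homotopy invariants. Given this, the proof is simply the two-line concatenation above, and I do not anticipate any genuine obstacle, since the hard work has already been carried out in Theorem \ref{thm:dim16elliptic} (for the rational classification) and in \cite{AmannKennard15} (for the Wilhelm conjecture in the singly generated case).
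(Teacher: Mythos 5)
Your proposal is correct and follows exactly the paper's (implicit) argument: apply Theorem \ref{thm:dim16elliptic} to conclude $M$ has singly generated rational cohomology, then invoke the result of \cite{AmannKennard15} for such spaces. The observation that singly generated rational cohomology is a rational-homotopy invariant is the only connective tissue needed, and you correctly identify it.
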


\smallskip\section{Biquotients}\label{sec:biquotients}\smallskip

In this section, we combine our Euler characteristic and other calcultions with work of Kapovitch--Ziller, Totaro, and DeVito to provide diffeomorphism classifications for biquotients in dimensions $10$, $14$, and $16$ that admit positively curved metrics with torus symmetry. In particular, we obtain a characterization of the Cayley plane in this context. We also consider dimension $12$ and restrict further the case of symmetric spaces to obtain a partial diffeomorphism classification.

The main additional ingredient is the classification of biquotients whose rational cohomology is four--periodic in the sense of \cite[Definition 1.1]{Kennard13}. This class includes those with singly generated cohomology, and this case was classified in Kapovitch and Ziller \cite{KapovitchZiller04}. In even dimensions, there is one more possible rational cohomology ring, namely, that of $\s^2 \times \HH\pp^m$. Here we require recent work of DeVito \cite{DeVito-pre2}.

To be clear, we state explicitly that in the subsequent theorem the biquotient structure does not have to be related to the positively curved metric.

\begin{theorem}\label{thm:dim101416biquotient}
Let $M^n$ be a closed, simply connected biquotient that independently admits a positively curved Riemannian metric with $T^r$ symmetry.
	\begin{itemize}
	\item If $n = 10$ and $r \geq 3$, then $M$ is diffeomorphic to $\s^{10}$, $\C\pp^5$, $\s^2 \tilde{\times}\HH\pp^2$, $N_1^{10}$, or $N_2^{10}$.
	\item If $n = 14$ and $r \geq 4$, then $M$ is diffeomorphic to $\s^{14}$, $\C\pp^7$, $\s^2 \tilde{\times}\HH\pp^3$, $N_1^{14}$, or $N_2^{14}$.
	\item If $n = 16$ and $r \geq 4$, then $M$ is diffeomorphic to $\s^{16}$, $\C\pp^8$, $\HH\pp^4$, or $\Ca\pp^2$.
	\end{itemize}
Here $\s^2 \tilde \times \HH\pp^m$ denotes one of the two diffeomorphism types of total spaces of $\HH\pp^m$--bundles over $\s^2$ whose structure group reduces to a circle acting linearly, and $N_1^{4k-2}$ and $N_2^{4k-2}$ denote the free circle quotients $\SO(2k+1)/(\SO(2k-1)\times \SO(2))$ and $\Delta\SO(2)\backslash\SO(2k+1)/\SO(2k-1)$ of the unit tangent bundle of $\s^{2k}$ described in \cite{KapovitchZiller04}.
\end{theorem}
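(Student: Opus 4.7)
The plan is to follow the two-step strategy laid out in the introduction. First, I would exploit the well-known fact that every biquotient $G /\!/ H$ is rationally elliptic (its minimal model is a quotient of that of $G$) to bring each of the rationally elliptic classification results of Section \ref{sec:Elliptic} into play. Specifically, Theorems \ref{thm:dim10elliptic}, \ref{thm:dim14elliptic}, and \ref{thm:dim16elliptic} would pin down the rational homotopy type of $M$: in dimension $16$ it must be a rank one symmetric space, while in dimensions $10$ and $14$ it must be $\s^n$, $\C\pp^{n/2}$, or $\s^2 \times \HH\pp^m$ (where $n = 10$ gives $m=2$ and $n=14$ gives $m=3$).

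Second, I would feed each of these possible rational homotopy types into the existing structural classifications of biquotients with such cohomology. Kapovitch--Ziller \cite{KapovitchZiller04} classify closed, simply connected biquotients whose rational cohomology is $4$-periodic with a single generator, which covers all four rational types appearing in dimension $16$ and the $\s^n$, $\C\pp^{n/2}$ cases in dimensions $10$ and $14$. In the latter dimensions, the $\C\pp^{n/2}$ rational type produces not only $\C\pp^{n/2}$ itself but also the two circle quotients $N_1^{n}$ and $N_2^{n}$ of the unit tangent bundle of $\s^{(n+2)/2}$, which are rational $\C\pp^{n/2}$'s but not diffeomorphic to them. DeVito's generalization \cite{DeVito-pre2} covers biquotients rationally equivalent to $\s^2 \times \HH\pp^m$ and yields precisely the two diffeomorphism types of linear $\HH\pp^m$-bundles over $\s^2$. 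In dimension $16$, no $\s^2 \times \HH\pp^m$ possibility arises from Theorem \ref{thm:dim16elliptic}, so Kapovitch--Ziller alone produces the clean list of rank one symmetric spaces.

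The main obstacle is that the classifications of Kapovitch--Ziller and DeVito are a priori larger than what we want: they list every biquotient of the given rational type, not only those compatible with our curvature and symmetry assumptions. To complete the argument, I would return to the refined integral-cohomology conclusions of Theorems \ref{thm:dim10}, \ref{thm:dim14}, and \ref{thm:dim16}. These theorems record, for each rational type, strong constraints on $H^*(M;\Z)$, including the ring structure in the $\C\pp^{n/2}$ case and the relation $z^2 = mx$ in the $\s^2 \times \HH\pp^m$ case. Comparing these constraints against the integral cohomology of each biquotient appearing in the Kapovitch--Ziller and DeVito lists would eliminate the extraneous entries. I expect the delicate point to be the $\HH\pp^m$-bundle case: one must verify that precisely the two total spaces whose structure group reduces to a linearly acting circle are consistent with the integral cohomology derived in Section \ref{sec:dim10} and Section \ref{sec:dim14}, while other $\HH\pp^m$-bundles over $\s^2$ allowed as biquotients are ruled out.
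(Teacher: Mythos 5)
Your proposal follows essentially the same two-step strategy as the paper: first use rational ellipticity of biquotients together with Theorems \ref{thm:dim10elliptic}, \ref{thm:dim14elliptic}, and \ref{thm:dim16elliptic} to pin down the rational homotopy type, then feed that into Kapovitch--Ziller \cite{KapovitchZiller04} (for the singly-generated cases) and DeVito \cite{DeVito-pre2} (for the $\s^2 \times \HH\pp^m$ type), finally using integral cohomology from Theorems \ref{thm:dim10}, \ref{thm:dim14}, \ref{thm:dim16} to discard extraneous entries. The one place your prediction of where the ``delicate point'' lies is slightly off: DeVito's classification in the $\s^2\times\HH\pp^2$ rational type already confines the bundles to the two expected linear ones, so there are no extra $\HH\pp^m$-bundles to rule out; what actually needs to be excluded (in dimension $10$) is the product $[\Gtwo/\SO(4)]\times \s^2$, which the paper eliminates by comparing $H_2(M;\Z)=\Z$ from Theorem \ref{thm:dim10} against $H_2([\Gtwo/\SO(4)]\times\s^2;\Z)\cong\Z_2\oplus\Z$.
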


\begin{proof}
First suppose $M$ has the rational homotopy type of $\s^2\times \hh\pp^2$ or $\s^2\times \hh\pp^3$. From DeVito's classification we deduce that $M^{10}$ is diffeomorphic to one of two bundles over $\s^2$ or with fiber $\hh\pp^2$ or $[G_2/\SO(4)]\times \s^2$. For $M^{14}$ we obtain that $M$ is one of the two respective $\hh\pp^3$-bundles over $\s^2$. Now, in dimension ten, we know that $H_2(M;\zz)=\zz$, whereas $H_2([G_2/\SO(4)]\times \s^2)=\zz_2\oplus \zz$.

Next suppose that the rational homotopy type is not as above. The calculations above for rationally elliptic spaces imply that $M$ has the rational homotopy type of a compact, rank one symmetric space (see Theorems \ref{thm:dim10elliptic}, \ref{thm:dim14elliptic}, and \ref{thm:dim16elliptic}). In particular, the rational cohomology of $M$ is generated by one element, so we can apply the classification of Kapovitch and Ziller (see \cite[Theorem 0.1]{KapovitchZiller04}). In these dimensions, either the conclusion of Theorem \ref{thm:dim101416biquotient} holds, or $M$ is diffeomorphic to one of the two rational complex projective spaces in dimensions $10$ or $14$. These spaces are $N_1$ and $N_2$ from the conclusion of the theorem.
\end{proof}

A similarly strong result in dimension $12$ seems out of reach. We make a few general remarks on the structure of a $12$--dimensional biquotient admitting positive sectional curvature and $T^3$ symmetry. We then restrict further to the case of symmetric spaces and provide a diffeomorphism classification in this case.

Suppose that $M^{12}$ is a closed, simply connected Riemannian manifold with positive sectional curvature and $T^3$ symmetry. Assume moreover that $M$ admits a possibly independent biquotient structure, i.e., suppose that $M$ is diffeomorphic to a quotient $\biq{G}{H}$ of $G$ by a two-sided action by a subgroup $H \subseteq G$. By the results of Kapovitch--Ziller \cite{KapovitchZiller04} or Totaro \cite{Totaro02}, we may replace $G$ and $H$, if necessary, so that $G$ is a connected, simply connected Lie group and $H$ is a product of a torus $T^r$ with a connected, simply connected Lie group. In other words, $G = \prod_{i=1}^k G_i$ and $H = T^r \times \prod_{i=1}^l H_i$, where $G_i$ and $H_i$ are compact, one--connected, simple Lie groups. Since $\chi(M) > 0$ under our assumptions, the ranks of $G$ and $H$ agree, so $l = k - r$. Moreover, an extension of the arguments in \cite[Section 1]{KapovitchZiller04} or \cite[Lemma 3.3 and Corollary 4.6]{Totaro02} show that $G$ has at most three simple factors, i.e., $k \leq 3$. Next, we require Totaro's work on the ``contribution'' of each simple group $G_i$ to the odd rational homotopy groups of $M$. This work implies that, if $G_i$ has the rational homotopy type of $\prod_j \s^{2n_j - 1}$, then the largest value of $n_j$ appearing is at most $2n = 24$. In particular, there exist finitely many such $G_i$. Finally we recall that the quotient map $G \to M$ is a fibration with fiber $H$, so the associated long exact homotopy sequence relates the rational homotopy groups of $G$ and $H$ (which are both concentrated in odd degrees) with those of $M$. In particular, this implies that $r = b_2(M) = \dim \pi_2(M) \tensor \Q \leq 2$ by Table \ref{tab12}. Putting all of this together, we have the following structure theorem.

\begin{theorem}\label{thm:dim12biquotients}
Let $M^{12}$ be a closed, simply connected Riemannian manifold with positive sectional curvature and $T^3$ symmetry. Assume $M$ admits a possibly independent biquotient structure. Then there exist $k \in \{1,2,3\}$ and $r \in \{1,2\}$ and compact, one--connected, simple Lie groups $G_i$ and $H_i$ such that $\pi_j(G_i)\tensor \Q = \pi_j(H_i) \tensor \Q = 0$ for $j \geq 48$ and such that $M$ is diffeomorphic to a biquotient of the form $\biq G H$, where $G = \prod_{i=1}^k G_i$ and $H = T^r \times \prod_{i=1}^{k-r} H_i$.
\end{theorem}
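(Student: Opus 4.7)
My plan is to follow the outline already sketched in the paragraph preceding the statement, formalizing each reduction. First, given any biquotient presentation of $M$, I would apply the standard reduction results of Kapovitch--Ziller \cite{KapovitchZiller04} or Totaro \cite{Totaro02}, which allow one to replace $G$ and $H$, if necessary, so that $G$ is a connected, simply connected compact Lie group and $H$ is the product of a torus with a connected, simply connected compact Lie group. Decomposing further into simple factors gives $G=\prod_{i=1}^k G_i$ and $H=T^r\times\prod_{i=1}^{l}H_i$, where each $G_i, H_i$ is one-connected and simple. Since our calculations in dimension $12$ (Theorem \ref{thm:dim12}) show $\chi(M)\geq 2>0$, the biquotient quotient must be equirank, which forces $l=k-r$.

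Second, I would bound the number of simple factors of $G$. The fibration $H\to G\to M$ yields a long exact sequence of rational homotopy groups. Since $H$ and $G$ have rational homotopy concentrated in odd degrees, the even-degree portion of this sequence gives $\pi_{2j}(M)\otimes\qq\cong\pi_{2j-1}(H)\otimes\qq/\mathrm{image}$, while the odd-degree portion places $\pi_{2j-1}(M)\otimes\qq$ between quotients and kernels of maps between odd homotopy of $G$ and $H$. The key constraint is that $M$ is rationally elliptic (which for biquotients is automatic), so all its rational homotopy groups are finite dimensional. An extension of the arguments in \cite[Section 1]{KapovitchZiller04} or \cite[Lemma 3.3, Corollary 4.6]{Totaro02}, together with the dimension and Euler characteristic constraints coming from Theorem \ref{thm:dim12} and Table \ref{tab12}, then bounds $k\leq 3$.

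Third, I would bound $r$. Again from the long exact sequence of $H\to G\to M$, one has $\pi_2(M)\otimes\qq\cong\pi_1(H)\otimes\qq\cong\qq^r$ since $G$ is simply connected. Thus $r=b_2(M)$. By Table \ref{tab12}, every entry achievable under our hypotheses has $b_2\leq 2$ (this requires one to compare the Euler characteristic list $\chi(M)\in\{2,4,6,7,8,9,10,12\}$ from Theorem \ref{thm:dim12elliptic} against the possible tuples of degrees, ruling out any configuration with three or more degree-$2$ generators). Since $M$ is not a point, $r\geq 1$ would follow from bookkeeping in specific cases; alternatively, if $r=0$ the conclusion as stated still allows absorbing this into the form $T^0\times\prod H_i$, but the tight statement $r\in\{1,2\}$ uses that the only achievable tuples in dimension $12$ have $b_2\in\{1,2\}$.

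Fourth, I would bound the rational homotopy groups of each simple factor. Here I invoke Totaro's ``contribution'' formula: each simple factor $G_i$ (resp.\ $H_i$) is rationally a product of odd spheres $\prod_j\s^{2n_j-1}$, and Totaro's bound shows that any such $2n_j-1$ that contributes nontrivially to $M$ satisfies $2n_j-1\leq 2\dim M-1=23$; factors whose top sphere exceeds this cannot contribute to the finite-dimensional $\pi_*(M)\otimes\qq$. Pushing this through the long exact sequence with $\dim M=12$, one gets $2n_j\leq 2\cdot 24=48$, so $\pi_j(G_i)\otimes\qq=\pi_j(H_i)\otimes\qq=0$ for $j\geq 48$. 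The main obstacle I anticipate is the careful bookkeeping in the second and fourth steps: the bound $k\leq 3$ and the rational-homotopy vanishing both require translating between the long exact homotopy sequence of the fibration and the rational-ellipticity constraints derived from Table \ref{tab12}, and making sure the Kapovitch--Ziller/Totaro reductions apply once one allows an independent (rather than invariant) biquotient structure.
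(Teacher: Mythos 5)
Your proposal follows the paper's argument essentially line for line: reduce via Kapovitch--Ziller/Totaro to $G$ simply connected, $H = T^r\times(\text{simply connected})$; use $\chi(M)>0$ to get equirank and hence $l=k-r$; bound $k\leq 3$ by extending Kapovitch--Ziller/Totaro; read $r = b_2(M) \leq 2$ off the long exact homotopy sequence of $H\to G\to M$ and Table~\ref{tab12}; and bound the rational homotopy of the simple factors using Totaro's contribution estimate. So the overall route is the same; the paper presents this as an informal paragraph rather than a displayed proof.

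There is one step where your numbers do not hang together. You state Totaro's bound as ``any such $2n_j-1$ that contributes nontrivially to $M$ satisfies $2n_j-1\leq 2\dim M-1=23$,'' i.e.\ $n_j\leq 12$, which would force $\pi_j(G_i)\otimes\Q=0$ already for $j\geq 24$. You then write ``one gets $2n_j\leq 48$,'' which is a strictly weaker conclusion and does not follow from the inequality you just asserted. The bound actually used in the paper is $n_j\leq 2n=24$, not $n_j\leq n$: the point is that a simple factor of $G$ can carry a sphere $\s^{2n_j-1}$ whose class cancels against one from a factor of $H$ without contributing to $\pi_*(M)\otimes\Q$, so one only controls the top generator degree by roughly twice the dimension, giving sphere degree $\leq 4n-1 = 47$ and hence the stated vanishing for $j\geq 48$. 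Your final conclusion agrees with the paper, but it should be derived from $n_j\leq 2n$, not from the too-strong inequality $2n_j-1\leq 2n-1$.

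A smaller point: you try to justify $r\geq 1$ by asserting that ``the only achievable tuples in dimension $12$ have $b_2\in\{1,2\}$,'' but Table~\ref{tab12} contains tuples with no degree-$2$ generator (e.g.\ $(12,23)$, $(4,15)$, $(6,17)$), and $\s^{12}$ and $\HH\pp^3$ are realized as homogeneous biquotients with no torus factor in $H$. The paper's own derivation only yields $r=b_2(M)\leq 2$, so if anything, the statement should be read with $r\in\{0,1,2\}$ (or one should argue separately that a trivial circle factor can be introduced); your attempted justification of $r\geq 1$ is not correct.
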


Using the last statement together with the classification in Table \ref{tab12} of the rational homotopy groups of $M$ (with dimension $12$ and Euler characteristic $2$, $4$, $6$, $7$, $8$, $9$, $10$, or $12$), one could analyze the possible groups $G$ and $H$ appearing in the conclusion of this theorem. Indeed, the theorem together with the classification of compact, simply connected, simple Lie groups implies that there are finitely many choices of $G$. Given $G$ and $M$, the rational homotopy groups of $H$ follow from the long exact homotopy sequence associated to the fibration $G \to M$, so there are only finitely many possibilities for $H$. We could therefore enumerate the finite number of possibilities for $G$ and $H$, however we do not pursue this here.

Instead, we restrict ourselves further to the class of symmetric spaces. These are homogeneous spaces and therefore biquotients, so everything above carries over. However, we proceed by a different route to get a classification in this case.

\begin{theorem}\label{thm:dim12symmetricspaces}
Let $M^{12}$ be a compact, simply connected symmetric space. Assume $M$ admits a possibly independent Riemannian metric with positive sectional curvature and $T^3$ symmetry. One of the following occurs:
	\begin{itemize}
	\item $\chi(M) = 2$, $M$ is spin, and $M$ is $\s^{12}$.
	\item $\chi(M) = 4$, $M$ is spin, and $M$ is $\HH\pp^3$ or a product of the form $\s^{2k} \times \s^{12-2k}$.
	\item $\chi(M) = 6$, $M$ is spin, and $M$ is $\s^4 \times \HH\pp^2$ or $\s^4 \times \of{\Gtwo/\SO(4)}$.
	\item $\chi(M) = 6$, $M$ is not spin, and $M$ is $\SO(7)/\SO(3)\times\SO(4)$ or $\s^8 \times \C\pp^2$.
	\item $\chi(M) = 7$, $M$ is not spin, and $M$ is $\C\pp^6$.
	\item $\chi(M) = 8$, $M$ is spin, and $M$ is $\SO(8)/\SO(2) \times \SO(6)$, $\Sp(3)/\Un(3)$, $\s^6 \times \C\pp^3$, or a product of the form $\s^{2k} \times \s^{2l} \times \s^{12-2k-2l}$.
	\item $\chi(M) = 8$, $M$ is not spin, and $M$ is $\s^6 \times \of{\SO(5) / \SO(2) \times \SO(3)}$.
	\item $\chi(M) = 9$, $M$ is not spin, and $M$ is $\C\pp^2 \times \HH\pp^2$ or $\C\pp^2 \times \of{\Gtwo/\SO(4)}$.
	\item $\chi(M) = 10$, $M$ is not spin, and $M$ is $\s^4 \times \C\pp^4$.
	\item $\chi(M) = 12$, $M$ is not spin, and $M$ is $\s^2 \times \of{\SO(7)/\SO(2) \times \SO(5)}$ or a product of the form $\s^{2k} \times \s^{8-2k} \times \C\pp^2$.
	\end{itemize}
\end{theorem}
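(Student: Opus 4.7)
The proof combines the topological constraints from Theorem~\ref{thm:dim12} with the classification of simply connected compact symmetric spaces. Since $M$ is formal and rationally elliptic, Theorem~\ref{thm:dim12}(2) gives $\chi(M)\in\{2,4,6,7,8,9,10,12\}$ and its signature part gives $|\sigma(M)|\leq 1$ (as $\chi(M)\leq 12<13$); in addition, if $M$ is spin then $\chi(M)\leq C(6)$.

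The proof then proceeds by direct enumeration. Every compact simply connected symmetric space decomposes uniquely as a product of irreducible factors, so my first step is to list all irreducible compact simply connected symmetric spaces of dimension at most $12$: the rank-one spaces $\s^n$, $\C\pp^n$, $\HH\pp^n$; the oriented and complex Grassmannians of appropriate rank; the Hermitian symmetric spaces $\Sp(n)/\Un(n)$ and $\SO(2n)/\Un(n)$; the spaces $\SU(n)/\SO(n)$ and $\SU(2n)/\Sp(n)$; and the exceptional Wolf space $\Gtwo/\SO(4)$. For each $12$-dimensional product I would compute the Euler characteristic, multiplicatively from the Weyl group ratio $|W_G|/|W_K|$ on irreducible equal-rank factors, and retain only those with $\chi\in\{2,4,6,7,8,9,10,12\}$.

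Several exceptional isomorphisms must be tracked to avoid overcounting, in particular $\SO(6)/\Un(3)\cong\C\pp^3$, $\Sp(2)/\Un(2)\cong\SO(5)/\SO(2)\times\SO(3)$, $\SO(6)/\SO(2)\times\SO(4)\cong \mathrm{Gr}_2(\C^4)$, and the triality isomorphism $\SO(8)/\Un(4)\cong\SO(8)/\SO(2)\times\SO(6)$. For the surviving candidates I would compute the signature (via Schubert calculus for Hermitian symmetric spaces and multiplicativity for products), using this to eliminate spaces that violate $|\sigma|\leq 1$, notably $\mathrm{Gr}_2(\C^5)$ which has $\chi=10$ but $\sigma=2$. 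Spin-ness follows from standard criteria: spheres and quaternionic projective spaces are spin, $\C\pp^n$ is spin iff $n$ is odd, the Wolf space $\Gtwo/\SO(4)$ is spin, and a product is spin iff each of its factors is.

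The main obstacle will be to exhaustively handle all cases while correctly enforcing the spin versus non-spin split. The theorem's dichotomy is consistent with $C(6)=8$, but $C(6)$ is only known to lie in $\{6,8,10,12,14\}$, so borderline spin candidates of large Euler characteristic (for example $\s^4\times \mathrm{Gr}_2(\C^4)$ with $\chi=12$) must be ruled out by re-examining the specific fixed-point data from the club analysis of Lemma~\ref{lem:dim12codim6} for each candidate, or by invoking the rational homotopy constraints collected in Theorem~\ref{thm:dim12elliptic}. Matching each remaining candidate to an entry in the conclusion then completes the proof.
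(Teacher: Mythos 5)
Your overall strategy matches the paper's: decompose $M$ into irreducible symmetric factors, enumerate products of total dimension $12$ and at most three factors, and filter by Euler characteristic, signature, and spin. The Weyl group formula $\chi(G/K)=|W_G|/|W_K|$, the signature filter that eliminates $\mathrm{Gr}_2(\C^5)$ (with $\sigma=\pm2$), and the exceptional identifications all appear in the paper's proof. One small correction in the other direction: the paper asserts $\Sp(2)/\Un(2)\cong\C\pp^3$, but your identification $\Sp(2)/\Un(2)\cong\SO(5)/(\SO(2)\times\SO(3))$ (the complex $3$--quadric $Q_3$) is the correct one, as the two disagree on $w_2$; the final table is unaffected since either version simply removes $\Sp(2)/\Un(2)$ as a duplicate.

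The subtlety you flagged at the end is genuine, and it is not actually closed by the paper's proof either. The paper opens by claiming that Theorem \ref{thm:dim12} yields ``either $\chi(M)\in\{2,4,6,8\}$ or $M$ is not spin and $\chi(M)\in\{7,9,10,12\}$,'' but what Theorem \ref{thm:dim12} supplies is only ``$\chi(M)\in\{2,4,\ldots,C(6)\}$ (with no spin restriction), or $M$ is not spin and $7\le\chi(M)\le\tfrac{7}{4}C(6)$,'' together with the rationally elliptic list $\{2,4,6,7,8,9,10,12\}$. Since $C(6)$ is only pinned down to $\{6,8,10,12,14\}$ by Lemma \ref{lem:C6}, this does not force a spin manifold with $\chi(M)=12$ into the second (non-spin) alternative. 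Spin symmetric spaces such as $\s^2\times\C\pp^5$, $\s^4\times\SO(6)/(\SO(2)\times\SO(4))$, and $\s^2\times\s^2\times\HH\pp^2$ all have dimension $12$, $\chi=12$, $\sigma=0$, and constant (indeed vanishing) elliptic genus, and their rational homotopy degree tuples appear in Table \ref{tab12}; so neither the signature clause of Theorem \ref{thm:dim12} nor Theorem \ref{thm:dim12elliptic} excludes them. Your suggested fix of re-examining the club analysis also does not obviously help: in case (1) of Lemma \ref{lem:ClubAnalysis} one obtains $\chi(M)=\chi(M^\iota)$ for a connected, simply connected, positively curved $6$--manifold $M^\iota$ with $T^2$ symmetry, which again only bounds $\chi(M)$ by $C(6)$. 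At $\chi(M)=10$ the issue is harmless because the enumeration happens to yield only the non-spin $\s^4\times\C\pp^4$, but at $\chi(M)=12$ the ``not spin'' clause in the conclusion requires the additional input $C(6)\le 10$, which is not known; neither your proposal nor the paper's argument supplies a substitute for that input.
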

We note that the signature in each case is $0$ or $\pm 1$, according to the parity of $M$. In addition, the elliptic genus is constant for all of these manifolds since they are homogeneous (see Hirzebruch--Slodowy \cite{HS90}). These remarks are consistent with the calculation in Theorem \ref{thm:dim12} of the Euler characteristic, signature, and elliptic genus.

We also remark that Theorem \ref{thm:dim12symmetricspaces} could be improved if the constant $C(6)$ defined in the introduction is shown to be six. Indeed, if $C(6) = 6$, our Euler characteristic calculation in dimension $12$ implies that $\chi(M) \neq 12$, $\chi(M) \neq 10$, and that $\chi(M) = 8$ only if $M$ is not spin.

\begin{proof}
Let $M^{12}$ be a closed, simply connected symmetric space that admits a (possibly independent) Riemannian metric with positive curvature and $T^3$ symmetry. By Theorem \ref{thm:dim12}, either $\chi(M) \in \{2,4,6,8\}$ or $M$ is not spin and $\chi(M) \in \{7,9,10,12\}$. Note in the first case, the theorem does not claim that $M$ is spin, however we will see for symmetric spaces, $\chi(M) \in \{2,4\}$ only if $M$ is spin.

By the classification of symmetric spaces, $M = \prod_{i=1}^k M_i$ where each $M_i$ is a compact, simply connected, irreducible symmetric space. Since $\dim(M) = 12$, there are a finite number of possibilities to check. In fact, the classification also implies that each $M_i$ has Euler characteristic zero or Euler characteristic at least two. Since $\chi(M) > 0$, we conclude that each $\chi(M_i) \geq 2$ (in particular, no Lie group factors appear) and that the number of factors is at most three.

Consider first the case where $M = M_1$, i.e., where $M$ is irreducible. For this case, we simply enumerate all compact, simply connected, irreducible symmetric spaces of dimension $12$. We first eliminate those spaces with Euler characteristic zero (i.e., any space $G/H$ where $G$ and $H$ have unequal rank). Second, we eliminate the rank two complex Grassmannian $\Un(5)/\Un(2)\times\Un(3)$, which has Euler characteristic $10$ and is not spin, because it has signature $\pm 2$ (see Shanahan \cite[Section 5]{Shanahan79}). Third, we record each space's signature and whether the spaces are spin, and we check that this data is consistent with our obstructions mentioned above (Theorem \ref{thm:dim12}). For determining which irreducible symmetric spaces are spin, we rely on Cahen--Gutt \cite{CahenGutt88} (cf. \cite[Table 4]{GadeaGonzalez-DavilaOubina-pre}). For the signature, we note that the signature vanishes for any simply connected, spin homogeneous space of dimension not divisible by eight (see Hirzebruch--Slodowy \cite[Section 2.3]{HS90}). For the signature of the oriented Grassmannians, see Slodowy \cite{Slodowy92}. Finally, we exclude $\SO(8)/\Un(4)$ from the table, since it is diffeomorphic to $\SO(8)/\SO(2)\times\SO(6)$ by way of low-dimensional, accidental isomorphisms (see Ziller \cite[Section 6.3]{Ziller-symspsnotes}). The result is Table \ref{tab:dim12sym-irreducible}.

\begin{table}[h]
\centering \caption{Irreducible symmetric spaces $M^{12}$ with $2 \leq \chi(M) \leq 12$ and $|\sigma(M)| \leq 1$.}
\label{tab:dim12sym-irreducible}
\begin{tabular}{ c | c | c | c }
$M$ 								&$\chi(M)$	& spin?	& $|\sigma(M)|$ \\\hline
$\s^{12}$								& $2$		& yes	& $0$	\\
$\HH\pp^3$							& $4$		& yes	& $0$	\\
$\SO(7)/\SO(3)\times\SO(4)$				& $6$		& no 		& $0$	\\ 
$\C\pp^6$								& $7$		& no		& $1$	\\
$\SO(8)/\SO(2)\times\SO(6)$, $\Sp(3)/\Un(3)$	& $8$		& yes 	& $0$	\\
\end{tabular}
\end{table}

Next consider the case where $M = M_1 \times M_2$ or $M = M_1 \times M_2 \times M_3$ where each $M_i$ is irreducible. In these cases, $2 \leq \dim(M_i) \leq 10$ and $2 \leq \chi(M_i)\leq 6$ by the multiplicativity of the Euler characteristic and the fact that $\chi(M) \leq 12$. Using Ziller \cite[Section 6.3]{Ziller-symspsnotes} again, we eliminate redundancy in our list by observing the following accidental isomorphisms:
	\begin{enumerate}
	\item $\SO(4)/\SO(2) \times \SO(2)$ is diffeomorphic to $\s^2 \times \s^2$, and hence it is not irreducible.
	\item $\Sp(2)/\Un(2)$ is diffeomorphic to $\C\pp^3$.
	\item $\SO(6)/\Un(3)$ is diffeomorphic to $\C\pp^3$.
	\end{enumerate}
We also record which spaces are spin using the classification of Cahen and Gutt mentioned above. The list of possible irreducible factors is presented in Table \ref{tab:dim12sym-reducible}.

\begin{table}[h]
\centering \caption{Irreducible symmetric spaces $M^n$ with $2 \leq n \leq 10$ and $2 \leq \chi(M) \leq 6$.}
\label{tab:dim12sym-reducible}
\begin{tabular}{ c | c | c | c }
$M$ 							& $\dim(M)$	& $\chi(M)$ 	& spin? 	\\\hline
$\s^{n}$							& $n$		& $2$		& yes	\\
$\C\pp^2$							& $4$		& $3$		& no		\\
$\HH\pp^2$, $\Gtwo/\SO(4)$						& $8$		& $3$		& yes	\\
$\C\pp^3$							& $6$		& $4$		& yes	\\
$\SO(5)/\SO(2)\times\SO(3)$			& $6$		& $4$		& no		\\
$\C\pp^4$							& $8$		& $5$		& no		\\
$\SO(6)/\SO(2) \times \SO(4)$, $\Un(4)/\Un(2) \times \Un(2)$		& $8$		& $6$		& yes	\\
$\SO(7)/\SO(2) \times \SO(5)$			& $10$		& $6$		& no		\\
$\C\pp^5$							& $10$		& $6$		& yes	
\end{tabular}
\end{table}

With Table \ref{tab:dim12sym-reducible} complete, it is not difficult to enumerate the possible products $M_1 \times M_2$ and $M_1 \times M_2 \times M_3$ that have dimension $12$ and Euler characteristic satisfying $\chi(M) \in \{2,4,6,8\}$ or $\chi(M) \in \{7,9,10,12\}$, where the latter case only occurs if $M$ is not spin. Here it is useful to recall that the product of manifolds is spin if and only if each factor is spin. For example, we can exclude the products of the form $\s^2 \times N$ where $N$ is one of the three spin manifolds in Table \ref{tab:dim12sym-reducible} with Euler characteristic six.
\end{proof}


\vfill
\begin{center}
\noindent
\begin{minipage}{\linewidth}
\small \noindent \textsc
{Manuel Amann} \\
\textsc{Fakult\"at f\"ur Mathematik}\\
\textsc{Institut f\"ur Algebra und Geometrie}\\
\textsc{Karlsruher Institut f\"ur Technologie}\\
\textsc{Englerstra\ss{}e 2, Karlsruhe}\\
\textsc{76131 Karlsruhe}\\
\textsc{Germany}\\
[1ex]
\textsf{manuel.amann@kit.edu}\\
\url{http://topology.math.kit.edu/21_54.php}
\end{minipage}
\end{center}

\vspace{10mm}

\begin{center}
\noindent
\begin{minipage}{\linewidth}
\small \noindent \textsc
{Lee Kennard} \\
\textsc{Department of Mathematics}\\
\textsc{University of Oklahoma}\\
\textsc{Norman, OK 73019-3103}\\
\textsc{USA}\\
[1ex]
\textsf{kennard@math.ou.edu}\\
\url{http://math.ou.edu/~kennard}
\end{minipage}
\end{center}

\end{document}